\documentclass[10pt]{amsart}

\usepackage{latexsym,exscale,enumerate,amsfonts,amssymb,xparse, mathtools}
\usepackage[normalem]{ulem}
\usepackage{amsmath,amsthm,amsfonts,amssymb,amscd, stmaryrd,textcomp}
\usepackage[bookmarks=false]{hyperref}
\usepackage[usenames,dvipsnames]{xcolor}
\usepackage{enumitem}
\usepackage{verbatim}
\usepackage{ytableau}
\usepackage{euscript}

\usepackage{bbm}

\colorlet{green}{black!30!green}

\definecolor{myblue}{RGB}{109, 156, 179}

\usepackage{xcolor}
\definecolor{myorange}{rgb}{1,0.647,0}

\definecolor{mypurple}{cmyk}{.6,.9,0, .11}

\newcommand{\bl}[1]{{\color{blue}#1}}
\newcommand{\gray}[1]{{\color{gray}#1}}

\numberwithin{equation}{section}

\newcommand{\arxiv}[1]{\href{https://arxiv.org/abs/#1}{\small  arXiv:#1}}

\theoremstyle{definition}
\newtheorem{thm}{Theorem}[section]
\newtheorem{cor}[thm]{Corollary}

\newtheorem{conj}[thm]{Conjecture}
\newtheorem{lem}[thm]{Lemma}
\newtheorem{rem}[thm]{Remark}
\newtheorem{conv}[thm]{Convention}

\newtheorem{prop}[thm]{Proposition}
\newtheorem{defn}[thm]{Definition}
\newtheorem{example}[thm]{Example}

\newtheorem{problem}[thm]{Problem}

\newtheorem*{ack}{Acknowledgements}
\newtheorem*{fund}{Funding}

\usepackage{tikz}
\usetikzlibrary{cd}
\usetikzlibrary{snakes}
\usetikzlibrary{decorations.markings}
\usetikzlibrary{decorations.pathreplacing}
\usetikzlibrary{arrows,shapes,positioning}
\usetikzlibrary{decorations.pathmorphing}
\tikzset{anchorbase/.style={baseline={([yshift=-0.5ex]current bounding box.center)}},
tinynodes/.style={font=\tiny,text height=0.75ex,text depth=0.15ex},
smallnodes/.style={font=\scriptsize,text height=0.75ex,text depth=0.15ex},
>={Latex[length=1mm, width=1.5mm]},
overcross/.style={line width=4pt,color=white},
overcros/.style={line width=3pt,color=white},
overcro/.style={line width=2pt,color=white},
}
\tikzstyle directed=[postaction={decorate,decoration={markings,
	mark=at position #1 with {\arrow{>}}}}]
\tikzstyle rdirected=[postaction={decorate,decoration={markings,
	mark=at position #1 with {\arrow{<}}}}]

\newcommand{\Hom}{{\rm Hom}}

\newcommand{\End}{{\rm End}}

\renewcommand{\to}{\rightarrow}

\newcommand{\id}{{\rm id}}

\newcommand{\TL}{{\mathbf{TL}}}

\let\hat=\widehat
\let\tilde=\widetilde
\newcommand{\scs}{\scriptsize}
\def\C{{\mathbb C}}

\def\R{{\mathbb R}}
\def\Z{{\mathbb Z}}

\def\K{{\mathbb K}}

\def\N{\mathbb{Z}_{\ge 0}}

\newcommand{\sln}[1][n]{\mathfrak{sl}_{#1}}
\newcommand{\gln}[1][n]{\mathfrak{gl}_{#1}}

\newcommand{\slm}{\mathfrak{sl}_{m}}
\newcommand{\glm}{\mathfrak{gl}_{m}}
\newcommand{\spn}[1][2n]{\mathfrak{sp}_{#1}}
\newcommand{\son}[1][2n+1]{\mathfrak{so}_{#1}}
\newcommand{\som}[1][m]{\mathfrak{so}_{#1}}

\newcommand{\Rep}{\mathbf{Rep}}

\newcommand{\FRep}{\mathbf{FundRep}}

\newcommand{\Web}{\mathbf{Web}}

\newcommand{\Foam}{\mathbf{Foam}}

\newcommand\mydots{\makebox[1em][c]{$\cdot$\hfil$\cdot$\hfil$\cdot$}}
\newcommand\mysdots{\makebox[.5em][c]{$\cdot$\hfil$\cdot$\hfil$\cdot$}}
\newcommand{\ot}{\otimes}

\DeclareMathOperator{\Kar}{Kar}

\newcommand{\Br}{\mathrm{Br}}

\newcommand{\iqUsom}[1][m]{{U}^{\iota}_{-q^2}(\som[#1])}

\newcommand{\wt}{\mathbf{a}}

\newcommand{\iQW}{\prescript{\iota}{}{T}}

\newcommand{\Ad}{\prescript{\iota\!}{}{\mathrm{Ad}}}

\newcommand{\ee}{\alpha}

\newcommand{\FEk}{\mathbf{X}^{(k)}}

\newcommand{\FE}{\mathbf{X}}
\newcommand{\FEalt}{\mathbf{Y}}

\newcommand{\xx}{\gray{\mathsf{X}}}
\newcommand{\xxb}{\mathsf{x}}
\newcommand{\hh}{\mathsf{H}}

\newcommand{\gS}{\gray{\mathsf{S}}}
\newcommand{\gM}{\gray{M}}
\newcommand{\Lad}{\mathbf{Lad}}
\newcommand{\rung}{r}
\newcommand{\FreeWeb}{\mathbf{FreeWeb}}
\newcommand{\Free}{\mathbf{Free}}

\newcommand{\ab}{\mathbf{a}}
\newcommand{\abb}{\mathbf{b}}
\DeclareMathOperator{\lead}{lead}
\newcommand{\ep}{\varepsilon}
\newcommand{\SP}{\mathcal{SP}}

\newcommand{\dd}{d}

\newcommand{\nctriv}{\mathcal{N}}

\begin{document}
%

\title[]{Type $B$ Webs}

\author{Elijah Bodish}
\address{Department of Mathematics, Indiana University Bloomington, 
Rawles Hall, Bloomington, IN 47405-7106, USA}
\email{ebodish@iu.edu}

\author{Ben Elias}
\address{Department of Mathematics, University of Oregon,
Fenton Hall, Eugene, OR 97403-1222, USA}
\email{belias@uoregon.edu}

\author{David E. V. Rose}
\address{Department of Mathematics, University of North Carolina, 
Phillips Hall CB \#3250, UNC-CH, Chapel Hill, NC 27599-3250, USA}
\email{davidrose@unc.edu}

\begin{abstract}
We solve the type $B$ case of the main open problem from Kuperberg's 1996 paper ``Spiders for rank 2 Lie algebras.''
That is, we define a $\C(q)$-linear pivotal category $\Web(\son)$ and prove that it 
is equivalent to the full subcategory of finite-dimensional representations of 
$U_q(\son)$ tensor-generated by the fundamental representations. 
A consequence of our main result is an explicit construction 
of braid group symmetries for nonclassical finite-dimensional representations of the 
$\iota$quantum group $\iqUsom$, 
which may be of independent interest.
\end{abstract}

\maketitle

\setcounter{tocdepth}{1}
\tableofcontents

%
\section{Introduction}
%

Given a finite-dimensional simple complex Lie algebra $\mathfrak{g}$, 
let $U_q(\mathfrak{g})$ denote its quantized enveloping algebra,
and let $\FRep(U_q(\mathfrak{g}))$ denote the category whose 
objects\footnote{The dual of a fundamental representation is again fundamental,
so $\FRep(U_q(\mathfrak{g}))$ is equivalent to the category generated by the fundamentals 
and their duals. In some places in the literature, the duals are also included as generating objects in 
web categories, but this is not necessary (or desirable) in Type $B$.} 
are tensor products of \emph{fundamental representations}, i.e.~ tensor products of highest weight irreducible type I  
$U_q(\mathfrak{g})$-representations $V_{\varpi}$, where $\varpi$ is a fundamental weight of $\mathfrak{g}$. 
This is a full subcategory of the category $\Rep(U_q(\mathfrak{g}))$ of type I finite-dimensional 
representations over $\C(q)$. We call $\FRep(U_q(\mathfrak{g}))$ the \emph{fundamental subcategory}.

In \cite{Kup}, Kuperberg poses the problem of giving a presentation of 
$\FRep(U_q(\mathfrak{g}))$, as a pivotal category, 
and he solves this problem in rank $\leq 2$.
Subsequent work by Cautis--Kamnitzer--Morrison \cite{CKM} 
and the present authors and Tatham \cite{BERT,BERT2} solve this problem 
for $\mathfrak{g}=\sln$ (type $A$) and $\spn$ (type $C$), respectively.
See ibid.~for some history on this problem and references to related works.
In the present paper, we solve this problem for $\mathfrak{g} = \son$ (type $B$).

We now define our presentation, and then state the main result of this paper.
Throughout, we will use the following notation for quantum numbers:
\begin{itemize}
\item $[k]_v:= \tfrac{v^k - v^{-k}}{v - v^{-1}}$,
\item $[k] := [k]_q = \tfrac{q^k - q^{-k}}{q - q^{-1}}$ (the \emph{quantum integer} $k$),
\item $[2]_{m} := [2]_{q^m} = q^m + q^{-m}$,
\item $``[k][\ell]" := [\ell+k-1] - [\ell+k-3] + \cdots + (-1)^{k-1} [\ell-k+1]$, 
when $k\le \ell$ (the \emph{devil's product}), 
\item $``{2k \brack k}" := \prod_{i=1}^k [2]_{2i-1}$ (the \emph{devil's central binomial coefficient}).
\end{itemize}

\begin{defn}
	\label{def:webs}
Let $\Web(\son)$ be the $\C(q)$-linear pivotal category defined by 
the following graphical presentation. 
The objects are generated monoidally by self-dual objects $\{1,\ldots,n{-}1,\gS \}$. 
In addition to the cap/cup (co)unit morphisms implicit in $\Web(\son)$ being pivotal, 
the morphisms are generated by
\begin{equation}\label{eq:WebGen}
\Bigg\{
\begin{tikzpicture}[scale =.5, tinynodes,anchorbase]
	\draw[very thick, gray] (0,0) node[below=-1pt]{$\gS$} to [out=90,in=210] (.5,.75);
	\draw[very thick, gray] (1,0) node[below=-1pt]{$\gS$} to [out=90,in=330] (.5,.75);
	\draw[very thick] (.5,.75) to (.5,1.5) node[above=-2pt]{$k$};
\end{tikzpicture}
\Bigg\}_{k=1}^{n-1}
\cup
\Bigg\{
\begin{tikzpicture}[scale=.5, tinynodes,anchorbase]
	\draw[very thick] (0,0) node[below=-1pt]{$1$} to [out=90,in=210] (.5,.75);
	\draw[very thick] (1,0) node[below=-1pt]{$k$} to [out=90,in=330] (.5,.75);
	\draw[very thick] (.5,.75) to (.5,1.5) node[above=-2pt]{$k{+}1$};
\end{tikzpicture}
\, , \, 
\begin{tikzpicture}[scale =.5, tinynodes,anchorbase]
	\draw[very thick] (0,0) node[below=-1pt]{$k$} to [out=90,in=210] (.5,.75);
	\draw[very thick] (1,0) node[below=-1pt]{$1$} to [out=90,in=330] (.5,.75);
	\draw[very thick] (.5,.75) to (.5,1.5) node[above=-2pt]{$k{+}1$};
\end{tikzpicture}
\Bigg\}_{k=1}^{n-2} \, .
\end{equation}
One then takes the quotient by the tensor ideal generated by the following (local) relations.
\begin{subequations}
	\label{eq:webrel}

\begin{minipage}{.55\textwidth} 
\begin{gather} \label{eq:digonS}
\begin{tikzpicture}[scale=.175,tinynodes, anchorbase]
	\draw [very thick] (0,.75) to (0,2.5) node[above,yshift=-3pt]{$k$};
	\draw [very thick,gray] (0,-2.75) to [out=30,in=330] node[right,xshift=-2pt]{$\gS$} (0,.75);
	\draw [very thick,gray] (0,-2.75) to [out=150,in=210] node[left,xshift=2pt]{$\gS$} (0,.75);
	\draw [very thick] (0,-4.5) node[below,yshift=2pt]{$k$} to (0,-2.75);
\end{tikzpicture}
= (-1)^{\binom{n-k+1}{2}} 
``{\textstyle {2(n-k) \brack n-k}}"
\begin{tikzpicture}[scale=.175, tinynodes, anchorbase]
	\draw [very thick] (0,-4.5) node[below,yshift=2pt]{$k$} 
		to (0,2.5) node[above,yshift=-3pt]{$k$};
\end{tikzpicture}
\end{gather}
\end{minipage}
\begin{minipage}{.45\textwidth} 
\begin{gather} \label{eq:lolliS}
\begin{tikzpicture}[scale=.175,tinynodes,anchorbase,rotate=180]
	\draw [very thick, gray] (0,-2.75) to [out=30,in=0] (0,.75) node[below=-1pt]{$\gS$};
	\draw [very thick, gray] (0,-2.75) to [out=150,in=180] (0,.75);
	\draw [very thick] (0,-4.5) node[above=-2pt]{$k$} to (0,-2.75);
\end{tikzpicture}
= 0
\text{ if } 1 \leq k \leq n-1
\end{gather}
\end{minipage}
\begin{gather}
	\label{eq:spinH=I}
\begin{tikzpicture}[scale=.4, rotate=90, tinynodes, anchorbase]
	\draw[very thick, gray] (-1,.5) node[below,yshift=2pt,xshift=2pt]{$\gS$} to (0,1);
	\draw[very thick, gray] (1,.5) node[above,yshift=-4pt,xshift=2pt]{$\gS$} to (0,1);
	\draw[very thick, gray] (0,2.5) to (-1,3) node[below,yshift=2pt,xshift=-2pt]{$\gS$};
	\draw[very thick, gray] (0,2.5) to (1,3) node[above,yshift=-4pt,xshift=-2pt]{$\gS$};
	\draw[very thick] (0,1) to node[below,yshift=2pt]{$1$} (0,2.5);
\end{tikzpicture}
=\frac{1}{[2]}
\begin{tikzpicture}[scale=.5, tinynodes, anchorbase]
	\draw[very thick,gray] (1,-1) to (1,1);
	\draw[very thick,gray] (0,-1) to (0,1);
\end{tikzpicture} \
+ \sum_{k=1}^{n}(-1)^{\binom{k}{2}}\frac{``[k][k{+}1]"}{``{2k \brack k}"}
\begin{tikzpicture}[scale=.375, smallnodes, anchorbase]
	\draw[very thick,gray] (-1,0) to (0,1);
	\draw[very thick,gray] (1,0) to (0,1);
	\draw[very thick,gray] (0,2.5) to (-1,3.5);
	\draw[very thick,gray] (0,2.5) to (1,3.5);
	\draw[very thick] (0,1) to node[right=-2pt]{$n{-}k$} (0,2.5);
\end{tikzpicture} \\
	\label{eq:blackspinH=I}
\begin{tikzpicture}[scale=.4, rotate=90, tinynodes, anchorbase]
	\draw[very thick] (-1,.5) node[below,yshift=2pt,xshift=2pt]{$1$} to (0,1);
	\draw[very thick, gray] (1,.5) node[above,yshift=-4pt,xshift=2pt]{$\gS$} to (0,1);
	\draw[very thick] (0,2.5) to (-1,3) node[below,yshift=2pt,xshift=-2pt]{$k$};
	\draw[very thick, gray] (0,2.5) to (1,3) node[above,yshift=-4pt,xshift=-2pt]{$\gS$};
	\draw[very thick, gray] (0,1) to node[below,yshift=1pt]{$\gS$} (0,2.5);
\end{tikzpicture}
= 
\begin{tikzpicture}[scale=.375, tinynodes, anchorbase]
	\draw[very thick] (-1,0) node[below=-2pt]{$k$} to (0,1);
	\draw[very thick] (1,0) node[below=-2pt]{$1$} to (0,1);
	\draw[very thick,gray] (0,2.5) to (-1,3.5);
	\draw[very thick,gray] (0,2.5) to (1,3.5);
	\draw[very thick] (0,1) to node[right=-2pt]{$k{+}1$} (0,2.5);
\end{tikzpicture}
+ (-1)^{n-k+1} \frac{1}{[2]_{2n{-}2k{+}1}}
\begin{tikzpicture}[scale=.375, tinynodes, anchorbase]
	\draw[very thick] (-1,0) node[below=-2pt]{$k$} to (0,1);
	\draw[very thick] (1,0) node[below=-2pt]{$1$} to (0,1);
	\draw[very thick,gray] (0,2.5) to (-1,3.5);
	\draw[very thick,gray] (0,2.5) to (1,3.5);
	\draw[very thick] (0,1) to node[right=-2pt]{$k{-}1$} (0,2.5);
\end{tikzpicture}
\quad \text{if } 1 \leq k \leq n-2
\end{gather}
\begin{gather}
	\label{eq:n-1triangle}
\begin{tikzpicture}[scale=.25,tinynodes,anchorbase]
	\draw[very thick, gray] (-1,0) to (1,0);
	\draw[very thick,gray] (-1,0) to (0,1.732);
	\draw[very thick,gray] (1,0) to (0,1.732);
	\draw[very thick] (0,1.732) to (0,3.232) node[above=-3pt]{$n{-}1$};
	\draw[very thick] (-2.3,-.75) node[below=-2pt]{$k$} to (-1,0);
	\draw[very thick] (2.3,-.75) node[below=-2pt]{$1$} to (1,0);
\end{tikzpicture} = 0 
\quad \text{if } k \neq n-2
	\end{gather}
\begin{minipage}{.55\textwidth} 
\begin{gather}
	\label{eq:digon1}
\begin{tikzpicture}[scale=.175,tinynodes, anchorbase]
	\draw [very thick] (0,.75) to (0,2.5) node[above,yshift=-3pt]{$k$};
	\draw [very thick] (0,-2.75) to [out=30,in=330] node[right,xshift=-2pt]{$k{-}1$} (0,.75);
	\draw [very thick] (0,-2.75) to [out=150,in=210] node[left,xshift=2pt]{$1$} (0,.75);
	\draw [very thick] (0,-4.5) node[below,yshift=2pt]{$k$} to (0,-2.75);
\end{tikzpicture}
= (-1)^{k-1} ``[k]^2"
\begin{tikzpicture}[scale=.175,tinynodes, anchorbase]
	\draw [very thick] (0,-4.5) node[below,yshift=2pt]{$k$} to (0,2.5);
\end{tikzpicture} \qquad
\end{gather}
\end{minipage}
\begin{minipage}{.45\textwidth} 
\begin{gather}
	\label{eq:assoc}
	\begin{tikzpicture}[scale=.2, xscale=-1,tinynodes, anchorbase]
		\draw [very thick] (-1,-1) node[below,yshift=2pt]{$1$} to [out=90,in=210] (0,.75);
		\draw [very thick] (1,-1) node[below,yshift=2pt]{$k$} to [out=90,in=330] (0,.75);
		\draw [very thick] (3,-1) node[below,yshift=2pt]{$1$} to [out=90,in=330] (1,2.5);
		\draw [very thick] (0,.75) to [out=90,in=210] (1,2.5);
		\draw [very thick] (1,2.5) to (1,4.25) node[above,yshift=-3pt]{$k{+}2$};
	\end{tikzpicture}
	=
	\begin{tikzpicture}[scale=.2, tinynodes, anchorbase]
		\draw [very thick] (-1,-1) node[below,yshift=2pt]{$1$} to [out=90,in=210] (0,.75);
		\draw [very thick] (1,-1) node[below,yshift=2pt]{$k$} to [out=90,in=330] (0,.75);
		\draw [very thick] (3,-1) node[below,yshift=2pt]{$1$} to [out=90,in=330] (1,2.5);
		\draw [very thick] (0,.75) to [out=90,in=210] (1,2.5);
		\draw [very thick] (1,2.5) to (1,4.25) node[above,yshift=-3pt]{$k{+}2$};
	\end{tikzpicture} \qquad
\end{gather}
\end{minipage}
\begin{gather}
	\label{eq:blackH=I}
\begin{tikzpicture}[scale=.4, rotate=90, tinynodes, anchorbase]
	\draw[very thick] (-1,.5) node[below,yshift=2pt,xshift=2pt]{$k$} to (0,1);
	\draw[very thick] (1,.5) node[above,yshift=-4pt,xshift=2pt]{$1$} to (0,1);
	\draw[very thick] (0,2.5) to (-1,3) node[below,yshift=2pt,xshift=-2pt]{$k$};
	\draw[very thick] (0,2.5) to (1,3) node[above,yshift=-4pt,xshift=-2pt]{$1$};
	\draw[very thick] (0,1) to node[below,yshift=2pt]{$k{+}1$} (0,2.5);
\end{tikzpicture}
=
\begin{tikzpicture}[scale=.4, tinynodes, anchorbase]
	\draw[very thick] (-1,0) node[below,yshift=2pt,xshift=-2pt]{$k$} to (0,1.5);
	\draw[very thick] (1,0) node[below,yshift=2pt,xshift=2pt]{$k$} to (0,1.5);
	\draw[very thick] (-.7,.5) to node[below,yshift=4pt]{$_{k{-}1}$} (.7,.5);
	\draw[very thick] (0,2.5) to (-1,3.5) node[above,yshift=-4pt,xshift=-2pt]{$1$};
	\draw[very thick] (0,2.5) to (1,3.5) node[above,yshift=-4pt,xshift=2pt]{$1$};
	\draw[very thick] (0,1.5) to node[right, xshift=-2pt]{$2$} (0,2.5);
\end{tikzpicture} \!\!
+\frac{[2]_{2n{-}2k{-}1}}{[2]_{2n{-}2k{+}1}}
\begin{tikzpicture}[scale=.4, rotate=90, tinynodes, anchorbase]
	\draw[very thick] (-1,0.5) node[below,yshift=2pt,xshift=2pt]{$k$} to (0,1);
	\draw[very thick] (1,0.5) node[above,yshift=-4pt,xshift=2pt]{$1$} to (0,1);
	\draw[very thick] (0,2.5) to (-1,3) node[below,yshift=2pt,xshift=-2pt]{$k$};
	\draw[very thick] (0,2.5) to (1,3) node[above,yshift=-4pt,xshift=-2pt]{$1$};
	\draw[very thick] (0,1) to node[below,yshift=2pt]{$k{-}1$} (0,2.5);
\end{tikzpicture}
+ (-1)^k
\frac{[2]_{2n{-}2k{-}1}}{[2]_{2n{-}1}}
\begin{tikzpicture}[scale=.4, tinynodes, anchorbase]
	\draw[very thick] (-1,0) node[below,yshift=2pt]{$k$} to [out=90,in=180] (0,1) 
		to [out=0,in=90] (1,0);
	\draw[very thick] (-1,3) node[above,yshift=-3pt]{$1$} to [out=270,in=180] (0,2)
		to [out=0,in=270] (1,3);
\end{tikzpicture}
\quad \text{if } 1 \leq k \leq n-2
\end{gather}
\end{subequations}
\end{defn}

\begin{conv}\label{conv:webs}
Iterative compositions of tensor products of the generating morphisms in \eqref{eq:WebGen} 
and the cap/cup morphisms are called \emph{webs}; 
thus, morphisms in $\Web(\son)$ are $\C(q)$-linear combinations of webs.
When a strand in a web is labeled zero one can simply erase it. 
For example, the $k=1$ version of \eqref{eq:digon1} holds tautologically, 
and the $k=0$ version of \eqref{eq:digonS} gives the circle relation
\begin{gather} \label{eq:circleS}
\begin{tikzpicture}[scale =.625, anchorbase]
	\draw[very thick, gray] (0,0) node[left=7pt]{\scs$\gS$} circle (.5);
\end{tikzpicture}
= (-1)^{\binom{n+1}{2}} {\textstyle ``{2n \brack n}"}
= (-1)^{\binom{n+1}{2}}\prod_{i=1}^n [2]_{2i-1}  \, .
	\end{gather}
On the other hand, 
whenever a strand has negative label, the web is interpreted as the zero morphism. 
With this convention, 
relations \eqref{eq:blackspinH=I} and \eqref{eq:blackH=I} hold tautologically when $k=0$ as well.
Lastly, we emphasize that 
\textbf{there is no version} of \eqref{eq:blackspinH=I} or \eqref{eq:blackH=I} 
when $k=n-1$, as \textbf{we do not permit} $n$-labeled strands;
see Remark \ref{rem:morelabels}.
\end{conv}

\begin{rem}
The \emph{devil's arithmetic}
describes scalars appearing in quantum type $B$ representation theory 
as signed versions of scalars appearing in type $A$ representation theory. 
Normally, for $k \leq \ell$, the product of quantum integers satisfies
\[
[k][\ell] = [\ell+k-1] + [\ell+k-3] + [\ell+k-5] + \cdots + [\ell-k+1] \, , 
\]
but in the devil's arithmetic, we instead set 
\begin{equation}\label{eq:devils}
``[k][\ell]" = [\ell+k-1] - [\ell+k-3] + [\ell+k-5] - \cdots + (-1)^{k-1} [\ell-k+1] \, . 
\end{equation}
One can check the useful identity $``[k]^2"=[k]_{q^{2}}$.

Similarly, $``{2k \brack k}" := \prod_{i=1}^k [2]_{2i-1}$ is a signed version of ${2k \brack k}$.
Indeed, in \cite[Equation 10.35]{BER} we showed that
$``{2k \brack k}"$ is a graded count of the ``transpose-invariant'' 
partitions fitting inside a $k \times k$ box.
(Recall that ${2k \brack k}$ is a graded count of all partitions fitting inside a $k \times k$ box.)
Using
\begin{equation} [2]_{2k+1} = \frac{``{2(k+1) \brack (k+1)}"}{``{2k \brack k}"}, \end{equation}
one could express all the coefficients in \eqref{eq:webrel} with the devil's arithmetic.

Devil's arithmetic was introduced in our previous work \cite{BER}, 
where these signs were explained via equivariant categorification; 
this connection is discussed in further detail in \S \ref{ss:categorification} below. 
\end{rem}

\begin{rem}\label{rem:rels}
The relations in \eqref{eq:webrel} that involve the spin object/representation appear in our 
previous work \cite{BER} on spin link homology,
save for \eqref{eq:blackspinH=I} and \eqref{eq:n-1triangle}, which are new. 
Our relations that do not involve the spin representation are
signed versions of the orthogonal web relations from \cite{BodWu},
which themselves are analogues of the type $C$ web relations from \cite{BERT},
but with different coefficients.
	\end{rem}

\begin{rem} 
Our relations above are not obviously invariant under 
the horizontal or vertical reflection of diagrams.
However, the ideal they generate (and hence the category itself) 
does possess these symmetries. 
By construction the category possesses rotational symmetry, 
thus horizontal symmetry implies vertical symmetry. 
The horizontal flip of \eqref{eq:digon1} is just a rotation of itself. 
The horizontal flips of \eqref{eq:blackspinH=I} and \eqref{eq:n-1triangle} 
are proven in Proposition \ref{prop:hardrel}. 
\end{rem}

The objects of the category $\FRep(U_q(\son))$ 
are tensor products of fundamental representations 
 of $U_q(\son)$. 
 For $1 \leq k \leq n$, let $\varpi_k$ denote the $k$-th fundamental $\son$ weight 
 and write $V_{\varpi_k}$ to denote the corresponding highest weight irreducible type I $U_q(\son)$-representation. 
For $k=1, \dots, n-1$, these are the quantized analogues of the $\son$-representations $\Lambda^k(\C^{2n+1})$. 
Let $S$ denote the (quantized) spin representation, 
which is the fundamental representation $V_{\varpi_n}$ of $U_q(\son)$. 
(The representation $S$ is distinguished from the formal object $\gS$ in $\Web(\son)$ 
by a change of color and font.) 
The main result of this paper is the following.

\begin{thm} \label{thm:main}
There is an equivalence of $\C(q)$-linear pivotal categories 
\[
\varphi \colon \Web(\son) \to \FRep(U_q(\son))
\]
sending $k \mapsto V_{\varpi_k}$ (for $1 \le k \le n-1$) and $\gS \mapsto S$.
\end{thm}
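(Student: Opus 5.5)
The proof has three parts: construct $\varphi$, prove it is full, and prove it is faithful.

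\emph{Existence of $\varphi$.} Fix $U_q(\son)$-intertwiners for each generator in \eqref{eq:WebGen}. The space $\Hom(S\otimes S, V_{\varpi_k})$ is one-dimensional because $S\otimes S\cong\bigoplus_{k=0}^{n} V_{\varpi_k}$, where $V_{\varpi_0}$ is trivial and for $k=n$ we mean the highest weight $2\varpi_n$. The space $\Hom(V_{\varpi_1}\otimes V_{\varpi_k}, V_{\varpi_{k+1}})$ is likewise one-dimensional. Choose self-duality pairings on each generating object. We then normalize the trivalent vertices so that \eqref{eq:digonS} and \eqref{eq:digon1} hold with exactly the stated scalars. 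The spin relations \eqref{eq:digonS}, \eqref{eq:lolliS}, \eqref{eq:spinH=I} are already verified in \cite{BER}. The relations without spin are signed versions of those in \cite{BodWu}, so they can be checked by transporting those computations after adjusting signs in the pairings.

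The genuinely new relations \eqref{eq:blackspinH=I} and \eqref{eq:n-1triangle} would be checked by a dimension-and-coefficient argument. For \eqref{eq:n-1triangle}, the relevant space is $\Hom(V_{\varpi_k}\otimes V_{\varpi_1}, V_{\varpi_{n-1}})$, which vanishes unless $k=n-2$ or $k=n$; so the triangle is zero for $k\ne n-2$ by Schur's lemma. For \eqref{eq:blackspinH=I}, the space $\Hom(V_{\varpi_1}\otimes S, V_{\varpi_k}\otimes S)$ has small dimension. We pin down the coefficients by composing both sides with suitable caps and trivalent vertices, reducing to already-established digon and bubble scalars. Where that is insufficient, we evaluate on explicit weight vectors, using a model of $S$ as a quantum exterior algebra.

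\emph{Fullness.} We use the type $C$ strategy of \cite{BERT}. First show that the images of webs on the objects $\gS^{\otimes m}$ span all of $\End(S^{\otimes m})$, which suffices because every $V_{\varpi_k}$ is a summand of $S\otimes S$. Here we would invoke the $\iota$quantum group $\iqUsom$ mentioned in the abstract: spin webs generate an action of it, and a skew Howe duality between $U_q(\son)$ and $\iqUsom$ on the quantum exterior algebra/Clifford module gives a double-centralizer statement. The images of ladder webs then generate the commutant, which gives fullness.

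\emph{Faithfulness.} This is where I expect the main obstacle. The plan is to bound $\dim\Hom_{\Web}(\underline a,\underline b)$ above by $\dim\Hom_{U_q}$. The relations \eqref{eq:spinH=I}, \eqref{eq:blackspinH=I}, \eqref{eq:blackH=I} and \eqref{eq:assoc} let us rewrite any web into a spanning set of ``ladder'' or reduced webs indexed by combinatorial data, such as $\iqUsom$-weight basis elements or lattice paths in the $\son$ Weyl chamber. We then count these and compare with tensor product multiplicities via crystal or Littelmann-path counting. Equality of dimensions together with fullness gives faithfulness. The hardest step is this reduction algorithm, in particular showing termination when spin and black strands interact. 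An alternative, if the reduction is too hard, is to prove that the $\iqUsom$ action descends to the web category and use a semisimplicity and idempotent-lifting argument to match Karoubi envelopes.
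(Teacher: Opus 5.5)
Your construction of $\varphi$ and your fullness argument follow the paper. The spin relations come from \cite{BER}, the black relations from \cite{BodWu} after a sign correction of the black caps, \eqref{eq:n-1triangle} from Schur's lemma, and fullness from Wenzl's surjection $\iqUsom\twoheadrightarrow\End_{U_q(\son)}(S^{\otimes m})$ (rungs $\rung_i$ map to his generators) after reducing to the objects $\gS^{\otimes m}$.

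One caution for \eqref{eq:blackspinH=I}. Composing with vertices and using digon/bubble scalars determines the coefficients only up to sign ($a_k^2=1$ in the paper's notation). Relating a gray triangle to its mirror image also needs the braiding $R_{k-1,1}$. The paper then forces $a_k=1$ by rescaling both black vertices $1\otimes k\to k{+}1$ and $k\otimes 1\to k{+}1$ by $-1$, which leaves every other relation invariant. Your weight-vector fallback could do the same, but you must check it agrees with the normalizations already fixed.

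The genuine gap is faithfulness. You want a spanning set of reduced webs of cardinality $\dim\End_{U_q(\son)}(S^{\otimes m})$, but you give neither the set nor the reduction algorithm, and you yourself name this as the obstacle. A priori you do not even know that $\End_{\Web(\son)}(\gS^{\otimes m})$ is finite-dimensional. The paper never counts webs exactly; it argues in three steps.

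(i) Theorem~\ref{thm:ladder} (ladderization) shows $\End_{\Web(\son)}(\gS^{\otimes m})$ is generated by the rungs. The proof lifts the braidings $c_{1,1},c_{1,\gS},c_{\gS,1}$ to webs satisfying Reidemeister-type moves and reduces all black parts to $1$-labeled tangles. It performs gray saddle moves using invertibility of the matrix $(A_{i,j})$ expressing $r_1^i$ in the basis of $\End(\gS\otimes\gS)$, then inducts using BMW skein theory and \eqref{eq:Reshetikhin}. So $\psi$ is surjective and, since $\xx^{(n+1)}=0$, factors through $\iqUsom^{\le n}$.

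(ii) $\iqUsom^{\le n}$ is finite-dimensional (Proposition~\ref{prop:finite-dim}). Each Iorgov--Klimyk PBW generator $\hh_{i,j}$ is conjugate to $b_{j-1}$ by products of the elements $\iQW_i=\sum_k q^{-k}\xxb_i^{(k)}$, via Lemma~\ref{lem:techcomm}. Hence it also satisfies $p_{n+1}$, so PBW monomials with exponents $\le n$ span.

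(iii) $\iqUsom^{\le n}$ is then semisimple by Iorgov--Klimyk complete reducibility. Every simple $M_{\ab}$ with $\ab\in\SP_m^{\le n}$ occurs in $S^{\otimes m}$ by the bijection $\Xi_{m,n}$. So $\varphi\circ\psi^{\le n}$ is injective, and hence so is $\varphi$ on $\End_{\Web(\son)}(\gS^{\otimes m})$.

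Your fallback (``the $\iqUsom$-action descends, plus semisimplicity and idempotent lifting'') is missing exactly (i) and (ii). Without surjectivity of $\psi$ you cannot control webs outside its image. And $\iqUsom$ is infinite-dimensional; nothing you say makes its quotient by the minimal polynomials finite-dimensional, so semisimplicity cannot be invoked.
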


Every irreducible representation in $\Rep(U_q(\son))$ is a direct summand of some tensor product 
of fundamental representations. 
Hence, $\Rep(U_q(\son))$ is equivalent to the additive 
Karoubi completion of $\FRep(U_q(\son)) \cong \Web(\son)$, 
so we view Theorem \ref{thm:main} as giving a diagrammatic description for the 
(type I, finite-dimensional) representation theory of $U_q(\son)$.
Furthermore, $\FRep(U_q(\son))$ is braided, and we obtain formulae for the 
braiding isomorphisms in $\Web(\son)$ which extend Theorem \ref{thm:main}
to an equivalence of ribbon (so, in particular, of braided pivotal) categories;
see Remark \ref{rem:braiding}.

\subsection{Spin relations from categorification and folding}
	\label{ss:categorification}
Given a finite-dimensional simple complex Lie algebra $\mathfrak{g}$, 
the first step in establishing an equivalence $\FRep(U_q(\mathfrak{g})) \cong \Web(\mathfrak{g})$
is finding an appropriate candidate for the web category.
The general form of the presentation in Definition \ref{def:webs} is suggested by 
the semisimplicity of $\Rep(U_q(\son))$.
For example, the existence of trivalent vertices $\gS \otimes \gS \to k$ 
and of a relation like \eqref{eq:spinH=I} follow (roughly) 
from the need to decompose $S \otimes S$ into a direct sum of irreducible representations.
Similarly, \eqref{eq:lolliS} follows from Schur's Lemma, 
as do the forms of relations \eqref{eq:digonS} and \eqref{eq:digon1}.
However, the coefficients appearing in these relations are not obvious.
We presently focus on those involving $\gS$ since, up to sign,
the remainder appeared in \cite{BodWu} (see Remark \ref{rem:rels}).

Here, our presentation is informed by our previous work on 
\emph{spin link homology} \cite{BER}.
Therein, we equip $\Lambda_q^n \C^{2n}$-colored $\sln[2n]$ Khovanov--Rozansky link homology 
with a novel involution, and show that this refined invariant gives a 
categorification of the spin-colored $\son$ quantum link polynomials 
(when $n \leq 3$, and for all $n$ assuming some conjectures of a technical nature).
This is an implementation, in the realm of link homology, 
of the principle of folding along Dynkin diagram automorphisms;
specifically, this is the version\footnote{The other version of folding, 
which is Langlands dual to this one, instead considers the invariant 
subalgebra under the induced action on the Lie algebra.} 
of folding that constructs one root system by summing over orbits of simple roots 
under the action induced by the diagram automorphism.
When we fold from type $A_{2n-1}$ to type $B_n$, 
the fundamental $U_q(\sln[2n])$-representation $\Lambda_q^n\C^{2n}$ 
corresponds to the spin representation of $U_q(\son)$.

Let us explain how this folding motivates our coefficients, 
beginning with a recollection of type $A$ categorification. 
There is a bicategory $2n\mathbf{Foam_+}$ of 
\emph{progressive}\footnote{This is new terminology from \cite{Recio}, which we like.} 
(or \emph{upward}) $\gln[2n]$ foams \cite{QR1}, 
wherein the $\Hom$-categories categorify certain morphism spaces 
in $\FRep(U_q(\sln[2n])) \cong \Web(\sln[2n])$.
In particular, there is an object $n^{\otimes m} \in 2n\mathbf{Foam_+}$ 
so that
\begin{equation} \label{grothofbnm}
\C(q) \otimes_{\Z[q^{\pm}]} K_0(\End_{2n\mathbf{Foam_+}}(n^{\otimes m})) 
	\cong \End_{U_q(\sln[2n])}\big((\Lambda_q^n\C^{2n})^{\otimes m} \big) \, .
	\end{equation}
(Here $K_0$ denotes the Grothendieck ring.)
Set $\mathbf{B}^n_m := \End_{2n\mathbf{Foam_+}}(n^{\otimes m})$. 

Let us examine the case $m=2$. For $0 \leq k \leq n$, the $\sln[2n]$-webs
\begin{equation}\label{eq:FEwebs}
\chi^{(k)}:=
\begin{tikzpicture}[anchorbase,tinynodes,scale=.625]
	\draw[very thick,->] (0,.25) to [out=150,in=270] (-.25,1) node[above=-3pt]{$n$};
	\draw[very thick,->] (.5,.5) to (.5,1) node[above=-3pt]{$n$};
	\draw[very thick,directed=.65] (0,.25) to node[above=-2pt,xshift=-1pt]{$k$} (.5,.5);
	\draw[very thick,directed=.65] (0,-.25) to (0,.25);
	\draw[very thick,directed=.65] (.5,-.5) to [out=30,in=330] (.5,.5);
	\draw[very thick,directed=.65] (.5,-.5) to node[below=-1pt,xshift=-1pt]{$k$} (0,-.25);
	\draw[very thick,directed=.65] (.5,-1) node[below=-2pt]{$n$} to (.5,-.5);
	\draw[very thick,directed=.5] (-.25,-1)node[below=-2pt]{$n$} to [out=90,in=210] (0,-.25);
	\end{tikzpicture}
=
\begin{tikzpicture}[anchorbase,tinynodes,xscale=-1,scale=.625]
	\draw[very thick,->] (0,.25) to [out=150,in=270] (-.25,1) node[above=-3pt]{$n$};
	\draw[very thick,->] (.5,.5) to (.5,1) node[above=-3pt]{$n$};
	\draw[very thick,directed=.65] (0,.25) to node[above=-2pt,xshift=1pt]{$k$} (.5,.5);
	\draw[very thick,directed=.65] (0,-.25) to (0,.25);
	\draw[very thick,directed=.65] (.5,-.5) to [out=30,in=330] (.5,.5);
	\draw[very thick,directed=.65] (.5,-.5) to node[below=-1pt,xshift=1pt]{$k$} (0,-.25);
	\draw[very thick,directed=.65] (.5,-1) node[below=-2pt]{$n$} to (.5,-.5);
	\draw[very thick,directed=.5] (-.25,-1)node[below=-2pt]{$n$} to [out=90,in=210] (0,-.25);
	\end{tikzpicture}
\end{equation}
span the endomorphism space $\End_{\Web(\sln[2n])}(n \otimes n)$. 
Moreover, this unital algebra is generated by $\chi^{(1)}$, 
with the remaining elements $\chi^{(k)}$ determined recursively by the relation
\begin{equation}\label{eq:FE}
\chi^{(k)} \chi^{(1)} = [k][k+1] \chi^{(k)} + [k+1]^2 \chi^{(k+1)} \, .
\end{equation}
This relation continues to hold for all $k \geq 1$ if we impose the relation that 
$\chi^{(k)} = 0$ for $k >n$, which gives the algebra isomorphism
\begin{equation}\label{eq:FEalg}
\End_{\Web(\sln[2n])}(n \otimes n) \cong 
\C(q)[\chi^{(1)}] {\big /} \chi^{(n+1)} = 
\C(q)[\chi^{(1)}] {\Big /} {\textstyle \prod_{k=0}^n \big(\chi^{(1)} - [k][k+1]\big)} \, .
\end{equation}
There are $1$-morphisms $\FEk$ in $\mathbf{B}_2^n$ (for each $k \ge 1$) 
which categorify $\chi^{(k)}$ via \eqref{grothofbnm},
and equation \eqref{eq:FE} admits a categorification by an isomorphism
\begin{equation}\label{eq:FEcat}
\FEk \FE^{(1)} \cong [k][k+1] \FEk \oplus [k+1]^2 \FE^{(k+1)}.
\end{equation}
More precisely, there are two $1$-morphisms $\FE^{(k)}$ and $\FEalt^{(k)}$ 
which categorify the webs on the left- and right-hand sides of \eqref{eq:FEwebs} respectively, 
and $\FE^{(k)} \cong \FEalt^{(k)}$.

In \cite{BER} we discovered an involution $\tau$ on $\mathbf{B}_2^n$ 
that swaps $\FEk$ with $\FEalt^{(k)}$, hence fixing the $1$-morphisms $\FEk$ up to isomorphism. 
In \cite[Proposition 8.6]{BER}, we show that each of the 
$1$-morphisms $\FEk$ may be equipped with an equivariant structure, 
i.e.~a coherent choice of isomorphism $\FEk \cong \tau(\FEk)=\FEalt^{(k)}$. 
By slight abuse of notation, we use the same symbol $\FEk$ 
to denote the corresponding object in the equivariant category $(\mathbf{B}^n_2)^\tau$. 
In fact, $\FEk$ admits two equivariant structures which differ by a sign, 
and we will denote the object given by the other equivariant structure by $-\FEk$. 
The decomposition \eqref{eq:FEcat} then lifts to the equivariant category to produce a decomposition
\begin{equation}\label{eq:introbXkcat}
\FEk \FE 
	\cong (-1)^{k} ``[k][k+1]"\FEk \oplus (-1)^k``[k+1]^2"\FE^{(k+1)} \, .
\end{equation}
Briefly put, the signs in this decomposition are computed as the trace of $\tau$ 
acting on certain graded multiplicity spaces in $\mathbf{B}_2^n$.
Decompositions such as \eqref{eq:introbXkcat} are the origins of the devil's arithmetic.

The folding paradigm suggests that an appropriate decategorification of $(\mathbf{B}^n_2)^\tau$ 
should be related to type $B$ representation theory.
Specifically, we consider the \emph{equivariant Grothendieck group} 
$K_0^\tau((\mathbf{B}^n_2)^\tau)$, where one imposes the relation $[-\FEk] = -[\FEk]$. We propose the following.
\begin{conj}[{c.f.~\cite[Conjecture 1.10]{BER}}]
	\label{conj:SLH}
If $(\mathbf{B}^n_m)^\tau$ denotes the equivariant category 
associated with the involution $\tau$ acting on 
$\mathbf{B}^n_m := \End_{2n\mathbf{Foam_+}}(n^{\otimes m})$,
then
\[
\C(q) \otimes_{\Z[q^{\pm}]} K_0^{\tau}\big((\mathbf{B}^n_m)^\tau \big)
	\cong \End_{U_q(\son)}(S^{\otimes m}) \, .
	\]
\end{conj}

We establish the $m=2$ case of Conjecture \ref{conj:SLH} in \cite[Theorem 1.9]{BER}. 
In \cite[Definition 4.24]{BER}, 
we identify an element $\mathsf{X} \in \End_{U_q(\son)}(S^{\otimes 2})$ 
that corresponds to the class of $\FE^{(1)}$ and use the equation 
\begin{equation}\label{eq:introbXk}
\mathsf{X}^{(k)} \mathsf{X}  
	= (-1)^{k} ``[k][k+1]"\mathsf{X}^{(k)} + (-1)^k``[k+1]^2"\mathsf{X}^{(k+1)}
\end{equation}
(i.e.~the equivariant decategorification of \eqref{eq:introbXkcat})
to define elements $\mathsf{X}^{(k)} \in \End_{U_q(\son)}(S^{\otimes 2})$. 
We then show in \cite[Proposition 4.25 and Remark 4.26]{BER} that
\begin{equation}\label{eq:devilsalg}
\End_{U_q(\son)}(S \otimes S) \cong 
\C(q)[\mathsf{X}] {\big /} \mathsf{X}^{(n+1)} = 
\C(q)[\mathsf{X}] {\Big /} 
	{\textstyle \prod_{k=0}^n \big(\mathsf{X}^{(1)} - (-1)^k ``[k][k+1]" \big)} \, , 
\end{equation}
which is the type $B$ analogue of \eqref{eq:FEalg}.

Further considerations in equivariant categorification 
(which did not appear\footnote{But should appear in a planned extension of that work to 
non-minuscule fundamentals.} explicitly in \cite{BER})
suggest that $\mathsf{X} \in \End_{U_q(\son)}(S^{\otimes 2})$ corresponds to the web
\begin{equation}\label{eq:introX}
\xx :=
\begin{tikzpicture}[scale=.325, rotate=90, tinynodes, anchorbase]
	\draw[very thick, gray] (-1,.5) to (0,1);
	\draw[very thick, gray] (1,.5) to (0,1);
	\draw[very thick, gray] (0,2.5) to (-1,3);
	\draw[very thick, gray] (0,2.5) to (1,3);
	\draw[very thick] (0,1) to node[below,yshift=2pt]{$1$} (0,2.5);
\end{tikzpicture}
-\frac{1}{[2]}
\begin{tikzpicture}[scale=.35, tinynodes, anchorbase]
	\draw[very thick,gray] (1,-1) to (1,1);
	\draw[very thick,gray] (0,-1) to (0,1);
\end{tikzpicture} \in \End_{\Web(\son)}(\gS \otimes \gS) \, .
\end{equation}
(This is equivalent to our definition of $\mathsf{X}$ in \cite{BER}.)
One can then check that the coefficients in \eqref{eq:digonS} and \eqref{eq:spinH=I} 
are designed so that $\prod_{k=0}^n \big(\xx - (-1)^k ``[k][k+1]" \big) = 0$,
as is required by \eqref{eq:devilsalg}.

In summary, one finds candidate relations in $\End_{\Web(\son)}(\gS \otimes \gS)$ 
by considering the categorification of web relations in $\End_{\Web(\sln[2n])}(n \otimes n)$, 
passing to the equivariant category, and decategorifying. 
We propose that the following general framework governs webs in non-simply laced type. 
For the duration, $\Web(\mathfrak{g})$ denotes a $\C(q)$-linear monoidal category 
presented via generators-and-relations that is equivalent to $\Rep(U_q(\mathfrak{g}))$, 
as in Theorem \ref{thm:main}.

\begin{conj}\label{conj:metaweb}
Let $\mathfrak{g}$ be a simply laced simple Lie algebra.
If $\tau$ is a diagram automorphism of $\mathfrak{g}$, 
let $\mathfrak{g}_{\tau}$ denote the non-simply laced Lie algebra that is
Langlands dual\footnote{The root system for $\mathfrak{g}_{\tau}$ 
is the folding of the root system of $\mathfrak{g}$ \cite{Stembridge}, 
and is the dual of the root system of $\mathfrak{g}^{\tau}$; see \cite{MOfolding}.} 
to the fixed point subalgebra $\mathfrak{g}^{\tau}$.
(Thus, if $\mathfrak{g} = \sln[2n], \son[2n], \mathfrak{e}_6,$ or $\son[8]$, 
and $\tau$ is order $2, 2, 2$, and $3$, respectively,
then $\mathfrak{g}_{\tau}= \son, \spn[2n-2], \mathfrak{f}_4, \mathfrak{g}_2$.)
\begin{itemize}
\item There is a graded, linear, monoidal bicategory $\Foam(\mathfrak{g})$ such that 
\[
\C(q) \otimes_{\Z[q^{\pm}]} K_0(\Foam(\mathfrak{g}))
	\cong \Web(\mathfrak{g}) \cong \FRep(U_q(\mathfrak{g})) \, .
	\]
\item There is a full, monoidal, sub-bicategory $\mathbf{B}(\mathfrak{g}) \subset \Foam(\mathfrak{g})$ 
that admits an automorphism $\tau$ (of the same order as the corresponding diagram automorphism) 
such that
\[
\C(q) \otimes_{\Z[q^{\pm}]} K_0^{\tau}(\mathbf{B}(\mathfrak{g})^\tau)
	\cong \Web(\mathfrak{g}_{\tau}) \cong \FRep(U_q(\mathfrak{g}_{\tau})) \, .
	\]
\end{itemize}
\end{conj}

Taken together, 
the present work and \cite{BER} provide a wealth of evidence 
in support of Conjecture \ref{conj:metaweb}.
Support outside types A/B is given by the following.

\begin{example}
Let $\Lambda_q^2\C^8$ be the quantum analogue of the $\son[8]$ representation $\Lambda^2 \C^8$. 
This is the fundamental representation associated to the central Dynkin node in type $D_4$, 
and has quantum dimension
\begin{equation}\label{eq:sp8circle}
[11]+2[7]+[3] =
q^{10}+q^{8}+3 q^{6}+3q^{4}+4q^{2}+4+4q^{-2}+3q^{-4}+3q^{-6}+q^{-8}+q^{-10} \, .
\end{equation}
This is the value of a $\Lambda_q^2\C^8$-colored circle in $\Web(\son[8])$. 
(This latter category has not yet appeared in the literature.)
In $\Foam(\son[8])$ (also to-be-defined), this circle relation will be categorified 
by a direct sum decomposition involving a multiplicity space having \eqref{eq:sp8circle} 
as its graded dimension.

Such a vector space admits a grading-preserving action of $\Z/3$ whose 
trace equals 
\[
[11] - [7] + [3] = q^{10} + q^{8} + q^{2} + 1 + q^{-2} + q^{-8} + q^{-10} \, .
\]
This recovers Kuperberg's circle value for the $7$-dimensional fundamental $\mathfrak{g}_2$ representation
\cite[p.~14]{Kup}. (Note: Kuperberg's $q$ is our $q^2$.)
	\end{example}
	
\subsection{Web bases}
	\label{ss:webbasesintro}

For rank 1 $\mathfrak{g} (\cong \sln[2] \cong \son[3])$, 
$\Web(\mathfrak{g})$ is equivalent to the Temperley--Lieb category $\TL$,
which is pivotally generated by a single self-dual object $\gS$. 
In this case, there are no trivalent vertices in the presentation, 
and $\Hom_{\TL}(\gS^{\otimes m} , \gS^{\otimes n})$ is spanned by planar $(m,n)$-tangles 
modulo isotopy and the circle relation 
$\begin{tikzpicture}[scale =.5, anchorbase]
	\draw[very thick, gray] (0,0) circle (.5);
\end{tikzpicture} = - [2]$.
Since $\TL$ is pivotal, each morphism space $\Hom_{\TL}(\gS^{\otimes m} , \gS^{\otimes n})$ 
can be identified with the span of planar tangles in the closed disk $\mathbb{D}$ 
with $m+n$ boundary points on $\partial \mathbb{D}$ 
(by choosing an appropriate basepoint disjoint from the boundary of the tangles).
It is straightforward to show that this space has a basis consisting of planar matchings of 
$m+n$ points, i.e.~the planar tangles in $\mathbb{D}$ having no closed components.

Extending this, for rank 2 $\mathfrak{g} (= \sln[3], \son[5] \cong \spn[4], \text{ or } \mathfrak{g}_2)$, 
Kuperberg's web categories $\Web(\mathfrak{g})$ similarly admit bases characterized by certain 
topological/combinatorial properties. 
Again, all morphism spaces can be identified with the span of certain graphs in $\mathbb{D}$ 
with appropriate boundary, and the basis elements are the so-called \emph{non-elliptic webs}, 
which are characterized by the types of allowed faces 
(in particular, this again implies that no closed components are permitted).

For various applications/considerations, 
it is desirable to have higher rank versions of Kuperberg's non-elliptic web bases. 
Given that these bases are characterized by topological/combinatorial properties of the constituent graphs, 
these bases are \emph{rotation-invariant}, 
i.e.~ (appropriate) rotations of the disk preserve the non-elliptic basis.
Recent work of Gaetz--Pechenik--Pfannerer--Striker--Swanson \cite[Theorem A]{MR5008156}
constructs such a basis for $\mathfrak{g}=\mathfrak{sl}_4$, 
but, in general, it is an open problem to find rank $3$ (and higher) non-elliptic, rotation-invariant web bases
(although some initial steps for type $C$ appear in \cite{BERT}).

Taking $n=2$ in Definition \ref{def:webs}, our presentation for $\Web(\son[5])$ 
coincides with Kuperberg's $B_2$ webs. 
For example, in this case, relation \ref{eq:spinH=I} becomes
\[
\begin{tikzpicture}[scale=.35, rotate=90, tinynodes, anchorbase]
	\draw[very thick, gray] (-1,.5) node[below,yshift=2pt,xshift=2pt]{$\gS$} to (0,1);
	\draw[very thick, gray] (1,.5) node[above,yshift=-4pt,xshift=2pt]{$\gS$} to (0,1);
	\draw[very thick, gray] (0,2.5) to (-1,3) node[below,yshift=2pt,xshift=-2pt]{$\gS$};
	\draw[very thick, gray] (0,2.5) to (1,3) node[above,yshift=-4pt,xshift=-2pt]{$\gS$};
	\draw[very thick] (0,1) to node[below,yshift=2pt]{$1$} (0,2.5);
\end{tikzpicture}
=\frac{1}{[2]}
\begin{tikzpicture}[scale=.5, tinynodes, anchorbase]
	\draw[very thick,gray] (1,-1) to (1,1);
	\draw[very thick,gray] (0,-1) to (0,1);
\end{tikzpicture} \
+ 
\begin{tikzpicture}[scale=.3, smallnodes, anchorbase]
	\draw[very thick,gray] (-1,0) to (0,1);
	\draw[very thick,gray] (1,0) to (0,1);
	\draw[very thick,gray] (0,2.5) to (-1,3.5);
	\draw[very thick,gray] (0,2.5) to (1,3.5);
	\draw[very thick] (0,1) to node[right=-2pt]{$1$} (0,2.5);
\end{tikzpicture}
- \frac{1}{[2]}
\begin{tikzpicture}[scale=.5, tinynodes, anchorbase,gray]
	\draw[very thick] (0,0) to [out=90,in=180] (.5,.625) 
		to [out=0,in=90] (1,0);
	\draw[very thick] (0,2) to [out=270,in=180] (.5,1.375)
		to [out=0,in=270] (1,2);
\end{tikzpicture}
\]
which is the last relation in \cite[equation (3)]{Kup}, 
after rescaling Kuperberg's generating trivalent vertices 
(his equal $[2]^{\frac{1}{2}}$ times ours).
This relation inspired Kuperberg to introduce the quadrivalent vertex 
\[
\begin{tikzpicture}[scale=.3, tinynodes, anchorbase]
	\draw[very thick,gray] (-1,-1) to (1,1);
	\draw[very thick, gray] (-1,1) to (1,-1);
\end{tikzpicture}
:=
\begin{tikzpicture}[scale=.4, rotate=90, tinynodes, anchorbase]
	\draw[very thick, gray] (-1,.5) to (0,1);
	\draw[very thick, gray] (1,.5) to (0,1);
	\draw[very thick, gray] (0,2.5) to (-1,3);
	\draw[very thick, gray] (0,2.5) to (1,3);
	\draw[very thick] (0,1) to node[below,yshift=2pt]{$1$} (0,2.5);
\end{tikzpicture}
-\frac{1}{[2]}
\begin{tikzpicture}[scale=.5, tinynodes, anchorbase]
	\draw[very thick,gray] (1,-1) to (1,1);
	\draw[very thick,gray] (0,-1) to (0,1);
\end{tikzpicture}
\stackrel{(n=2)}{=}
\begin{tikzpicture}[scale=.3, smallnodes, anchorbase]
	\draw[very thick,gray] (-1,0) to (0,1);
	\draw[very thick,gray] (1,0) to (0,1);
	\draw[very thick,gray] (0,2.5) to (-1,3.5);
	\draw[very thick,gray] (0,2.5) to (1,3.5);
	\draw[very thick] (0,1) to node[right=-2pt]{$1$} (0,2.5);
\end{tikzpicture}
- \frac{1}{[2]}
\begin{tikzpicture}[scale=.5, tinynodes, anchorbase,gray]
	\draw[very thick] (0,0) to [out=90,in=180] (.5,.625) 
		to [out=0,in=90] (1,0);
	\draw[very thick] (0,2) to [out=270,in=180] (.5,1.375)
		to [out=0,in=270] (1,2);
\end{tikzpicture}
\]
which is the rotation-invariant building block for Kuperberg's $B_2$ non-elliptic web basis. 
Note that Kuperberg's vertex is exactly (the $n=2$ case of)
our morphism \eqref{eq:introX}.

For $n > 2$, the morphism $\xx$ from \eqref{eq:introX} is no longer fixed by rotation.
However, one can mimic \eqref{eq:introbXk} by defining
\[
\xx^{(0)} :=\begin{tikzpicture}[scale=.35, tinynodes,anchorbase]
	\draw[very thick,gray] (1,-1) to (1,1);
	\draw[very thick,gray] (0,-1) to (0,1);
\end{tikzpicture} \, , \quad
\xx^{(k+1)} := \dfrac{(-1)^k}{``[k+1]^2"}\left(\xx^{(k)} \xx - (-1)^{k} ``[k][k+1]"\xx^{(k)}\right)
\, \text{for } k \geq 0 \, .
\]
It follows from \eqref{eq:spinH=I} that 
$\xx^{(1)} = \xx =  
\sum_{i=1}^{n}(-1)^{\binom{i}{2}}\frac{``[i][i{+}1]"}{``{\textstyle {2i \brack i}}"}
\begin{tikzpicture}[scale=.175, tinynodes, anchorbase]
	\draw[very thick,gray] (-1,0) to (0,1);
	\draw[very thick,gray] (1,0) to (0,1);
	\draw[very thick,gray] (0,2.5) to (-1,3.5);
	\draw[very thick,gray] (0,2.5) to (1,3.5);
	\draw[very thick] (0,1) to node[right=-2pt]{$n{-}k$} (0,2.5);
\end{tikzpicture}$. 
More generally, 
it follows from \cite[Proposition 4.25]{BER} that 
\[\xx^{(k)} =  
\sum_{i=k}^{n} \frac{(-1)^{\binom{i-k+1}{2}}}{``{\textstyle {2i \brack i}}"} 
	\prod_{t=1}^{k}\frac{``[i+1-t][i+t]"}{``[t]^2"}
\begin{tikzpicture}[scale=.175, tinynodes, anchorbase]
	\draw[very thick,gray] (-1,0) to (0,1);
	\draw[very thick,gray] (1,0) to (0,1);
	\draw[very thick,gray] (0,2.5) to (-1,3.5);
	\draw[very thick,gray] (0,2.5) to (1,3.5);
	\draw[very thick] (0,1) to node[right=-2pt]{$n{-}i$} (0,2.5);
\end{tikzpicture} \, .
\]
A direct computation (also appearing in \cite[Proposition 4.25]{BER})
shows that the (seemingly complicated) coefficient on 
$\begin{tikzpicture}[scale=.175, tinynodes, anchorbase]
	\draw[very thick,gray] (-1,0) to (0,1);
	\draw[very thick,gray] (1,0) to (0,1);
	\draw[very thick,gray] (0,2.5) to (-1,3.5);
	\draw[very thick,gray] (0,2.5) to (1,3.5);
	\draw[very thick] (0,1) to node[right=-2pt]{$n{-}k$} (0,2.5);
\end{tikzpicture}$ in $\xx^{(k)}$ is in fact equal to $1$. 
This gives that $\xx^{(n)} = \begin{tikzpicture}[scale=.275, tinynodes, anchorbase,gray]
	\draw[very thick] (0,0) to [out=90,in=180] (.5,.625) 
		to [out=0,in=90] (1,0);
	\draw[very thick] (0,2) to [out=270,in=180] (.5,1.375)
		to [out=0,in=270] (1,2);
\end{tikzpicture}$ and a little more work shows that 
$\xx^{(n-1)} = \begin{tikzpicture}[scale=.175, tinynodes, anchorbase]
	\draw[very thick,gray] (-1,0) to (0,1);
	\draw[very thick,gray] (1,0) to (0,1);
	\draw[very thick,gray] (0,2.5) to (-1,3.5);
	\draw[very thick,gray] (0,2.5) to (1,3.5);
	\draw[very thick] (0,1) to node[right=-2pt]{$1$} (0,2.5);
\end{tikzpicture} 
- \frac{1}{[2]}
\begin{tikzpicture}[scale=.275, tinynodes, anchorbase,gray]
	\draw[very thick] (0,0) to [out=90,in=180] (.5,.625) 
		to [out=0,in=90] (1,0);
	\draw[very thick] (0,2) to [out=270,in=180] (.5,1.375)
		to [out=0,in=270] (1,2);
\end{tikzpicture}$. 
Noting that these are the $90^{\circ}$ rotations of $\xx^{(0)}$ and $\xx^{(1)}$ 
motivates the following (and proves it for $n \leq 3$).

\begin{conj}[{c.f.~\cite[Conjecture 4.44]{BER}}]\label{conj:rotationwebs}
In $\Web(\son)$, 
the $90^{\circ}$ rotation of $\xx^{(k)}$ is $\xx^{(n-k)}$. 
\end{conj}

Given this, we anticipate that the ``webs'' $\xx^{(k)}$ will be the building blocks 
of rotation-invariant non-elliptic basis for morphism spaces between arbitrary tensor powers 
of $\gS$. 
More generally, this should yield non-elliptic bases in all morphism spaces in $\Web(\son)$, 
since each of the remaining fundamental representations is a summand of the tensor square of 
the spin representation.

\begin{rem}
Conjecture \ref{conj:rotationwebs} is also predicted from the equivariant categorification 
perspective of \S \ref{ss:categorification}. 
Indeed, recall that the elements $\xx^{(k)}$ are categorified by 
equipping the $1$-morphisms $\mathbf{X}^{(k)}$ with equivariant structures, 
and that the latter (without these structures) categorify the webs $\chi^{(k)}$ from \eqref{eq:FEwebs}.
Rotating the web $\chi^{(k)}$ by $90^{\circ}$ yields the web
\begin{equation}\label{eq:chi90}
\begin{tikzpicture}[anchorbase,tinynodes,scale=.625,rotate=90]
	\draw[very thick,->] (0,.25) to [out=150,in=0] (-.75,.5) node[below=-2pt]{$n$};
	\draw[very thick,->] (.5,.5) to [out=90,in=180] (1,.75) node[above=-2pt]{$n$};
	\draw[very thick,directed=.65] (0,.25) to node[left=-2pt]{$k$} (.5,.5);
	\draw[very thick,directed=.65] (0,-.25) to (0,.25);
	\draw[very thick,directed=.65] (.5,-.5) to [out=30,in=330] (.5,.5);
	\draw[very thick,directed=.65] (.5,-.5) to node[right=-2pt,yshift=-1pt]{$k$} (0,-.25);
	\draw[very thick,directed=.45] (1,-.75) node[above=-2pt]{$n$} to [out=180,in=270] (.5,-.5);
	\draw[very thick,directed=.5] (-.75,-.5)node[below=-2pt]{$n$} to [out=0,in=210] (0,-.25);
	\end{tikzpicture} 
\in \Hom_{\Web(\sln[2n])}(n^\vee \otimes n, n \otimes n^\vee)\, .
\end{equation}
Using the isomorphisms 
$\Lambda^a(\C^{2n})^\vee \cong \Lambda^{2n-a}(\C^{2n})$ in $\Rep(U_q(\sln[2n]))$, 
one can show that \eqref{eq:chi90} corresponds to the web
\[
\begin{tikzpicture}[anchorbase,tinynodes,scale=.625,xscale=-1.5]
	\draw[very thick,->] (0,.25) to [out=150,in=270] (-.25,1) node[above=-3pt]{$n$};
	\draw[very thick,->] (.5,.5) to (.5,1) node[above=-3pt]{$n$};
	\draw[very thick,directed=.65] (0,.25) to node[above=-2pt,xshift=2pt]{$n{-}k$} (.5,.5);
	\draw[very thick,directed=.65] (0,-.25) to (0,.25);
	\draw[very thick,directed=.65] (.5,-.5) to [out=30,in=330] (.5,.5);
	\draw[very thick,directed=.65] (.5,-.5) to node[below=-1pt,xshift=2pt]{$n{-}k$} (0,-.25);
	\draw[very thick,directed=.65] (.5,-1) node[below=-2pt]{$n$} to (.5,-.5);
	\draw[very thick,directed=.5] (-.25,-1)node[below=-2pt]{$n$} to [out=90,in=210] (0,-.25);
	\end{tikzpicture}
	= \chi^{(n-k)} \in \Web(\sln[2n]) \, .
	\]
	\end{rem}

\begin{rem}\label{rem:topsummand}
Although every irreducible representation in $\Rep(U_q(\son))$ is a direct summand 
of some tensor power of the spin representation, 
one benefit of using all fundamental representations in our presentation for $\Web(\son)$ is that 
every irreducible representation is a multiplicity one summand 
of a particular tensor product of fundamental representations.  
This structure leads to the existence of object-adapted cellular bases, 
i.e.~light ladder bases, for morphism spaces; 
see \cite{EliasLL} and \cite{Bodish} 
for the construction of such bases using type $A$ webs and type $B_2$ webs, respectively.

While the rotation-invariant bases discussed thus far are tailored for applications 
in combinatorics and skein theory, 
light ladder bases are designed to have representation theoretic applications,
e.g.~modular representation theory \cite{EliasLL, Bodish}, 
and clasp formulae \cite{EliasLL, MR4413286, BodishWu2026}. 
\end{rem}

\subsection{Dualities}

A key technical tool for studying webs in type $A$ is quantum skew Howe duality,  
as pioneered by Cautis--Kamnitzer--Morrison (CKM) \cite{CKM}. 
They consider commuting actions of $U_q(\sln[N])$ and $U_q(\glm)$ on 
$\Lambda_q^*(\C^N\otimes \C^m)$, which yield a surjective homomorphism
\begin{equation}\label{CKMmap}
\dot{U}_q^{\le N}(\glm) \longrightarrow 
\bigoplus_{\sum k_i = \sum \ell_i} 
\Hom_{U_q(\sln[N])} \big(\Lambda_q^{k_1}\C^N \otimes \cdots \otimes \Lambda_q^{k_m}\C^N
	, \Lambda_q^{\ell_1} \C^N \otimes \cdots \otimes \Lambda_q^{\ell_m}\C^N \big) \, .
\end{equation}
Here, $\dot{U}_q^{\le N}(\glm)$ is the quotient of Lusztig's $\dot{U}_q(\glm)$, 
an algebra with orthogonal idempotents $1_{\wt}$ for $\wt\in \mathbb{Z}^m$, 
by the ideal generated by all $1_{\wt}$ which act by zero on $\Lambda_q^*(\C^N\otimes \C^m)$. 
Using known results on the structure theory and representation theory of $\dot{U}_q(\glm)$, 
CKM deduce that the map \ref{CKMmap} is injective and thus is an isomorphism.

The map \eqref{CKMmap} sends Chevalley generators in $U_q(\glm)$ to $U_q(\sln[N])$-module morphisms 
described diagrammatically by so-called ``ladder webs.''
As an example, taking $m=2$ and $N=2n$, the elements 
$F^{(k)}E^{(k)}1_{(n,n)} = E^{(k)}F^{(k)}1_{(n,n)}$ are mapped to the webs
$\chi^{(k)}$ from \eqref{eq:FEwebs}.
The defining relations of $\Web(\sln[N])$ are then obtained by 
writing the relations of $\dot{U}_q^{\le N}(\glm)$ in diagrammatic form.

This incarnation of skew Howe duality was categorified in \cite{QR1} to 
give the bicategories $N\mathbf{Foam_+}$ appearing in \S \ref{ss:categorification}.
Recall that it is the $\Z/2$-action on the sub-bicategories 
$\mathbf{B}^n_m \subset 2n\mathbf{Foam_+}$ appearing in Conjecture \ref{conj:SLH}
that gives rise to the relations in $\Web(\son)$.
It is therefore unsurprising that aspects of our proof of Theorem \ref{thm:main} 
share features with the CKM approach, as we now describe.

Playing the role of the right-hand side of \eqref{CKMmap} is the algebra
$\End_{U_q(\son)}(S^{\otimes m})$, which appears in Conjecture \ref{conj:SLH}.
This algebra was previously studied in \cite{Wenzl-Spin}, 
wherein Wenzl establishes a duality between $U_q(\son)$ 
and Gavrilik--Klimyk's \cite{GK-Usom} nonstandard quantum group $\iqUsom$.
The latter is a $\C(q)$-algebra generated by elements $b_1, \dots, b_{m-1}$
(and plays a role in the present story analogously to that of $U_q(\glm)$ in CKM). 
Wenzl defines endomorphisms of $S^{\otimes m}$ giving the action of each $b_i$, 
then proves that this gives a surjective homomorphism
\begin{equation}\label{Wenzlmap} 
\iqUsom^{\le n} \longrightarrow \End_{U_q(\son)}(S^{\otimes m}) \, ,
\end{equation}
where $\iqUsom^{\le n}$ is the quotient of $\iqUsom$ by the ideal generated 
by the minimal polynomials of the $b_i$'s action on $S^{\otimes m}$. 
Most likely, it is a folk conjecture among experts in this area that \eqref{Wenzlmap} is an isomorphism.
However, although the representation theory of $\iqUsom$ is well-studied \cite{GK-Usom, WenzlUprime}, 
a proof that \eqref{Wenzlmap}  is injective has not previously appeared in the literature.

In \cite{BER}, we studied Wenzl's homomorphism diagrammatically,
and observed that \eqref{Wenzlmap} acts on the generators of $\iqUsom$ as 
$b_i\mapsto \varphi\Big( \,
\begin{tikzpicture}[scale=.3, tinynodes, anchorbase]
	\draw[thick, black] (1,0) to node[below=-1pt]{$1$} (2,0);
	\draw[very thick,gray] (0,-1) to (0,1);
	\node at (.5,0) {$\mysdots$};
	\draw[very thick,gray] (1,-1) to (1,1);
	\draw[very thick,gray] (2,-1) to (2,1);
	\node at (2.5,0) {$\mysdots$};
	\draw[very thick,gray] (3,-1) to (3,1);	
\end{tikzpicture} \,
\Big) \in \End_{U_q(\son)}(S^{\otimes m})$; 
here $\varphi$ is the functor $\Web(\son)\rightarrow \FRep(U_q(\son))$ from Theorem \ref{thm:main}. 
Writing the defining ``$\iota$Serre relation'' of $\iqUsom$ 
(see \eqref{eqn:iSerre-alg-version} below) in diagrammatic form yields 
the following:
\begin{equation}\label{eq:iSW}
\begin{tikzpicture}[scale=.4, anchorbase]
	\draw[very thick] (0,1.5) to (1,1.5);
	\draw[very thick] (0,.75) to (1,.75);
	\draw[very thick] (1,2.25) to (2,2.25);
	\draw[very thick, gray] (0,0) to (0,3);
	\draw[very thick, gray] (1,0) to (1,3);
	\draw[very thick, gray] (2,0) to (2,3);
\end{tikzpicture}
+[2]_{q^2}
\begin{tikzpicture}[scale=.4, anchorbase]
	\draw[very thick, gray] (0,0) to (0,3);
	\draw[very thick, gray] (1,0) to (1,3);
	\draw[very thick, gray] (2,0) to (2,3);
	\draw[very thick] (0,.75) to (1,.75);
	\draw[very thick] (1,1.5) to (2,1.5);
	\draw[very thick] (0,2.25) to (1,2.25);
\end{tikzpicture}
+ 
\begin{tikzpicture}[scale=.4, anchorbase]
	\draw[very thick] (0,1.5) to (1,1.5);
	\draw[very thick] (0,2.25) to (1,2.25);
	\draw[very thick] (1,.75) to (2,.75);
	\draw[very thick, gray] (0,0) to (0,3);
	\draw[very thick, gray] (1,0) to (1,3);
	\draw[very thick, gray] (2,0) to (2,3);
\end{tikzpicture}
= 
\begin{tikzpicture}[scale=.4, anchorbase]
	\draw[very thick, gray] (0,0) to (0,3);
	\draw[very thick, gray] (1,0) to (1,3);
	\draw[very thick, gray] (2,0) to (2,3);
	\draw[very thick] (1,1.5) to (2,1.5);
\end{tikzpicture} \, .
\end{equation}
In Proposition \ref{prop:iQtoLad}, 
we prove this relation is a consequence of the defining relations of $\Web(\son)$.

The diagrammatic perspective also sheds light on the algebra $\iqUsom^{\le n}$.
Lifting \eqref{eq:introbXk} and \eqref{eq:introX} to $\iqUsom$, 
we define the \emph{nonclassical $\iota$divided power elements}
\begin{equation}\label{eq:ddp}
\xxb_i:=b_i-\frac{1}{[2]} \, , \quad
\xxb_i^{(k+1)}=\frac{(-1)^k}{``[k+1]^2"}(\xxb_i^{(k)}\xxb_i- (-1)^k``[k][k+1]"\xxb_i^{(k)}) \, .
\end{equation}
Setting $(``[k]^2")! := (``[k]^2")(``[k-1]^2")\cdots(``[2]^2")(``[1]^2")$,
this gives the formulae
\[
\xxb_i^{(k)}:= (-1)^{\binom{k}{2}}\dfrac{(\xxb_i+(-1)^k``[k-1][k]")\cdot (\xxb_i+(-1)^{k-1}``[k-2][k-1]")\cdots (\xxb_i+(-1)``[0][1]")}{(``[k]^2")!} \, .
\]
We show that, up to scalar, the minimal polynomial of 
$b_i$ in $\End_{U_q(\son)}(S^{\otimes m})$ is equal to $\xxb_i^{(n+1)}$.
Using this, and the braid group symmetries discussed in the following section, 
we prove that $\iqUsom^{\le n}$ is finite-dimensional in Proposition \ref{prop:finite-dim}. 
After establishing this, results from the literature on the representation theory of $\iqUsom$ 
imply that \eqref{Wenzlmap} is injective. We hence establish the aforementioned folk conjecture.
\begin{thm}[{Corollary \ref{cor:inj}}]
Wenzl's map $\iqUsom^{\le n} \longrightarrow \End_{U_q(\son)}(S^{\otimes m})$ from \eqref{Wenzlmap}
is an isomorphism of unital associative algebras. \qed
	\end{thm}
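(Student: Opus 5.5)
The statement concerns Wenzl's map $\iqUsom^{\le n} \to \End_{U_q(\son)}(S^{\otimes m})$ from \eqref{Wenzlmap}. Wenzl already proved this map is surjective, so the task is injectivity. I would bound the dimension of the source from above and then compare it with the target using semisimplicity. The argument has three steps.

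\emph{Step 1: the minimal polynomial of each $b_i$.} The image of $b_i$ is $\varphi$ of the web with a $1$-labeled rung between strands $i$ and $i+1$. So $b_i - \frac{1}{[2]}$ acts on the $i$-th and $(i+1)$-st factors as $\varphi(\xx)$. By \eqref{eq:devilsalg}, the element $\mathsf{X}=\varphi(\xx)$ satisfies $\prod_{k=0}^n(\mathsf{X}-(-1)^k``[k][k+1]")=0$, and these eigenvalues are distinct. Hence the minimal polynomial of $b_i$ on $S^{\otimes m}$ is, up to a scalar, $\xxb_i^{(n+1)}$. It follows that $\iqUsom^{\le n}$ is the quotient of $\iqUsom$ by the ideal generated by the $\xxb_i^{(n+1)}$. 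In this quotient each $b_i$ is semisimple with at most $n+1$ eigenvalues.

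\emph{Step 2: finite-dimensionality of $\iqUsom^{\le n}$ (Proposition~\ref{prop:finite-dim}).} This is the step I expect to be the main obstacle, because $\iqUsom$ has no obvious PBW-type spanning set that is compatible with the relations $\xxb_i^{(n+1)}=0$.

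First I would construct braid group symmetries $\iQW_i$ of $\iqUsom^{\le n}$, using the nonclassical divided powers $\xxb_i^{(k)}$ in place of the usual $q$-exponential. Concretely, take $\iQW_i$ to be conjugation by $\sum_k c_k \xxb_i^{(k)}$ with suitable scalars $c_k$, modeled on the braiding on $S\otimes S$. The key check is that this preserves the $\iota$Serre relation \eqref{eqn:iSerre-alg-version}. This can be done inside the quotient, since there $\xxb_i$ generates a split semisimple commutative subalgebra.

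Next I would use these automorphisms to produce the remaining ``root vectors'' $b_{jk}$ as $\iQW$-conjugates of simple generators. Each $b_{jk}$ is then also annihilated by a degree-$(n+1)$ polynomial. Finally, a PBW-type spanning argument, ordering monomials in the $b_{jk}$ (as in Gavrilik--Klimyk), shows that ordered monomials with each exponent at most $n$ span. This gives $\dim \iqUsom^{\le n} < \infty$.

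\emph{Step 3: injectivity.} A finite-dimensional quotient of $\iqUsom$ factors through a finite-dimensional algebra, so it is determined by which finite-dimensional irreducible representations it admits. For generic $q$ these representations are semisimple, and I would argue that the quotient is semisimple as well. I would then invoke the classification of finite-dimensional irreducibles of $\iqUsom$ \cite{GK-Usom, WenzlUprime}. Every irreducible representation of $\iqUsom^{\le n}$ is one on which each $b_i$ has spectrum contained in the $n+1$ prescribed eigenvalues. By that classification, these are exactly the nonclassical representations that occur in $S^{\otimes m}$ in Wenzl's decomposition. Therefore $\iqUsom^{\le n}$ is the semisimple algebra $\bigoplus_\lambda \End(W_\lambda)$ over exactly the summands appearing in $S^{\otimes m}$. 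Its dimension then equals $\dim \End_{U_q(\son)}(S^{\otimes m})$, and the surjection is an isomorphism.
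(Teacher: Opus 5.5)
Your architecture is the paper's: surjectivity from Wenzl, finite-dimensionality via $\iQW_i$ plus the Iorgov--Klimyk PBW basis, semisimplicity from complete reducibility, and matching simples with the summands of $S^{\otimes m}$. Your dimension count in Step 3 is equivalent to Lemma \ref{lem:fullforssisfaithful}. But Step 2 has a genuine gap, exactly where you expected the obstacle.

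The check you single out is vacuous. Conjugation by a unit is an inner automorphism, so it preserves every relation automatically. Invertibility of $\sum_k c_k\xxb_i^{(k)}$ is also easy, since $b_i$ generates a quotient of the split semisimple algebra $\C(q)[t]/(p_{n+1}(t))$. What is not automatic, and what finite-dimensionality hinges on, is the identity
\[
\iQW_i\, b_{i+1}\, \iQW_i^{-1} \;=\; q^{-1}b_ib_{i+1}+q\,b_{i+1}b_i \;=\; \hh_{i,i+2}
\qquad \text{in } \iqUsom^{\le n}
\]
for the specific choice $c_k=q^{-k}$ (Proposition \ref{prop-braid-conjugates}).

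The Iorgov--Klimyk theorem concerns the particular elements $\hh_{i,j}$ defined by \eqref{eq:longHdef}. This leaves you stuck either way. If you merely \emph{define} root vectors as $\iQW$-conjugates of the $b_j$, they satisfy $p_{n+1}$, but you have no PBW spanning result for ordered monomials in them. If you use the $\hh_{i,j}$, you have the PBW basis but no reason for $p_{n+1}(\hh_{i,j})=0$ in the quotient. The displayed identity is the bridge between the two. Induction with far-commutativity (Proposition \ref{prop:longb-braid-conjugates}) then makes every $\hh_{i,j}$ conjugate to $b_{j-1}$.

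You cannot shortcut this with the automorphisms $\Ad_i$ of Lemma \ref{lem:internalbraid}. They do send $b_{i+1}\mapsto\hh_{i,i+2}$, but they are not known a priori to preserve the ideal generated by the $p_{n+1}(b_j)$, which is essentially the point at issue.

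The missing computation is the commutation formula of Lemma \ref{lem:techcomm}:
\[
q^{-k}b_{i,i+1}\xxb_i^{(k)} = (-1)^kq^{k}\xxb_i^{(k)}b_{i,i+1}+(-1)^kq^{k-1}\xxb_i^{(k-1)}b_{i,i+1}+q^{-k+1}\xxb_i^{(k-1)}b_{i+1} .
\]
It is proved by induction on $k$ in $\iqUsom$ itself, from the $\iota$Serre relation rewritten as $b_{i,i+1}b_i=-q^2b_ib_{i,i+1}+q\,b_{i+1}$. Summing over $k$, the first two terms telescope, since $\xxb_i^{(k)}=0$ for $k>n$ in the quotient. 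What remains is $b_{i,i+1}\iQW_i=\iQW_i b_{i+1}$. This is where the nonclassical divided powers and the coefficients $q^{-k}$ actually do work; without it your Step 2 does not produce a finite spanning set.

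A smaller point about Step 3: characterizing the simple $\iqUsom^{\le n}$-modules needs two inputs. First, all $b_i$ act by conjugate operators; this is Wenzl's rank-one lemma, with $-b_2$ excluded by the type I sign condition. Second, the Gelfand--Tsetlin computation that $b_1$ has spectrum $\{(-1)^j[2j+1]/[2]\}_{j=0}^{k}$ on $M_{\ab}$ when $\lead(\ab)=\frac{2k+1}{2}$. Citing Wenzl's decomposition of $S^{\otimes m}$ for the final matching is legitimate; the paper re-derives it via $\Xi_{m,n}$.
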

This is the representation-theoretic tool used to show 
that our functor in Theorem \ref{thm:main} is fully faithful.

\subsection{Relative braid group symmetries on nonclassical representations}

For each $\mathfrak{g}$, the monoidal category $\Rep(U_q(\mathfrak{g}))$ is braided, 
via the universal $R$-matrix. 
In type $A$, CKM show that the braiding isomorphism in 
$\Hom_{U_q(\sln[N])}(\Lambda_q^k\C^N \otimes \Lambda_q^{\ell}\C^N
	, \Lambda_q^{\ell}\C^N \otimes \Lambda_q^k \C^N)$
is given as the image under \eqref{CKMmap} of the quantum Weyl group generator 
in $\dot{U}_q(\gln[2])$ \cite[\S 6]{CKM}. 
Inspired by this result, and the folding paradigm from \S \ref{ss:categorification}, 
we showed in \cite[Theorem 1.26]{BER} that the elements 
\begin{equation}\label{eq:iQW}
\iQW_i:= \sum_{k \ge 0} q^{-k}\xxb_i^{(k)} \in \iqUsom^{\le n} 
\end{equation}
are sent by \eqref{Wenzlmap} to a scalar multiple of the braiding isomorphism 
acting on the $i$-th and $(i+1)$-st tensor factors of $S^{\otimes m}$.
Having now established that \eqref{Wenzlmap} is an isomorphism, 
this shows that the elements $\iQW_i$ satisfy the braid relations in the algebra $\iqUsom^{\le n}$ itself.
This has implications for $\iota$quantum group representation theory, 
as we now describe.

Gavrilik--Klimyk \cite{GK-Usom} originally defined the
nonstandard quantization $\iqUsom$ to study $q$-analogues of Gelfand-Zetlin bases 
of representations of $\mathfrak{so}_m$. 
However, Noumi \cite{Noumi} later discovered that $\iqUsom$ is isomorphic to the subalgebra of $U_{-q^2}(\slm)$ 
generated by the elements $f_i+ (-q^2)^{-1}e_ik_i^{-1}$, for $i=1, \dots, m-1$. 
Thus $\iqUsom$ is an $\iota$quantum group, a coideal subalgebra $\iqUsom \subset U_{-q^2}(\slm)$
which gives a quantum symmetric pair.

To provide context for the relative braid group symmetries \eqref{eq:iQW}, 
we consider the symmetric pair $\mathrm{SO}_m\subset \mathrm{SL}_m$. 
The Weyl group $W= N_{\mathrm{SL}_m}(T)/T$ has simple reflection generators $s_1, \ldots , s_{m-1}$ 
which lift to elements $\tilde{s_i}\in N_{\mathrm{SL}_m}(T)$, e.g.~$\tilde{s_1}=
\begin{pmatrix}
0 & -1 \\
1 & 0 \\
\end{pmatrix}\in N_{\mathrm{SL}_2}(T)$. 
Let $\tilde{W}$ denote the subgroup of $\mathrm{SL}_m$ 
generated by the elements $\tilde{s}_1, \dots, \tilde{s}_{m-1}$,
and let $\Br_m$ denote the $m$-strand braid group, 
which has generators $\beta_1, \dots, \beta_{m-1}$. 
The assignment $\beta_i\mapsto \tilde{s}_i$ gives a group homomorphism $\Br_m\rightarrow \tilde{W}$, 
thus there is an action of $\Br_m$ on any $\mathrm{SL}_m$ representation.
The $q$-analogue of this braid group action is realized by Lusztig's quantum Weyl group operators, 
which are defined via a $q$-analogue of the triple exponent formula $\tilde{s}_i=\exp(f_i)\exp(-e_i)\exp(f_i)$.

Since $\tilde{W}\subset \mathrm{SO}_m:=\{g\in \mathrm{SL}_m \mid gg^t=1\}$, 
there is also an action of $\Br_m$ on any $\mathrm{SO}_m$ representation.
The Lie algebra $\som$ is generated by the elements $f_i-e_i$, $i=1, \dots, m-1$, 
and using that $\tilde{s_i}=\exp(\frac{\pi}{2}(f_i-e_i))$ 
gives explicit formulae for the action of $\tilde{s_i}$ on any $\mathrm{SO}_m$ representation 
in terms of the (classical) $\iota$divided powers \cite[Definition 3.8]{MR4983792}. 
The $q$-analogues of these formulae, which are given in \cite[Section 3.1]{wang2026relativebraidgroupsymmetries}, 
describe relative braid group symmetries on the integrable $U_q^{\iota}(\som)$-modules.

These formulae are the key ingredient in the solution to \cite[Problem 1.1]{wang2026relativebraidgroupsymmetries}, 
which asks for an explicit description of the action of the relative braid group on integrable modules, satisfying some desired properties. 
Before the discovery of these explicit formulae, an implicit formula for the action was established in \cite{WangZhangintrinsic}, 
by combining Lusztig's braid group operators for $U_{q}(\slm)$-modules with the quasi $K$-matrix for $U_q^{\iota}(\som)\subset U_{q}(\slm)$. 
In the explicit approach, 
it is essential that for any integrable $U_q^{\iota}(\som)$-module, 
the spectrum of the action of the generators $b_i$ consists only of quantum integers, 
i.e.~the classical $\iota$divided powers are eventually zero.
In the implicit approach, 
it is essential that any finite-dimensional irreducible integrable $U_q^{\iota}(\som)$-module 
is isomorphic to a summand of a restriction of a type I $U_{q}(\slm)$-module. 

The representations of $\iqUsom$ which are summands of $S^{\otimes m}$ 
are (type I) \emph{nonclassical}\footnote{While the notion of nonclassical representations is in the literature, 
the description of some of them as being ``type I'' is not in the literature.}, see Definition \ref{def:nonclassical-typeI}.
In particular, they are \emph{not} integrable and do not appear in restrictions of representations of $U_{-q^2}(\slm)$.
As such, there did not exist explicit formulae for an action of the $m$-strand braid group $\Br_m$ on the 
nonclassical representations of $\iqUsom$; 
nor were there implicit formulae for an action in terms of Lusztig's symmetries and the quasi $K$-matrix 
(although implicit formulae can be deduced from Wenzl's results). 

As a consequence of our isomorphism $\iqUsom^{\le n}\rightarrow \End_{U_q(\son)}(S^{\otimes m})$,
established in Corollary \ref{cor:inj}, 
along with the relation between $\iQW_i$ and the braiding of the $U_{q}(\son)$ spin representations
from \cite[Theorem 1.26]{BER}, we prove the following theorem. 

\begin{thm}\label{thm:mainthm2}
Let $M$ be a finite-dimensional (type I) nonclassical representation of $\iqUsom$. 
For $m \in M$, the formulae $\beta_i(m):=\sum_{k \ge 0} q^{-k}\xxb^{(k)}_i\cdot m$
determine an action of $\Br_m$ on $M$.
\end{thm}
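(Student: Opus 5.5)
The plan is to deduce the braid relations for the operators $\beta_i$ on $M$ from the corresponding relations among braidings of spin representations. The transfer uses the isomorphism of Corollary \ref{cor:inj}.

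First I make sense of the formula. Since $M$ is a finite-dimensional type I nonclassical representation, Definition \ref{def:nonclassical-typeI} will show that the action of $\iqUsom$ on $M$ factors through the quotient $\iqUsom^{\le n}$ for some $n$. Concretely, the spectrum of each $b_i$ on $M$ should consist of the values $\frac{1}{[2]}+(-1)^k``[k][k+1]"$ with $0\le k\le n$, the same as on $S^{\otimes m}$. The action should also be semisimple, as for a summand of some $S^{\otimes m}$. Then $\xxb_i^{(n+1)}$, which up to scalar is the minimal polynomial of $b_i$, acts by zero on $M$. Since $\xxb_i^{(k)}$ is a multiple of $\xxb_i^{(n+1)}$ for $k>n$, the sum $\sum_{k\ge0} q^{-k}\xxb_i^{(k)}$ acting on $M$ is finite and equals the action of $\iQW_i\in\iqUsom^{\le n}$.

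Next I show that the $\iQW_i$ satisfy the braid relations, and are invertible, already in the algebra $\iqUsom^{\le n}$. By \cite[Theorem 1.26]{BER}, Wenzl's map \eqref{Wenzlmap} sends $\iQW_i$ to $c\,R_{i,i+1}$. Here $R_{i,i+1}$ is the braiding of $U_q(\son)$ acting on the $i$-th and $(i+1)$-st factors of $S^{\otimes m}$, and $c$ is a nonzero scalar independent of $i$. The braidings satisfy $R_iR_{i+1}R_i=R_{i+1}R_iR_{i+1}$, $R_iR_j=R_jR_i$ for $|i-j|>1$, and are invertible; this follows from the hexagon and naturality axioms in the braided category $\Rep(U_q(\son))$. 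The scalar $c$ appears to the same power (three) on both sides of the cubic relation, so the images of the $\iQW_i$ satisfy the braid relations. Corollary \ref{cor:inj} says Wenzl's map is an isomorphism, so the relations $\iQW_i\iQW_{i+1}\iQW_i=\iQW_{i+1}\iQW_i\iQW_{i+1}$ and $\iQW_i\iQW_j=\iQW_j\iQW_i$ hold in $\iqUsom^{\le n}$ itself. Invertibility also holds there, since an element mapping to an invertible element under an algebra isomorphism is invertible. Consequently $\beta_i\mapsto \iQW_i$ defines a group homomorphism $\Br_m\to(\iqUsom^{\le n})^\times$, and composing with the action on $M$ gives the desired $\Br_m$-action.

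The main obstacle is the first step: verifying that every finite-dimensional type I nonclassical $M$ really is a module over $\iqUsom^{\le n}$. This requires the same $n$ (rank of $\son$) for which the braid relations were established; if $M$ involves a different $n$, I would instead use that $n$, since the argument works uniformly in $n$. Depending on how Definition \ref{def:nonclassical-typeI} is phrased, this may be immediate, for instance if type I means the $b_i$ have exactly this spectrum. Otherwise I would invoke the classification of nonclassical irreducibles from \cite{GK-Usom, WenzlUprime} together with semisimplicity of $\iqUsom^{\le n}$. The latter follows from the isomorphism with the semisimple algebra $\End_{U_q(\son)}(S^{\otimes m})$, and lets me realize each such $M$ as a summand of some $S^{\otimes m}$. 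In that case the action is visibly the restriction of the braiding, and the theorem follows directly.
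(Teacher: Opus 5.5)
Your proposal is correct and follows the paper's route. The paper proves invertibility and the braid relations for $\iQW_i$ inside $\iqUsom^{\le n}$ by transporting the Yang--Baxter relation for $R_{S,S}$ through the isomorphism of Corollary \ref{cor:inj} (via Remark \ref{rem:TisR}), and then pulls them back to $M$. In your reduction step, the only input needed is that each $b_i$ acts diagonalizably on $M$ with finitely many eigenvalues in $\sigma_+$, which follows from Corollary \ref{cor:biss} and complete reducibility. Then $p_{N+1}(b_i)$ acts by zero on $M$ for large $N$, so realizing $M$ as a summand of $S^{\otimes m}$ is unnecessary (and would presuppose this step anyway).
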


\begin{rem}
We prove the injectivity of $\iqUsom^{\le n}\rightarrow \End_{U_q(\son)}(S^{\otimes m})$ 
by first deducing that $\iqUsom^{\le n}$ is finite-dimensional 
in Proposition \ref{prop:finite-dim}.
This uses Proposition \ref{prop:longb-braid-conjugates}, which relates
the elements $\iQW_i$ to the PBW basis of $\iqUsom$.
Hence, the elements \eqref{eq:iQW} are crucial to our study.
\end{rem}

We view Theorem \ref{thm:mainthm2} as evidence for nonclassical generalizations 
of \cite[Problem 1.1]{wang2026relativebraidgroupsymmetries}, 
which we state as follows.

\begin{problem}
For non-integrable representations of split $\iota$quantum groups, find: 
quasi $K$-matrices, modified $\iota$quantum groups, integral $\mathbb{Z}[q^{\pm 1}]$-forms, 
and both implicit (using quasi $K$-matrix and Lusztig symmetries) and explicit (using rank $1$ formulae) 
descriptions of relative braid group symmetries that preserve the integral $\mathbb{Z}[q^{\pm 1}]$-forms.
\end{problem}

Wang's ICM address \cite{WangICM} presents a program to generalize the theory of quantum groups $U_q:=U_q(\mathfrak{g})$ 
to $\iota$quantum groups $U_q^{\iota}:=U_q^{\iota}(\mathfrak{k}\subset \mathfrak{g})$. 
The centerpiece of this program is Bao--Wang's $\iota$canonical basis \cite{BW-icanonical} 
for the $U_q^{\iota}$-modules that are restrictions of based $U_q$-modules (and for a related modified form of $U_q^{\iota}$).
The $\iota$canonical basis is defined using an $\iota$bar involution, 
which is constructed by combining the usual bar involution on the based $U_q$-module 
with the quasi $K$-matrix $\mathfrak{X}$ 
(denoted $\Upsilon$ and called the ``intertwiner'' in \cite{MR3864017, BW-icanonical}), 
which lies in a completion of $U_q(\mathfrak{g})$. 
The $\iota$canonical basis elements for the modified form of $U_q^{\iota}\subset U_q(\mathfrak{sl}_2)$ are the $\iota$divided powers \cite{MR3783013}. 

Bao--Wang's $\iota$canonical basis is adapted to \emph{classical} representations\footnote{More precisely, 
integrable representations \cite[Section 2.5]{wang2026relativebraidgroupsymmetries}, 
which are classical but do not exhaust all classical representations.} 
of $U_q^\iota$, 
but we believe that Wang's program extends to other representations, including the nonclassical representations appearing above.
Let $\mathfrak{g}$ be a simply laced Lie algebra. 
Consider the associated Drinfeld--Jimbo quantum group $U_{-q^2}$ 
and let $U_{-q^2}^{\iota}$ be the split $\iota$quantum group with generators $b_i = f_i+(-q^2)^{-1}e_ik_i^{-1}$. 
A (type I) nonclassical representation of $U_{-q^2}^{\iota}$ is a representation in which the $b_i$ are diagonalizable 
with eigenvalues of the form $\{(-1)^k\frac{[2k+1]}{[2]}\}_{k\in \mathbb{Z}_{\ge 0}}$.
Equivalently, a (type I) nonclassical representation is one in which the $\xxb_i^{(k)}$,
which again are defined via \eqref{eq:ddp}, act by zero for $k\gg0$. 
As a consequence, the formula in \eqref{eq:iQW} gives rise to well-defined operators $\iQW_i$ 
on these representations.

There is a one-dimensional (type I) nonclassical representation $\nctriv=\C(q) \{1_{\nctriv}\}$, 
where the generators of $U_{-q^2}^{\iota}$ act as $b_i\cdot 1_{\nctriv} = \frac{1}{[2]}1_{\nctriv}$. 
Since the coproduct of $U_{-q^2}$ is such that $\Delta(b_i) = b_i\otimes 1 + k_i^{-1}\otimes b_i$, 
if $V$ is a representation of $U_{-q^2}$, 
then $V\otimes \nctriv$ can be treated as a representation of $U_{-q^2}^{\iota}$.

\begin{prop}
Let $V$ be a finite-dimensional (type I) $U_{-q^2}$-module. 
The $U_{-q^2}^{\iota}$-module $V\otimes \nctriv$ is (type I) nonclassical.
As operators on $V\otimes \nctriv$, 
the nonclassical $\iota$quantum Weyl group generators $\Delta(\iQW_i)$ are invertible 
and satisfy the braid relations of $\Br_{\mathfrak{g}}$.
\end{prop}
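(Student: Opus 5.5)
The plan is to reduce both assertions to ranks one and two, and to rely on the results already available for $\iqUsom$ and the spin representation. Write $b_i$ for the generator acting on $V\otimes\nctriv$. By the coproduct formula $\Delta(b_i)=b_i\otimes 1+k_i^{-1}\otimes b_i$, it acts on $V\cong V\otimes\nctriv$ by
\[
\beta_i := f_i+(-q^2)^{-1}e_ik_i^{-1}+\tfrac{1}{[2]}k_i^{-1}.
\]
This operator lies in the image of the copy $U_{-q^2}(\sln[2])_i\subset U_{-q^2}$ attached to node $i$. So for the first assertion we restrict $V$ to $U_{-q^2}(\sln[2])_i$ and decompose it into irreducibles $V(m)$. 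It then suffices to show the following rank one statement: on each $V(m)$ the operator $\beta=f+(-q^2)^{-1}ek^{-1}+\frac{1}{[2]}k^{-1}$ is diagonalizable with eigenvalues in $\{(-1)^k\frac{[2k+1]}{[2]}\}_{k\ge 0}$. Equivalently, $\xxb^{(k)}$ acts by zero for $k>m$, using the product formula for $\xxb^{(k)}$ stated after \eqref{eq:ddp}.

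For this rank one statement we would work in the standard weight basis of $V(m)$, in which $\beta$ is tridiagonal. The first attempt is to show that the characteristic polynomial of $\beta$ on $V(m)$ is, up to a scalar, $\prod_{k=0}^{m}\bigl(\beta-(-1)^k\tfrac{[2k+1]}{[2]}\bigr)$. We would prove this by induction on $m$, via the three-term recursion for the determinants of the tridiagonal truncations. Diagonalizability then follows because these $m+1$ eigenvalues are pairwise distinct. This computation is the step we expect to be the main obstacle, since the recursion must be massaged into devil's-arithmetic form. As a cross-check and fallback, the cases $m=1$ and $m=2$ should match Wenzl's action of $b_i$ on $S\otimes S$ for $\son[3]$ and $\son[5]$ through \eqref{eq:devilsalg}, because there the spectrum is known from \cite{BER}. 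Granting the rank one statement, $V\otimes\nctriv$ is finite-dimensional with every $\xxb_i^{(k)}$ nilpotently truncated, so it is (type I) nonclassical.

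Invertibility of $\Delta(\iQW_i)$ is then also a rank one check. On the $(-1)^j\frac{[2j+1]}{[2]}$-eigenspace of $b_i$, the element $\xxb_i^{(k)}$ acts by a scalar read off from the product formula, and this scalar vanishes for $k>j$. Hence $\iQW_i$ acts by the finite sum $\sum_{k\le j}q^{-k}\xxb_i^{(k)}$ evaluated at that eigenvalue. We expect this to collapse to a signed power of $q$, as it does for the braiding on $S\otimes S$ in \cite[Theorem 1.26]{BER}; in any case we only need it to be nonzero, which can be checked directly.

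For the braid relations, if $i,j$ are not adjacent then $b_i$ and $b_j$ commute in the split $\iota$quantum group. So $\iQW_i$ and $\iQW_j$, being polynomials in $b_i$ and $b_j$ respectively, commute. If $i,j$ are adjacent, the subalgebra generated by $b_i,b_j$ is a copy of $\iqUsom[3]$, the split $\iota$quantum group of $\sln[3]$ sitting inside $U_{-q^2}(\sln[3])_{ij}$. The previous paragraphs show that $V\otimes\nctriv$, restricted to this copy, is a finite-dimensional (type I) nonclassical $\iqUsom[3]$-module. Theorem \ref{thm:mainthm2} with $m=3$ then gives $\iQW_i\iQW_j\iQW_i=\iQW_j\iQW_i\iQW_j$ on it. Since $\Br_{\mathfrak{g}}$ is presented by exactly these rank two relations, this completes the argument.
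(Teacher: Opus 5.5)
Your proposal is correct and follows essentially the same route as the paper's sketch. Complete reducibility of $V$ over the rank one subalgebra $U_{-q^2}(\mathfrak{sl}_2)_i$ reduces nonclassicality to a rank one spectrum check, which the paper leaves as an exercise and your tridiagonal characteristic-polynomial computation supplies (the spectrum on $V(m)$ is indeed $\{(-1)^k\frac{[2k+1]}{[2]}\}_{0\le k\le m}$), and since $\mathfrak{g}$ is simply laced the braid relations reduce to the rank two case handled by Theorem~\ref{thm:mainthm2}. For invertibility you can skip the direct scalar check: $p_{N+1}(b_i)$ kills $V\otimes\nctriv$ for $N$ large, so Proposition~\ref{prop:iQW-invertible} applies (the scalar on the $(-1)^j\frac{[2j+1]}{[2]}$-eigenspace is in fact $(-1)^{\binom{j+1}{2}}q^{-j(j+1)}$).
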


\begin{proof}[Proof (sketch)]
Note that $V$ restricts to a completely reducible representation of any parabolic subalgebra of $U_{-q^2}$. 
This reduces the first claim to $\mathfrak{g}=\mathfrak{sl}_2$, 
where we leave it as an exercise to check that the spectrum of $b$ 
acting on $V\otimes \mathcal{N}$ is contained in $\{(-1)^k\frac{[2k+1]}{[2]}\}_{k\in \mathbb{Z}_{\ge 0}}$. 
It also reduces the second claim to a nontrivial rank $2$ calculation which, 
since $\mathfrak{g}$ is simply laced, follows from Theorem \ref{thm:mainthm2}.
\end{proof}

The quasi $K$-matrix $\mathfrak{X}$ gives rise to a $2$-tensor quasi $K$-matrix $\Theta^{\iota}$ 
which lies in a completion of $U_{-q^2}\otimes U_{-q^2}^{\iota}$ \cite{BalaKolb, kolb2026shortstarproductsquantum} and therefore acts on $V\otimes \nctriv$.

\begin{conj}
Let $(V, \mathbb{B}_V)$ be a based $U_{-q^2}$-module \cite[Chapter 27]{Lus4}, 
e.g.~$V$ is a finite-dimensional irreducible (type I) representation of $U_{-q^2}$ 
and $\mathbb{B}_V$ is Lusztig's canonical basis of $V$.
\begin{itemize}
\item As operators on $V\otimes \nctriv$, we have 
\[
\Theta^{\iota} = (T_{w_0}^{-1}\otimes \iQW_{w_0}^{-1})\circ \Delta(\iQW_{w_0}), 
\]
where $T_{w_0}$ and $\iQW_{w_0}$ are the longest elements of the quantum Weyl group for $U_{-q^2}$ 
and the $\iota$quantum Weyl group of $U_{-q^2}^{\iota}$ respectively. 
(Note that $\iQW_{w_0}\cdot 1_{\nctriv} = 1_{\nctriv}$.)
\item The $2$-tensor quasi $K$-matrix $\Theta^{\iota}$ 
gives rise to an $\iota$bar involution on $V\otimes \nctriv$. 
\item There is a unique $\iota$canonical basis $\mathbb{B}_{V\otimes \nctriv}^{\iota}$ of $V\otimes \nctriv$ 
which is $\iota$bar invariant and unitriangular with respect to $\mathbb{B}_V\otimes 1_{\nctriv}$. 
\item The $\iota$canonical basis $\mathbb{B}_{V\otimes \nctriv}^{\iota}$ respects the decomposition of $V\otimes \nctriv$ into isotypic components.
\item There is a nonclassical modified form of $U_{-q^2}^{\iota}$ 
which admits a nonclassical $\iota$canonical basis compatible with $\mathbb{B}_{V\otimes \nctriv}$ for each $V\otimes \nctriv$. 
\item In rank $1$, the nonclassical $\iota$canonical basis consists of the nonclassical $\iota$divided powers $\xxb^{(k)}$.
\end{itemize}
\end{conj}

\subsection{Further relations}
	\label{ss:furtherrelationsandapp}

While studying the spin-colored Reshetikhin--Turaev link polynomial in \cite{BER}, 
we found various formulae satisfied by the nonclassical $\iota$divided powers \eqref{eq:ddp}. 
In particular, our \cite[Conjecture 4.40]{BER} generalizes the aforementioned 
$\iota$Serre relation of $\iqUsom$,
and can be restated in terms of $\Web(\son)$ as follows. 
\begin{conj}\label{conj:divpowerrelns}
For $1\le a,b,c\le n$, there is a relation of the form
\[
\xx_i^{(a)}\xx_{i\pm 1}^{(b)}\xx_i^{(c)} = \xi\cdot \xx_{i\pm 1}^{(a')}\xx_i^{(b')}\xx_{i\pm 1}^{(c')} 
	+ \mathrm{LOT}_{a,b.c}
\]
where $\xi\in \C(q)$, $1\le a',b',c'\le n$, 
and $\mathrm{LOT}_{a,b,c}$ is a linear combination of terms of the form 
$\xx_i^{(k)}\xx_{i\pm 1}^{(\ell)}$ and $\xx_{i \pm 1}^{(\ell)}\xx_i^{(k)}$.
\end{conj}

We establish this conjecture for $n=1,2,3$ in \cite[Proposition 4.41]{BER}, 
and the relations appear to be independent of $n$.

\begin{example}
We have
\begin{equation}\label{eq:iserre-divpower-webs}
\xx_i\xx_{i\pm 1}\xx_i = \xx_i^{(2)}\xx_{i\pm 1} + \xx_{i\pm 1}\xx_i^{(2)} + [2]\xx_i^{(2)} + \xx_i
\end{equation}
and
\begin{equation}
\xx_i\xx_{i\pm 1}^{(2)}\xx_i = \xx_{i\pm 1}\xx_i^{(2)}\xx_{i\pm 1} \, .
\end{equation}
The simplest way to derive these relation is to prove that they in fact already hold in $\iqUsom$. 
From this point of view, 
\eqref{eq:iserre-divpower-webs} is exactly the defining $\iota$Serre relation of $\iqUsom$, 
but rewritten in terms of the nonclassical $\iota$divided powers. 
Our conjecture may then be a nonclassical generalization of \cite{MR4227167}.
\end{example}

\begin{rem}
As outlined in \cite[Theorem 4.45]{BER},
Conjecture \ref{conj:divpowerrelns} is the crucial step in a characterization of 
the spin-colored Reshetikhin--Turaev polynomial in terms of traces on $\iqUsom^{\le n}$. 
This is analogous to the results in \cite[Sections 5 and 6]{Jones2} that characterize the HOMFLYPT 
and $\sln[N]$ link polynomials in terms of the Jones--Ocneanu trace 
on the type $A$ Hecke algebras.
In type $B$, this characterization is desirable, as it is a proposed step in showing that 
the spin link homology constructed in \cite{BER}
indeed categorifies spin-colored $\son$ Reshetikhin--Turaev link polynomials. 
So far, that result is only established for $n\le 3$. 
\end{rem}

A different, but related, direction is the consideration of integral forms 
for the category $\Web(\son)$.
In \S \ref{ss:morerels}, we establish additional useful relations
that follow from those in Definition \ref{def:webs}
when working over $\C(q)$.
However, we suspect that some of these relations should be 
adjoined to the presentation \eqref{eq:webrel}
when working with the ``correct'' integral form of $\Web(\son)$.
By the latter, we mean a category that describes tilting modules for 
quantum $\son$, after all specializations. 
In this paper, we have focused on providing an efficient presentation over $\C(q)$, 
rather than on solving the (important) follow-up problem of finding this integral form.

Nevertheless, let $\mathcal{A}:=\Z[q^{\pm 1},``{2n \brack n}"^{-1}] 
= \mathbb{Z}[q^{\pm 1}, [2]^{-1}, [2]_{q^3}^{-1}, \dots, [2]_{q^{2n-1}}^{-1}]$,
and note that this ring contains all of the denominators 
appearing in our defining relations \eqref{eq:webrel} for $\Web(\son)$. 
Working over $\mathcal{A}$, 
we expect that it is still possible to adapt arguments from this paper to prove there is a functor 
$\Web_{\mathcal{A}}(\son)\rightarrow \FRep_{\mathcal{A}}(\son)$. 
Upon establishing the existence of this functor, 
we have the following result, 
which we learned from arguments in \cite[Section 3.6.2]{JMWParityTilting}. 

\begin{lem}
Assume the existence of the functor $\Web_{\mathcal{A}}(\son)\rightarrow \FRep_{\mathcal{A}}(\son)$.
After specialization to a field, via $\mathcal{A}\rightarrow \mathbb{F}$, 
the Karoubi envelope of $\FRep_{\mathbb{F}}(\son)$ is equivalent to the category of tilting modules.
\end{lem}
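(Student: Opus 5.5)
The plan is to follow the strategy of \cite[Section 3.6.2]{JMWParityTilting}. First, realize every object of $\FRep_{\mathbb{F}}(\son)$ as a tilting module for the specialized quantum group. Then show that every indecomposable tilting module $T(\lambda)$ occurs as a direct summand of such an object. Since the category of tilting modules is Krull--Schmidt and closed under direct sums and summands, it is the Karoubi envelope of any full subcategory with these two properties. The proof therefore reduces to these two facts.

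\textbf{Step 1: the fundamental objects are tilting.} The spin representation $S$ is minuscule. Its integral form (Lusztig's $\mathcal{A}$-form, specialized to $\mathbb{F}$) has Weyl module equal to dual Weyl module equal to simple module, so $S_{\mathbb{F}}$ is tilting. Tensor products of tilting modules are tilting, so $S_{\mathbb{F}}^{\otimes 2}$ is tilting.

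Here the assumed functor $\Web_{\mathcal{A}}(\son)\to\FRep_{\mathcal{A}}(\son)$ enters. Consider the composite of the trivalent vertex $k \to \gS\otimes\gS$ with its vertical flip $\gS\otimes\gS \to k$. By \eqref{eq:digonS}, this equals the scalar $\pm``{2(n-k) \brack n-k}"$, which is invertible in $\mathcal{A}$. Rescaling one of the two maps by the inverse of this scalar, the other composite, $\gS\otimes\gS\to k\to\gS\otimes\gS$, is an idempotent defined over $\mathcal{A}$ whose image is the object $k$. Applying the functor and specializing, $V_{\varpi_k,\mathbb{F}}$ (as the objects of $\FRep_{\mathbb{F}}$ are defined via the $\mathcal{A}$-form) becomes a direct summand of $S_{\mathbb{F}}^{\otimes 2}$. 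Hence it is tilting, and every object of $\FRep_{\mathbb{F}}(\son)$ is tilting.

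One point needs checking: that the $\mathcal{A}$-lattice used for $V_{\varpi_k}$ agrees with the image of this idempotent. I would arrange this by defining the integral fundamental objects this way.

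\textbf{Step 2: every $T(\lambda)$ appears.} Write $\lambda=\sum_k a_k\varpi_k$ and take $X_\lambda=\bigotimes_k V_{\varpi_k}^{\otimes a_k}$, specialized to $\mathbb{F}$. This is tilting by Step 1. Its $\lambda$-weight space is one-dimensional, and all other weights are strictly smaller. So exactly one indecomposable summand of $X_\lambda$ has highest weight $\lambda$, and it must be $T(\lambda)$ by the classification of indecomposable tilting modules. Since $\FRep_{\mathbb{F}}$ is a full subcategory, the idempotent splitting off $T(\lambda)$ lies in it, so $T(\lambda)$ lies in its Karoubi envelope.

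\textbf{Step 3: compatibility of Hom spaces with specialization.} If $\FRep_{\mathbb{F}}$ is defined by base change from $\FRep_{\mathcal{A}}$ rather than as a full subcategory of $\mathbb{F}$-modules, I would verify that
\[
\Hom_{\mathcal{A}}(X,Y)\otimes_{\mathcal{A}}\mathbb{F}\cong\Hom_{\mathbb{F}}(X_{\mathbb{F}},Y_{\mathbb{F}}).
\]
This holds because $X$ has a Weyl filtration and $Y$ has a dual Weyl filtration, so the Hom space is $\mathcal{A}$-free of rank given by multiplicities, and $\mathrm{Ext}^1$ vanishes. Thus the two definitions agree.

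\textbf{Main obstacle.} I expect the main obstacle to be Step 1, namely ensuring that the integral idempotents really are defined over $\mathcal{A}$. This is exactly where the inverted quantities $[2]_{2i-1}$ are needed, and it is the reason the hypothesis on the functor over $\mathcal{A}$ is required at all. The rest is the standard tilting-theory argument.
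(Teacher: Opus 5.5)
Your proposal is correct and follows the same route as the paper's sketch. There, $S$ is minuscule hence tilting, the digon relation \eqref{eq:digonS} with $d_{n-k}$ invertible in $\mathcal{A}$ splits each $V_{\varpi_k}$ off $S^{\otimes 2}$ (so every object of $\FRep_{\mathbb{F}}(\son)$ is tilting), and highest weight theory puts every indecomposable tilting module in the Karoubi envelope; your Steps 2 and 3 just spell out what the paper leaves implicit (the summand $T(\lambda)$ of $\bigotimes_k V_{\varpi_k}^{\otimes a_k}$ and the compatibility of $\Hom$ spaces with base change), and your caveat about which $\mathcal{A}$-lattice represents $V_{\varpi_k}$ is a point the paper's sketch also passes over.
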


\begin{proof}[Proof (sketch)]
Since $S=V_{\varpi_n}$ is minuscule, it is tilting (it is both a Weyl and a dual Weyl module), 
and since a tensor product of tilting modules is tilting, so is $S^{\otimes 2}$. 
By \eqref{eq:digonS}, and invertibility of $``{2(n-i) \brack n-i}"$ in $\mathbb{F}$, 
the fundamental representations $V_{\varpi_i}$, 
$i=1, \dots, n-1$, are irreducible direct summands of $S^{\otimes 2}$. 
Direct summands of tilting modules are tilting, 
so $\FRep_{\mathbb{F}}(\son)$ is contained in the category of tilting modules, thus so is its Karoubi envelope. 
By highest weight theory, this Karoubi envelope contains 
an indecomposable tilting module for each dominant weight, 
and thus it contains the entire category of tilting modules. \end{proof}

At the present moment, however,
we do not know if the generating morphisms for $\Web(\son)$ 
still generate all morphisms in $\FRep_{\mathcal{A}}(\son)$.
Further, we do not know if the relations defining $\Web(\son)$ 
describe all relations between the corresponding morphisms in $\FRep_{\mathcal{A}}(\son)$. 

\begin{rem}\label{rem:integralform}
To aid future research, we draw attention to instances when 
we divide by scalars other than units in $\mathcal{A}$. 
In \eqref{dividingbyk}, we divide by $``[k]^2"$ to derive the equation \eqref{eq:zerodigonB}.
Thus, it is possible that \eqref{eq:zerodigonB} is not implied by our current relations 
for specializations where  $``[k]^2"=0$. 
If so, equation \eqref{eq:zerodigonB} should be added to the integral version of our presentation. 
Another example is in \eqref{eq:flowvertex} below, 
where we define new trivalent (``flow'') vertices via formulae that involve dividing by $``[k]^2"$. 
Thus, when working integrally, the flow vertices should be generators in their own right, 
and we expect general flow vertex versions of all the defining relations in \eqref{eq:webrel} 
in this presentation.
\end{rem}

\begin{rem}
More generally,
tilting modules are defined for any specialization $\mathbb{Z}[q^{\pm 1}]\rightarrow \mathbb{F}$. 
However, we have observed that new web generators-and-relations 
are required for the ``correct" $\mathbb{Z}[q^{\pm 1}]$ version of $\Web(\son)$.
This presentation will most likely include nonclassical $\iota$divided power generators, 
and the relations from Conjecture \ref{conj:divpowerrelns}.
\end{rem}

\subsection{Related work}

Various categories related to $\Web(\son)$ have previously appeared in the literature. 
We now comment on some of these categories.

\subsubsection{Ranks $1$ and $2$}

As already noted in \S \ref{ss:webbasesintro}, 
our Definition \ref{def:webs} recovers existing descriptions of $\FRep(U_q(\son))$ when $n=1,2$.
To briefly recall, 
when $n=1$ it gives a pivotal category with one generating objects $\gS$, 
no generating morphisms (besides identity and cap/cup morphisms), 
and one relation \eqref{eq:circleS}. 
Theorem \ref{thm:main} thus states the equivalence between 
$\FRep(U_q(\son[3]))$ and the Temperley--Lieb category $\mathbf{TL}$ (at circle value $-[2]$).

Similarly, when $n=2$, Definition \ref{def:webs} recovers Kuperberg's $\son[5]$ web category. 
For this, one should use the substitution that Kuperberg's generating trivalent vertex is 
$[2]^{\frac{1}{2}}$ ours, and his $q$ is equal to our $q^2$. 
After doing so, his relations exactly agree with ours 
(note: there are no nontrivial ``all black'' relations when $n=2$), 
except that he includes a relation for the $1$-labeled circle. 
See Proposition \ref{prop:easyrel} which derives this relation from the others.
Thus, in this case, Theorem \ref{thm:main} recovers Kuperberg's (type $B_2$) 
equivalence from \cite{Kup}.

Hence, in both of these cases, Theorem \ref{thm:main} is simply a restatement of known results.
Given this, to help streamline aspects of our proof, 
we will at times assume that $n \geq 3$.
Saliently, for these values of $n$, 
we have access to both $1$- and $2$-labeled web edges in $\Web(\son)$. 
Our ``ladderization" result (Theorem \ref{thm:ladder}) used in our
proof of fully faithfulness utilizes the braiding morphism 
$c_{1,1} \colon 1 \otimes 1 \to 1 \otimes 1$.
Hence, we must argue differently at this step when $n=1$, 
since we don't have access to $1$-labeled edges in this case. 
Further, $c_{1,1}$ has a simpler description when written in terms of $2$-labeled edges, 
so certain assertions concerning this braiding morphism 
are justified by different computations when $n=2$.
We will comment more about the workarounds, 
which would allow our proof to carry over to the $n=1,2$ cases, 
at the relevant parts of the paper.
See Remarks \ref{rem:n=2} and \ref{rem:n=1}.

\subsubsection{Westbury's $\son[7]$-webs}

In \cite{WesSpin}, Westbury studies the full subcategory 
$\Rep_{S,1} \subset \Rep(U_q(\son[7]))$ 
tensor-generated by the quantum spin and vector representations. 
He defines a category of webs $\Web_{S,1}$ which 
provides a complete generators-and-relations presentation of $\Rep_{S,1}$, 
i.e.~he proves the analogue of our Theorem \ref{thm:main} in this setting.
Since $\Rep_{S,1}$ is a proper subcategory of $\FRep(U_q(\son[7]))$, 
Westbury's presentation has fewer generating objects and morphisms; 
hence, it is necessarily different from ours.
Indeed, restricting to these generators necessitates relations 
in $\Web_{S,1}$ involving webs having
faces with more than three edges.
Nevertheless, up to a rescaling 
(since Westbury's trivalent vertex is equal to $[2]^{\frac{1}{2}}$ times ours),
one can derive all relations in \cite[Fig.~1--4]{WesSpin} from ours
(after fixing the apparent typo in \cite[Fig.~4]{WesSpin} so that it matches 
the corresponding equation on p.~222 of loc.~cit.).

\subsubsection{Bodish--Wu's quantum orthogonal webs}

In classical work on invariant theory in type $B$ and $D$ \cite{Braueroriginal, LZbrauer}, 
the first object to study is the orthogonal group $\mathrm{O}_m$, 
which is a $\mathbb{Z}/2$ extension of $\mathrm{SO}_m$. 
One reason for this is that the representation theory of $\mathrm{O}_m$ 
is more uniform than $\mathrm{SO}_m$, 
e.g.~the exterior powers $\Lambda^k(\mathbb{C}^m)$ are mutually 
non-isomorphic irreducible $\mathrm{O}_m$-representations, for $k=0, \dots, m$. 
However, the main reason is that there are no spin representations 
for the groups $\mathrm{SO}_m$ or $\mathrm{O}_m$. 

The algebra $U_q(\mathfrak{o}_m)$, 
defined as a $\mathbb{Z}/2$ extension of $U_q(\mathfrak{so}_m)$, 
is an associative algebra such that (type I) modules over it, 
with integer weights (i.e.~ignoring spin representations), 
give a $q$-analogue of $\Rep(\mathrm{O}_m)$. 
In \cite{BodWu}, Bodish--Wu define $\Web(\mathrm{O}_m)$ 
and prove that it gives a complete generators-and-relations presentation 
of the quantum analogue of the full subcategory of representations tensor-generated 
by the exterior powers, 
denoted $\FRep(U_q(\mathfrak{o}_m))\subset \Rep(U_q(\mathfrak{o}_m))$. 
The relations in \cite[Definition 1.4]{BodWu} are the same (up to signs accounting for different conventions)
as those relations in our definition of $\Web(\son)$ which do not involve gray strands. 
For more discussion see the beginning of the proof of Theorem \ref{thm:functor}.

\subsubsection{McNamara--Savage's quantum spin Brauer category}

In type $A$, another approach to the study of quantum group representation theory 
is via the \emph{HOMFLY--PT (oriented) skein category} $\mathbf{OS}$, 
wherein morphisms are given by oriented tangles, modulo relations;
see e.g.~\cite{BrunHOMFLY} and references within.
This category encodes morphisms between tensor powers of the vector 
representation $V$ of $U_q(\gln)$ and its dual\footnote{It is more natural to consider quantum $\gln$ in this setting, 
to avoid subtleties involving the isomorphism of $U_q(\sln)$-modules between the determinant and trivial modules.}, 
as a ribbon category. 
Although $\mathbf{OS}$ may also be presented by generators and relations, 
its generators are the braiding morphisms (as well as cups and caps), 
in contrast to the trivalent vertices that generate web categories.

However, note that the category $\mathbf{OS}$ does not describe $\Rep(U_q(\gln))$ for any fixed value of $n$; 
rather, it can be viewed as the $n \to \infty$ limit of these categories. 
To pass from $\mathbf{OS}$ to a particular $\Rep(U_q(\gln))$ involves two steps: 
a specialization and a quotient, both depending on $n$.
To elaborate, for each $n$ there is a category $\mathbf{OS}(n)$ obtained by specializing the 
parameters in the definition of $\mathbf{OS}$, 
and there is a full functor $\mathbf{OS}(n) \to \Rep(U_q(\gln))$.
We emphasize that this latter functor is \emph{not faithful}, 
so a further quotient, described explicitly in \cite[Theorem 1.3]{BrunHOMFLY}, 
must be imposed on $\mathbf{OS}(n)$ before it ``describes'' $\Rep(U_q(\gln))$. 

The \emph{quantum Brauer category} is a type $BCD$ analogue of $\mathbf{OS}$, 
wherein morphisms are described by unoriented tangles,
accounting for the fact that the vector representation in type $BCD$ is self-dual. 
Recently, McNamara--Savage \cite{MS-qspin} introduced and studied a type $B/D$ 
enlargement $\mathbf{QSB}$ of the quantum Brauer category that also incorporates the spin representation, 
and which they call the \emph{quantum spin Brauer category}. 
Paralleling the type $A$ case described above, 
McNamara--Savage show that there is a full functor
\begin{equation}\label{eq:MSfunctor}
\mathbf{QSB}(2n+1) \twoheadrightarrow \FRep(U_q(\son)) \, ,
\end{equation}
where $\mathbf{QSB}(2n+1)$ denotes an appropriate specialization of the parameters of $\mathbf{QSB}$.

This functor is not faithful, and, unlike in type $A$, 
there is no explicit description of the kernel of this functor in the literature (this kernel depends on $n$).
As a consequence of Theorem \ref{thm:main}, there is a full functor 
\[
\mathbf{QSB}(2n+1) \twoheadrightarrow \Web(\son), 
\]
which is essentially surjective after additive Karoubi completion. 
This result could also be established directly from the respective presentations of these categories, 
although to match conventions one must replace $q$ with $q^2$ 
in the choice of parameters from \cite[Section 6]{MS-qspin}.
The main feature of our result, besides incorporating the remaining fundamental representations, 
is the \emph{faithfulness} established in Theorem \ref{thm:main}: 
we have all the relations needed to exactly describe $\Rep(U_q(\son))$.

\begin{ack}
Thanks to 
William Ballinger, 
Pavel Etingof, 
Artem Kalmykov,
Greg Kuperberg,
Peter McNamara, 
Ivan Motorin, 
Alistair Savage, 
David Speyer, 
Dani Tubbenhauer,
Weiqiang Wang, 
Hans Wenzl, 
and Haihan Wu 
for helpful discussion and correspondence.
The results of this work were presented at the conference 
\emph{Webs in Algebra, Geometry, Topology and Combinatorics}
held at ICERM in December 2025, 
and we thank the organizers for the opportunity to preview our results.
\end{ack}

\begin{fund}
E.B.~was partially supported by the NSF MSPRF-2202897 
and is grateful for the support and hospitality of the Sydney Mathematical Research Institute (SMRI). 
B.E.~was partially supported by NSF grant DMS-2201387, 
and appreciates the support given to his research group by DMS-2039316.
D.E.V.R.~was partially supported by NSF CAREER grant DMS-2144463.
\end{fund}

%
\section{Preliminaries}
%

\subsection{The type $B$ quantum group}

The Lie algebra $\son$ corresponds to the type $B_n$ root system $\Phi_{B_n}$.
Therein, the simple roots are the vectors
\[
\{\ee_i = \epsilon_i- \epsilon_{i+1} \}_{i=1}^{n-1} \cup \{\ee_n = \epsilon_n\} 
\]
living in $\R\{\epsilon_i\}_{i=1}^n \cong \R^n$. 
This vector space is endowed with the inner product $(\epsilon_i , \epsilon_j) := 2\delta_{ij}$, a multiple
of the standard inner product on $\R^n$.
The corresponding (type $B_n$) Cartan matrix is
\[
a_{ij} := 2 \frac{(\ee_i ,\ee_j)}{(\ee_i , \ee_i)}
=
\begin{pmatrix}
2 & -1 & 0 & \cdots & 0 \\
-1 & 2 & \ddots & \ddots & \vdots \\
0 & \ddots & \ddots & -1 & 0 \\
\vdots & \ddots & -1 & 2 & -1 \\
0 & \cdots & 0 & -2 & 2
\end{pmatrix}.
\]

Let $q$ be an indeterminate.
Given $i \in \{1,\ldots,n\}$ and $m \in \N$, set 
\[
q_i := q^{\frac{1}{2}(\ee_i , \ee_i)} = 
\begin{cases}
q^2 & i = 1 \ldots, n-1 \\
q & i = n
\end{cases} \, , \quad
[m]_{q_i} = \frac{q_i^m- q_i^{-m}}{q_i- q_i^{-1}}
	\]
and
\[
[n]_{q_i}! = [n]_{q_i}[n-1]_{q_i}\cdots[2]_{q_i}[1]_{q_i} \, ,  \quad 
{n \brack k}_{q_i}= \frac{[n]_{q_i}!}{[n-k]_{q_i}![k]_{q_i}!} \, .
\]
We will typically write $[m]$ in place of $[m]_q$.
Note also that $[m]_{q^2} = \tfrac{[2m]}{[2]}$.

\begin{defn}
	\label{def:quantumgroup}
Let $U_q(\son)$ be the unital $\C(q)$-algebra 
generated by elements 
$e_i, f_i, k_i^{\pm 1}$ for $i \in \{1,\ldots,n\}$, subject to the following relations:
\begin{gather*}
k_i k_i^{-1} = 1= k_i^{-1} k_i, \quad k_i k_j = k_j k_i, 
\quad  k_i e_j= q^{(\alpha_i, \alpha_j)} e_j k_i, \quad  k_i f_j = q^{-(\alpha_i, \alpha_j)} f_j k_i \, , \\
e_i f_j - f_je_i = \delta_{ij}\dfrac{k_i- k_i^{-1}}{q_i - q_i^{-1}} \, , \\
\sum_{s= 0}^{1- a_{ij}} (-1)^s{1- a_{ij}\brack s}_{q_i} e_i^{1- a_{ij}- s} e_j e_i^s = 0, 
\quad \sum_{s= 0}^{1- a_{ij}} (-1)^s{1- a_{ij}\brack s}_{q_i} f_i^{1- a_{ij}- s} f_j f_i^s = 0 \, .
	\end{gather*}
\end{defn}

This algebra is a Hopf algebra, and, 
following the conventions in \cite{CP,ST},
the structure maps are given on generators as follows:
\begin{itemize}
\item $\Delta(e_i) = e_i \otimes k_i + 1\otimes e_i$, 
	$\Delta(f_i) = f_i\otimes 1 + k_i^{-1}\otimes f_i$, 
	and $\Delta(k_i^{\pm}) = k_i ^{\pm}\otimes k_i^{\pm}$.
\item $\mathbf{S}(e_i) = -e_i k_i^{-1}$, 
	$\mathbf{S}(f_i) = - k_i f_i$, 
	and $\mathbf{S}(k_i^{\pm})= k_i^{\mp}$.
\item $\epsilon(e_i)= 0$, 
	$\epsilon(f_i) = 0$, 
	and $\epsilon(k_i) = 1$.
\end{itemize}

Let $\Rep(U_q(\son))$ denote the category of finite-dimensional, type I representations of $U_q(\son)$ over $\C(q)$
and let $\Rep(\son)$ denote the category of finite-dimensional representations of the Lie algebra $\son$ over $\C$. 
These categories are rigid monoidal, so in particular are closed under tensor product and 
monoidal duals, and contain a trivial representation. 
Further, both categories are braided (with the braiding on $\Rep(\son)$ additionally being symmetric) and pivotal.

\begin{conv}\label{conv:pivotal}
Throughout, we work with the pivotal structure on $\Rep(U_q(\son))$ 
associated to the ribbon element $\nu = X^{-2}$ from \cite{ST}, 
wherein all self-dual irreducible representations have Frobenius--Schur indicator equal to $+1$.
\end{conv}

The categories $\Rep(U_q(\son))$ and $\Rep(\son)$ are also 
\emph{split semisimple} over $\C(q)$ and $\C$ (respectively), 
meaning that they are semisimple and endomorphism algebras of simple objects $V$ 
are equal to $\C(q) \cdot \id_V$ and $\C \cdot \id_V$ (respectively).
In both cases, the simple objects are classified (up to isomorphism)
by the set $X_+(\son)$ of dominant integral weights, 
which are the weights $\lambda \in \R\{\epsilon_i\}_{i=1}^n$ 
such that $2\frac{(\alpha_i,\lambda)}{(\alpha_i, \alpha_i)}\in \N$ for $i=1, \dots, n$.
In coordinates,
\[
X_+(\son) = \{ (a_1,\ldots,a_n) \mid 
	a_i - a_{i+1} \in \N \text{ and } a_n \in \tfrac{1}{2} \N \}.
	\]
In particular, either all indices $a_i$ are integers, or all indices $a_i$ are half-integers (see Convention \ref{convention:halfinteger}).
For $\lambda \in X_+(\son)$ we write $L_{\lambda}$ for the corresponding irreducible representation of $\son$ 
and $V_{\lambda}$ for the irreducible representation of $U_q(\son)$. 
The representations $V_{\lambda}$ and $L_{\lambda}$ have the same formal characters; 
i.e.~if $V[\mu]$ denotes the $\mu$ weight space of $V$, then $\dim(V_{\lambda}[\mu]) = \dim(L_{\lambda}[\mu])$.

\begin{conv} \label{convention:halfinteger} 
In this paper, a \emph{half-integer} is an element of $\mathbb{Q}$ of the form $\frac{2j+1}{2}$ for some integer $j$. 
Integers are not half-integers, 
and $\tfrac{1}{2} \Z$ is the disjoint union of the integers and the half-integers. \end{conv}

For the Lie algebra $\son$ and $k \in \{1,\ldots,n-1\}$, 
the fundamental weights are 
\[
\varpi_k := \sum_{i=1}^k \epsilon_i = (\underbrace{1,\ldots,1}_{k},0,\ldots,0)
	\]
and the $k$-th fundamental representation $L_k :=L_{\varpi_k}$ 
is isomorphic to the exterior product $\Lambda^k \C^{2n+1}$. 
The corresponding fundamental $U_q(\son)$-representations $V_k := V_{\varpi_k}$ 
are quantized analogues of these exterior products \cite{BerZwick, BodWu}. 
We also write $L_0$ and $V_0$ to denote the trivial representation, 
isomorphic to $\Lambda^0 \C^{2n+1}$ and its quantized analogue, respectively.

The remaining fundamental representation is the (quantum) spin representation,
which in both classical and quantum cases we denote by $S$ (by slight abuse of notation).
Its highest weight is the final fundamental weight
\[
\varpi_{n} := \frac{1}{2} \sum_{i=1}^n \epsilon_i = (\tfrac{1}{2},\ldots,\tfrac{1}{2}) \, .
\]

\begin{rem}
Let $\Rep_{\text{even}}(U_q(\son))$ denote the full subcategory of $\Rep(U_q(\son))$ 
consisting of representations whose weights have integer coefficients.
For example, $V_k\in \Rep_{\text{even}}(U_q(\son))$ for $0 \le k \le n-1$, 
$S \otimes S \in \Rep_{\text{even}}(U_q(\son))$, 
and $S \notin \Rep_{\text{even}}(U_q(\son))$.
Every irreducible representation in $\Rep_{\text{even}}(U_q(\son))$ is a summand 
of a tensor power of $V_1$.
In the classical setting, the representations in $\Rep_{\text{even}}(\son)$ 
are those that integrate to the Lie group $\mathrm{SO}_{2n+1}$, 
while one must pass to the double cover 
$\mathrm{Spin}_{2n+1}$ to integrate the remaining representations.
\end{rem}

The categories $\Rep(U_q(\son))$ and $\Rep(\son)$ share the same rules for decomposing tensor products. 
Additionally, if a tensor product $L \otimes L'$ of two finite-dimensional irreducible representations 
contains the trivial representation, then $L^\vee \cong L'$, 
where $L^\vee$ denotes the dual representation of $L$.
Thanks to monoidal self-duality of fundamental representations in $\Rep(\son)$, 
we deduce that all of the fundamental representations in $\Rep(U_q(\son))$ are monoidal self-dual.

Next, we record some pertinent decompositions.

\begin{prop}
	\label{prop:decomp}
The following decompositions hold in $\Rep(U_q(\son))$:
\begin{equation} \label{eq:V1Vk}
V_1 \otimes V_k \cong
\begin{cases}
V_{\varpi_1+\varpi_k} \oplus V_{k+1} \oplus V_{k-1} & 1 \leq k \leq n-2 \\
V_{\varpi_1+\varpi_{n-1}} \oplus V_{2\varpi_n} \oplus V_{n-2} & k = n-1 \, ,
	\end{cases}
\end{equation}
\begin{equation} \label{eq:V1V2n}
V_1 \otimes V_{2\varpi_n} \cong
V_{\varpi_1+2\varpi_n} \oplus V_{2\varpi_n} \oplus V_{n-1} \, ,
	\end{equation}
\begin{equation} \label{eq:SSdecomp}
S \otimes S \cong V_{2\varpi_n} \oplus \bigoplus_{i=0}^{n-1} V_i \, ,
	\end{equation}
and
\begin{equation} \label{eq:1Sdecomp} 
V_1 \otimes S \cong V_{\varpi_1 + \varpi_n} \oplus S \, . 
	\end{equation}
	\end{prop}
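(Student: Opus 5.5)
Since $\Rep(U_q(\son))$ and $\Rep(\son)$ share the same tensor product decomposition rules, and $V_\lambda$, $L_\lambda$ have the same formal characters, it suffices to prove each decomposition classically for $\son$. There I will compare characters. Concretely, I will use the Weyl character formula in its Brauer--Klimyk form: for irreducible $L_\lambda$ and $L_\mu$,
\[
\mathrm{ch}(L_\lambda\otimes L_\mu)=\sum_{\nu} \dim L_\mu[\nu]\,\chi(\lambda+\nu),
\]
where $\chi(\lambda+\nu)$ is either zero or $\pm$ an irreducible character, determined by the dot action of the Weyl group $W(B_n)$ (signed permutations) on $\lambda+\nu+\rho$. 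Here $\rho=(n-\tfrac12,n-\tfrac32,\ldots,\tfrac12)$. I always take the minuscule or small factor as $\mu$.

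\textbf{Spin factor.} For \eqref{eq:1Sdecomp} and \eqref{eq:SSdecomp}, take $\mu=\varpi_n$. Its weights are all $(\pm\tfrac12,\ldots,\pm\tfrac12)$, each with multiplicity one. Since $S$ is minuscule, $\lambda+\nu$ is either dominant or lies on a wall, giving zero. For $\lambda=\varpi_1=(1,0,\ldots,0)$, the surviving terms are $(\tfrac32,\tfrac12,\ldots,\tfrac12)=\varpi_1+\varpi_n$ and $(\tfrac12,\ldots,\tfrac12)$, the latter from the sign vector $(-,+,\ldots,+)$. Every other sign pattern has some coordinate $-\tfrac12$ in positions $\ge2$, and the adjacent-coordinate or last-coordinate wall kills it. For $\lambda=\varpi_n$, the vectors $\lambda+\nu$ are exactly the $0/1$ vectors. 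The dominant ones are $(1^k,0^{n-k})=\varpi_k$ for $0\le k\le n$, with $\varpi_n$ here meaning $(1,\ldots,1)=2\varpi_n$. These give \eqref{eq:SSdecomp}.

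\textbf{Vector factor.} For \eqref{eq:V1Vk} and \eqref{eq:V1V2n}, take $\mu=\varpi_1$, whose weights are $\pm\epsilon_i$ and $0$, each with multiplicity one. The one delicate point is the zero weight. Adding $\pm\epsilon_i$ to $\lambda=\varpi_k$ gives the dominant weights $\varpi_k+\epsilon_1=\varpi_1+\varpi_k$, $\varpi_k+\epsilon_{k+1}=\varpi_{k+1}$ and $\varpi_k-\epsilon_k=\varpi_{k-1}$. The other choices either sit on a wall, cancelling, or are conjugate by a simple reflection to one of these with sign $-1$. That is precisely how the zero weight's contribution $\chi(\varpi_k)$ gets cancelled: the term $\varpi_k-\epsilon_{k+1}$, after reflecting $\rho$-shifted coordinates, equals $-\chi(\varpi_k)$ when $k<n$. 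I will check this case carefully, since for $k=n-1$ the term $\varpi_{n-1}+\epsilon_n$ is $(1,\ldots,1)=2\varpi_n$ rather than a fundamental $\varpi_n$, and the term involving $-\epsilon_n$ reflects via $s_n$.

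For \eqref{eq:V1V2n}, with $\lambda=(1,\ldots,1)$, the same analysis gives $\varpi_1+2\varpi_n$ from $+\epsilon_1$, $\varpi_{n-1}$ from $-\epsilon_n$, and $2\varpi_n$ from the zero weight. Here $+\epsilon_n$ is dominant-adjacent: $(1,\ldots,1,2)+\rho$ has equal coordinates $\tfrac32,\tfrac32$ in the last two slots, so it vanishes. The terms $-\epsilon_i$ for $i<n$ also vanish on walls.

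\textbf{Sanity check and main obstacle.} As a cross-check I will compare dimensions via the Weyl dimension formula. For example, $\dim(S\otimes S)=2^{2n}=\sum_{k=0}^{n}\binom{2n+1}{k}$, and $\dim L_{2\varpi_n}=\binom{2n+1}{n}$. The main obstacle is just the sign/wall bookkeeping in the vector-factor cases near $k=n-1$ and for $2\varpi_n$, where the short root changes which reflections apply. I expect this to be routine once the reflections are tracked carefully.
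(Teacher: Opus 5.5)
Your Brauer--Klimyk route is valid and differs from the paper's. The paper does not compute anything for \eqref{eq:V1Vk}, \eqref{eq:V1V2n}, \eqref{eq:SSdecomp}; it cites classical references. It proves \eqref{eq:1Sdecomp} with a short highest-weight argument. The top summand $V_{\varpi_1+\varpi_n}$ occurs once. All weights of $V_1\otimes S$ are half-integral, so the only other possible highest weight is $\varpi_n$. Its multiplicity is $\dim\Hom(V_1\otimes S,S)=\dim\Hom(V_1,S\otimes S)=1$, using self-duality of $S$ and \eqref{eq:SSdecomp}. That argument is shorter, but it depends on \eqref{eq:SSdecomp} and on the literature. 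Your computation is self-contained and treats all four cases uniformly. In the spin cases it is exactly the minuscule tensor product rule the paper later invokes as \eqref{eq:tensor-product}. Your spin-factor computations are correct, as is your computation for \eqref{eq:V1V2n}.

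One step in the vector-factor case is false as written, although the final decomposition is right. For $1\le k\le n-2$, the term $\varpi_k-\epsilon_{k+1}$ does not contribute $-\chi(\varpi_k)$. The $(k{+}1)$-st and $(k{+}2)$-nd coordinates of $\varpi_k-\epsilon_{k+1}+\rho$ are both $n-k-\tfrac32$, so this term lies on a wall and contributes $0$.

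The term that cancels the zero-weight contribution $+\chi(\varpi_k)$ is $\nu=-\epsilon_n$, for every $1\le k\le n-1$. The last coordinate of $\varpi_k-\epsilon_n+\rho$ is $-\tfrac12$. The short reflection $s_n$ negates that coordinate and returns exactly $\varpi_k+\rho$, so the term contributes $-\chi(\varpi_k)$. The terms $-\epsilon_{k+1}$ and $-\epsilon_n$ coincide only when $k=n-1$.

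With this correction, every other weight has two equal $\rho$-shifted coordinates and vanishes. These are $+\epsilon_i$ for $i\ne 1,k+1$ and $-\epsilon_i$ for $i\ne k,n$. What remains is exactly \eqref{eq:V1Vk}.

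There is also a small arithmetic slip. For $\lambda=2\varpi_n$, the last two coordinates of $\lambda+\epsilon_n+\rho$ are $\tfrac52,\tfrac52$, not $\tfrac32,\tfrac32$. Your conclusion that this term vanishes is unaffected.
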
	
\begin{proof}
This is classical; 
see e.g.~\cite{YDM} for \eqref{eq:V1Vk} and \eqref{eq:V1V2n}
and \cite[Exercise 19.16]{FultonHarris} for \eqref{eq:SSdecomp}.
For \eqref{eq:1Sdecomp}, 
observe that we must have the highest weight summand $V_{\varpi_1 + \varpi_n}$ 
with multiplicity one and all other summands having strictly lower highest weight in the dominance order.
Further, all weights appearing in $V_1 \otimes S$ have half-integer weights, 
so the only possible remaining summands are isomorphic to $S$.
To see that there is only one such summand, we use monoidal self-duality of the spin representation to compute
\[
\dim \big( \Hom_{U_q(\son)}(V_1 \otimes S, S) \big) = \dim \big( \Hom_{U_q(\son)}(V_1, S \otimes S) \big)
\stackrel{{\eqref{eq:SSdecomp}}}{=} 1\, . \qedhere
	\]
\end{proof}

\begin{rem} \label{rem:morelabels}
If we write $V_n := V_{2 \varpi_n}$ and $V_{2n+1-k} := V_k$ for $0 \le k \le n$, 
and write $\mu_k$ to denote the highest weight of
$V_k$ for $1 \le k \le 2n$, 
then 
\[ 
V_1 \ot V_k \cong V_{\varpi_1+\mu_k} \oplus V_{k+1} \oplus V_{k-1} 
\] 
for all $ 1 \le k \le 2n$. 
In this paper the index $k$ in
$V_k$ is constrained so that $1 \le k \le n-1$, 
but we expect many diagrammatic formulae to continue to hold for all $1 \le k \le 2n$ when extended in this fashion. 
Trivalent vertices involving $k$ will not represent the same morphism as those with $2n+1-k$, 
but some scaling factors need to be inserted, c.f. Remark \ref{rem:2nplus1minusk}.

In this extended graphical calculus, 
one should interpret a strand of thickness $n$ as representing the object in the Karoubi envelope of $\Web(\son)$ 
given by the idempotent
\[
\begin{tikzpicture}[scale=.5, tinynodes, anchorbase]
	\draw[very thick,gray] (1,-1) to (1,1);
	\draw[very thick,gray] (0,-1) to (0,1);
\end{tikzpicture} \
- \sum_{k=1}^{n} \frac{(-1)^{\binom{k+1}{2}}}{``{\textstyle {2k \brack k}}"}
\begin{tikzpicture}[scale=.275, smallnodes, anchorbase]
	\draw[very thick,gray] (-1,0) to (0,1);
	\draw[very thick,gray] (1,0) to (0,1);
	\draw[very thick,gray] (0,2.5) to (-1,3.5);
	\draw[very thick,gray] (0,2.5) to (1,3.5);
	\draw[very thick] (0,1) to node[right=-2pt]{$n{-}k$} (0,2.5);
\end{tikzpicture}
\]
in $\End(\gS\otimes \gS)$. The trivalent vertex $n \to \gS \otimes \gS$
is the inclusion map, and the trivalent vertex $\gS \otimes \gS \to n$ is the projection map, which is compatible with
\eqref{eq:digonS} for $k=n$. The equality \eqref{eq:blackspinH=I} for $k=n-1$, 
when correctly interpreted, can be viewed
as a definition of the trivalent vertex $(n-1) \otimes 1 \to n$.

We do not pursue this extended calculus further in this paper.
\end{rem}

\subsection{Additional relations}\label{ss:morerels}

In this section, we record some additional web relations 
which follow as direct consequences of our defining relations \eqref{eq:webrel}.
While the enterprising reader can view their establishment as an exercise 
(of varying difficulty, depending on the relation), 
we provide some detail, especially for those relations 
that play a role in the proof of Theorem \ref{thm:main}.

Henceforth we let 
\[
\dd_k := ``{\textstyle {2k \brack k}}" = \prod_{i=1}^{k} [2]_{2i-1}
\]
denote the devil's central binomial coefficient
(this was our notation from \cite{BER}).
For example, $(-1)^{n+1 \choose 2}\dd_n$ is the value of the $\gS$-colored circle by \eqref{eq:circleS}. 
Note that \eqref{eq:digonS} contains $\dd_{n-k}$, and \eqref{eq:spinH=I} contains $\dd_k^{-1}$.
We also remind the reader that $``[k]^2"=[k]_{q^{2}}$.

\begin{prop}\label{prop:easyrel}
The following relations hold in $\Web(\son)$:
\newline\begin{minipage}{.5\textwidth} 
\begin{equation}
	\label{eq:otherdigonS}
\begin{tikzpicture}[scale=.175,tinynodes, anchorbase]
	\draw [very thick, gray] (0,.75) to (0,2.5) ;
	\draw [very thick] (0,-2.75) to [out=30,in=330] node[right,xshift=-2pt]{$1$} (0,.75);
	\draw [very thick, gray] (0,-2.75) to [out=150,in=210]  (0,.75);
	\draw [very thick, gray] (0,-4.5) to (0,-2.75);
\end{tikzpicture}
= 
(-1)^n\frac{[2n+1]}{[2]} \
\begin{tikzpicture}[scale=.175, tinynodes, anchorbase]
	\draw [very thick, gray] (0,-4.5) to (0,2.5);
\end{tikzpicture}
		\end{equation}
\end{minipage}
\begin{minipage}{.5\textwidth} 
\begin{equation}
	\label{eq:zerodigonB}
\begin{tikzpicture}[scale=.175,tinynodes, anchorbase]
	\draw [very thick] (0,.75) to (0,2.5) node[above,yshift=-3pt]{$k{+}2$};
	\draw [very thick] (0,-2.75) to [out=30,in=330] node[right,xshift=-2pt]{$k{+}1$} (0,.75);
	\draw [very thick] (0,-2.75) to [out=150,in=210] node[left,xshift=2pt]{$1$} (0,.75);
	\draw [very thick] (0,-4.5) node[below,yshift=2pt]{$k$} to (0,-2.75);
\end{tikzpicture}
= 0 \qquad \qquad
	\end{equation}
	\end{minipage}
\begin{minipage}{.4\textwidth} 
\begin{gather}
	\label{eq:circle1}
\begin{tikzpicture}[scale =.625, anchorbase]
	\draw[very thick] (0,0) node[left=7pt]{\scs$1$} circle (.5);
\end{tikzpicture}
= [2]_{2n-1}\frac{[2n+1]}{[2]} \qquad \qquad
\end{gather}
	\end{minipage}
\begin{minipage}{.6\textwidth} 
\begin{equation}
	\label{eq:otherdigonB}
\begin{tikzpicture}[scale=.175,tinynodes, anchorbase]
	\draw [very thick] (0,.75) to (0,2.5) node[above,yshift=-3pt]{$k$};
	\draw [very thick] (0,-2.75) to [out=30,in=330] node[right,xshift=-2pt]{$1$} (0,.75);
	\draw [very thick] (0,-2.75) to [out=150,in=210] node[left,xshift=2pt]{$k{+}1$} (0,.75);
	\draw [very thick] (0,-4.5) node[below,yshift=2pt]{$k$} to (0,-2.75);
\end{tikzpicture}
= (-1)^{k} 
``[2n-k+1]^2"
\dfrac{[2]_{2n-2k-1}}{[2]_{2n-2k+1}}
\begin{tikzpicture}[scale=.175, tinynodes, anchorbase]
	\draw [very thick] (0,-4.5) node[below,yshift=2pt]{$k$} to (0,2.5) node[above,yshift=-3pt]{$k$};
\end{tikzpicture}
\end{equation}
\end{minipage}
\begin{minipage}{.4\textwidth} 
\begin{equation}\label{eq:ggb-triangle+}
\begin{tikzpicture}[scale=.25,tinynodes,anchorbase]
	\draw[very thick, gray] (-1,0) to (1,0);
	\draw[very thick] (-1,0) to node[above=-2pt, left=-1pt]{$1$} (0,1.732);
	\draw[very thick] (1,0) to node[above=-2pt, right=-1pt]{$k$} (0,1.732);
	\draw[very thick] (0,1.732) to (0,3.232) node[above=-3pt]{$k{+}1$};
	\draw[very thick, gray] (-2.3,-.75) to (-1,0);
	\draw[very thick, gray] (2.3,-.75) to (1,0);
\end{tikzpicture}
= (-1)^k 
``[k+1]^2"
\begin{tikzpicture}[scale =.5, smallnodes,anchorbase]
	\draw[very thick, gray] (0,0) to [out=90,in=210] (.5,.75);
	\draw[very thick, gray] (1,0) to [out=90,in=330] (.5,.75);
	\draw[very thick] (.5,.75) to (.5,1.5) node[above=-3pt]{$k{+}1$};
\end{tikzpicture}
	\end{equation}
	\end{minipage}
\begin{minipage}{.6\textwidth} 
\begin{equation}\label{eq:ggb-triangle-}
\begin{tikzpicture}[scale=.25,tinynodes,anchorbase]
	\draw[very thick, gray] (-1,0) to (1,0);
	\draw[very thick] (-1,0) to node[above=-2pt, left=-1pt]{$1$} (0,1.732);
	\draw[very thick] (1,0) to node[above=-2pt, right=-1pt]{$k$} (0,1.732);
	\draw[very thick] (0,1.732) to (0,3.232) node[above=-3pt]{$k{-}1$};
	\draw[very thick, gray] (-2.3,-.75) to (-1,0);
	\draw[very thick, gray] (2.3,-.75) to (1,0);
\end{tikzpicture}
= (-1)^n 
``[2n-k+2]^2"
\frac{1}{[2]_{2n-2k+3}} 
\begin{tikzpicture}[scale =.5, smallnodes,anchorbase]
	\draw[very thick, gray] (0,0) to [out=90,in=210] (.5,.75);
	\draw[very thick, gray] (1,0) to [out=90,in=330] (.5,.75);
	\draw[very thick] (.5,.75) to (.5,1.5) node[above=-3pt]{$k{-}1$};
\end{tikzpicture} \, .
	\end{equation}
	\end{minipage}
\end{prop}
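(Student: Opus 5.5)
The plan is to derive each relation by a \emph{partial closure} argument. We take one of the ``$H=I$'' relations \eqref{eq:spinH=I}, \eqref{eq:blackspinH=I} or \eqref{eq:blackH=I}, and compose it with a cap, or close off a strand, so that the left-hand side becomes the digon or triangle we want. On the right-hand side almost every term then contains a lollipop, which vanishes by \eqref{eq:lolliS}. It also vanishes by Schur-type arguments, i.e.~\eqref{eq:zerodigonB} once it is known. The surviving terms are evaluated by \eqref{eq:digonS}, \eqref{eq:digon1} and \eqref{eq:circleS}. What remains is a quantum-number identity in devil's arithmetic.

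Concretely, for \eqref{eq:otherdigonS} we cap off the two top strands of \eqref{eq:spinH=I}. On the left this produces a cap with an $\gS$–$1$ digon inserted. On the right, each term with an $(n{-}k)$-edge, $k<n$, contains an $\gS$-lollipop and dies. The identity term gives $\tfrac{1}{[2]}$ times a cap. The $k=n$ term gives a circle, hence the coefficient
\[
(-1)^{\binom{n}{2}}\tfrac{``[n][n+1]"}{\dd_n}\,(-1)^{\binom{n+1}{2}}\dd_n=(-1)^n``[n][n+1]".
\]
The total is therefore $\tfrac{1}{[2]}+(-1)^n``[n][n+1]"$. Expanding $[2]``[n][n+1]"=[2]([2n]-[2n-2]+\cdots\pm[2])$ telescopes to $[2n+1]-(-1)^n$, which gives the stated scalar $(-1)^n\frac{[2n+1]}{[2]}$.

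For \eqref{eq:circle1} we evaluate the closed web consisting of an $\gS$-circle with one $1$-chord in two ways. Via \eqref{eq:otherdigonS} and \eqref{eq:circleS} it equals $(-1)^n\frac{[2n+1]}{[2]}(-1)^{\binom{n+1}{2}}\dd_n$. Via the $k=1$ case of \eqref{eq:digonS} it equals $(-1)^{\binom{n}{2}}\dd_{n-1}$ times the $1$-circle. Dividing, and using $\dd_n/\dd_{n-1}=[2]_{2n-1}$, gives the result.

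For \eqref{eq:zerodigonB} we attach a $1$-strand and use \eqref{eq:assoc} to rewrite the $k\to k{+}1\to k{+}2$ ladder through the other side. Then \eqref{eq:digon1} produces a factor $``[k{+}1]^2"$, and we divide by it; this is the division flagged in Remark~\ref{rem:integralform}. Alternatively, we close the $k{+}2$ output against an $(k{+}2)\to 1\otimes(k{+}1)$ vertex and compare with \eqref{eq:digon1}. For \eqref{eq:otherdigonB} we close the right-hand $1$–$1$ strands in \eqref{eq:blackH=I}, i.e.~we take the partial trace on the $1$ strand. The first term becomes a web whose $2$-edge yields, by \eqref{eq:zerodigonB} and \eqref{eq:digon1}, a multiple of the identity. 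The second term gives the $(k{-}1)$-digon evaluated by \eqref{eq:digon1}, and the cup–cap term becomes $(-1)^k\frac{[2]_{2n-2k-1}}{[2]_{2n-1}}$ times the $1$-circle \eqref{eq:circle1}. We would argue by induction on $k$, using the $k-1$ case of \eqref{eq:otherdigonB} itself where needed. Summing and simplifying should give $(-1)^k``[2n-k+1]^2"\frac{[2]_{2n-2k-1}}{[2]_{2n-2k+1}}$.

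For the triangles \eqref{eq:ggb-triangle+} and \eqref{eq:ggb-triangle-} we apply \eqref{eq:blackspinH=I}, rotated, to the $1$–$\gS$–$k$ corner of the triangle. This replaces the bottom $\gS$-edge together with its two vertices by $(k,1)\to k{\pm}1$ ladders followed by an $\gS\gS$ vertex. The triangle then collapses to a black digon: a $k{+}1\to 1,k\to k{+}1$ digon evaluated by \eqref{eq:digon1} in the ``$+$'' case, and a $k{-}1\to1,k\to k{-}1$ digon in the ``$-$'' case, plus a term that vanishes by \eqref{eq:zerodigonB} or \eqref{eq:lolliS}. The coefficient $\frac{1}{[2]_{2n-2k+1}}$, combined with \eqref{eq:otherdigonB}, yields the stated scalars. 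The main obstacle is this last step: choosing the correct rotation of \eqref{eq:blackspinH=I}, tracking signs and the extra terms, and verifying the final devil's-arithmetic identities for \eqref{eq:otherdigonB} and \eqref{eq:ggb-triangle-}. We would check these first for small $n,k$ against the dimension-count specialization $q=1$ (up to sign).
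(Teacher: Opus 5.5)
\textbf{Where the proposal works.} Your derivations of \eqref{eq:otherdigonS}, \eqref{eq:circle1}, \eqref{eq:otherdigonB} and the two triangle relations follow the paper's route. Those steps are: close off \eqref{eq:spinH=I}; evaluate an $\gS$-circle with one $1$-chord in two ways; close the $1$-strands of \eqref{eq:blackH=I}; and compose a rotated \eqref{eq:blackspinH=I} with a black trivalent vertex. Your arithmetic for \eqref{eq:otherdigonS} and \eqref{eq:circle1} is correct.

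\textbf{The gap: no base case for \eqref{eq:zerodigonB}.} As written, your \eqref{eq:digon1}/\eqref{eq:assoc} manipulation does not end in a diagram known to vanish. Your alternative, closing the $k{+}2$ output against a vertex, does not either. Carried out correctly, as in the paper, this manipulation gives only the inductive step. One inserts a \eqref{eq:digon1}-digon on the $k$-edge, with scalar $(-1)^{k-1}``[k]^2"$, and applies \eqref{eq:assoc} twice. This exhibits the $(k{-}1)$-instance of \eqref{eq:zerodigonB} as a subdiagram. So you need a base case, and the proposal supplies none. The induction bottoms out at $k=0$, which is the vanishing of the $2$-labeled lollipop \eqref{eq:lolli1}. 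There is no $k$-edge there to insert a digon on.

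\textbf{The missing idea.} The base case needs the spin relations; it is the one place in the proposition where \eqref{eq:blackspinH=I} is used (compare the remark in the proof of Theorem~\ref{thm:functor}). The argument runs as follows.
\begin{enumerate}
\item Insert the gray digon \eqref{eq:digonS} with label $2$ on the stem of the lollipop, dividing by $(-1)^{\binom{n-1}{2}}\dd_{n-2}$.
\item Rewrite the resulting $\gS\otimes\gS\to 2\to 1\otimes 1$ piece using the $k=1$ case of \eqref{eq:blackspinH=I}.
\item The $\gS$-rung term contains a $1$--$\gS$ digon on a gray loop, which \eqref{eq:otherdigonS} turns into a gray lollipop.
\item The other term is a gray lollipop times a $1$-circle.
\item Both terms vanish by \eqref{eq:lolliS}.
\end{enumerate}

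\textbf{Consequences for the other relations.} The gap propagates, because your computations for \eqref{eq:otherdigonB} and the triangles use \eqref{eq:zerodigonB}, including its $k=0$ case. In \eqref{eq:otherdigonB}, the first term on the right of \eqref{eq:blackH=I} does not become a nonzero multiple of the identity after closing. It is exactly the $2$-lollipop, and it vanishes by \eqref{eq:lolli1}. The two surviving terms then sum to $(-1)^k\frac{[2]_{2n-2k-1}}{[2]_{2n-2k+1}}\cdot\frac{[2]_{2n-2k+1}[2n+1]-[2k]}{[2]}$. The identity $[2]_{2n-2k+1}[2n+1]=[4n-2k+2]+[2k]$ then gives the claimed scalar directly, since $``[2n-k+1]^2"=\frac{[4n-2k+2]}{[2]}$. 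No induction on $k$ is needed for \eqref{eq:otherdigonB}.
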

\begin{proof}
To prove \eqref{eq:otherdigonS} we precompose \eqref{eq:spinH=I} with an $\gS$-labeled cup. 
Applying \eqref{eq:lolliS} will kill every term in the sum except for $k=n$. 
The $k=n$ term yields an $\gS$-labeled circle, which cancels the denominator $\dd_n$ using \eqref{eq:circleS}. 
Thus it remains to show that 
\begin{equation} (-1)^n \frac{[2n+1]}{[2]} = \frac{1}{[2]} + (-1)^{\binom{n}{2}}(-1)^{\binom{n+1}{2}} ``[n][n+1]", \end{equation}
which we leave to the reader.
(Hint: multiply both sides by $[2]$.)

We next prove \eqref{eq:zerodigonB} by induction on $k$. We first establish the $k=0$ case, 
which is the relation
\begin{gather} \label{eq:lolli1}
\begin{tikzpicture}[scale=.175,tinynodes,anchorbase,rotate=180]
	\draw [very thick] (0,-2.75) to [out=30,in=0] (0,.75) node[below=-1pt]{$1$};
	\draw [very thick] (0,-2.75) to [out=150,in=180] (0,.75);
	\draw [very thick] (0,-4.5) node[above=-2pt]{$2$} to (0,-2.75);
\end{tikzpicture}
= 0 \, .
\end{gather}
For this, we compute 
\begin{align*}
\begin{tikzpicture}[scale=.175,tinynodes,anchorbase,rotate=180]
	\draw [very thick] (0,-2.75) to [out=30,in=0] (0,.75) node[below=-1pt]{$1$};
	\draw [very thick] (0,-2.75) to [out=150,in=180] (0,.75);
	\draw [very thick] (0,-4.5) node[above=-2pt]{$2$} to (0,-2.75);
\end{tikzpicture}
\stackrel{\eqref{eq:digonS}}{=}
(-1)^{\binom{n-1}{2}} \dd_{n-2}^{-1}
\begin{tikzpicture}[scale=.175,tinynodes,anchorbase,rotate=180]
	\draw [very thick] (0,-2.75) to [out=30,in=0] (0,0) node[below=-1pt]{$1$};
	\draw [very thick] (0,-2.75) to [out=150,in=180] (0,0);
	\draw [very thick] (0,-4) to node[right=-1pt]{$2$} (0,-2.75);
	\draw[very thick,gray] (0,-6) to [out=30,in=330] (0,-4);
	\draw[very thick,gray] (0,-6) to [out=150,in=210] (0,-4);
	\draw [very thick] (0,-7) node[above=-2pt]{$2$} to (0,-6);
\end{tikzpicture}
\stackrel{\eqref{eq:blackspinH=I}}{=}
(-1)^{\binom{n-1}{2}} \dd_{n-2}^{-1}
\left(
\begin{tikzpicture}[scale=.175,tinynodes,anchorbase,rotate=180]
	\draw[very thick,gray] (0,-6) to [out=30,in=0] (1.5,-2.5);
	\draw[very thick,gray] (0,-6) to [out=150,in=180] (-1.5,-2.5);
	\draw [very thick] (0,-7) node[above=-2pt]{$2$} to (0,-6);
	\draw[very thick] (1.5,-2.5) to [out=120,in=60] node[below=-2pt]{$1$} (-1.5,-2.5);
	\draw[very thick,gray] (1.5,-2.5) to [out=240,in=300] (-1.5,-2.5);
\end{tikzpicture}
- (-1)^n \frac{1}{[2]_{2n-1}}
\begin{tikzpicture}[scale=.175,tinynodes,anchorbase,rotate=180]
	\draw[very thick,gray] (0,-6) to [out=30,in=0] (0,-4);
	\draw[very thick,gray] (0,-6) to [out=150,in=180] (0,-4);
	\draw [very thick] (0,-7) node[above=-2pt]{$2$} to (0,-6);
	\draw[very thick] (0,-2) node[left=2pt]{\scs$1$} circle (.75);
\end{tikzpicture}
\right)
\stackrel{\substack{\eqref{eq:otherdigonS} \\ \eqref{eq:lolliS}}}{=} 0 \, .
	\end{align*}
The inductive step for \eqref{eq:zerodigonB} follows from the computation
\begin{equation} \label{dividingbyk}
(-1)^{k-1}
``[k]^2"
\begin{tikzpicture}[scale=.2, xscale=-1,tinynodes, anchorbase]
	\draw [very thick] (-2,-4) node[below=-2pt]{$k$} to [out=90,in=210] (0,.75);
	\draw[very thick] (0,.75) to [out=330,in=90] (1,-1) to [out=270,in=180] (2,-2) 
		node[below=-2pt]{$1$} to [out=0,in=270] (3,-1) to [out=90,in=330] (1,2.5);
	\draw [very thick] (0,.75) to [out=90,in=210] node[right=-1pt]{$k{+}1$} (1,2.5);
	\draw [very thick] (1,2.5) to (1,4.25) node[above=-2pt]{$k{+}2$};
\end{tikzpicture}
\stackrel{\eqref{eq:digon1}}{=}
\begin{tikzpicture}[scale=.2, xscale=-1,tinynodes, anchorbase]
	\draw [very thick] (-2,-4) node[below=-2pt]{$k$} to (-2,-3) node[left=-1pt]{$k{-}1$}
	to [out=30,in=330] (-2,-1.5) to [out=90,in=210] node[right]{$k$} (0,.75);
	\draw[very thick] (-2,-3) node[right=-1pt]{$1$} to [out=150,in=210] (-2,-1.5);
	\draw[very thick] (0,.75) to [out=330,in=90] (1,-1) to [out=270,in=180] (2,-2) 
		node[below=-2pt]{$1$} to [out=0,in=270] (3,-1) to [out=90,in=330] (1,2.5);
	\draw [very thick] (0,.75) to [out=90,in=210] node[right=-1pt]{$k{+}1$} (1,2.5);
	\draw [very thick] (1,2.5) to (1,4.25) node[above=-2pt]{$k{+}2$};
\end{tikzpicture}
\stackrel{\eqref{eq:assoc}}{=}
\begin{tikzpicture}[scale=.25,tinynodes,anchorbase,xscale=-1]
	\draw[very thick] (-2,-2) node[below=-2pt]{$k$} to [out=90,in=210] (-1,0);
	\draw[very thick] (-1,0) to node[below=-1pt]{$k{-}1$} (1,0);
	\draw[very thick] (-1,0) to node[pos=.7,right=-1pt]{$1$} (0,1.732);
	\draw[very thick] (1,0) to node[pos=.7,left=-1pt]{$k$} (0,1.732);
	\draw[very thick] (0,1.732) to [out=90,in=210] node[right=-1pt]{$k{+}1$} (1.25,3.5);
	\draw[very thick] (1,0) to [out=330,in=225] (3,0) node[below]{$1$} 
		to [out=45,in=330] (1.25,3.5);
	\draw[very thick] (1.25,3.5) to (1.25,5) node[above=-2pt]{$k{+}2$};
\end{tikzpicture}
\stackrel{\eqref{eq:assoc}}{=}
\begin{tikzpicture}[scale=.25,tinynodes,anchorbase,xscale=-1]
	\draw[very thick] (-2,-2) node[below=-2pt]{$m$} to [out=90,in=210] (-1,0);
	\draw[very thick] (1,.5) to [out=330,in=225] (3,0) node[below]{$1$} 
		to [out=45,in=330] (2.5,2);
	\draw[very thick] (-1,0) to node[below=-1pt,xshift=-1pt]{$k{-}1$} (1,.5);
	\draw[very thick] (2.5,2) to [out=90,in=330] node[above=-2pt,xshift=-5pt]{$k{+}1$} (1.25,3.5);
	\draw[very thick] (1,.5) to [out=90,in=210] node[above=-2pt]{$k$} (2.5,2);
	\draw[very thick] (-1,0) to [out=90,in=210] node[right=-1pt]{$1$} (1.25,3.5);
	\draw[very thick] (1.25,3.5) to (1.25,5) node[above=-2pt]{$k{+}2$};
\end{tikzpicture}
\stackrel{\eqref{eq:zerodigonB}}{=}
0.
\end{equation}
We divide by $(-1)^{k-1}``[k]^2"$ 
to draw our conclusion.

Next, we will show \eqref{eq:circle1} and \eqref{eq:otherdigonB};
although the former is simply the $k=0$ case of the latter, 
we establish it first since it is used for the general case.
We thus compute
\begin{align*}
\begin{tikzpicture}[scale =.625, anchorbase]
	\draw[very thick] (0,0) node[left=7pt]{\scs$1$} circle (.5);
\end{tikzpicture}
&\stackrel{\eqref{eq:digonS}}{=}
(-1)^{n \choose 2} \dd_{n-1}^{-1} \;
\begin{tikzpicture}[scale=.5,tinynodes, anchorbase]
	\draw[very thick, gray] (0,-.5) to [out=150,in=210] (0,.5);
	\draw[very thick, gray] (0,-.5) to [out=30,in=330] (0,.5);
	\draw[very thick] (0,.5) to [out=90,in=180] (.5,1) to [out=0,in=90] (1,0) node[right=-2pt]{$1$}
		to [out=270,in=0] (.5,-1) to [out=180,in=270] (0,-.5);
\end{tikzpicture}
=
(-1)^{n \choose 2} \dd_{n-1}^{-1} \;
\begin{tikzpicture}[scale=.5,tinynodes, anchorbase]
	\draw[very thick] (0,-.5) to [out=150,in=210] node[left=-2pt]{$1$} (0,.5);
	\draw[very thick, gray] (0,-.5) to [out=30,in=330] (0,.5);
	\draw[very thick,gray] (0,.5) to [out=90,in=180] (.5,1) to [out=0,in=90] (1,0) 
		to [out=270,in=0] (.5,-1) to [out=180,in=270] (0,-.5);
\end{tikzpicture} \\
&\stackrel{\eqref{eq:otherdigonS}}{=}
(-1)^{{n \choose 2}+n}
\frac{[2n+1]}{[2]}
\dd_{n-1}^{-1} \;
\begin{tikzpicture}[scale =.625, anchorbase]
	\draw[very thick, gray] (0,0) circle (.5);
\end{tikzpicture}
\stackrel{\eqref{eq:circleS}}{=}
[2]_{2n-1}\frac{[2n+1]}{[2]} \, .
\end{align*}
In the last step, we also
used that $\dd_n \dd_{n-1}^{-1} = [2]_{2n-1}$.

Placing a $1$-labeled cap on \eqref{eq:blackH=I} and turning the result by 90 degrees, we have
\begin{align*}
\begin{tikzpicture}[scale=.175,tinynodes, anchorbase]
	\draw [very thick] (0,.75) to (0,2.5) node[above,yshift=-3pt]{$k$};
	\draw [very thick] (0,-2.75) to [out=30,in=330] node[right,xshift=-2pt]{$1$} (0,.75);
	\draw [very thick] (0,-2.75) to [out=150,in=210] node[left,xshift=2pt]{$k{+}1$} (0,.75);
	\draw [very thick] (0,-4.5) node[below,yshift=2pt]{$k$} to (0,-2.75);
\end{tikzpicture}
&\stackrel{\eqref{eq:blackH=I}}{=}
\begin{tikzpicture}[scale=.175,tinynodes, anchorbase]
	\draw [very thick] (0,.75) to (0,2.5) node[above,yshift=-3pt]{$k$};
	\draw[very thick] (0,.75) to (1.5,-1);
	\draw[very thick] (0,-2.75) to (1.5,-1);
	\draw[very thick] (1.5,-1) to node[above=-2pt]{$2$} (3,-1);
	\draw[very thick] (3,-1) to [out=60,in=90] (4.5,-1) to [out=270,in=300] (3,-1);
	\draw [very thick] (0,-2.75) to [out=150,in=210] node[left,xshift=2pt]{$k{-}1$} (0,.75);
	\draw [very thick] (0,-4.5) node[below,yshift=2pt]{$k$} to (0,-2.75);
\end{tikzpicture}
+\tfrac{[2]_{2n{-}2k{-}1}}{[2]_{2n{-}2k{+}1}}
\begin{tikzpicture}[scale=.175,tinynodes, anchorbase]
	\draw [very thick] (0,.75) to (0,2.5) node[above,yshift=-3pt]{$k$};
	\draw [very thick] (0,-2.75) to [out=30,in=330] node[right,xshift=-2pt]{$1$} (0,.75);
	\draw [very thick] (0,-2.75) to [out=150,in=210] node[left,xshift=2pt]{$k{-}1$} (0,.75);
	\draw [very thick] (0,-4.5) node[below,yshift=2pt]{$k$} to (0,-2.75);
\end{tikzpicture}
+ (-1)^k
\tfrac{[2]_{2n{-}2k{-}1}}{[2]_{2n{-}1}}
\begin{tikzpicture}[scale=.175,tinynodes, anchorbase]
	\draw [very thick] (0,-4.5) node[below,yshift=2pt]{$k$} to (0,2.5) node[above,yshift=-3pt]{$k$};
	\draw[very thick] (1.5,-1) node[right=2pt]{\scs$1$} circle (.75);
\end{tikzpicture} \\
&\stackrel{\eqref{eq:lolli1},\eqref{eq:digon1},\eqref{eq:circle1}}{=}
\left(
(-1)^{k-1}
\tfrac{``[k]^2"[[2]_{2n{-}2k{-}1}}{[2]_{2n{-}2k{+}1}} 
+ (-1)^k
\tfrac{[2n+1][2]_{2n{-}2k{-}1}}{[2]}
	\right)
\begin{tikzpicture}[scale=.175,tinynodes, anchorbase]
	\draw [very thick] (0,-4.5) node[below,yshift=2pt]{$k$} to (0,2.5) node[above,yshift=-3pt]{$k$};
\end{tikzpicture} \;.
	\end{align*}
From here, \eqref{eq:otherdigonB} is an unilluminating exercise in quantum arithmetic.

Relations \eqref{eq:ggb-triangle+} and \eqref{eq:ggb-triangle-} 
then follow by postcomposing \eqref{eq:blackspinH=I} with an appropriate trivalent vertex to match the left-hand side of the equations. 
The right-hand side will have two terms, one of which is zero by \eqref{eq:zerodigonB}, 
and the other can be resolved using \eqref{eq:digon1} or \eqref{eq:otherdigonB}; no quantum arithmetic required.
	\end{proof}

\begin{rem} \label{rem:2nplus1minusk} 
Following the idea of Remark \ref{rem:morelabels}, 
one might naively try to resolve the left-hand side of \eqref{eq:otherdigonB} by replacing $k$ with $2n+1-k$, 
replacing $k+1$ with $2n-k$, and using \eqref{eq:digon1}. 
This would produce the scalar $(-1)^{2n-k} ``[2n+1-k]^2"$, 
which disagrees with the scalar from \eqref{eq:otherdigonB} by a factor of $\frac{[2]_{2n-2k-1}}{[2]_{2n-2k+1}}$. 
This rescaling factor (invisible at $q=1$) also appears in \eqref{eq:blackH=I}, 
and the individual factors $[2]_{2n-2k\pm 1}$ appear frequently in relations that involve strands labeled both $k+1$ and $k-1$. \end{rem}

In order to establish further relations, 
we find it convenient to introduce additional trivalent vertices, 
called \emph{flow vertices},
for which we need to divide by quantum numbers of the form $``[k]^2"$. 
For $k,\ell$ satisfying $k+\ell \leq n-1$
we let
\begin{equation}\label{eq:flowvertex}
\begin{tikzpicture}[scale =.5, smallnodes,anchorbase]
	\draw[very thick] (0,0) node[below=-1pt]{$k$} to [out=90,in=210] (.5,.75);
	\draw[very thick] (1,0) node[below=-1pt]{$\ell$} to [out=90,in=330] (.5,.75);
	\draw[very thick] (.5,.75) to (.5,1.5) node[above=-2pt]{$k{+}\ell$};
\end{tikzpicture}
:=
(-1)^{k-1}
\frac{1}{``[k]^2"}
\begin{tikzpicture}[scale=.3,smallnodes,anchorbase]
	\draw[very thick] (-1,0) to node[below=-1pt]{$k{-}1$} (1,0);
	\draw[very thick] (-1,0) to node[left,xshift=2pt]{$1$} (0,1.732);
	\draw[very thick] (1,0) to node[right,xshift=-2pt]{$k{+}\ell{-}1$} (0,1.732);
	\draw[very thick] (0,1.732) to (0,3.232) node[above=-2pt]{$k{+}\ell$};
	\draw[very thick] (-2.3,-.75) node[below=-2pt,xshift=-2pt]{$k$} to (-1,0);
	\draw[very thick] (2.3,-.75) node[below=-2pt,xshift=2pt]{$\ell$} to (1,0);
\end{tikzpicture}
=
(-1)^{\ell-1}
\frac{1}{``[\ell]^2"}
\begin{tikzpicture}[scale=.3,smallnodes,anchorbase]
	\draw[very thick] (-1,0) to node[below=-1pt]{$\ell{-}1$} (1,0);
	\draw[very thick] (-1,0) to node[left,xshift=2pt]{$k{+}\ell{-}1$} (0,1.732);
	\draw[very thick] (1,0) to node[right,xshift=-2pt]{$1$} (0,1.732);
	\draw[very thick] (0,1.732) to (0,3.232) node[above=-2pt]{$k{+}\ell$};
	\draw[very thick] (-2.3,-.75) node[below=-2pt,xshift=-2pt]{$k$} to (-1,0);
	\draw[very thick] (2.3,-.75) node[below=-2pt,xshift=2pt]{$\ell$} to (1,0);
\end{tikzpicture} \, .
\end{equation}
The equality in \eqref{eq:flowvertex} is not obvious, and is part of the content of the following proposition.

\begin{prop}
The flow vertices in \eqref{eq:flowvertex} are well-defined and 
satisfy the relations \\
\begin{minipage}{.5\textwidth} 
\begin{equation}
	\label{eq:flowassoc}
	\begin{tikzpicture}[scale=.2, xscale=-1,tinynodes, anchorbase]
		\draw [very thick] (-1,-1) node[below=-2pt]{$k$} to [out=90,in=210] (0,.75);
		\draw [very thick] (1,-1) node[below=-2pt]{$\ell$} to [out=90,in=330] (0,.75);
		\draw [very thick] (3,-1) node[below=-2pt]{$m$} to [out=90,in=330] (1,2.5);
		\draw [very thick] (0,.75) to [out=90,in=210] (1,2.5);
		\draw [very thick] (1,2.5) to (1,4.25) node[above=-2pt]{$k{+}\ell{+}m$};
	\end{tikzpicture}
	=
	\begin{tikzpicture}[scale=.2, tinynodes, anchorbase]
		\draw [very thick] (-1,-1) node[below=-2pt]{$k$} to [out=90,in=210] (0,.75);
		\draw [very thick] (1,-1) node[below=-2pt]{$\ell$} to [out=90,in=330] (0,.75);
		\draw [very thick] (3,-1) node[below=-2pt]{$m$} to [out=90,in=330] (1,2.5);
		\draw [very thick] (0,.75) to [out=90,in=210] (1,2.5);
		\draw [very thick] (1,2.5) to (1,4.25) node[above=-2pt]{$k{+}\ell{+}m$};
	\end{tikzpicture}
	\end{equation}
\end{minipage}
\begin{minipage}{.5\textwidth}
\begin{equation}
	\label{eq:flowdigon}
	\begin{tikzpicture}[scale=.175,tinynodes, anchorbase]
		\draw [very thick] (0,.75) to (0,2.5) node[above,yshift=-3pt]{$k{+}\ell$};
		\draw [very thick] (0,-2.75) to [out=30,in=330] node[right,xshift=-2pt]{$\ell$} (0,.75);
		\draw [very thick] (0,-2.75) to [out=150,in=210] node[left,xshift=2pt]{$k$} (0,.75);
		\draw [very thick] (0,-4.5) node[below,yshift=2pt]{$k{+}\ell$} to (0,-2.75);
	\end{tikzpicture}
	= (-1)^{k \ell} {k+\ell \brack k}_{q^2}
	\begin{tikzpicture}[scale=.175,tinynodes, anchorbase]
		\draw [very thick] (0,-4.5) node[below,yshift=2pt]{$k{+}\ell$} to (0,2.5);
	\end{tikzpicture}
	\end{equation}
\end{minipage}
for $k+\ell+m \leq n-1$ or $k+\ell \leq n-1$ respectively.
	\end{prop}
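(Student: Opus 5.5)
We argue by a double induction on $k+\ell$ (and then $k+\ell+m$), using only \eqref{eq:assoc}, \eqref{eq:digon1}, \eqref{eq:zerodigonB} and \eqref{eq:blackH=I}. Throughout we write $V(k,\ell)$ for the left expression in \eqref{eq:flowvertex}, built by splitting $k$ into $1\otimes(k-1)$ via the adjoint of the $1\otimes(k-1)\to k$ vertex, and $V'(k,\ell)$ for the right expression.

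\textbf{Base cases.} When $k=1$, $V(1,\ell)$ is literally the generating vertex $1\otimes \ell\to \ell+1$ (the $0$-labeled rung is erased), so $V(1,\ell)$ needs no normalization. For $V'(1,\ell)$ we must show
\[
(-1)^{\ell-1}\frac{1}{``[\ell]^2"}\,(\text{triangle})
\]
equals that generator. Apply \eqref{eq:assoc} to slide the $1$-strand of the top vertex past the $(\ell-1)$-rung. This produces a digon of type \eqref{eq:digon1} on the $\ell$-strand, which contributes $(-1)^{\ell-1}``[\ell]^2"$ and so cancels the normalization. This is exactly the manipulation in \eqref{dividingbyk}.

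\textbf{Well-definedness, $V(k,\ell)=V'(k,\ell)$.} Inductively we may assume that flow vertices with smaller total label are well defined and associative. Write $V(k,\ell)$ as $1\otimes(k-1)\otimes\ell \to 1\otimes(k-1+\ell)\to k+\ell$, precomposed with the split of $k$.
\begin{itemize}
\item By induction, the inner vertex $(k-1)\otimes\ell\to k+\ell-1$ equals $V'(k-1,\ell)$. Substitute this, so the $\ell$ strand is split off a $1$.
\item Use \eqref{eq:assoc} (in its flow form, available inductively) to reassociate the two $1$-strands meeting the $k$ and $\ell$ sides.
\item Re-collapse the $k$ side using $V(k,\cdot)$ at lower total label.
\end{itemize}
Comparing coefficients gives $\frac{(-1)^{k-1}}{``[k]^2"}\cdot\frac{(-1)^{\ell-1}}{``[\ell]^2"}\cdots$ on both sides, and the two expressions match.

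\textbf{The expected obstacle.} The delicate point is that reassociation leaves a square face $k$--$1$--$\ell$--$1$. Resolving it may require \eqref{eq:blackH=I} (an $H=I$ move with a $1$ strand) rather than just \eqref{eq:assoc}. The extra terms of \eqref{eq:blackH=I} are:
\begin{itemize}
\item the $(k-1)$-rung term, which feeds back into the induction;
\item the cap/cup term, which dies by \eqref{eq:lolli1}/\eqref{eq:zerodigonB}-type vanishing, since it creates a $k\to k+2$ through $k+1$ digon or a lollipop;
\item the term with a $2$-labeled edge, which must be recognized as a flow vertex with $\ell=2$.
\end{itemize}
The coefficient $\frac{[2]_{2n-2k-1}}{[2]_{2n-2k+1}}$ must cancel against the scalar of \eqref{eq:otherdigonB}. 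If \eqref{eq:blackH=I} turns out to be needed, that cancellation is the crux of the whole argument; I would first try to avoid it by performing only \eqref{eq:assoc} moves, keeping all $1$-strands flowing upward.

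\textbf{Associativity \eqref{eq:flowassoc}.} Induct on $k$. For $k=1$, expand the $\ell$ and $m$ vertices via the $V'$ form. Repeated \eqref{eq:assoc} then moves the single $1$-strand to the top on both sides, with identical coefficients. For general $k$, expand $k$ as $1\otimes(k-1)$ using $V$, and apply the $k=1$ case together with induction.

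\textbf{Digon \eqref{eq:flowdigon}.} Induct on $k$; the case $k=1$ is \eqref{eq:digon1}, since $(-1)^{\ell}``[\ell+1]^2"=(-1)^\ell[\ell+1]_{q^2}$. For the inductive step:
\begin{itemize}
\item Expand the bottom $(k+\ell)\to k\otimes\ell$ split, with $k$ split as $1\otimes(k-1)$.
\item Use \eqref{eq:flowassoc} to merge the $(k-1)$ and $\ell$ strands first, which creates a $(k-1+\ell)$-digon (inductively a scalar) and then a $1\otimes(k+\ell-1)$ digon via \eqref{eq:digon1}.
\item Divide by the $``[k]^2"$ normalization.
\end{itemize}
The scalar identity
\[
{k+\ell\brack k}_{q^2}=\frac{[k+\ell]_{q^2}}{[k]_{q^2}}{k+\ell-1\brack k-1}_{q^2},
\]
together with the sign bookkeeping $(-1)^{(k-1)\ell}(-1)^{k+\ell-1}(-1)^{k-1}=(-1)^{k\ell}$, completes the proof.
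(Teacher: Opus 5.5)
Your proof is correct and is essentially the paper's argument (the paper defers to the type $C$ proof of \cite[Lemma 3.4]{BERT}): an induction that uses only \eqref{eq:assoc} and \eqref{eq:digon1}, where the digon scalars of \eqref{eq:digon1} account for both the signs and the $q^2$-binomials. The obstacle you anticipate never arises: after substituting $V'(k-1,\ell)$, the one square face is removed by a single application of \eqref{eq:assoc} to the generating vertices of $1\otimes(k+\ell-2)\otimes 1\to k+\ell$, so neither \eqref{eq:blackH=I} nor \eqref{eq:zerodigonB} is needed.
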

	
\begin{proof}
An analogous result was proven for type $C$ in \cite[Lemma 3.4]{BERT}. 
The proof can be copied almost verbatim, adding signs, and replacing quantum numbers in $q$ with quantum numbers in $q^2$. 
The difference originates in the digon relation \eqref{eq:digon1}, which involves $``[k]^2"=[k]_{q^{2}}$ 
(c.f.~\cite[Equation (1.2c)]{BERT}).
This leads to the present formulae \eqref{eq:flowvertex} and \eqref{eq:flowdigon}
(c.f.~\cite[Equations (3.3) and (3.4)]{BERT}).
\end{proof}

In the remainder of this section we prove formulae which do not involve flow vertices, 
though the proofs are simplified by the use of flow vertices. 
In the spirit of Remark \ref{rem:integralform}, we point out that
the proofs thus implicitly (or explicitly) divide by the values $``[k]^2"$. 
This is the only purpose for which we use flow vertices in this paper, 
though they are intrinsically important.
	
\begin{lem}
	\label{lem:hardtriangle}
For $1 \leq k \leq n-1$,
\begin{equation}\label{eq:blacktriangle}
 \, .
		\end{equation}
\end{lem}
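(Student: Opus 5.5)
The idea is to reduce the triangle in \eqref{eq:blacktriangle} to digons using \eqref{eq:blackspinH=I}, and then evaluate those digons with \eqref{eq:digonS}. I read \eqref{eq:blacktriangle} as the statement that the triangle with internal $\gS$-labeled edges and external black legs $k$, $1$ and $k\pm1$ equals an explicit scalar multiple of the corresponding black trivalent vertex; the plan is organized around that reading.

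First, for $1 \le k \le n-2$, isolate the two vertices of the triangle at which the external legs $k$ and $1$ enter. Joined along their common $\gS$-edge, these form exactly the left-hand side of \eqref{eq:blackspinH=I}, possibly after the horizontal reflection recorded in Proposition \ref{prop:hardrel} or a rotation via pivotality. Applying \eqref{eq:blackspinH=I} replaces the triangle by two terms:
\begin{itemize}
\item the black vertex $k \otimes 1 \to k+1$ followed by $k+1 \to \gS\otimes\gS$;
\item $(-1)^{n-k+1}[2]_{2n-2k+1}^{-1}$ times the vertex $k\otimes 1\to k-1$ followed by $k-1 \to \gS\otimes\gS$.
\end{itemize}
In each term the remaining two $\gS$-strands close up against the third vertex of the original triangle, producing an $\gS\gS$-digon between black labels $a$ and $b$ with $a,b \in \{k-1,k,k+1\}$.

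Second, evaluate these digons. When $a=b$, \eqref{eq:digonS} gives $(-1)^{\binom{n-a+1}{2}}\dd_{n-a}$ times the identity. When $a \neq b$, I need the digon to vanish; this is not among the defining relations. To get it, I would take an $\gS\gS$-digon from $a$ to $b$ and insert the identity on the $b$-strand in the form $\dd_{n-b}^{-1}(-1)^{\binom{n-b+1}{2}}$ times an $\gS\gS$-digon, using \eqref{eq:digonS}. I would then resolve the resulting $\gS$-ladder by \eqref{eq:spinH=I}. After that, the terms close off as lollipops, which vanish by \eqref{eq:lolliS}, or as black digons, which are controlled by \eqref{eq:digon1}, \eqref{eq:zerodigonB} and \eqref{eq:otherdigonB}. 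If the bookkeeping becomes unwieldy, a cleaner route may be to write the $a$-edge via flow vertices \eqref{eq:flowvertex} as built from $1$-edges. Then repeatedly apply \eqref{eq:ggb-triangle+} and \eqref{eq:ggb-triangle-} to move the $1$-edges through the gray digon. This reduces a mixed digon to one with a smaller label difference, ending at a lollipop. I expect establishing this vanishing of mixed digons, uniformly in $k$, to be the main obstacle.

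Third, the case $k=n-1$ must be handled separately, since \eqref{eq:blackspinH=I} is unavailable there. For this case I would invoke \eqref{eq:n-1triangle}, its reflected form from Proposition \ref{prop:hardrel}, and rotation, reducing to the $k=n-2$ case already treated. Finally, collecting the surviving scalar, which is a product of one coefficient from \eqref{eq:blackspinH=I} and one value $\dd_{n-a}$, gives the claimed coefficient after the identity $\dd_{m}/\dd_{m-1}=[2]_{2m-1}$. This last step is routine quantum arithmetic.
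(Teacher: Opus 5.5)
There is a genuine gap: you are proving a different relation. The triangle in \eqref{eq:blacktriangle} is \emph{all black}, with no $\gS$-labeled edge anywhere. Its sides are labeled $1$, $k-1$, $k$, its external legs are $k$, $1$, $k-1$, and it equals $\tau_k$ times the black trivalent vertex on those legs. The surrounding text forces this reading, for four reasons:
\begin{itemize}
\item The case $k=1$ is tautological because the $(k-1)$-labeled side disappears and the triangle becomes the vertex. Under your reading, $k=1$ would already require evaluating a gray digon via \eqref{eq:digonS}.
\item The proof needs $2$-labeled edges.
\item The lemma is used to prove \eqref{eq:altblackH=I} via \eqref{eq:blackH=I}.
\item The recursion is $\tau_k=\bigl(\tfrac{[2]_{2n-2k+1}}{[2]_{2n-2k+3}}-[2]_{q^2}\bigr)\tau_{k-1}$, which is the paper's $\tfrac{[4n-4k+2][2n-2k+3]}{[2n-2k+1][4n-4k+6]}-\tfrac{[4]}{[2]}$. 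Its first summand is the second coefficient of \eqref{eq:blackH=I} at index $k-1$, and its second is the $2$-labeled digon value $-[4]/[2]$ from \eqref{eq:digon1}.
\end{itemize}
You can see the mechanism at $k=2$. There the $2$-labeled side is the rung of an H whose four legs are all $1$-labeled. The $k=1$ case of \eqref{eq:blackH=I} rewrites the triangle as three terms: a $2$-labeled digon attached to the vertex (contributing $-[2]_{q^2}$), $\tfrac{[2]_{2n-3}}{[2]_{2n-1}}$ times the vertex itself, and a lollipop, which vanishes by \eqref{eq:lolli1}. This gives $\tau_2=\tfrac{[2]_{2n-3}}{[2]_{2n-1}}-[2]_{q^2}$, matching the recursion with $\tau_1=1$.

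None of your ingredients acts on this object. The relations \eqref{eq:blackspinH=I}, \eqref{eq:digonS}, \eqref{eq:lolliS} and the vanishing of mixed gray digons only apply where $\gS$-edges are present. The scalars they produce are built from the devil's binomials $\dd_{n-a}$, none of which appears in $\tau_k$. What you have outlined is the evaluation of a triangle with gray sides and black legs $k$, $1$, $k\pm1$. That is a different relation, essentially the computation \eqref{eq:spintriangle1}--\eqref{eq:spintriangle2} in the proof of Theorem \ref{thm:functor}, which rests on \eqref{eq:digonS} and \eqref{eq:zerodigonS}.

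The missing idea is an induction on $k$ carried out entirely in the black calculus. Use flow vertices \eqref{eq:flowvertex}, \eqref{eq:flowassoc} and \eqref{eq:digon1} to relate the $k$-triangle to the $(k-1)$-triangle. Then \eqref{eq:blackH=I} at index $k-1$ and a $2$-labeled digon supply the two summands of the recursion. The remaining cap--cup contribution must be shown to vanish; at $k=2$ it is the lollipop above.
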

\begin{proof}
We assume that $n \geq 3$ (this allows the use of $2$-labeled edges), 
since otherwise $k=1$ and the result holds tautologically.
Let $\tau_k$ denote the scalar on the right-hand side of \eqref{eq:blacktriangle}.
We proceed by induction on $k$, with the base $k=1$ case holding trivially.
We then compute
\begingroup
\allowdisplaybreaks
\begin{align*}
%
 \, .
	\end{align*}
\endgroup
The result follows since 
$\tau_k 
= \Big( \tfrac{[4n-4k+2][2n-2k+3]}{[2n-2k+1][4n-4k+6]} -\tfrac{[4]}{[2]} \Big) \tau_{k-1}$, 
which can be confirmed via direct computation.
	\end{proof}

\begin{prop}\label{prop:hardrel}
The following relations hold in $\Web(\son)$:
\begin{equation}
	\label{eq:refblackspinH=I}
%
 \, .
\end{equation}
	\end{prop}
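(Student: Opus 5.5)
The plan is to deduce each horizontally reflected relation in \eqref{eq:refblackspinH=I} from its unreflected counterpart, \eqref{eq:blackspinH=I} or \eqref{eq:n-1triangle}, by induction on $k$. The only asymmetry in these relations comes from the order of the two black inputs at the vertices $k\otimes 1\to k+1$ and $1\otimes k\to k+1$. The flow vertex calculus lets me trade one order for the other. Concretely, the equality of the two expressions in \eqref{eq:flowvertex} (with $\ell=1$, or with $k$ and $1$ swapped), together with the associativity \eqref{eq:assoc} and \eqref{eq:flowassoc}, expresses the vertex $1\otimes k\to k+1$ as $(-1)^{k-1}``[k]^2"^{-1}$ times a triangle. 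That triangle has a $(k-1)$-labeled rung and a vertex of the form $(k-1)\otimes 1 \to k$ on the other side. Substituting this into the reflected left-hand side moves the "wrong-handed" vertex one step down in label.

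For the reflected version of \eqref{eq:blackspinH=I}, the base case $k=1$ is a rotation of the original relation: when $k=1$ both black inputs carry label $1$, so the reflection is obtained by rotating by $180^\circ$ and using pivotality and self-duality of $1$ and $\gS$. For the inductive step I would proceed as follows.
\begin{enumerate}
\item Write the reflected $k$-labeled H-diagram via the triangle above.
\item Apply the already-known reflected relation at label $k-1$ to the inner H.
\item Apply \eqref{eq:blackspinH=I} in its original form where the orientation agrees.
\item Simplify using \eqref{eq:digon1}, \eqref{eq:zerodigonB}, \eqref{eq:otherdigonB} and the triangle relations \eqref{eq:ggb-triangle+}, \eqref{eq:ggb-triangle-}.
\end{enumerate}
The result is a combination of the two webs appearing on the right-hand side of \eqref{eq:blackspinH=I}, but reflected, i.e.\ with the vertex $1\otimes k\to k\pm1$. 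I would then convert these back to the needed form with the same flow-vertex identity.

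The reflected \eqref{eq:n-1triangle} would be handled the same way. Expand the vertex $1\otimes k\to k+1$ (or the $(n{-}1)$-vertex) into a triangle, reassociate, and reduce to an instance of the unreflected \eqref{eq:n-1triangle} or of \eqref{eq:lolliS}/\eqref{eq:zerodigonB}, both of which vanish.

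The main obstacle is the bookkeeping of scalars. The devil's-arithmetic coefficients, in particular $(-1)^{n-k+1}/[2]_{2n-2k+1}$ and the signs coming from $``[k]^2"$, must reassemble exactly at each inductive step. This is where I expect Lemma~\ref{lem:hardtriangle} to enter: the scalar $\tau_k$, the all-black triangle evaluation with its recursion $\tau_k=\big(\tfrac{[4n-4k+2][2n-2k+3]}{[2n-2k+1][4n-4k+6]}-\tfrac{[4]}{[2]}\big)\tau_{k-1}$, is precisely the correction factor produced when a reflected vertex is resolved inside a triangle. I would first verify the step $k=2$ explicitly to fix the signs, then check that the general step reduces to the $\tau_k$ recursion.
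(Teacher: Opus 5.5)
Your skeleton agrees with the paper's: induction on $k$, with the $k=1$ case being \eqref{eq:blackspinH=I} itself. For $k=1$ every web in the relation is reflection-invariant, so no rotation is needed; a $180^\circ$ rotation would in fact exchange the $1\otimes 1$ and $\gS\otimes\gS$ boundaries.

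The gap is in the inductive step itself. Write $H_k, A_k, B_k$ for the three webs in \eqref{eq:blackspinH=I} and $H'_k, A'_k, B'_k$ for their reflections. Contrary to your opening claim, the asymmetry is not confined to the black vertices $1\otimes k\to k\pm1$. The ladder $H'_k$ has only $(\gS,\gS,\cdot)$ vertices, and it is not a rotation of $H_k$, since the cyclic order of its boundary labels is reversed. So $H'_k$ contains no black--black vertex into which \eqref{eq:flowvertex} could be substituted. That identity only rewrites the black parts of $A'_k$ and $B'_k$ and never produces a web containing $H'_{k-1}$. Your step 1 therefore gives the inductive hypothesis nothing to act on.

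The move that works acts on the gray vertex of the $k$-upright; you list \eqref{eq:ggb-triangle+} only as a simplification tool, but it is what drives the induction.
\begin{itemize}
\item By the $180^\circ$ rotation of \eqref{eq:ggb-triangle+}, $H'_k$ equals $(-1)^{k-1}[k]_{q^2}^{-1}$ times a web in which $k$ splits as $(k-1)\otimes 1$ into a chain of three gray vertices. The gray vertex on the $1$-upright and the new $(k-1)$-vertex form a copy of $H'_{k-1}$, so the inductive hypothesis applies.
\item The resulting $A'_{k-1}$-term is an unreflected $H_k$ precomposed with $1\otimes k\to 1\otimes(k-1)\otimes 1\to k\otimes 1$. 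Applying \eqref{eq:blackspinH=I}, its $A_k$-part becomes $(-1)^{k-1}[k]_{q^2}A'_k$ by \eqref{eq:assoc} and \eqref{eq:digon1}.
\item The $B_k$-part contains the black triangle $1\otimes k\to 1\otimes(k-1)\otimes 1\to k\otimes 1\to k-1$, all three of whose vertices are of the unreflected type.
\item The $B'_{k-1}$-term contains an unreflected $H_{k-2}$ and must likewise be reduced to a multiple of $B'_k$.
\end{itemize}
Evaluating such triangles as multiples of $1\otimes k\to k-1$ is the genuinely new input, which is where an identity like Lemma~\ref{lem:hardtriangle} would enter, so your guess there is reasonable. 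As a check at $k=2$: \eqref{eq:blackH=I} at $k=1$, \eqref{eq:lolli1} and \eqref{eq:digon1} give the triangle as $\big(\tfrac{[2]_{2n-3}}{[2]_{2n-1}}-[2]_{q^2}\big)$ times $1\otimes 2\to 1$, matching the $k=2$ step of the $\tau_k$ recursion. The $B'_1$-term is literally $B'_2$, and the coefficient of $B'_2$ comes out to exactly $(-1)^{n-1}/[2]_{2n-3}$.

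Your plan for the reflected \eqref{eq:n-1triangle} has the same defect: that web has no $1\otimes k\to k+1$ vertex either. Expanding its $(\gS,\gS,n-1)$ vertex by \eqref{eq:ggb-triangle+} just gives a gray square, one of whose corner pairs is again $H'_k$. The paper derives it from the reflected \eqref{eq:blackspinH=I}. Composing with $\gS\otimes\gS\to n-1$ turns both right-hand terms into gray digons $k\pm1\to n-1$, which vanish by \eqref{eq:zerodigonS} for $k\neq n-2$. For $k=n-1$ the reflected triangle is simply a rotation of the original.

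Finally, the proposition also contains \eqref{eq:zerodigonS}, \eqref{eq:otherggb-triangle}, \eqref{eq:blackrungtriangle} and \eqref{eq:altblackH=I}, none of which you address. In particular, \eqref{eq:zerodigonS} is proved by induction starting from \eqref{eq:lolliS} and \eqref{eq:n-1triangle}, and \eqref{eq:altblackH=I} is where the paper explicitly invokes Lemma~\ref{lem:hardtriangle}.
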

\begin{proof}
For \eqref{eq:refblackspinH=I}, we proceed inductively, 
with the base $k=1$ case provided by \eqref{eq:blackspinH=I}.
We then compute
\begin{align*}
%
 \, .
	\end{align*}
Having established \eqref{eq:refblackspinH=I}, 
we deduce \eqref{eq:otherggb-triangle} by composing with appropriate 
trivalent vertices and applying \eqref{eq:digon1}, \eqref{eq:zerodigonB}, 
and \eqref{eq:otherdigonB}.

Next, we consider \eqref{eq:zerodigonS}. 
Without loss of generality, it suffices to establish this in the case that $\ell < k$.
When $\ell = 0$, this is simply \eqref{eq:lolliS}, 
and when $k=n-1$,
this is given by
\[
%
\stackrel{\eqref{eq:n-1triangle}}{=} 0 \, .
	\]
It remains to establish the relation for all $0<\ell<k<n-1$.
This follows by induction, since the computation
\[
%
\right)
	\]
shows that \eqref{eq:zerodigonS} holds for the pair $(k,\ell)$ if it holds 
for both $(k+1,\ell-1)$ and $(k-1,\ell-1)$.
Having proved \eqref{eq:zerodigonS}, 
equation \eqref{eq:blackrungtriangle} follows by composing \eqref{eq:spinH=I} 
with appropriate trivalent vertices and equation \eqref{eq:refn-1triangle} 
follows from \eqref{eq:refblackspinH=I}.

Finally, the proof of \eqref{eq:altblackH=I} employs \eqref{eq:blacktriangle} via 
\eqref{eq:blackH=I}, and is similar to the proof of \eqref{eq:refblackspinH=I}. 
Since this relation plays no role in the remainder of the paper, 
we leave the details to the reader.
	\end{proof}

%
\section{Outline of the proof} \label{S:outline}
%

We now give a condensed and informal overview of the proof of Theorem \ref{thm:main}. 

\subsection{Step 1: Produce the functor}
First, we show the existence of a pivotal, 
$\C(q)$-linear, essentially surjective functor $\varphi \colon \Web(\son) \to \FRep(U_q(\son))$.
To begin, we note that the images of our generating trivalent morphisms and of the cap/cup morphisms 
(implicit in the pivotal structure) lie in $1$-dimensional morphism spaces in $\FRep(U_q(\son))$. 
Hence, the choice of where $\varphi$ sends these morphisms \eqref{eq:WebGen} is unique up to scalar. 

It remains to show that, for an appropriate choice of these scalars, 
our relations in \eqref{eq:webrel} are satisfied.
Here, most of the heavy lifting has been done in our prior work \cite{BER} and the work of 
the first named author and Wu \cite{BodWu}. 
Indeed, in \cite[Section 4]{BER} we exhibit explicit images of ``gray'' cap, cup, and ``gray-gray-black'' trivalent morphisms 
so that 
\eqref{eq:digonS}, 
\eqref{eq:lolliS}, 
and \eqref{eq:spinH=I} are satisfied.
Meanwhile, in \cite{BodWu}, images of ``all black'' cap, cup, and trivalent morphisms are determined 
for which, after a slight adjustment, relations 
\eqref{eq:digon1}, 
\eqref{eq:assoc}, 
and \eqref{eq:blackH=I} hold.
It thus remains to show that these choices can be made compatibly, 
and to verify that 
\eqref{eq:n-1triangle} and \eqref{eq:blackspinH=I} hold 
in the image. 
This is done in \S \ref{SS:functor} and Appendix \ref{Appendix:comparing-cups-and-caps}.

Essential surjectivity is a statement about objects and is evident from the definition of $\varphi$. 

\subsection{Step 2: Analyze the braiding}\label{ss:O2}

The $R$-matrix for $U_q(\son)$ gives rise to isomorphisms $R_{V,V'} \colon V \ot V' \to V' \ot V$ in $\Rep(U_q(\son))$ 
which equip the category $\Rep(U_q(\son))$ with the structure of a braided monoidal category. 
As a consequence, the morphisms $R_{V,V'}$ satisfy a number of desirable properties, 
for example invertibility (Reidemeister II), the braid relation (Reidemeister III), and, generalizing the latter,
naturality (e.g.~the ``fork-sliding'' relation; see \cite[Relation 11]{RT1}).

Inspired by prior work of the authors (and Wu) \cite{BERT,BER,BodWu}, 
it is easy to see how some of these braiding morphisms $R_{V,V'}$ 
lift to linear combinations of webs in $\Web(\son)$. 
Specifically, we provide the linear combinations of webs $c_{1,1}$, $c_{1,\gS}$, $c_{\gS,1}$, and $c_{\gS,\gS}$, 
lifting the braiding morphisms $R_{V_1,V_1}$, $R_{V_1,S}$, $R_{S, V_1}$, and $R_{S,S}$ respectively.
We make explicit use of the first three of these morphisms in the proof of fully faithfulness of our functor, 
and it is hence necessary to show that these maps satisfy the same desirable properties in 
$\Web(\son)$ that $R_{V,V'}$ enjoy in $\Rep(U_q(\son))$. 
Once established, we may use ``topological'' arguments in our simplification of diagrams.

While the morphism $c_{\gS,\gS}$ does not play an explicit role in our proof, 
it informs some of our algebraic arguments given in \S \ref{ss:FF}. 
For this reason, we also discuss this morphism and present a trick to 
quickly derive some of its desirable properties.

\subsection{Step 3: Reduce to a full subcategory and ladderize}\label{ss:ladder}

Having produced an essentially surjective functor $\varphi \colon \Web(\son) \to \FRep(U_q(\son))$, 
it remains to show that $\varphi$ is fully faithful. 
Our first step is to use monoidal duality (i.e.~cup and cap morphisms, see \eqref{eqn:monoidalduality} below) 
and an elementary result
(stated as Lemma \ref{lem:reduction} below and proved in \cite[Lemma 5.5]{BERT})
to reduce to proving that,
for each $m \geq 1$, the map
\begin{equation}
	\label{eq:spinalgiso?}
\End_{\Web(\son)}(\gS^{\otimes m}) \xrightarrow{\varphi}  \End_{U_q(\son)}(S^{\otimes m})
\end{equation}
is an algebra isomorphism.

To help with this latter task, 
we consider the \emph{ladder subalgebra} $\Lad_m \subset \End_{\Web(\son)}(\gS^{\otimes m})$, 
defined as the $\C(q)$-subalgebra generated by webs called \emph{rungs}:
\[
\rung_i := 
\begin{tikzpicture}[scale=.5, tinynodes, anchorbase]
	\draw[thick, black] (1,0) to node[below=-1pt]{\scs$1$} (2,0);
	\draw[very thick,gray] (0,-1) to (0,1);
	\node at (.5,0) {$\mydots$};
	\draw[very thick,gray] (1,-1) to (1,1);
	\draw[very thick,gray] (2,-1) to (2,1);
	\node at (2.5,0) {$\mydots$};
	\draw[very thick,gray] (3,-1) to (3,1);	
\end{tikzpicture} \; .
\]
Here the $1$-labeled ``rung'' $r_i$ lies between the $i$-th and $(i+1)$-st $\gS$-labeled ``uprights.''
We then employ a diagrammatic/topological argument 
using properties of the morphisms $c_{1,1}$, $c_{1, \gS}$, and $c_{\gS, 1}$ established in Step 2
to show that the inclusion 
$\Lad_m \hookrightarrow \End_{\Web(\son)}(\gS^{\otimes m})$ is in fact an equality. 
This ``ladderization'' result is Theorem \ref{thm:ladder} and Corollary \ref{cor:ladder0} below.

\subsection{Step 4: Use duality with the $\iota$quantum group $\iqUsom$
	to show fully faithfulness}

Let $\iqUsom$ be the non-standard quantization of $\som$, 
defined in work of Gavrilik--Klimyk \cite{GK-Usom}.
Equivalently, this is the $\iota$\emph{quantum group} associated with 
the split symmetric pair $(\slm, \som)$. 

In \cite[Theorem 5.2]{Wenzl-Spin}, 
Wenzl constructs a surjective $\C(q)$-algebra homomorphism
\begin{equation}\label{E:wenzls-map}
\iqUsom \twoheadrightarrow \End_{U_q(\son)}(S^{\otimes m}) \, .
\end{equation}
Further, he describes the $\iqUsom$ representation $S^{\otimes m}$ as a direct sum of finite-dimensional irreducible $\iqUsom$ representations, 
such that the generators $b_1, \dots, b_{m-1}$ of $\iqUsom$ have a specific spectrum \cite[Theorem 5.3(a)]{Wenzl-Spin}.

In Proposition \ref{prop:iQtoLad}, we show that there is a surjective $\C(q)$-algebra morphism
\begin{equation}
	\label{eq:iqtoLad}
\psi \colon \iqUsom \twoheadrightarrow \Lad_m \, ,
\end{equation}
and our results from \cite{BER} imply that the composition
\begin{equation}
	\label{eq:iqtoSpin}
\iqUsom \xrightarrow{\psi} \Lad_m \xrightarrow{\varphi} \End_{U_q(\son)}(S^{\otimes m})
\end{equation}
agrees with Wenzl's homomorphism. 
Since \eqref{E:wenzls-map} is surjective, 
this implies that \eqref{eq:spinalgiso?} is indeed surjective.

It remains to show that the map $\varphi$ from \eqref{eq:spinalgiso?} is injective. 
For this, we first observe that \eqref{eq:iqtoLad} 
factors through a quotient $\iqUsom^{\leq n}$ of $\iqUsom$
defined in our previous work \cite{BER}.
As predicted in \cite[Remark 1.21]{BER}, 
we are now able to show that this quotient is finite-dimensional and semisimple.
This allows us to establish injectivity via the strategy outlined in loc.~cit., 
which we now recall.

Let $\psi^{\le n}$ be the induced map $\iqUsom^{\leq n} \twoheadrightarrow \Lad_m$. 
The composition $\varphi \circ \psi^{\le n}$ is a surjection 
\begin{equation}
	\label{eq:iqtoSpin2}
\iqUsom^{\leq n} \twoheadrightarrow \End_{U_q(\son)}(S^{\otimes m})
\end{equation}
between finite-dimensional semisimple algebras.  
In \S \ref{s:reptheory}, we use results from \cite{MR2143754} to show that
all finite-dimensional irreducible representations of $\iqUsom^{\leq n}$ 
appear as summands of $S^{\otimes m}$, so these representations factor through the map $\varphi \circ \psi^{\le n}$. 
This shows that $\varphi \circ \psi^{\le n}$ is injective, and thus so is $\psi^{\le n}$. 
As both $\varphi \circ \psi^{\le n}$ and $\psi^{\le n}$ are isomorphisms, so too is $\varphi$.

\medskip

In the remaining sections, we carry out the technical details of the proof just sketched.

\section{The essentially surjective functor}\label{SS:functor}

\begin{thm}\label{thm:functor}
There is a pivotal, essentially surjective functor $\varphi \colon \Web(\son) \to \FRep(U_q(\son))$.
\end{thm}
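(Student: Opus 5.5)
We build $\varphi$ on generators, fix scalars using the prior constructions, and then verify the relations \eqref{eq:webrel} one family at a time. On objects we set $k\mapsto V_k$ for $1\le k\le n-1$ and $\gS\mapsto S$. Each fundamental is monoidally self-dual with Frobenius--Schur indicator $+1$ (Convention \ref{conv:pivotal}), so cups and caps go to the pivotal (co)evaluation maps, suitably normalized. By Proposition \ref{prop:decomp} (specifically \eqref{eq:SSdecomp} and \eqref{eq:V1Vk}), the Hom spaces $\Hom(S\otimes S,V_k)$ and $\Hom(V_1\otimes V_k,V_{k+1})$ are one-dimensional. Each generating trivalent vertex in \eqref{eq:WebGen} therefore maps to a nonzero element of a line, determined up to a scalar. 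Once the functor exists, essential surjectivity onto $\FRep(U_q(\son))$ is immediate, since every object there is a tensor product of the $V_k$ and $S$.

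For the scalars we import the two existing solutions. From \cite[Section 4]{BER} we take explicit gray cups/caps and gray-gray-black vertices $S\otimes S\to V_k$ satisfying \eqref{eq:digonS}, \eqref{eq:lolliS}, and \eqref{eq:spinH=I}. From \cite{BodWu} we take all-black cups/caps and vertices $V_1\otimes V_k\to V_{k+1}$, $V_k\otimes V_1\to V_{k+1}$ satisfying the orthogonal web relations. These agree with \eqref{eq:digon1}, \eqref{eq:assoc}, and \eqref{eq:blackH=I} after rescaling each black vertex by a sign, for example $(-1)^{k}$ or $i$-free signs chosen inductively in $k$, so that $[k]_{q^2}$-type coefficients acquire the devil's signs.

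The first genuine issue is compatibility. The black cups/caps for $V_k$ used in \cite{BodWu} and those implicitly used in \cite{BER} (when rotating gray-gray-black vertices) must coincide, or differ by a scalar that can be absorbed. We would compare them by evaluating a closed diagram, such as the $k$-colored circle, or a theta graph with two gray edges and one black edge, in both normalizations. Since the pivotal structure in Convention \ref{conv:pivotal} is fixed, the two cups differ by a scalar $c_k$. We rescale so that $c_k=1$, checking that this rescaling leaves \eqref{eq:digon1} and \eqref{eq:blackH=I} invariant; both sides are homogeneous in the cup/cap scalar for each label. This is a routine but fiddly bookkeeping step.

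The remaining relations, \eqref{eq:blackspinH=I} and \eqref{eq:n-1triangle}, are the main obstacle because they mix the two families. For \eqref{eq:blackspinH=I} we use semisimplicity. Both sides lie in $\Hom(V_1\otimes S, V_k\otimes S)$ (after rotation), which by \eqref{eq:1Sdecomp} and \eqref{eq:V1Vk} is at most two-dimensional, spanned by factorizations through $V_{k+1}$ and $V_{k-1}$ on the other side of the H. Hence the relation reduces to determining two scalars. We find them by capping off: composing with the gray-gray-black vertices into $V_{k\pm1}$ and using \eqref{eq:digonS}, \eqref{eq:digon1}, and \eqref{eq:otherdigonB} on the image side reduces each coefficient to the value of a triangle. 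These triangle values are computed directly in the image, by evaluating on highest weight vectors with the explicit formulas of \cite{BER,BodWu}; the comparison of cups and caps enters here and is where we would use an appendix-style explicit computation.

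For \eqref{eq:n-1triangle}, the triangle lies in $\Hom(V_k\otimes V_1, V_{n-1})$, which is zero by \eqref{eq:V1Vk} unless $k\in\{n-2,n\}$. Only $k=n-2$ is allowed, so this relation holds for free in $\FRep$. Finally, pivotality of $\varphi$ holds because the cups and caps map to a fixed pivotal duality.
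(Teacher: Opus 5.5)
Your overall architecture matches the paper's: generators go to one-dimensional Hom spaces, the \cite{BER} and \cite{BodWu} solutions are imported, and \eqref{eq:n-1triangle} follows by Schur. There are, however, two genuine gaps.

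\textbf{The signs in the all-black relations.} Your mechanism for reconciling these with the gray part does not work. Rescaling black vertices by signs cannot create the devil's signs. For $k=1$ the generators $1\otimes k\to k+1$ and $k\otimes 1\to k+1$ coincide, so if \eqref{eq:assoc} is to keep holding you must use the same scalar $\epsilon_k$ on both vertex types at each level. The left side of \eqref{eq:digon1} is a merge $1\otimes(k-1)\to k$ composed with the $180^\circ$ rotation of $(k-1)\otimes 1\to k$, so it changes only by $\epsilon_{k-1}^2=1$.

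Your claim that the black relations are homogeneous in the cup/cap scalars is also false. A rotated vertex uses two cups and one cap. So rescaling the $j$-labeled cup by $c_j^{-1}$ and cap by $c_j$ multiplies the left side of \eqref{eq:digon1} by $c_k/(c_1c_{k-1})$, while the right side is unchanged. This is exactly where the signs come from in the paper. Lemma~\ref{lem-spin-vs-O-cups-and-caps} shows the \cite{BodWu} and \cite{BER} $k$-caps differ by $c_k=(-1)^{\binom{k}{2}}$, found explicitly as the coefficient of $b_k\cdots b_1\otimes a_1\cdots a_k$. Using the \cite{BER} caps with the \cite{BodWu} vertices then produces the factor $c_k/(c_1c_{k-1})=(-1)^{k-1}$, which converts the \cite{BodWu} relations into \eqref{eq:digon1} and \eqref{eq:blackH=I}. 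Note also that a $k$-colored circle cannot detect $c_k$, since it is invariant under inverse rescaling of cup and cap. The comparison must be made on the maps themselves.

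\textbf{The relation \eqref{eq:blackspinH=I}.} Reducing to two scalars $a_k,b_k$ in a two-dimensional Hom space is also the paper's starting point. You then defer everything to an explicit highest-weight evaluation of triangles that you do not carry out. You also implicitly assume it returns exactly $a_k=1$ and $b_k=(-1)^{n-k+1}/[2]_{2n-2k+1}$, but nothing about the \cite{BodWu} normalization of black vertices guarantees this.

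The paper needs no such computation:
\begin{enumerate}
\item From \eqref{eq:spintriangle1}--\eqref{eq:spintriangle3} and a braiding flip of the triangle (naturality, \cite[Lemma 4.11]{BER}, and \eqref{generalblackuntwist}), it obtains the link \eqref{eq:b=a} between $b_k$ and $a_{k-1}$.
\item Composing \eqref{eq:lastrel} with a gray vertex on the side, and using \eqref{eq:otherdigonS} and \eqref{eq:othertriangles}, gives a quadratic identity forcing $a_k^2=1$ inductively.
\item Any remaining $a_k=-1$ is absorbed by negating both black vertices at level $k$. This preserves every other relation, since each contains an even number of them. It flips the $a_k$ term at level $k$ and, consistently with \eqref{eq:b=a}, the $b_{k+1}$ term at level $k+1$.
\end{enumerate}
Your plan needs this sign freedom together with something like \eqref{eq:b=a}, or else a complete explicit computation of both coefficients for every $k$.
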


\begin{proof}
Given a subset $\mathsf{W}$ of the generating morphisms in \eqref{eq:WebGen}, 
let $\Free(\mathsf{W})$ denote the free $\C(q)$-linear pivotal category generated by this subset. 
(Implicitly, the objects in $\Free(\mathsf{W})$ are monoidally generated by the tensor factors of the source and target of morphism in $\mathsf{W}$, 
and these generators are assumed to be self-dual.)
Let
\[
\mathsf{W}_{\bullet} := \Big\{
\begin{tikzpicture}[scale=.375, tinynodes,anchorbase]
	\draw[very thick] (0,0) node[below]{$1$} to [out=90,in=210] (.5,.75);
	\draw[very thick] (1,0) node[below]{$k$} to [out=90,in=330] (.5,.75);
	\draw[very thick] (.5,.75) to (.5,1.5) node[above=-2pt]{$k{+}1$};
\end{tikzpicture}
\, , \, 
\begin{tikzpicture}[scale =.375, tinynodes,anchorbase]
	\draw[very thick] (0,0) node[below]{$k$} to [out=90,in=210] (.5,.75);
	\draw[very thick] (1,0) node[below]{$1$} to [out=90,in=330] (.5,.75);
	\draw[very thick] (.5,.75) to (.5,1.5) node[above=-2pt]{$k{+}1$};
\end{tikzpicture}
\Big\}_{k=1}^{n-2}
\, , \quad
\mathsf{W}_{\gray{\bullet}} := \Big\{
\begin{tikzpicture}[scale =.375, smallnodes,anchorbase]
	\draw[very thick, gray] (0,0) node[below]{$\gS$} to [out=90,in=210] (.5,.75);
	\draw[very thick, gray] (1,0) node[below]{$\gS$} to [out=90,in=330] (.5,.75);
	\draw[very thick] (.5,.75) to (.5,1.5) node[above=-2pt]{$k$};
\end{tikzpicture}
\Big\}_{k=1}^{n-1}
\, ,
\]
and denote $\FreeWeb(\son) := \Free(\mathsf{W}_{\bullet} \cup \mathsf{W}_{\gray{\bullet}})$.
The latter is simply the free pivotal category on all the generators in \eqref{eq:WebGen}.

The arguments\footnote{To quickly summarize that argument: 
since $\Rep(U_q(\son))$ is pivotal, we know that it affords some graphical calculus.
In our chosen pivotal structure from Convention \ref{conv:pivotal}, 
all objects of $\Rep(U_q(\son))$
are naturally self-dual (in the sense of \cite{Sel2}), 
so, in particular, one obtains an unoriented graphical calculus for $\FRep(U_q(\son))$.
We then obtain our functor by sending generators and cap/cup morphisms 
in $\FreeWeb(\son)$
to the unique (up to scalar multiple) morphisms in the corresponding 
$\Hom$-spaces in $\FRep(U_q(\son))$.} 
in the proof of \cite[Theorem 5.1]{BERT} apply 
mutatis mutandis to show that there is a pivotal functor 
\begin{equation}
	\label{eq:Free}
\hat{\varphi} \colon \FreeWeb(\son) \to \FRep(U_q(\son))
	\end{equation}
that sends the generating morphisms in \eqref{eq:WebGen} 
to morphisms in the $\Hom$-spaces
\[
\Hom_{U_q(\son)}(V_1 \otimes V_k,V_{k+1}) \, , \quad 
\Hom_{U_q(\son)}(V_k \otimes V_1,V_{k+1}) \, , \quad 
\Hom_{U_q(\son)}(S \otimes S, V_k)
\]
respectively. 
By Proposition \ref{prop:decomp}, 
each of these spaces is $1$-dimensional,
so such a functor amounts to choosing an appropriate multiple of any 
fixed non-zero element in these $\Hom$-spaces.
Similarly, the cup and cap morphisms
\footnote{Existence of these morphisms is implicit in the word ``pivotal,'' which in particular requires that all objects have duals. 
We emphasize, however, that a particular choice of cap/cup morphisms is \emph{not} part of the pivotal structure.}
in $\FreeWeb(\son)$ are sent to maps in the $1$-dimensional spaces
\[
\Hom_{U_q(\son)}(\C(q), V_{\varpi_k} \otimes V_{\varpi_k}) 
\text{ and }
\Hom_{U_q(\son)}(V_{\varpi_k} \otimes V_{\varpi_k}, \C(q)) \, . 
\]
While these latter morphisms cannot be chosen arbitrarily (the isotopy relations in the domain must be satisfied), 
they can be rescaled as long as the scalar multiple of the cup is equal 
to the inverse of the scalar multiple of the cap.
In all, it thus suffices to show that we may choose appropriate scalar multiples for 
the images of $\mathsf{W}_{\bullet}$, $\mathsf{W}_{\gray{\bullet}}$, and the cap/cup morphisms, so that 
the relations in \eqref{eq:webrel} are satisfied.

Our results in \cite[\S 4.2]{BER} imply that there is a pivotal functor
\begin{equation}
	\label{eq:FreeSpin}
\hat{\varphi}_{BER} \colon \Free(\mathsf{W}_{\gray{\bullet}}) \to \FRep(U_q(\son))
	\end{equation}
that factors through the quotient by 
\eqref{eq:digonS}, 
\eqref{eq:lolliS},
and \eqref{eq:spinH=I}.
Meanwhile, 
the results in \cite{BodWu} imply that there is a pivotal functor 
$\hat{\varphi}_{O} \colon \Free(\mathsf{W}_{\bullet}) \to \FRep(U_q(\son))$.
Therein the authors show that one may choose the images of the 
all black trivalent and cap/cup morphisms so that 
\eqref{eq:assoc} 
holds on the nose, 
and so that the relations \eqref{eq:digon1} and \eqref{eq:blackH=I} hold,
but with different signs. 

In Appendix \ref{Appendix:comparing-cups-and-caps}, we show that 
\[
\hat{\varphi}_{O}\bigg(
\begin{tikzpicture}[scale=.4, anchorbase,tinynodes]
	\draw[very thick] (0,1.25) to [out=90,in=270] (0,1.5) to [out=90,in=180] (.5,2.25) node[above=-2pt]{$k$}
		to [out=0,in=90] (1,1.5) to (1,1.25);
\end{tikzpicture}  
\bigg)
= (-1)^{\binom{k}{2}}\hat{\varphi}_{BER}\bigg(
\begin{tikzpicture}[scale=.4, anchorbase,tinynodes]
	\draw[very thick] (0,1.25) to [out=90,in=270] (0,1.5) to [out=90,in=180] (.5,2.25) node[above=-2pt]{$k$}
		to [out=0,in=90] (1,1.5) to (1,1.25);
\end{tikzpicture}  
\bigg) \, .
\]
Since both functors are pivotal, 
the black cups are also related by the scalar $(-1)^{\binom{k}{2}}$. 
Clearly, the functors in \eqref{eq:FreeBlack} and \eqref{eq:FreeSpin} 
do not agree on the intersection of their domains: 
i.e.~they do not take the same values on black caps and cups.
To remedy this, define a new pivotal functor 
\begin{equation}
	\label{eq:FreeBlack}
\hat{\varphi}_{BW} \colon \Free(\mathsf{W}_{\bullet}) \to \FRep(U_q(\son))
	\end{equation}
such that
\begin{gather*}
\hat{\varphi}_{BW}\bigg(
\begin{tikzpicture}[scale=.4, anchorbase,tinynodes]
	\draw[very thick] (0,1.25) to [out=90,in=270] (0,1.5) to [out=90,in=180] (.5,2.25) node[above=-2pt]{$k$}
		to [out=0,in=90] (1,1.5) to (1,1.25);
\end{tikzpicture}  
\bigg) 
:=
\hat{\varphi}_{BER}\bigg(
\begin{tikzpicture}[scale=.4, anchorbase,tinynodes]
	\draw[very thick] (0,1.25) to [out=90,in=270] (0,1.5) to [out=90,in=180] (.5,2.25) node[above=-2pt]{$k$}
		to [out=0,in=90] (1,1.5) to (1,1.25);
\end{tikzpicture}  
\bigg) 
= 
(-1)^{\binom{k}{2}}\hat{\varphi}_{O}\bigg(
\begin{tikzpicture}[scale=.4, anchorbase,tinynodes]
	\draw[very thick] (0,1.25) to [out=90,in=270] (0,1.5) to [out=90,in=180] (.5,2.25) node[above=-2pt]{$k$}
		to [out=0,in=90] (1,1.5) to (1,1.25);
\end{tikzpicture}  
\bigg) \, , \\
\hat{\varphi}_{BW}\Bigg(
\begin{tikzpicture}[scale=.375, tinynodes,anchorbase]
	\draw[very thick] (0,0) node[below=-2pt]{$1$} to [out=90,in=210] (.5,.75);
	\draw[very thick] (1,0) node[below=-2pt]{$k$} to [out=90,in=330] (.5,.75);
	\draw[very thick] (.5,.75) to (.5,1.5) node[above=-2pt]{$k{+}1$};
\end{tikzpicture}
\Bigg) 
:=
\hat{\varphi}_{O}\Bigg(
\begin{tikzpicture}[scale=.375, tinynodes,anchorbase]
	\draw[very thick] (0,0) node[below=-2pt]{$1$} to [out=90,in=210] (.5,.75);
	\draw[very thick] (1,0) node[below=-2pt]{$k$} to [out=90,in=330] (.5,.75);
	\draw[very thick] (.5,.75) to (.5,1.5) node[above=-2pt]{$k{+}1$};
\end{tikzpicture}
\Bigg) \, ,
\quad \text{and} \quad 
\hat{\varphi}_{BW}\Bigg(
\begin{tikzpicture}[scale =.375, tinynodes,anchorbase]
	\draw[very thick] (0,0) node[below=-2pt]{$k$} to [out=90,in=210] (.5,.75);
	\draw[very thick] (1,0) node[below=-2pt]{$1$} to [out=90,in=330] (.5,.75);
	\draw[very thick] (.5,.75) to (.5,1.5) node[above=-2pt]{$k{+}1$};
\end{tikzpicture}
\Bigg) 
:=
\hat{\varphi}_{O}\Bigg(
\begin{tikzpicture}[scale =.375, tinynodes,anchorbase]
	\draw[very thick] (0,0) node[below=-2pt]{$k$} to [out=90,in=210] (.5,.75);
	\draw[very thick] (1,0) node[below=-2pt]{$1$} to [out=90,in=330] (.5,.75);
	\draw[very thick] (.5,.75) to (.5,1.5) node[above=-2pt]{$k{+}1$};
\end{tikzpicture}
\Bigg) \, .
\end{gather*}
It is an easy exercise to check that the image of the 
caps, cups, and trivalent vertices under $\hat{\varphi}_{BW}$ 
satisfy relations 
\eqref{eq:digon1}, 
\eqref{eq:assoc},
and \eqref{eq:blackH=I} on the nose. 
Since the functors $\hat{\varphi}_{BER}$ and $\hat{\varphi}_{BW}$ agree on the intersection of their domains, 
we obtain a pivotal functor as in \eqref{eq:Free} that factors through the 
quotient by all of the relations in \eqref{eq:webrel}, 
save for \eqref{eq:blackspinH=I} and \eqref{eq:n-1triangle}.

It thus remains to show that the images under $\hat{\varphi}$ 
of these two relations hold in $\FRep(U_q(\son))$.
For \eqref{eq:n-1triangle}, this is straightforward: 
the image of the left-hand side lies in 
$\Hom_{U_q(\son)}(V_k \otimes V_1,V_{n-1})$, 
which for $k \neq n-2$ is zero by \eqref{eq:V1Vk}.
It thus suffices to establish the image of \eqref{eq:blackspinH=I}. 
This proof will last until the end of this chapter, with several intervening lemmata.

Fix $1 \leq k \leq n-2$. 
Using Proposition \ref{prop:decomp} and Schur's lemma, 
one deduces that $\Hom(V_k \otimes V_1, S \otimes S)$ is two-dimensional. 
Precisely, one element in a basis will be the projection map $V_k \otimes V_1 \to V_{k+1}$ 
composed with the inclusion map $V_{k+1} \to S \otimes S$ 
and the other will factor through $V_{k-1}$ instead. 
Up to non-zero scalar, these projection and inclusion maps agree with the images of our generators and their rotations. 
Hence, there exists some relation of the form
\begin{equation}
	\label{eq:lastrel}
\hat{\varphi} \Bigg(
\begin{tikzpicture}[scale=.25, rotate=90, tinynodes, anchorbase]
	\draw[very thick] (-1,0) node[below,yshift=2pt,xshift=2pt]{$1$} to (0,1);
	\draw[very thick, gray] (1,0) node[above,yshift=-4pt,xshift=2pt]{$\gS$} to (0,1);
	\draw[very thick] (0,2.5) to (-1,3.5) node[below,yshift=2pt,xshift=-2pt]{$k$};
	\draw[very thick, gray] (0,2.5) to (1,3.5) node[above,yshift=-4pt,xshift=-2pt]{$\gS$};
	\draw[very thick, gray] (0,1) to node[below,yshift=1pt]{$\gS$} (0,2.5);
	\end{tikzpicture}
		\Bigg)
= 
a_k \
\hat{\varphi} \Bigg(
\begin{tikzpicture}[scale=.2, tinynodes, anchorbase]
	\draw[very thick] (-1,0) node[below=-2pt]{$k$} to (0,1);
	\draw[very thick] (1,0) node[below=-2pt]{$1$} to (0,1);
	\draw[very thick,gray] (0,2.5) to (-1,3.5);
	\draw[very thick,gray] (0,2.5) to (1,3.5);
	\draw[very thick] (0,1) to node[right=-2pt]{$k{+}1$} (0,2.5);
	\end{tikzpicture}
		\Bigg)
+ b_k \
\hat{\varphi} \Bigg(
\begin{tikzpicture}[scale=.2, tinynodes, anchorbase]
	\draw[very thick] (-1,0) node[below=-2pt]{$k$} to (0,1);
	\draw[very thick] (1,0) node[below=-2pt]{$1$} to (0,1);
	\draw[very thick,gray] (0,2.5) to (-1,3.5);
	\draw[very thick,gray] (0,2.5) to (1,3.5);
	\draw[very thick] (0,1) to node[right=-2pt]{$k{-}1$} (0,2.5);
	\end{tikzpicture}
		\Bigg)
	\end{equation}
for scalars $a_k, b_k \in \C(q)$, 
which we now compute inductively.
While doing so, we use the relations in \eqref{eq:webrel}
which we have already confirmed hold in the codomain of the functor $\hat{\varphi}$. 
We may also use all the relations from Proposition \ref{prop:easyrel} and relation \eqref{eq:zerodigonS}. 
The proof of Proposition \ref{prop:easyrel} does use \eqref{eq:blackspinH=I} exactly once, 
in the proof of \eqref{eq:zerodigonB}, 
but \eqref{eq:zerodigonB} holds obviously after applying $\hat{\varphi}$ since $\Hom_{U_q(\son)}(V_k,V_\ell) = 0$ when $k \neq \ell$.
Similarly, \eqref{eq:zerodigonS} holds after applying $\hat{\varphi}$.
Thus \eqref{eq:blackspinH=I} is not required to confirm that the relations 
in Proposition \ref{prop:easyrel} and relation \eqref{eq:zerodigonS} hold after applying $\hat{\varphi}$.

By convention, we say that \eqref{eq:lastrel} holds for $k=0$, where $a_0=1$ and $b_0=0$.
This is our base case.
Next, we compose \eqref{eq:lastrel} 
on the top with
(the image under $\hat{\varphi}$ of) 
trivalent vertices in $\mathsf{W}_{\gray{\bullet}}$, yielding maps to $V_{k+1}$ or $V_{k-1}$, namely
\begin{equation}
	\label{eq:spintriangle1}
\hat{\varphi}\Bigg(
\begin{tikzpicture}[scale=.2,tinynodes,anchorbase]
	\draw[very thick,gray] (-1,0) to (1,0);
	\draw[very thick,gray] (-1,0) to (0,1.732);
	\draw[very thick,gray] (1,0) to (0,1.732);
	\draw[very thick] (0,1.732) to (0,3.232) node[above=-2pt]{$k{+}1$};
	\draw[very thick] (-2.3,-.75) node[below=-1pt]{$k$} to (-1,0);
	\draw[very thick] (2.3,-.75) node[below=-1pt]{$1$} to (1,0);
\end{tikzpicture}
	\Bigg)
=
a_k \
\hat{\varphi} \Bigg(
\begin{tikzpicture}[scale=.2, tinynodes, anchorbase]
	\draw[very thick] (-1,0) node[below=-2pt]{$k$} to (0,1);
	\draw[very thick] (1,0) node[below=-2pt]{$1$} to (0,1);
	\draw[very thick] (0,1) to node[right=-2pt]{$k{+}1$} (0,2.5);
	\draw[very thick,gray] (0,2.5) to [out=150,in=210] (0,4.5);
	\draw[very thick,gray] (0,2.5) to [out=30,in=330] (0,4.5);
	\draw[very thick] (0,4.5) to node[above]{$k{+}1$} (0,5.5);
	\end{tikzpicture}
		\Bigg)
+ b_k \
\hat{\varphi} \Bigg(
\begin{tikzpicture}[scale=.2, tinynodes, anchorbase]
	\draw[very thick] (-1,0) node[below=-2pt]{$k$} to (0,1);
	\draw[very thick] (1,0) node[below=-2pt]{$1$} to (0,1);
	\draw[very thick] (0,1) to node[right=-2pt]{$k{-}1$} (0,2.5);
	\draw[very thick,gray] (0,2.5) to [out=150,in=210] (0,4.5);
	\draw[very thick,gray] (0,2.5) to [out=30,in=330] (0,4.5);
	\draw[very thick] (0,4.5) to node[above]{$k{+}1$} (0,5.5);
	\end{tikzpicture}
		\Bigg)
\stackrel{\substack{\eqref{eq:digonS} \\ \eqref{eq:zerodigonS}}}{=}
(-1)^{\binom{n-k}{2}}
\dd_{n-k-1}
a_k
\hat{\varphi} \Bigg(
\begin{tikzpicture}[scale =.375, tinynodes,anchorbase]
	\draw[very thick] (0,0) node[below=-1pt]{$k$} to [out=90,in=210] (.5,.75);
	\draw[very thick] (1,0) node[below=-1pt]{$1$} to [out=90,in=330] (.5,.75);
	\draw[very thick] (.5,.75) to (.5,1.5) node[above=-2pt]{$k{+}1$};
\end{tikzpicture}
		\Bigg)
			\end{equation}
and
\begin{equation}
	\label{eq:spintriangle2}
\hat{\varphi}\Bigg(
\begin{tikzpicture}[scale=.2,tinynodes,anchorbase]
	\draw[very thick,gray] (-1,0) to (1,0);
	\draw[very thick,gray] (-1,0) to (0,1.732);
	\draw[very thick,gray] (1,0) to (0,1.732);
	\draw[very thick] (0,1.732) to (0,3.232) node[above=-2pt]{$k{-}1$};
	\draw[very thick] (-2.3,-.75) node[below=-1pt]{$k$} to (-1,0);
	\draw[very thick] (2.3,-.75) node[below=-1pt]{$1$} to (1,0);
\end{tikzpicture}
	\Bigg)
=
a_k \
\hat{\varphi} \Bigg(
\begin{tikzpicture}[scale=.2, tinynodes, anchorbase]
	\draw[very thick] (-1,0) node[below=-2pt]{$k$} to (0,1);
	\draw[very thick] (1,0) node[below=-2pt]{$1$} to (0,1);
	\draw[very thick] (0,1) to node[right=-2pt]{$k{+}1$} (0,2.5);
	\draw[very thick,gray] (0,2.5) to [out=150,in=210] (0,4.5);
	\draw[very thick,gray] (0,2.5) to [out=30,in=330] (0,4.5);
	\draw[very thick] (0,4.5) to node[above]{$k{-}1$} (0,5.5);
	\end{tikzpicture}
		\Bigg)
+ b_k \
\hat{\varphi} \Bigg(
\begin{tikzpicture}[scale=.2, tinynodes, anchorbase]
	\draw[very thick] (-1,0) node[below=-2pt]{$k$} to (0,1);
	\draw[very thick] (1,0) node[below=-2pt]{$1$} to (0,1);
	\draw[very thick] (0,1) to node[right=-2pt]{$k{-}1$} (0,2.5);
	\draw[very thick,gray] (0,2.5) to [out=150,in=210] (0,4.5);
	\draw[very thick,gray] (0,2.5) to [out=30,in=330] (0,4.5);
	\draw[very thick] (0,4.5) to node[above]{$k{-}1$} (0,5.5);
	\end{tikzpicture}
		\Bigg)
\stackrel{\substack{\eqref{eq:digonS} \\ \eqref{eq:zerodigonS}}}{=}
(-1)^{\binom{n-k+2}{2}}
\dd_{n-k+1}
b_k
\hat{\varphi} \Bigg(
\begin{tikzpicture}[scale =.375, tinynodes,anchorbase]
	\draw[very thick] (0,0) node[below=-1pt]{$k$} to [out=90,in=210] (.5,.75);
	\draw[very thick] (1,0) node[below=-1pt]{$1$} to [out=90,in=330] (.5,.75);
	\draw[very thick] (.5,.75) to (.5,1.5) node[above=-2pt]{$k{-}1$};
\end{tikzpicture}
		\Bigg) \, .
	\end{equation}
Using pivotality (i.e.~rotating), 
\eqref{eq:spintriangle2} yields
\begin{equation}
	\label{eq:spintriangle3}
\hat{\varphi}\Bigg(
\begin{tikzpicture}[scale=.2,tinynodes,anchorbase]
	\draw[very thick,gray] (-1,0) to (1,0);
	\draw[very thick,gray] (-1,0) to (0,1.732);
	\draw[very thick,gray] (1,0) to (0,1.732);
	\draw[very thick] (0,1.732) to (0,3.232) node[above=-2pt]{$k$};
	\draw[very thick] (-2.3,-.75) node[below=-1pt]{$1$} to (-1,0);
	\draw[very thick] (2.3,-.75) node[below=-1pt]{$k{-}1$} to (1,0);
\end{tikzpicture}
	\Bigg)
=
(-1)^{\binom{n-k+2}{2}}
\dd_{n-k+1}
b_k
\hat{\varphi} \Bigg(
\begin{tikzpicture}[scale =.375, tinynodes,anchorbase]
	\draw[very thick] (0,0) node[below=-1pt,xshift=-1pt]{$1$} to [out=90,in=210] (.5,.75);
	\draw[very thick] (1,0) node[below=-1pt,xshift=2pt]{$k{-}1$} to [out=90,in=330] (.5,.75);
	\draw[very thick] (.5,.75) to (.5,1.5) node[above=-2pt]{$k$};
\end{tikzpicture}
		\Bigg) \, .
			\end{equation}
			
We now argue that \eqref{eq:spintriangle3} also holds with the diagrams reflected horizontally. 
Having done so, we can compare the reflected \eqref{eq:spintriangle3} (for $k$) with \eqref{eq:spintriangle1} (for $k-1$) 
to obtain a relationship between $b_k$ and $a_{k-1}$. 
To compute the relationship between reflected diagrams, we instead ``flip'' the diagrams over
using the braiding. 
Flipping and reflecting are not the same operation\footnote{Generally, one expects flipping and reflecting to be related by ribbon twists.} 
but in this case they agree up to scalar, and we compute those scalars.
			
We precompose \eqref{eq:spintriangle3} with the braiding isomorphism 
$R_{k-1,1} \colon V_{k-1} \otimes V_1 \xrightarrow{\cong} V_1 \otimes V_{k-1}$ in $\Rep(U_q(\son))$. 
To resolve the right-hand side, we use the following lemma, a generalization of \cite[equation (2.7)]{BodWu}.

\begin{lem} For either $\ell = 1$ or $m = 1$, we have
\begin{equation}\label{generalblackuntwist}
\hat{\varphi} \Bigg(
\begin{tikzpicture}[scale =.375, tinynodes,anchorbase]
	\draw[very thick] (0,0) node[below=-2pt]{$m$} to [out=90,in=210] (.5,.75);
	\draw[very thick] (1,0) node[below=-2pt]{$\ell$} to [out=90,in=330] (.5,.75);
	\draw[very thick] (.5,.75) to (.5,1.5) node[above=-2pt]{$m{+}\ell$};
\end{tikzpicture}
		\Bigg)
\circ R_{\ell,m}
= (-q^{-2})^{m\ell}
\hat{\varphi} \Bigg(
\begin{tikzpicture}[scale =.375, tinynodes,anchorbase]
	\draw[very thick] (0,0) node[below=-2pt]{$\ell$} to [out=90,in=210] (.5,.75);
	\draw[very thick] (1,0) node[below=-2pt]{$m$} to [out=90,in=330] (.5,.75);
	\draw[very thick] (.5,.75) to (.5,1.5) node[above=-2pt]{$m{+}\ell$};
\end{tikzpicture}
		\Bigg) \, .
\end{equation}
\end{lem}

\begin{proof} The base case $m = \ell = 1$ follows from \cite[equation (2.7) and Theorem 4.10]{BodWu}. 
We now establish the case of $\ell = 1$ and general $m$ via induction on $m$. 
(The case of general $\ell$ and $m=1$ is analogous, so we omit the argument.)
In the following computation, we will notationally depict $R_{1,m}$ as the image under $\hat{\varphi}$ 
of an appropriately labeled positive crossing. 
We emphasize that we have not yet defined\footnote{We eventually do so in Remark \ref{rem:braiding}, 
but one could also establish an explicit formulae akin to \cite[Theorem 2.5]{HigginsWu} by following the proof of that result.}  
this $(1,m)$-labeled crossing in $\Web(\son)$; all calculations take place in $\FRep(U_q(\son))$.

Using this, and naturality of the braiding morphisms in $\FRep(U_q(\son))$ (abbreviated by nat.~below), 
we inductively compute
\begin{equation}\label{eq:blackforktwistproof}
\begin{aligned}
\hat{\varphi} \Bigg(
\begin{tikzpicture}[scale =.375, tinynodes,anchorbase]
	\draw[very thick] (0,0) node[below=-2pt]{$m$} to [out=90,in=210] (.5,.75);
	\draw[very thick] (1,0) node[below=-2pt]{$1$} to [out=90,in=330] (.5,.75);
	\draw[very thick] (.5,.75) to (.5,1.5) node[above=-2pt]{$m{+}1$};
\end{tikzpicture}
		\Bigg)
\circ R_{1,m}
&=:
\hat{\varphi} \Bigg(
\begin{tikzpicture}[scale =.375, tinynodes,anchorbase]
	\draw[very thick] (1,-1.5) node[below=-2pt]{$m$} to [out=90,in=270] (0,0) to [out=90,in=210] (.5,.75);
	\draw[overcross] (0,-1.5) to [out=90,in=270] (1,0);
	\draw[very thick] (0,-1.5) node[below=-2pt]{$1$} to [out=90,in=270] (1,0) to [out=90,in=330] (.5,.75);
	\draw[very thick] (.5,.75) to (.5,1.5) node[above=-2pt]{$m{+}1$};
\end{tikzpicture}
		\Bigg)
\stackrel{\eqref{eq:digon1}}{=} 
\frac{(-1)^{m-1}}{``[m]^2"}
\hat{\varphi} \Bigg(
\begin{tikzpicture}[scale =.375, tinynodes,anchorbase]
	\draw[very thick] (1,-2.5) node[below=-2pt]{$m$} to (1,-2.25);
	\draw[very thick] (1,-2.25) to [out=150,in=210] node[left=-3pt]{$1$} (1,-1.5);
	\draw[very thick] (1,-2.25) to [out=30,in=330] node[right=-3pt]{$m{-}1$} (1,-1.5);
	\draw[very thick] (1,-1.5) to [out=90,in=270] (0,0) to [out=90,in=210] (.5,.75);
	\draw[overcross] (0,-1.5) to [out=90,in=270] (1,0);
	\draw[very thick] (0,-2.5) node[below=-2pt]{$1$} to (0,-1.5) to [out=90,in=270] (1,0) to [out=90,in=330] (.5,.75);
	\draw[very thick] (.5,.75) to (.5,1.5) node[above=-2pt]{$m{+}1$};
\end{tikzpicture}
		\Bigg)
\stackrel{\text{nat.}}{=} 
\frac{(-1)^{m-1}}{``[m]^2"}
\hat{\varphi} \Bigg(
\begin{tikzpicture}[scale =.375, tinynodes,anchorbase]
	\draw[very thick] (1,-2.5) node[below=-2pt]{$m$} to (1,-2.25);
	\draw[very thick] (1,-2.25) to [out=150,in=210] node[pos=.75, left=-3pt]{$1$} (0,0);
	\draw[very thick] (1,-2.25) to [out=30,in=330] node[right=-3pt]{$m{-}1$} (0,0);
	\draw[very thick] (0,0) to [out=90,in=210] (.5,.75);
	\draw[overcross] (0,-2.25) to (0,-1.5) to [out=90,in=270] (1,0);
	\draw[very thick] (0,-2.5) node[below=-2pt]{$1$} to (0,-1.5) to [out=90,in=270] (1,0) to [out=90,in=330] (.5,.75);
	\draw[very thick] (.5,.75) to (.5,1.5) node[above=-2pt]{$m{+}1$};
\end{tikzpicture}
		\Bigg) \\
\stackrel{\eqref{eq:assoc}}{=}&
\frac{(-1)^{m-1}}{``[m]^2"}
\hat{\varphi} \Bigg(
\begin{tikzpicture}[scale =.375, tinynodes,anchorbase]
	\draw[very thick] (1,-2.5) node[below=-2pt]{$m$} to (1,-2.25);
	\draw[very thick] (1,-2.25) to [out=150,in=210] node[pos=.75, left=-3pt]{$1$} (.5,.75);
	\draw[very thick] (1,-2.25) to [out=30,in=210] node[pos=.25, right=-3pt]{$m{-}1$} (1,0);
	\draw[overcross] (0,-2.25) to (0,-1.5) to [out=90,in=270] (1.25,-.5);
	\draw[very thick] (0,-2.5) node[below=-2pt]{$1$} to (0,-1.5) to [out=90,in=270] (1.25,-.5) 
		to [out=90,in=330] (1,0) to [out=90,in=330] (.5,.75);
	\draw[very thick] (.5,.75) to (.5,1.5) node[above=-2pt]{$m{+}1$};
\end{tikzpicture}
		\Bigg)
\stackrel{\eqref{generalblackuntwist}}{=}
\frac{(q^{-2})^{m-1}}{``[m]^2"}
\hat{\varphi} \Bigg(
\begin{tikzpicture}[scale =.375, tinynodes,anchorbase]
	\draw[very thick] (1,-2.5) node[below=-2pt]{$m$} to (1,-2.25);
	\draw[very thick] (1,-2.25) to [out=150,in=210] node[pos=.75, left=-3pt]{$1$} (.5,.75);
	\draw[very thick] (1,-2.25) to [out=30,in=330] node[pos=.25, right=-3pt]{$m{-}1$} (1,0);
	\draw[overcross] (0,-2.25) to (0,-1.5) to [out=90,in=270] (.75,-.5);
	\draw[very thick] (0,-2.5) node[below=-2pt]{$1$} to (0,-1.5) to [out=90,in=270] (.75,-.5) 
		to [out=90,in=210] (1,0) to [out=90,in=330] (.5,.75);
	\draw[very thick] (.5,.75) to (.5,1.5) node[above=-2pt]{$m{+}1$};
\end{tikzpicture}
		\Bigg) \\
\stackrel{\eqref{eq:flowassoc}}{=}&
\frac{(q^{-2})^{m-1}}{``[m]^2"}
\hat{\varphi} \Bigg(
\begin{tikzpicture}[scale =.375, tinynodes,anchorbase]
	\draw[very thick] (1,-2.5) node[below=-2pt]{$m$} to (1,-2.25);
	\draw[very thick] (1,-2.25) to [out=150,in=210] node[pos=.75, left=-3pt]{$1$} (0,0) to [out=90,in=210] (.5,.75);
	\draw[very thick] (1,-2.25) to [out=30,in=330] node[right=-3pt]{$m{-}1$} (.5,.75);
	\draw[overcross] (0,-2.25) to (0,-1.5) to [out=90,in=270] (.25,-.5);
	\draw[very thick] (0,-2.5) node[below=-2pt]{$1$} to (0,-1.5) to [out=90,in=270] (.25,-.5) 
		to [out=90,in=330] (0,0);
	\draw[very thick] (.5,.75) to (.5,1.5) node[above=-2pt]{$m{+}1$};
\end{tikzpicture}
		\Bigg)
\stackrel{\eqref{generalblackuntwist}}{=}
\frac{-(q^{-2})^{m}}{``[m]^2"}
\hat{\varphi} \Bigg(
\begin{tikzpicture}[scale =.375, tinynodes,anchorbase]
	\draw[very thick] (1,-2.5) node[below=-2pt]{$m$} to (1,-2.25);
	\draw[very thick] (1,-2.25) to [out=150,in=330] node[left=-3pt]{$1$} (0,0) to [out=90,in=210] (.5,.75);
	\draw[very thick] (1,-2.25) to [out=30,in=330] node[right=-3pt]{$m{-}1$} (.5,.75);
	\draw[very thick] (0,-2.5) node[below=-2pt]{$1$} to [out=90,in=210] (0,0);
	\draw[very thick] (.5,.75) to (.5,1.5) node[above=-2pt]{$m{+}1$};
\end{tikzpicture}
		\Bigg)
\stackrel{\substack{\eqref{eq:flowassoc} \\ \eqref{eq:digon1}}}{=}
(-q^{-2})^{m}
\hat{\varphi} \Bigg(
\begin{tikzpicture}[scale =.375, tinynodes,anchorbase]
	\draw[very thick] (0,0) node[below=-2pt]{$1$} to [out=90,in=210] (.5,.75);
	\draw[very thick] (1,0) node[below=-2pt]{$m$} to [out=90,in=330] (.5,.75);
	\draw[very thick] (.5,.75) to (.5,1.5) node[above=-2pt]{$m{+}1$};
\end{tikzpicture}
		\Bigg) \, .
\end{aligned}
\end{equation}
(Here, our uses of \eqref{generalblackuntwist} are applications of the inductive hypothesis.)
\end{proof}

We pause to note the following, which we do not use.

\begin{lem} Equation \eqref{generalblackuntwist} holds for all $m, \ell \ge 0$, 
when interpreted using the flow vertices of \eqref{eq:flowvertex}. \end{lem}

\begin{proof}[Proof (sketch)]
The base case where $m=1$ (and $\ell$ is arbitrary) is the previous lemma. 
Now one inducts on $m$, following the exact procedure of the previous proof.
\end{proof}

Now we treat the left-hand side of the flipped-over \eqref{eq:spintriangle3}. 
Similar to the above, 
we will depict $R_{S,S}$ as the image under $\hat{\varphi}$ of an $\gS$-labeled positive crossing, 
and we again emphasize that this is merely notation: all computations take place in $\Rep(U_q(\son))$.
(But see Proposition \ref{prop:SpinBraiding} where we give a formula for this crossing in $\Web(\son)$ 
which is backwards-compatible with our usage here.)

Using the naturality of the braiding, we again compute
\begin{equation} \label{twistytriangle}
\hat{\varphi}\Bigg(
\begin{tikzpicture}[scale=.2,tinynodes,anchorbase]
	\draw[very thick,gray] (-1,0) to (1,0);
	\draw[very thick,gray] (-1,0) to (0,1.732);
	\draw[very thick,gray] (1,0) to (0,1.732);
	\draw[very thick] (0,1.732) to (0,3.232) node[above=-2pt]{$k$};
	\draw[very thick] (-2.3,-.75) node[below=-2pt]{$1$} to (-1,0);
	\draw[very thick] (2.3,-.75) node[below=-2pt]{$k{-}1$} to (1,0);
\end{tikzpicture}
	\Bigg)
\circ
R_{k-1,1}
=:
\hat{\varphi}\Bigg(
\begin{tikzpicture}[scale=.2,tinynodes,anchorbase]
	\draw[very thick] (1.5,-3) node[below=-2pt]{$1$} to [out=90,in=210] (-1,0);
	\draw[overcross] (-1.5,-3) to [out=90,in=330] (1,0);
	\draw[very thick] (-1.5,-3) node[below=-2pt]{$k{-}1$} to [out=90,in=330] (1,0);
	\draw[very thick,gray] (-1,0) to (1,0);
	\draw[very thick,gray] (-1,0) to (0,1.732);
	\draw[very thick,gray] (1,0) to (0,1.732);
	\draw[very thick] (0,1.732) to (0,3.232) node[above=-2pt]{$k$};
\end{tikzpicture}
	\Bigg)
\stackrel{\text{nat.}}{=}
\hat{\varphi}\Bigg(
\begin{tikzpicture}[scale=.2,tinynodes,anchorbase]
	\draw[very thick,gray] (1.5,-2.25) to [out=150,in=270] (-2,.5) to [out=90,in=210] (0,2.5);
	\draw[very thick,gray] (1.5,-2.25) to [out=30,in=270] (-.75,.5) to [out=90,in=180] (0,1.25);
	\draw[overcros] (-1.5,-2.25) to [out=30,in=270] (2,.5);
	\draw[overcros] (-1.5,-2.25) to [out=150,in=270] (.75,.5);
	\draw[very thick,gray] (-1.5,-2.25) to [out=30,in=270] (2,.5) to [out=90,in=330] (0,2.5);
	\draw[very thick,gray] (-1.5,-2.25) to [out=150,in=270] (.75,.5) to [out=90,in=0] (0,1.25) to [out=180,in=90] (-.75,.5);
	\draw[very thick] (0,2.5) to (0,3.25) node[above=-2pt]{$k$};
	\draw[very thick] (1.5,-3) node[below=-2pt]{$1$} to (1.5,-2.25);
	\draw[very thick] (-1.5,-3) node[below=-2pt]{$k{-}1$} to (-1.5,-2.25);
\end{tikzpicture}
	\Bigg)
\end{equation}
We now simplify this latter morphism using \cite[Lemma 4.11]{BER}, 
which states that
\begin{equation}\label{eq:untwistovercrossing}
\hat{\varphi} \Bigg(
\begin{tikzpicture}[scale=.375, tinynodes,anchorbase,rotate=180]
	\draw[very thick,gray] (1,-1.5) node[above=-2pt]{$\gS$} to [out=90,in=270] (0,0) to [out=90,in=210] (.5,.75);
	\draw[overcross] (0,-1.5) to [out=90,in=270] (1,0);
	\draw[very thick,gray] (0,-1.5) node[above=-2pt]{$\gS$} to [out=90,in=270] (1,0) to [out=90,in=330] (.5,.75);
	\draw[very thick] (.5,.75) to (.5,1.5) node[below=-2pt]{$\ell$};
\end{tikzpicture}
		\Bigg)
:=
R_{S,S} \circ 
\hat{\varphi} \Bigg(
\begin{tikzpicture}[scale =.375, tinynodes,anchorbase,yscale=-1]
	\draw[very thick,gray] (0,0) node[above=-3pt]{$\gS$} to [out=90,in=210] (.5,.75);
	\draw[very thick,gray] (1,0) node[above=-3pt]{$\gS$} to [out=90,in=330] (.5,.75);
	\draw[very thick] (.5,.75) to (.5,1.5) node[below=-2pt]{$\ell$};
\end{tikzpicture}
		\Bigg)
= q^{\frac{2\ell-n}{2}} (-1)^{\binom{n-\ell+1}{2}} q^{-(n-\ell)^2}
\hat{\varphi} \Bigg(
\begin{tikzpicture}[scale =.375, tinynodes,anchorbase,yscale=-1]
	\draw[very thick,gray] (0,0) node[above=-3pt]{$\gS$} to [out=90,in=210] (.5,.75);
	\draw[very thick,gray] (1,0) node[above=-3pt]{$\gS$} to [out=90,in=330] (.5,.75);
	\draw[very thick] (.5,.75) to (.5,1.5) node[below=-2pt]{$\ell$};
\end{tikzpicture}
		\Bigg)
\end{equation}
for $0 \leq \ell \leq n-1$.
The $\ell=0$ case of \eqref{eq:untwistovercrossing} allows for an application of a $\gS$-colored Reidemeister I move in the right-hand side of \eqref{twistytriangle}. 
After then applying a $\gS$-colored Reidemeister III move, it becomes possible to apply \eqref{eq:untwistovercrossing} (with $\ell=k$)
and apply \eqref{eq:untwistovercrossing} for a negative crossing twice (with $\ell=1$ and $k-1$).
In the end, this gives
\begin{equation}
	\hat{\varphi}\Bigg(
	\begin{tikzpicture}[scale=.2,tinynodes,anchorbase]
		\draw[very thick,gray] (-1,0) to (1,0);
		\draw[very thick,gray] (-1,0) to (0,1.732);
		\draw[very thick,gray] (1,0) to (0,1.732);
		\draw[very thick] (0,1.732) to (0,3.232) node[above=-2pt]{$k$};
		\draw[very thick] (-2.3,-.75) node[below=-1pt]{$1$} to (-1,0);
		\draw[very thick] (2.3,-.75) node[below=-1pt]{$k{-}1$} to (1,0);
	\end{tikzpicture}
		\Bigg) \circ R_{k-1,1}
	=
	(-1)^{k-1} q^{2-2k}
\hat{\varphi}\Bigg(
\begin{tikzpicture}[scale=.2,tinynodes,anchorbase]
	\draw[very thick,gray] (-1,0) to (1,0);
	\draw[very thick,gray] (-1,0) to (0,1.732);
	\draw[very thick,gray] (1,0) to (0,1.732);
	\draw[very thick] (0,1.732) to (0,3.232) node[above=-2pt]{$k$};
	\draw[very thick] (-2.3,-.75) node[below=-1pt]{$k{-}1$} to (-1,0);
	\draw[very thick] (2.3,-.75) node[below=-1pt]{$1$} to (1,0);
\end{tikzpicture}
	\Bigg) \, .
\end{equation}
Observe that the scalar here agrees with the scaling factor from \eqref{generalblackuntwist} for $\ell=k-1$ and $m=1$. 
Dividing by this common factor, we obtain the horizontal flip of \eqref{eq:spintriangle3}, namely
\begin{equation}
	\label{eq:spintriangle4}
\hat{\varphi}\Bigg(
\begin{tikzpicture}[scale=.2,tinynodes,anchorbase]
	\draw[very thick,gray] (-1,0) to (1,0);
	\draw[very thick,gray] (-1,0) to (0,1.732);
	\draw[very thick,gray] (1,0) to (0,1.732);
	\draw[very thick] (0,1.732) to (0,3.232) node[above=-2pt]{$k$};
	\draw[very thick] (-2.3,-.75) node[below=-1pt]{$k{-}1$} to (-1,0);
	\draw[very thick] (2.3,-.75) node[below=-1pt]{$1$} to (1,0);
\end{tikzpicture}
	\Bigg)
=
(-1)^{\binom{n-k+2}{2}}
\dd_{n-k+1}
b_k
\hat{\varphi} \Bigg(
\begin{tikzpicture}[scale =.375, tinynodes,anchorbase]
	\draw[very thick] (0,0) node[below=-1pt,xshift=-1pt]{$k{-}1$} to [out=90,in=210] (.5,.75);
	\draw[very thick] (1,0) node[below=-1pt,xshift=2pt]{$1$} to [out=90,in=330] (.5,.75);
	\draw[very thick] (.5,.75) to (.5,1.5) node[above=-2pt]{$k$};
\end{tikzpicture}
		\Bigg) \, .
			\end{equation}
Together, \eqref{eq:spintriangle1}, \eqref{eq:spintriangle4},
and the equality $d_{n-k+1}/d_{n-k} = [2]_{2n-2k+1}$ imply that
\begin{equation}
	\label{eq:b=a}
(-1)^{n-k+1} [2]_{2n-2k+1} b_k =  a_{k-1}
\end{equation}
for $1 \leq k \leq n-2$. 
This also holds for $k=0$, provided we declare that $a_{-1}=0$.

Next, composing \eqref{eq:lastrel} on the bottom with 
(the image under $\hat{\varphi}$ of) 
trivalent vertices in $\mathsf{W}_{\bullet}$ yields the equations:
\begin{equation}
	\label{eq:othertriangles}
\begin{aligned}
\hat{\varphi}\Bigg(
\begin{tikzpicture}[scale=.2,tinynodes,anchorbase, rotate=180]
	\draw[very thick,gray] (-1,0) to (1,0);
	\draw[very thick] (-1,0) to node[right=-1pt,yshift=-1pt]{$1$} (0,1.732);
	\draw[very thick] (1,0) to node[left=-1pt,yshift=-1pt]{$k$} (0,1.732);
	\draw[very thick] (0,1.732) to (0,3.232) node[below=-2pt]{$k{+}1$};
	\draw[very thick,gray] (-2.3,-.75) node[above=-3pt]{$\gS$} to (-1,0);
	\draw[very thick,gray] (2.3,-.75) node[above=-3pt]{$\gS$} to (1,0);
\end{tikzpicture}
	\Bigg)
&=
a_k \
\hat{\varphi} \Bigg(
\begin{tikzpicture}[scale=.2, tinynodes, anchorbase]
	\draw[very thick] (0,-2) node[below=-2pt]{$k{+}1$} to (0,-1);
	\draw[very thick] (0,-1) to [out=150,in=210] node[left=-2pt]{$k$} (0,1);
	\draw[very thick] (0,-1) to [out=30,in=330] node[right=-2pt]{$1$} (0,1);
	\draw[very thick,gray] (0,2.5) to (-1,3.5);
	\draw[very thick,gray] (0,2.5) to (1,3.5);
	\draw[very thick] (0,1) to node[right=-2pt]{$k{+}1$} (0,2.5);
	\end{tikzpicture}
		\Bigg)
+ b_k \
\hat{\varphi} \Bigg(
\begin{tikzpicture}[scale=.2, tinynodes, anchorbase]
	\draw[very thick] (0,-2) node[below=-2pt]{$k{+}1$} to (0,-1);
	\draw[very thick] (0,-1) to [out=150,in=210] node[left=-2pt]{$k$} (0,1);
	\draw[very thick] (0,-1) to [out=30,in=330] node[right=-2pt]{$1$} (0,1);
	\draw[very thick,gray] (0,2.5) to (-1,3.5);
	\draw[very thick,gray] (0,2.5) to (1,3.5);
	\draw[very thick] (0,1) to node[right=-2pt]{$k{-}1$} (0,2.5);
	\end{tikzpicture}
		\Bigg)
\stackrel{\substack{\eqref{eq:digon1} \\ \eqref{eq:zerodigonB}}}{=}
(-1)^k
``[k+1]^2" 
a_k
\hat{\varphi} \Bigg(
\begin{tikzpicture}[scale =.375, tinynodes,anchorbase,yscale=-1]
	\draw[very thick,gray] (0,0) to [out=90,in=210] (.5,.75);
	\draw[very thick,gray] (1,0) to [out=90,in=330] (.5,.75);
	\draw[very thick] (.5,.75) to (.5,1.5) node[below=-2pt]{$k{+}1$};
\end{tikzpicture}
		\Bigg) \\
\hat{\varphi}\Bigg(
\begin{tikzpicture}[scale=.2,tinynodes,anchorbase, rotate=180]
	\draw[very thick,gray] (-1,0) to (1,0);
	\draw[very thick] (-1,0) to node[right=-1pt,yshift=-1pt]{$1$} (0,1.732);
	\draw[very thick] (1,0) to node[left=-1pt,yshift=-1pt]{$k$} (0,1.732);
	\draw[very thick] (0,1.732) to (0,3.232) node[below=-2pt]{$k{-}1$};
	\draw[very thick,gray] (-2.3,-.75) node[above=-3pt]{$\gS$} to (-1,0);
	\draw[very thick,gray] (2.3,-.75) node[above=-3pt]{$\gS$} to (1,0);
\end{tikzpicture}
	\Bigg)
&=
a_k \
\hat{\varphi} \Bigg(
\begin{tikzpicture}[scale=.2, tinynodes, anchorbase]
	\draw[very thick] (0,-2) node[below=-2pt]{$k{-}1$} to (0,-1);
	\draw[very thick] (0,-1) to [out=150,in=210] node[left=-2pt]{$k$} (0,1);
	\draw[very thick] (0,-1) to [out=30,in=330] node[right=-2pt]{$1$} (0,1);
	\draw[very thick,gray] (0,2.5) to (-1,3.5);
	\draw[very thick,gray] (0,2.5) to (1,3.5);
	\draw[very thick] (0,1) to node[right=-2pt]{$k{+}1$} (0,2.5);
	\end{tikzpicture}
		\Bigg)
+ b_k \
\hat{\varphi} \Bigg(
\begin{tikzpicture}[scale=.2, tinynodes, anchorbase]
	\draw[very thick] (0,-2) node[below=-2pt]{$k{-}1$} to (0,-1);
	\draw[very thick] (0,-1) to [out=150,in=210] node[left=-2pt]{$k$} (0,1);
	\draw[very thick] (0,-1) to [out=30,in=330] node[right=-2pt]{$1$} (0,1);
	\draw[very thick,gray] (0,2.5) to (-1,3.5);
	\draw[very thick,gray] (0,2.5) to (1,3.5);
	\draw[very thick] (0,1) to node[right=-2pt]{$k{-}1$} (0,2.5);
	\end{tikzpicture}
		\Bigg)
\stackrel{\substack{\eqref{eq:zerodigonB} \\ \eqref{eq:otherdigonB}}}{=}
(-1)^{k-1}
``[2n-k+2]^2" 
\frac{[2]_{2n-2k+1}}{[2]_{2n-2k+3}}
b_k
\hat{\varphi} \Bigg(
\begin{tikzpicture}[scale =.375, tinynodes,anchorbase,yscale=-1]
	\draw[very thick,gray] (0,0) to [out=90,in=210] (.5,.75);
	\draw[very thick,gray] (1,0) to [out=90,in=330] (.5,.75);
	\draw[very thick] (.5,.75) to (.5,1.5) node[below=-2pt]{$k{-}1$};
\end{tikzpicture}
	\Bigg) \, .
	\end{aligned}
		\end{equation}
If we instead compose \eqref{eq:lastrel} on the right side 
with the first generator in $\mathsf{W}_{\gray{\bullet}}$, 
this gives
\begin{equation}
	\label{eq:prequadratic}
(-1)^n \frac{[2n+1]}{[2]} 
\hat{\varphi} \Bigg(
\begin{tikzpicture}[scale =.375, tinynodes,anchorbase,yscale=-1]
	\draw[very thick, gray] (0,0) to [out=90,in=210] (.5,.75);
	\draw[very thick, gray] (1,0) to [out=90,in=330] (.5,.75);
	\draw[very thick] (.5,.75) to (.5,1.5) node[below=-2pt]{$k$};
	\end{tikzpicture}
		\Bigg)
\stackrel{\eqref{eq:otherdigonS}}{=}
\hat{\varphi}\Bigg(
\begin{tikzpicture}[scale=.2, tinynodes, anchorbase,rotate=90]
	\draw[very thick,gray] (1,-2) to [out=180,in=270] (0,-1);
	\draw[very thick] (0,-1) to [out=150,in=210] node[below=-2pt]{$1$} (0,1);
	\draw[very thick,gray] (0,-1) to [out=30,in=330] (0,1);
	\draw[very thick] (0,2.5) to (-1,3.5) node[below=-2pt]{$k$};
	\draw[very thick,gray] (0,2.5) to (1,3.5);
	\draw[very thick, gray] (0,1) to (0,2.5);
	\end{tikzpicture}
		\Bigg)
= a_k \hat{\varphi}\Bigg(
\begin{tikzpicture}[scale=.2,tinynodes,anchorbase, rotate=180]
	\draw[very thick,gray] (-1,0) to (1,0);
	\draw[very thick] (-1,0) to node[right=-1pt,yshift=-1pt]{$1$} (0,1.732);
	\draw[very thick] (1,0) to node[left=-1pt,yshift=-1pt]{$k{+}1$} (0,1.732);
	\draw[very thick] (0,1.732) to (0,3.232) node[below=-2pt]{$k$};
	\draw[very thick,gray] (-2.3,-.75) node[above=-3pt]{$\gS$} to (-1,0);
	\draw[very thick,gray] (2.3,-.75) node[above=-3pt]{$\gS$} to (1,0);
\end{tikzpicture}
	\Bigg)
+ b_k \hat{\varphi}\Bigg(
\begin{tikzpicture}[scale=.2,tinynodes,anchorbase, rotate=180]
	\draw[very thick,gray] (-1,0) to (1,0);
	\draw[very thick] (-1,0) to node[right=-1pt,yshift=-1pt]{$1$} (0,1.732);
	\draw[very thick] (1,0) to node[left=-1pt,yshift=-1pt]{$k{-}1$} (0,1.732);
	\draw[very thick] (0,1.732) to (0,3.232) node[below=-2pt]{$k$};
	\draw[very thick,gray] (-2.3,-.75) node[above=-3pt]{$\gS$} to (-1,0);
	\draw[very thick,gray] (2.3,-.75) node[above=-3pt]{$\gS$} to (1,0);
\end{tikzpicture}
	\Bigg) \, .
\end{equation}
Now, recall that $\Hom_{U_q(\son)}(V_k,S \otimes S)$ is one-dimensional, 
with basis vector
$\hat{\varphi} \Big(
\begin{tikzpicture}[scale =.25, tinynodes,anchorbase,yscale=-1]
	\draw[very thick, gray] (0,0) to [out=90,in=210] (.5,.75);
	\draw[very thick, gray] (1,0) to [out=90,in=330] (.5,.75);
	\draw[very thick] (.5,.75) to (.5,1.5) node[below=-2pt]{$k$};
	\end{tikzpicture}
		\Big)$. 
Expanding the right-hand side of \eqref{eq:prequadratic} using \eqref{eq:othertriangles}, 
and computing the coefficient of that basis vector, we obtain
\[ 
(-1)^n \frac{[2n+1]}{[2]} 
= a_k (-1)^k ``[2n-k+1]^2" \frac{[2]_{2n-2k-1}}{[2]_{2n-2k+1}} b_{k+1} + b_k (-1)^{k-1} ``[k]^2" a_{k-1} \, . \]
Applying \eqref{eq:b=a} and simplifying, we then get
\[
[2]_{2n-2k+1} \frac{[2n+1]}{[2]} = ``[2n-k+1]^2" \cdot a_k^2 + ``[k]^2" \cdot a_{k-1}^2
\]
for $0 \leq k \leq n-2$.
If $a_{k-1}^2 = 1$, then quantum arithmetic implies $a_k^2 = 1$ (an exercise for the reader). 
Since $a_0 = 1$, this implies that $a_k^2 =1$ for all $1 \leq k \leq n-2$.

We have thus shown that the equation
\[
\hat{\varphi} \Bigg(
\begin{tikzpicture}[scale=.25, rotate=90, tinynodes, anchorbase]
	\draw[very thick] (-1,0) node[below,yshift=2pt,xshift=2pt]{$1$} to (0,1);
	\draw[very thick, gray] (1,0) node[above,yshift=-4pt,xshift=2pt]{$\gS$} to (0,1);
	\draw[very thick] (0,2.5) to (-1,3.5) node[below,yshift=2pt,xshift=-2pt]{$k$};
	\draw[very thick, gray] (0,2.5) to (1,3.5) node[above,yshift=-4pt,xshift=-2pt]{$\gS$};
	\draw[very thick, gray] (0,1) to node[below,yshift=1pt]{$\gS$} (0,2.5);
	\end{tikzpicture}
		\Bigg)
= 
a_k \
\hat{\varphi} \Bigg(
\begin{tikzpicture}[scale=.2, tinynodes, anchorbase]
	\draw[very thick] (-1,0) node[below=-2pt]{$k$} to (0,1);
	\draw[very thick] (1,0) node[below=-2pt]{$1$} to (0,1);
	\draw[very thick,gray] (0,2.5) to (-1,3.5);
	\draw[very thick,gray] (0,2.5) to (1,3.5);
	\draw[very thick] (0,1) to node[right=-2pt]{$k{+}1$} (0,2.5);
	\end{tikzpicture}
		\Bigg)
+ (-1)^{n-k+1}
\frac{1}{[2]_{2n-2k+1}}
a_{k-1} \
\hat{\varphi} \Bigg(
\begin{tikzpicture}[scale=.2, tinynodes, anchorbase]
	\draw[very thick] (-1,0) node[below=-2pt]{$k$} to (0,1);
	\draw[very thick] (1,0) node[below=-2pt]{$1$} to (0,1);
	\draw[very thick,gray] (0,2.5) to (-1,3.5);
	\draw[very thick,gray] (0,2.5) to (1,3.5);
	\draw[very thick] (0,1) to node[right=-2pt]{$k{-}1$} (0,2.5);
	\end{tikzpicture}
		\Bigg)
			\]
holds in $\Rep(U_q(\son))$, where $a_0=1$ and $a_k = \pm1$ for $1 \leq k \leq n-2$.
We finally claim that, possibly after adjusting the functor $\hat{\varphi}$, 
we can assume that $a_k=1$ for all $0 \leq k \leq n-2$. 
Indeed, were this not the case for a particular value of $k$, 
we could redefine our functor to rescale the image of both generators
\[
\begin{tikzpicture}[scale=.375, tinynodes,anchorbase]
	\draw[very thick] (0,0) node[below=-1pt]{$1$} to [out=90,in=210] (.5,.75);
	\draw[very thick] (1,0) node[below=-1pt]{$k$} to [out=90,in=330] (.5,.75);
	\draw[very thick] (.5,.75) to (.5,1.5) node[above=-2pt]{$k{+}1$};
\end{tikzpicture}
\, , \, 
\begin{tikzpicture}[scale =.375, tinynodes,anchorbase]
	\draw[very thick] (0,0) node[below=-1pt]{$k$} to [out=90,in=210] (.5,.75);
	\draw[very thick] (1,0) node[below=-1pt]{$1$} to [out=90,in=330] (.5,.75);
	\draw[very thick] (.5,.75) to (.5,1.5) node[above=-2pt]{$k{+}1$};
\end{tikzpicture}
\]
by $-1$. 
This leaves all other relations invariant, 
as they all involve an even number of the rescaled trivalent vertices.
Hence, we obtain
a functor, denoted $\varphi$, that factors through the quotient by all relations in \eqref{eq:webrel}.
\end{proof}

\section{Some results concerning the braiding}

In this section, we consider morphisms $c_{i,j} \in \Web(\son)$ which are sent
to braiding morphisms $R_{i,j} \in \Rep(U_q(\son))$ under 
the functor $\varphi$ from Theorem \ref{thm:functor}. 
Using this, we show that these morphisms satisfy various 
Reidemeister-like relations for webs built from these morphisms. 
These relations are crucial in our subsequent arguments.

\subsection{Braidings involving $1$-labeled edges}\label{ss:1braid}

To begin, we define the following morphisms:
\begin{subequations}\label{eq:otherbraidings}
\begin{equation}\label{eq:blackbraiding}
\begin{aligned}
c_{1,1}:=
\begin{tikzpicture}[scale=.4, anchorbase,tinynodes]
	\draw[very thick] (1,0) to [out=90,in=270] (0,1.5);
	\draw[overcross] (0,0) to [out=90,in=270] (1,1.5);
	\draw[very thick] (0,0) to [out=90,in=270] (1,1.5);
\end{tikzpicture}
& := q^2 \
\begin{tikzpicture}[scale=.2, tinynodes, anchorbase]
	\draw[very thick] (-.75,0) to (-.75,3.5);
	\draw[very thick] (.75,0) to (.75,3.5);
	\end{tikzpicture}
+ 
\frac{(-1)^{n-1 \choose 2}}{``{\textstyle {2(n-2) \brack n-2}}"}
\begin{tikzpicture}[scale=.4, anchorbase,tinynodes]
	\draw[very thick, gray] (-.25,-.25) to (.25,-.25) to (.25,.25) to (-.25,.25) to (-.25,-.25);
	\draw[very thick] (-.5,-.75) to [out=90,in=225] (-.25,-.25);
	\draw[very thick] (.5,-.75) to [out=90,in=315] (.25,-.25);
	\draw[very thick] (-.5,.75) to [out=270,in=135] (-.25,.25);
	\draw[very thick] (.5,.75) to [out=270,in=45] (.25,.25);
\end{tikzpicture} 
+q^{-2}
\begin{tikzpicture}[scale=.4, tinynodes, anchorbase]
	\draw[very thick] (0,0) to [out=90,in=180] (.5,.625) 
		to [out=0,in=90] (1,0);
	\draw[very thick] (0,2) to [out=270,in=180] (.5,1.375)
		to [out=0,in=270] (1,2);
\end{tikzpicture} \\
&=
q^2 \
\begin{tikzpicture}[scale=.2, tinynodes, anchorbase]
	\draw[very thick] (-.75,0) to (-.75,3.5);
	\draw[very thick] (.75,0) to (.75,3.5);
	\end{tikzpicture}
+ 
\begin{tikzpicture}[scale=.2, tinynodes, anchorbase]
	\draw[very thick] (-1,0) to (0,1);
	\draw[very thick] (1,0) to (0,1);
	\draw[very thick] (0,2.5) to (-1,3.5);
	\draw[very thick] (0,2.5) to (1,3.5);
	\draw[very thick] (0,1) to node[right=-2pt]{$2$} (0,2.5);
\end{tikzpicture}
- \frac{q^{1-2n} (q^2-q^{-2})}{[2]_{2n-1}}
\begin{tikzpicture}[scale=.4, tinynodes, anchorbase]
	\draw[very thick] (0,0) to [out=90,in=180] (.5,.625) 
		to [out=0,in=90] (1,0);
	\draw[very thick] (0,2) to [out=270,in=180] (.5,1.375)
		to [out=0,in=270] (1,2);
\end{tikzpicture} \\
c_{1,1}' :=
\begin{tikzpicture}[scale=.4, anchorbase,xscale=-1]
	\draw[very thick] (1,0) to [out=90,in=270] (0,1.5);
	\draw[overcross] (0,0) to [out=90,in=270] (1,1.5);
	\draw[very thick] (0,0) to [out=90,in=270] (1,1.5);
\end{tikzpicture}
& := q^{-2} \
\begin{tikzpicture}[scale=.2, tinynodes, anchorbase]
	\draw[very thick] (-.75,0) to (-.75,3.5);
	\draw[very thick] (.75,0) to (.75,3.5);
	\end{tikzpicture}
+ 
\frac{(-1)^{n-1 \choose 2}}{``{\textstyle {2(n-2) \brack n-2}}"}
\begin{tikzpicture}[scale=.4, anchorbase,tinynodes]
	\draw[very thick, gray] (-.25,-.25) to (.25,-.25) to (.25,.25) to (-.25,.25) to (-.25,-.25);
	\draw[very thick] (-.5,-.75) to [out=90,in=225] (-.25,-.25);
	\draw[very thick] (.5,-.75) to [out=90,in=315] (.25,-.25);
	\draw[very thick] (-.5,.75) to [out=270,in=135] (-.25,.25);
	\draw[very thick] (.5,.75) to [out=270,in=45] (.25,.25);
\end{tikzpicture} 
+q^{2}
\begin{tikzpicture}[scale=.4, tinynodes, anchorbase]
	\draw[very thick] (0,0) to [out=90,in=180] (.5,.625) 
		to [out=0,in=90] (1,0);
	\draw[very thick] (0,2) to [out=270,in=180] (.5,1.375)
		to [out=0,in=270] (1,2);
\end{tikzpicture} \\
&= 
q^{-2} \
\begin{tikzpicture}[scale=.2, tinynodes, anchorbase]
	\draw[very thick] (-.75,0) to (-.75,3.5);
	\draw[very thick] (.75,0) to (.75,3.5);
	\end{tikzpicture}
+ 
\begin{tikzpicture}[scale=.2, tinynodes, anchorbase]
	\draw[very thick] (-1,0) to (0,1);
	\draw[very thick] (1,0) to (0,1);
	\draw[very thick] (0,2.5) to (-1,3.5);
	\draw[very thick] (0,2.5) to (1,3.5);
	\draw[very thick] (0,1) to node[right=-2pt]{$2$} (0,2.5);
\end{tikzpicture}
+ \frac{q^{2n-1} (q^2-q^{-2})}{[2]_{2n-1}}
\begin{tikzpicture}[scale=.4, tinynodes, anchorbase]
	\draw[very thick] (0,0) to [out=90,in=180] (.5,.625) 
		to [out=0,in=90] (1,0);
	\draw[very thick] (0,2) to [out=270,in=180] (.5,1.375)
		to [out=0,in=270] (1,2);
\end{tikzpicture}\\
\end{aligned}
\end{equation}
\begin{equation}\label{eq:blackgraybraiding}
\begin{gathered}
c_{\gS,1} :=
\begin{tikzpicture}[scale=.4, anchorbase]
	\draw[very thick] (1,0) to [out=90,in=270] (0,1.5);
	\draw[overcross] (0,0) to [out=90,in=270] (1,1.5);
	\draw[very thick, gray] (0,0) to [out=90,in=270] (1,1.5);
\end{tikzpicture}
:= 
q
\begin{tikzpicture}[scale=.2, rotate=90, tinynodes, anchorbase]
	\draw[very thick] (-1,0) to (0,1);
	\draw[very thick, gray] (1,0) to (0,1);
	\draw[very thick, gray] (0,2.5) to (-1,3.5);
	\draw[very thick] (0,2.5) to (1,3.5);
	\draw[very thick, gray] (0,1) to (0,2.5);
\end{tikzpicture}
+q^{-1}
\begin{tikzpicture}[scale=.2, tinynodes, anchorbase]
	\draw[very thick, gray] (-1,0) to (0,1);
	\draw[very thick] (1,0) to (0,1);
	\draw[very thick] (0,2.5) to (-1,3.5);
	\draw[very thick, gray] (0,2.5) to (1,3.5);
	\draw[very thick, gray] (0,1) to (0,2.5);
\end{tikzpicture}
\quad , \quad
c_{\gS,1}' :=
\begin{tikzpicture}[scale=.4, anchorbase,xscale=-1]
	\draw[very thick] (1,0) to [out=90,in=270] (0,1.5);
	\draw[overcross] (0,0) to [out=90,in=270] (1,1.5);
	\draw[very thick, gray] (0,0) to [out=90,in=270] (1,1.5);
\end{tikzpicture}
:= 
q^{-1}
\begin{tikzpicture}[scale=.2, rotate=90, tinynodes, anchorbase,xscale=-1]
	\draw[very thick] (-1,0) to (0,1);
	\draw[very thick, gray] (1,0) to (0,1);
	\draw[very thick, gray] (0,2.5) to (-1,3.5);
	\draw[very thick] (0,2.5) to (1,3.5);
	\draw[very thick, gray] (0,1) to (0,2.5);
\end{tikzpicture}
+q
\begin{tikzpicture}[scale=.2, tinynodes, anchorbase,xscale=-1]
	\draw[very thick, gray] (-1,0) to (0,1);
	\draw[very thick] (1,0) to (0,1);
	\draw[very thick] (0,2.5) to (-1,3.5);
	\draw[very thick, gray] (0,2.5) to (1,3.5);
	\draw[very thick, gray] (0,1) to (0,2.5);
\end{tikzpicture} \\
c_{1,\gS}:=
\begin{tikzpicture}[scale=.4, anchorbase]
	\draw[very thick,gray] (1,0) to [out=90,in=270] (0,1.5);
	\draw[overcross] (0,0) to [out=90,in=270] (1,1.5);
	\draw[very thick] (0,0) to [out=90,in=270] (1,1.5);
\end{tikzpicture}
:=
q
\begin{tikzpicture}[scale=.2, rotate=90, tinynodes, anchorbase,xscale=-1]
	\draw[very thick] (-1,0) to (0,1);
	\draw[very thick, gray] (1,0) to (0,1);
	\draw[very thick, gray] (0,2.5) to (-1,3.5);
	\draw[very thick] (0,2.5) to (1,3.5);
	\draw[very thick, gray] (0,1) to (0,2.5);
\end{tikzpicture}
+q^{-1}
\begin{tikzpicture}[scale=.2, tinynodes, anchorbase,xscale=-1]
	\draw[very thick, gray] (-1,0) to (0,1);
	\draw[very thick] (1,0) to (0,1);
	\draw[very thick] (0,2.5) to (-1,3.5);
	\draw[very thick, gray] (0,2.5) to (1,3.5);
	\draw[very thick, gray] (0,1) to (0,2.5);
\end{tikzpicture}
\quad , \quad
c_{1,\gS}' :=
\begin{tikzpicture}[scale=.4, anchorbase,xscale=-1]
	\draw[very thick,gray] (1,0) to [out=90,in=270] (0,1.5);
	\draw[overcross] (0,0) to [out=90,in=270] (1,1.5);
	\draw[very thick] (0,0) to [out=90,in=270] (1,1.5);
\end{tikzpicture}
:=
q^{-1}
\begin{tikzpicture}[scale=.2, rotate=90, tinynodes, anchorbase]
	\draw[very thick] (-1,0) to (0,1);
	\draw[very thick, gray] (1,0) to (0,1);
	\draw[very thick, gray] (0,2.5) to (-1,3.5);
	\draw[very thick] (0,2.5) to (1,3.5);
	\draw[very thick, gray] (0,1) to (0,2.5);
\end{tikzpicture}
+q
\begin{tikzpicture}[scale=.2, tinynodes, anchorbase]
	\draw[very thick, gray] (-1,0) to (0,1);
	\draw[very thick] (1,0) to (0,1);
	\draw[very thick] (0,2.5) to (-1,3.5);
	\draw[very thick, gray] (0,2.5) to (1,3.5);
	\draw[very thick, gray] (0,1) to (0,2.5);
\end{tikzpicture}
	\end{gathered}
	\end{equation}
	\end{subequations}
Here, and in the following, all unlabeled black strands carry the label $1$.

\begin{rem}\label{rem:n=2}
The second formulation of $c_{1,1}$ and $c_{1,1}'$, which uses
\eqref{eq:blackspinH=I}, \eqref{eq:digonS}, and \eqref{eq:lolliS} to write
\[
\begin{tikzpicture}[scale=.4, anchorbase,tinynodes]
	\draw[very thick, gray] (-.25,-.25) to (.25,-.25) to (.25,.25) to (-.25,.25) to (-.25,-.25);
	\draw[very thick] (-.5,-.75) to [out=90,in=225] (-.25,-.25);
	\draw[very thick] (.5,-.75) to [out=90,in=315] (.25,-.25);
	\draw[very thick] (-.5,.75) to [out=270,in=135] (-.25,.25);
	\draw[very thick] (.5,.75) to [out=270,in=45] (.25,.25);
\end{tikzpicture} 
= (-1)^{n-1 \choose 2}
``{\textstyle {2(n-2) \brack n-2}}"
\begin{tikzpicture}[scale=.2, tinynodes, anchorbase]
	\draw[very thick] (-1,0) to (0,1);
	\draw[very thick] (1,0) to (0,1);
	\draw[very thick] (0,2.5) to (-1,3.5);
	\draw[very thick] (0,2.5) to (1,3.5);
	\draw[very thick] (0,1) to node[right=-2pt]{$2$} (0,2.5);
\end{tikzpicture}
+ \frac{(-1)^{n+1 \choose 2} ``{\textstyle {2(n-1) \brack n-1}}"}{[2]_{2n-1}}
\begin{tikzpicture}[scale=.4, tinynodes, anchorbase]
	\draw[very thick] (0,0) to [out=90,in=180] (.5,.625) 
		to [out=0,in=90] (1,0);
	\draw[very thick] (0,2) to [out=270,in=180] (.5,1.375)
		to [out=0,in=270] (1,2);
\end{tikzpicture} \, ,  
\]
is only valid when $n>2$, 
since it involves $2$-labeled strands. 
Nevertheless, it is this second formulation that we will use in the proofs of the 
main results of this section (Lemmata \ref{lem:otherR2} and \ref{lem:1R13} and Proposition \ref{prop:S1}).
Hence, this is a step in the proof of Theorem \ref{thm:main} where we assume that $n>2$.

However, each of these results is straightforward to verify when $n=2$.
Having done so, our proof works uniformly for all $n>1$.
We comment on the stand-in for these results in the $n=1$ case below in Remark \ref{rem:n=1}.
	\end{rem}

\begin{lem}\label{lem:otherR2}
Each of the morphisms $c_{i,j}$ in \eqref{eq:otherbraidings} are invertible, 
with inverses $c_{i,j}'$, and $\varphi(c_{i,j}) = R_{i,j}$.
\end{lem}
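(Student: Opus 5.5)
I will prove the two assertions separately: first invertibility, by direct diagrammatic computation in $\Web(\son)$; then $\varphi(c_{i,j}) = R_{i,j}$, by comparing eigenvalues on isotypic components in $\Rep(U_q(\son))$.

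\emph{Invertibility.} For $c_{1,1}$ I will use the second formulation, which uses only black webs (valid for $n>2$; see Remark \ref{rem:n=2}). The composite $c_{1,1}\circ c_{1,1}'$ expands into nine terms, which I will reduce to the three basis webs: the identity, the $2$-labeled $H$ (the $1\otimes 1\to 2\to 1\otimes 1$ web), and cup-cap. The tools are:
\begin{itemize}
\item \eqref{eq:digon1} and \eqref{eq:flowdigon} for the composite of two $2$-labeled $H$'s;
\item \eqref{eq:lolli1} to kill the mixed terms with cup-cap;
\item \eqref{eq:circle1} for the square of cup-cap.
\end{itemize}
The coefficients are arranged so that the $H$ and cup-cap coefficients cancel, leaving $\id$. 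This is a short quantum-number check. The same check handles $c_{1,1}'\circ c_{1,1}$.

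For $c_{\gS,1}$ and $c_{\gS,1}'$ I will compose the two-term expressions. This produces four terms:
\begin{itemize}
\item Two of them are gray-black-gray digons of the form on the left of \eqref{eq:otherdigonS}, up to rotation. These are scalar multiples of the identity.
\item The other two are $H$-shaped webs with a gray middle edge and black outer legs, of the type $\gS\otimes 1\to 1\otimes\gS\to\gS\otimes 1$ built from \eqref{eq:blackspinH=I}-type vertices. I will reduce these using \eqref{eq:blackspinH=I} with $k=1$ (the horizontal flip from Proposition \ref{prop:hardrel} where needed), then \eqref{eq:digonS} and \eqref{eq:otherdigonS}.
\end{itemize}
Collecting terms, the non-identity contributions should cancel between the $q$- and $q^{-1}$-weighted pieces, leaving $\id$. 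The $c_{1,\gS}$ case is the mirror image and is handled identically.

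\emph{Comparison with the braiding.} Since $\varphi$ is faithful on nothing yet, I will compute in $\Rep(U_q(\son))$ instead. The map $R_{1,1}$ acts on the summands $V_{2\varpi_1}$, $V_2$, $V_0$ of $V_1\otimes V_1$ by scalars of the form $\pm q^{c}$ determined by Casimir values. Similarly $R_{\gS,1}$ maps $S\otimes V_1$ to $V_1\otimes S$, and on the summands $V_{\varpi_1+\varpi_n}$ and $S$ it acts by known scalars composed with the canonical isomorphisms. The images of our webs are likewise diagonal in these decompositions:
\begin{itemize}
\item The $2$-labeled $H$ is a multiple of the projector onto $V_2$, by \eqref{eq:digon1}.
\item Cup-cap is a multiple of the projector onto $V_0$, by \eqref{eq:circle1}.
\item For $c_{\gS,1}$, the two webs are the two maps through $S$. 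Their difference from the ``identity-like'' map can be read off by precomposing with the spin-black vertex and using \eqref{eq:otherdigonS}.
\end{itemize}
I will then match the eigenvalues on each summand.

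To fix the normalization of $R_{1,1}$ I will use the base case \cite[equation (2.7)]{BodWu}, adjusted by the cap rescaling of Appendix \ref{Appendix:comparing-cups-and-caps}, i.e.\ the sign $(-1)^{\binom{k}{2}}$. For $R_{\gS,1}$ I will use \cite[Lemma 4.11]{BER} together with naturality, exactly as in \eqref{twistytriangle}. Concretely, I will precompose $R_{\gS,1}$ with the gray-black trivalent vertex and apply fork-sliding to move the crossing onto $\gS\otimes\gS$, where \eqref{eq:untwistovercrossing} evaluates it. This pins down the eigenvalue on the $S$-summand. The eigenvalue on the top summand $V_{\varpi_1+\varpi_n}$ is standard.

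\emph{Main obstacle.} I expect the hardest step to be the $c_{\gS,1}$ case, in two respects. First, the sign and power-of-$q$ bookkeeping when identifying $R_{\gS,1}$ on the $S$-isotypic piece. Second, the fact that neither web in the definition is itself a projector, so one must pass through the decomposition of $\Hom(S\otimes V_1,V_1\otimes S)$, which is $2$-dimensional. My first attempt will be to test both sides against the two spanning functionals: postcompose with the black-gray vertex $V_1\otimes S\to S$, and pair against the top summand via the difference of the two webs. I will verify agreement on each, using \eqref{eq:otherdigonS} and \eqref{eq:blackspinH=I}.
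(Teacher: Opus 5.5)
Your treatment of $c_{1,1}$ works, both the direct computation and the eigenvalue matching. The relations force $\varphi(H)=-[2]_{q^2}p_{V_2}$ and $\varphi(\text{cup}\circ\text{cap})=[2]_{2n-1}\frac{[2n+1]}{[2]}p_{V_0}$. So $\varphi(c_{1,1})$ acts by $q^2,-q^{-2},q^{-4n}$ on $V_{2\varpi_1},V_2,V_0$, which is what $R_{1,1}$ does. No cap-sign adjustment enters there. The problems are in the spin cases.

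\emph{The composite $c'_{\gS,1}c_{\gS,1}$ is misdescribed.} Write $c_{\gS,1}=qH+q^{-1}I$ and $c'_{\gS,1}=q^{-1}H'+qI'$, where $H,H'$ have a horizontal gray rung and $I,I'$ factor through a vertical gray edge. Let $I_0$ be the merge--split web $\gS\otimes 1\to\gS\to\gS\otimes 1$.
\begin{itemize}
\item Only $I'I$ contains a digon, and \eqref{eq:otherdigonS} makes it $(-1)^n\frac{[2n+1]}{[2]}I_0$, not a multiple of the identity.
\item The cross terms $H'I$ and $I'H$ are not $H$-shaped. They contain a triangle, namely a $1$-rung between the gray legs of a $(\gS,\gS,1)$-vertex, and give $(-1)^{n-1}\frac{[2n-1]}{[2]}I_0$.
\item $H'H$ has a square face.
\end{itemize}
The identity arises only from $H'H$. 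For $n\ge 3$, apply \eqref{eq:blackspinH=I} (with $k=1$, rotated) to its column with gray middle edge. This gives an \eqref{eq:otherdigonS}-digon plus a triangle term. The triangle term produces the web with a $2$-labeled rung, which must be re-expanded by \eqref{eq:blackspinH=I} as $I_0-(-1)^n[2]_{2n-1}^{-1}\id$. The result is $H'H=\id+(-1)^n\frac{[2n-3]}{[2]}I_0$. The $I_0$-coefficients then cancel across all four terms, because $[2n+1]+[2n-3]=[2]_{q^2}[2n-1]$.

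So your route is repairable, and it has the merit of not needing $\varphi(c')=R^{-1}$, but it does not work as sketched. The paper avoids all coefficient tracking with the linear independence trick. It shows only that the composites lie in the span of webs whose images are linearly independent, then uses $\varphi(c)\varphi(c')=RR^{-1}=\id$.

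\emph{The genuine gap is $\varphi(c_{\gS,1})=R_{S,1}$, and likewise for $c_{1,\gS}$.} Two isomorphisms $S\otimes V_1\to V_1\otimes S$ differ by independent scalars on the $S$-component and the $V_{\varpi_1+\varpi_n}$-component.
\begin{itemize}
\item Your first functional (postcompose with $V_1\otimes S\to S$, evaluating the $R$ side by naturality and \eqref{eq:untwistovercrossing}) fixes the $S$-scalar.
\item There is no ``standard eigenvalue'' on the top summand, since source and target are different objects. You must compare both maps on an actual vector, e.g.\ $v_S^+\otimes v_1^+$. Here $\varphi(I)$ kills it and $R_{S,1}$ sends it to $q^{\pm1}v_1^+\otimes v_S^+$.
\item The coefficient of $\varphi(H)$ on that vector cannot be read off from web relations. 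These only constrain composites such as $\varphi(H')\varphi(H)$, which by the computation above is $\id$ on that summand.
\end{itemize}
To close the gap you need one of the following:
\begin{itemize}
\item the explicit gray--black intertwiners of \cite[\S 4]{BER}, which is what \cite[Proposition~4.35]{BER}, cited by the paper, supplies; or
\item a diagrammatic expression for $R_{S,1}$, e.g.\ via naturality through $V_1\subset S\otimes S$ together with the full formula for $R_{S,S}$ in Proposition~\ref{prop:SpinBraiding}.
\end{itemize}
Your phrase ``pairing against the top summand via the difference of the two webs'' does not specify such a comparison.
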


\begin{proof}
The assertions that $\varphi(c_{i,j}) = R_{i,j}$ follow from 
\cite[Theorem 4.8]{BodWu} when $i=1=j$, 
\cite[Proposition 4.35]{BER} when $i=S,j=1$, 
and an argument analogous to the latter when $i=1,j=S$.
That $c_{1,1} c_{1,1}' = \id_{1 \ot 1} = c_{1,1}' c_{1,1}$ 
is a fairly straightforward computation using \eqref{eq:webrel}; 
however, it can also be proved using following trick that we dub the 
\emph{linear independence trick}.

This trick proceeds as follows:
First, it is an immediate consequence of \eqref{eq:digon1} and \eqref{eq:lolli1} 
that the compositions $c_{1,1} c_{1,1}'$ and $c_{1,1}' c_{1,1}$
may be written as linear combinations of the webs 
$\begin{tikzpicture}[scale=.15, tinynodes, anchorbase]
	\draw[very thick] (-.75,0) to (-.75,3.5);
	\draw[very thick] (.75,0) to (.75,3.5);
	\end{tikzpicture} \ $,
$\begin{tikzpicture}[scale=.15, tinynodes, anchorbase]
	\draw[very thick] (-1,0) to (0,1);
	\draw[very thick] (1,0) to (0,1);
	\draw[very thick] (0,2.5) to (-1,3.5);
	\draw[very thick] (0,2.5) to (1,3.5);
	\draw[very thick] (0,1) to node[right=-2pt]{$2$} (0,2.5);
\end{tikzpicture}$, and 
$\begin{tikzpicture}[scale=.25, tinynodes, anchorbase]
	\draw[very thick] (0,0) to [out=90,in=180] (.5,.625) 
		to [out=0,in=90] (1,0);
	\draw[very thick] (0,2) to [out=270,in=180] (.5,1.375)
		to [out=0,in=270] (1,2);
\end{tikzpicture}$. 
Using the $k=1$ case of the decomposition \eqref{eq:V1Vk}, 
it is easy to see that these three webs are sent by $\varphi$ 
to a basis of $\End_{U_q(\son)}(V_1 \ot V_1)$, thus are linearly independent in $\Web(\son)$.
After applying $\varphi$, we obtain $R_{1,1} R_{1,1}^{-1} = \id_{V_1 \ot V_1} = R_{1,1}^{-1} R_{1,1}$, 
thus the compositions $c_{1,1} c_{1,1}'$ and $c_{1,1}' c_{1,1}$ 
must have been equal to $\id_{1 \ot 1}$ before applying $\varphi$.
(Note: the efficiency in this argument is that one need not keep track of the coefficients 
in the relevant compositions.)

Similar arguments establish the equations 
$c_{1,\gS} c_{1,\gS}' = \id_{1 \otimes \gS} = c_{1,\gS}' c_{1,\gS}$, 
and
$c_{\gS,1} c_{\gS,1}' = \id_{\gS \otimes 1} = c_{\gS,1}' c_{\gS,1}$.
One can either perform a direct computation,
or one can 
argue that the compositions 
are in the spans of the diagrams
$\left\{ 
 
\right\}$
and then use the linear independence trick.
	\end{proof}
	
Lemma \ref{lem:otherR2} asserts that the $1$-labeled Reidemeister II move
\[
%
\]
holds in $\Web(\son)$, 
as do the variants where one of these strands is $\gS$-colored.
We next consider additional moves corresponding 
to isotopies of all-black colored tangles and tangled webs.
The following are established in \cite{BodWu} 
via direct computation and/or arguments analogous to those used in \cite{BERT}.

\begin{lem}\label{lem:1R13}
We have
\begin{equation}\label{eq:11crossingrotate}
%
 \, .
	\end{equation}
	\end{lem}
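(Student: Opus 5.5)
The plan is to prove \eqref{eq:11crossingrotate}, which I read as the statement that the $90^{\circ}$ rotation of $c_{1,1}$ (capping and cupping off strands via the pivotal structure) equals $c_{1,1}'$, together with the analogous statement for the opposite rotation. If instead the display records the $1$-labeled Reidemeister I and III moves, the same strategy applies verbatim with the obvious changes. I would use the second formulation of $c_{1,1}$ in \eqref{eq:blackbraiding}, so that $c_{1,1}$ is a combination of the identity, the $2$-labeled H-web, and the cup-cap web (assuming $n>2$, as in Remark \ref{rem:n=2}). I would then rotate it term by term.

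Under rotation, the identity and the cup-cap web are interchanged. The $2$-labeled H-web is sent to the $2$-labeled I-web, i.e.\ two vertical $1$-strands joined by a horizontal $2$-labeled rung. The key step is to re-express this I-web in the basis $\{\text{identity},\ \text{H-web},\ \text{cup-cap}\}$. This should follow from the $k=1$ case of \eqref{eq:blackH=I}. In that case the $(k{-}1)$-labeled middle term is $0$-labeled, so it is the identity (or cup-cap, depending on orientation). The result is an expression of the form
\[
\text{(I-web)} = \text{(H-web)} + \tfrac{[2]_{2n-3}}{[2]_{2n-1}}\,(\text{one of id, cup-cap}) - \tfrac{[2]_{2n-3}}{[2]_{2n-1}}\,(\text{the other}).
\]
Substituting this, the rotated $c_{1,1}$ becomes an explicit combination of the three basis webs, and its coefficients can be compared with those of $c_{1,1}'$.

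To avoid the quantum arithmetic, I would instead use the linear independence trick from the proof of Lemma \ref{lem:otherR2}. By the computation above, both sides of \eqref{eq:11crossingrotate} lie in the span of the three webs. Their images under $\varphi$ form a basis of $\End_{U_q(\son)}(V_1\otimes V_1)$, by the $k=1$ case of \eqref{eq:V1Vk}. It therefore suffices to verify the equality after applying $\varphi$. By Lemma \ref{lem:otherR2}, $\varphi(c_{1,1})=R_{1,1}$ and $\varphi(c'_{1,1})=R_{1,1}^{-1}$. In the ribbon category $\Rep(U_q(\son))$ with the pivotal structure of Convention \ref{conv:pivotal}, $V_1$ is self-dual with Frobenius--Schur indicator $+1$, so the standard topological identity says the $90^{\circ}$ rotation of the positive crossing is the negative crossing. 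Since $\varphi$ is pivotal by Theorem \ref{thm:functor}, it commutes with rotation, and this completes the argument.

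The main obstacle is the sign and normalization bookkeeping in this last step: one must check that rotation of $R_{1,1}$, computed with the specific cup and cap images fixed in Theorem \ref{thm:functor}, gives exactly $R_{1,1}^{-1}$ and not $-R_{1,1}^{-1}$ or a twist-rescaled version. I would handle this by noting that rescaling the cup by $\lambda$ and the cap by $\lambda^{-1}$ does not change a rotation, which uses one cup and one cap. The indicator being $+1$ then rules out a sign, so the identity holds on the nose. As a sanity check, I would compare the cup-cap coefficient directly: rotation sends the $q^2\,\mathrm{id}$ term to $q^2$ times cup-cap, and after adding the contribution from the I-web expansion this should match the cup-cap coefficient $\tfrac{q^{2n-1}(q^2-q^{-2})}{[2]_{2n-1}}$ in $c_{1,1}'$.
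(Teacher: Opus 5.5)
Your argument for the rotation relation \eqref{eq:11crossingrotate} is correct but roundabout. The proposal does not actually prove the rest of the lemma.

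On the rotation, the paper's proof is immediate from the \emph{first} formulation of $c_{1,1}$ in \eqref{eq:blackbraiding}. The web formed by a square of $\gS$-labeled edges with four $1$-labeled legs is invariant under $90^{\circ}$ rotation, while rotation interchanges $\id_{1\otimes 1}$ and the cup--cap web. So the rotation of $q^{2}\,\id + \kappa\,(\text{square}) + q^{-2}\,(\text{cup--cap})$ is literally the first formulation of $c'_{1,1}$. This argument stays entirely inside $\Web(\son)$. It needs neither $\varphi(c_{1,1})=R_{1,1}$ nor the Frobenius--Schur and normalization bookkeeping you flag as the main obstacle. It also covers $n=2$, where your second-formulation route is unavailable (Remark \ref{rem:n=2}). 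Your ribbon-category argument is sound, and it would apply to any crossing whose image under $\varphi$ is known. Even on your own route, though, the detour is unnecessary. The substitution via the $k=1$ case of \eqref{eq:blackH=I} closes at once, since $[2]_{2n-3}-q^{1-2n}(q^2-q^{-2})=q^{-2}[2]_{2n-1}$ and $q^{2}[2]_{2n-1}-[2]_{2n-3}=q^{2n-1}(q^2-q^{-2})$.

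The gap is elsewhere. The lemma also asserts the Reidemeister I and III moves for $1$-labeled strands; these are what Lemma \ref{lem:BMW-span} and Step 3 of the proof of Theorem \ref{thm:ladder} use, and the paper takes them from \cite[Proposition 2.11]{BodWu}. Your remark that the same strategy ``applies verbatim'' holds only for R1. There, closing off one side of each term of $c_{1,1}$ gives a multiple of $\id_1$ by \eqref{eq:circle1}, \eqref{eq:otherdigonB} and isotopy, and applying $\varphi$ identifies the scalar with the twist on $V_1$.

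For R3, the linear independence trick first requires you to show that both sides lie in the span of an explicit family of webs in $\End_{\Web(\son)}(1^{\otimes 3})$. Their images must be linearly independent in the $15$-dimensional space $\End_{U_q(\son)}(V_1^{\otimes 3})$. Expanding the crossings produces composites of $2$-labeled merge--split webs on overlapping pairs of strands, and reducing these to such a family is exactly the nontrivial content. You cannot borrow Lemma \ref{lem:BMW-span} for this, since it depends on the present lemma.

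You need either a direct computation as in \cite{BodWu}, or a reduction like the one in the proof of Proposition \ref{prop:S1}:
\begin{itemize}
\item Expand the crossing between the two under-strands via \eqref{eq:blackbraiding}.
\item The cup--cap term then slides by the rotation relation and Lemma \ref{lem:otherR2}.
\item What remains is to verify that the over-strand slides past the $2$-labeled merge--split web.
\end{itemize}
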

\begin{proof}
Equation \eqref{eq:11crossingrotate} is 
immediate from \eqref{eq:blackbraiding} 
(or see \cite[Proposition 2.9]{BodWu}) and
the relations \eqref{eq:1R1} and \eqref{eq:111R3} are given in \cite[Proposition 2.11]{BodWu}.
	\end{proof}
	
We next consider tangle and tangled web relations involving both $1$- and $\gS$-labeled strands.
	
\begin{prop}\label{prop:S1}
We have
\begin{equation}\label{eq:S1crossingrotate}
%
 \, .
	\end{equation}
	\end{prop}
\begin{proof}
The relations \eqref{eq:S1crossingrotate} are immediate from the definitions of the 
crossings in \eqref{eq:blackgraybraiding}.
Similarly, \eqref{eq:forktwist} follows from \eqref{eq:blackgraybraiding} using a 
direct computation employing \eqref{eq:otherdigonS} and \eqref{eq:blackrungtriangle}.

For \eqref{eq:1forkslide}, 
a direct computation via repeated use of 
\eqref{eq:blackspinH=I}, 
\eqref{eq:refblackspinH=I},
and \eqref{eq:blackrungtriangle}
gives that
\[
%
 \, .
\]
The relation then follows from the equality
\begin{equation}\label{eq:nicec11}
%
	\end{equation}
which can be confirmed by plugging the $k=1$ case of \eqref{eq:blackH=I} in 
for the second web appearing in \eqref{eq:nicec11} 
and comparing to \eqref{eq:blackbraiding}.

The derivation of \eqref{eq:S11R3} is more involved.
First, by expanding the black-black crossings using \eqref{eq:blackbraiding} 
and using the black-gray Reidemeister II invariance from Lemma \ref{lem:otherR2}, 
we see that it suffices to show
\begin{equation}\label{eq:S11MSslide}
%
 \, .
	\end{equation}
Observe that relation \eqref{eq:S11MSslide} will follow by writing the right-hand side
as a linear combination of webs that are invariant under a $180^{\circ}$ rotation.
For this, we must first establish some additional relations. 

To start, we compute
\[
q^{-2}
%
	\]
where the second equality uses both Lemma \ref{lem:otherR2} and \eqref{eq:1forkslide}.
From this, we conclude that
\begin{equation}\label{eq:preiQ}
%
 \, .
	\end{equation}
We now compute
\begin{align*}
%
 \, .
	\end{align*}
Since the first two webs here are invariant under $180^{\circ}$ rotation, 
it remains to show the same for the third.
This follows by computing
\[
%
	\]
which completes the proof.
	\end{proof}

\subsection{An aside on the spin-spin braiding}

In our previous work \cite{BER}, we gave simple and explicit formulae for the braiding morphisms
\[
R_{S,S} \colon S \otimes S \xrightarrow{\cong} S \otimes S
\, , \quad
R_{S,S}^{-1} \colon S \otimes S \xrightarrow{\cong} S \otimes S
\]
in $\Rep(U_q(\son))$. 
These formulae do not play a technical role in our arguments, 
but they do inform some of the results in \S \ref{ss:FF} below. 
Given this, we now recall the formulae and lift them to $\Web(\son)$.

To begin, the decomposition \eqref{eq:SSdecomp} implies that the 
functor $\varphi \colon \Web(\son) \to \FRep(U_q(\son))$
sends the webs:
\begin{equation} \label{SSbasis} \left\{ 
\begin{tikzpicture}[scale=.2, tinynodes, anchorbase]
	\draw[very thick,gray] (-.75,0) to (-.75,3.5);
	\draw[very thick,gray] (.75,0) to (.75,3.5);
	\end{tikzpicture} \right\} \cup \left\{
\begin{tikzpicture}[scale=.2, tinynodes, anchorbase]
	\draw[very thick,gray] (-1,0) to (0,1);
	\draw[very thick,gray] (1,0) to (0,1);
	\draw[very thick,gray] (0,2.5) to (-1,3.5);
	\draw[very thick,gray] (0,2.5) to (1,3.5);
	\draw[very thick] (0,1) to node[right=-2pt]{$k$} (0,2.5);
	\end{tikzpicture} \right\}_{k=0}^{n-1} 
\end{equation}
to a basis for $\End_{\son}(S \otimes S)$.
In \cite{BER}, we described the braiding morphisms $R_{S,S}$ and $R_{S,S}^{-1}$ in terms of this basis. 
Unfortunately, these coefficients are rather involved; 
see \cite[Proposition 4.21]{BER}.
However, motivated by our categorification results ibid., 
we found another basis for $\End_{\son}(S \otimes S)$ with respect to which 
the braiding admits a much simpler formula.
The following is a lift of that basis to $\Web(\son)$.

\begin{defn}\label{def:grayX}
Set $\xx^{(0)}:=\id_{\gS \otimes \gS}$, 
\begin{equation}
	\label{eq:x1defn}
\xx := \xx^{(1)} := 
\begin{tikzpicture}[scale=.5, tinynodes, anchorbase]
	\draw[very thick] (0,0) to node[below=-2pt]{$1$} (1,0);
	\draw[very thick,gray] (1,-1) to (1,1);
	\draw[very thick,gray] (0,-1) to (0,1);
\end{tikzpicture}
- \frac{1}{[2]} \,
\begin{tikzpicture}[scale=.5, tinynodes, anchorbase]
	\draw[very thick,gray] (1,-1) to (1,1);
	\draw[very thick,gray] (0,-1) to (0,1);
\end{tikzpicture}
\in \End_{\Web(\son)}(\gS \otimes \gS) \, ,
	\end{equation}
and, for $0 \leq i \leq n-1$, let
\begin{equation}
	\label{eq:xidefn}
\xx^{(i+1)} := \dfrac{(-1)^i}{``[i+1]^2"}\left(\xx^{(i)}\xx - (-1)^{i} ``[i][i+1]"\xx^{(i)}\right) \, .
\end{equation}
	\end{defn}

We again make use of the devil's arithmetic \eqref{eq:devils}. 
Unpacking the recursive formula of \eqref{eq:xidefn}, 
we see that
\begin{equation}
	\label{E:xxk-expanded}
\xx^{(i)} = (-1)^{\binom{i}{2}}\dfrac{(\xx+(-1)^i``[i-1][i]")\cdot (\xx+(-1)^{i-1}``[i-2][i-1]")\cdots (\xx+(-1)``[0][1]")}{``[i]^2"\cdot ``[i-1]^2"\cdots ``[1]^2"}
\end{equation}
for all $1 \leq i \leq n$.

It is clear from \eqref{eq:spinH=I} that $\xx^{(1)}$ lies in the span of \eqref{SSbasis}. 
By \eqref{eq:blackrungtriangle} and induction, 
we deduce that $\xx^{(i)}$ lies in this span for all $0 \le i \le n$, namely
\begin{equation}
	\label{eq:ItoX}
\xx^{(i)} = 
\prescript{n-i}{}\lambda_{n} \
\begin{tikzpicture}[scale=.2, tinynodes, anchorbase]
	\draw[very thick,gray] (-.75,0) to (-.75,3.5);
	\draw[very thick,gray] (.75,0) to (.75,3.5);
	\end{tikzpicture}
+ \sum_{\ell=1}^n \prescript{n-i}{}\lambda_{n-\ell}
\begin{tikzpicture}[scale=.2, tinynodes, anchorbase]
	\draw[very thick,gray] (-1,0) to (0,1);
	\draw[very thick,gray] (1,0) to (0,1);
	\draw[very thick,gray] (0,2.5) to (-1,3.5);
	\draw[very thick,gray] (0,2.5) to (1,3.5);
	\draw[very thick] (0,1) to node[right=-2pt]{$n{-}\ell$} (0,2.5);
	\end{tikzpicture}
	\end{equation}
for some scalars $\prescript{n-i}{}\lambda_{n-\ell} \in \C(q)$.
The proof of \cite[Proposition 4.25]{BER}
(which is established there in the context of $\Rep(U_q(\son))$) 
applies here mutatis mutandis to show that
\begin{equation} \label{eq:lambdas}
\prescript{n-i}{}\lambda_{n-\ell} 
= \begin{cases}
\dfrac{(-1)^{\binom{\ell-i+1}{2}}}{\prod_{j=1}^\ell (q^{2j-1}+ q^{-2j+ 1})} 
	\cdot \prod_{t=1}^i\dfrac{``[\ell+1- t][\ell+t]"}{``[t]^2"} & i \leq \ell \leq n \\ \\
0 & \text{else} \, .
	\end{cases}
	\end{equation}
In particular, $\prescript{n-i}{}\lambda_{n-i}=1$, so
\begin{equation}\label{eq:Xspan}
\operatorname{span} \big\{ \xx^{(i)} \big\}_{i=0}^{n}
=
\operatorname{span}
\left\{ 
\begin{tikzpicture}[scale=.2, tinynodes, anchorbase]
	\draw[very thick,gray] (-.75,0) to (-.75,3.5);
	\draw[very thick,gray] (.75,0) to (.75,3.5);
	\end{tikzpicture} \, , \,
\begin{tikzpicture}[scale=.2, tinynodes, anchorbase]
	\draw[very thick,gray] (-1,0) to (0,1);
	\draw[very thick,gray] (1,0) to (0,1);
	\draw[very thick,gray] (0,2.5) to (-1,3.5);
	\draw[very thick,gray] (0,2.5) to (1,3.5);
	\draw[very thick] (0,1) to node[right=-2pt]{$n{-}1$} (0,2.5);
	\end{tikzpicture} \, , \,
\ldots
 \, , \,
\begin{tikzpicture}[scale=.2, tinynodes, anchorbase]
	\draw[very thick,gray] (-1,0) to (0,1);
	\draw[very thick,gray] (1,0) to (0,1);
	\draw[very thick,gray] (0,2.5) to (-1,3.5);
	\draw[very thick,gray] (0,2.5) to (1,3.5);
	\draw[very thick] (0,1) to node[right=-2pt]{$1$} (0,2.5);
	\end{tikzpicture} \, , \,
\begin{tikzpicture}[scale=.35, tinynodes, anchorbase]
	\draw[very thick,gray] (0,0) to [out=90,in=180] (.5,.625) 
		to [out=0,in=90] (1,0);
	\draw[very thick,gray] (0,2) to [out=270,in=180] (.5,1.375)
		to [out=0,in=270] (1,2);
\end{tikzpicture} \right\} \, .
\end{equation}

Having introduced the relevant morphisms, 
we now lift our description of $R_{S,S}$ in \cite[Proposition 4.28]{BER}
to the setting of $\Web(\son)$.

\begin{prop}\label{prop:SpinBraiding}
Extend scalars in $\Web(\son)$ to $\C(q^{1/2})$ and set 
\begin{equation}
	\label{eq:SpinBraiding}
c_{\gS, \gS} = q^{\frac{n}{2}} \sum_{i=0}^n q^{-i}\xx^{(i)}
\, , \quad
c_{\gS, \gS}' = q^{-\frac{n}{2}} \sum_{i=0}^n q^{i}\xx^{(i)} \, .
	\end{equation}
Then, $c_{\gS, \gS}$ is invertible, 
with inverse $c_{\gS,\gS}'$, 
and $\varphi(c_{\gS,\gS}) = R_{S,S}$.
	\end{prop}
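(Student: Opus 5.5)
The plan is to work inside the commutative subalgebra $\C(q^{1/2})[\xx] \subset \End_{\Web(\son)}(\gS \otimes \gS)$ and to compare with the corresponding statements in $\Rep(U_q(\son))$ from \cite{BER}, using the linear independence trick from the proof of Lemma \ref{lem:otherR2}.

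\textbf{Step 1: the image of $c_{\gS,\gS}$.} Since $\varphi$ is a functor, $\varphi(\xx) = \mathsf{X}$, because $\xx$ is built from a rung and the identity exactly as $\mathsf{X}$ is in \cite[Definition 4.24]{BER}. The recursion \eqref{eq:xidefn} matches \eqref{eq:introbXk}, so $\varphi(\xx^{(i)}) = \mathsf{X}^{(i)}$ for all $0 \le i \le n$. Then \cite[Proposition 4.28]{BER} (equivalently \cite[Theorem 1.26]{BER}, with the normalization $q^{n/2}$) expresses $R_{S,S}$ as $q^{\frac n2}\sum_i q^{-i}\mathsf{X}^{(i)}$, and gives the analogous formula for $R_{S,S}^{-1}$. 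Applying $\varphi$ to \eqref{eq:SpinBraiding} therefore yields $\varphi(c_{\gS,\gS}) = R_{S,S}$ and $\varphi(c'_{\gS,\gS}) = R_{S,S}^{-1}$.

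\textbf{Step 2: invertibility.} The products $c_{\gS,\gS}c'_{\gS,\gS}$ and $c'_{\gS,\gS}c_{\gS,\gS}$ are polynomials in $\xx$. I will show that every polynomial in $\xx$ lies in $\operatorname{span}\{\xx^{(i)}\}_{i=0}^n$:

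1. By \eqref{eq:xidefn}, $\xx^{(i)}\xx$ is a combination of $\xx^{(i)}$ and $\xx^{(i+1)}$.
2. For $i = n$, I expect $\xx^{(n)}\xx$ to be a scalar multiple of $\xx^{(n)}$, because $\xx^{(n)}$ is the cup-cap by \eqref{eq:Xspan}/\eqref{eq:lambdas}.
3. Alternatively, set $\xx^{(n+1)} = 0$ directly: the would-be $\xx^{(n+1)}$ lies in the span \eqref{SSbasis} by \eqref{eq:ItoX}, and its coefficients vanish by \eqref{eq:lambdas}, since no $\ell$ satisfies $n+1 \le \ell \le n$.

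This closure under multiplication by $\xx$ is the step I expect to be the main obstacle. The cleanest route is to check that the computation of \cite[Proposition 4.25]{BER} uses only the relations \eqref{eq:digonS}, \eqref{eq:lolliS}, \eqref{eq:spinH=I} and \eqref{eq:blackrungtriangle}, all of which hold in $\Web(\son)$. It then transfers mutatis mutandis, exactly as claimed for \eqref{eq:lambdas}.

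\textbf{Step 3: linear independence and conclusion.} By the decomposition \eqref{eq:SSdecomp}, the $n+1$ webs in \eqref{SSbasis} map under $\varphi$ to a basis of $\End_{U_q(\son)}(S\otimes S)$, so they are linearly independent in $\Web(\son)$. Hence $\varphi$ is injective on their span, and by \eqref{eq:Xspan} this span contains both products. Since $\varphi$ sends each product to $R_{S,S}R_{S,S}^{-1} = \id = R_{S,S}^{-1}R_{S,S}$, both products equal $\id_{\gS\otimes\gS}$ in $\Web(\son)$ (extended to $\C(q^{1/2})$). This proves that $c_{\gS,\gS}$ is invertible with inverse $c'_{\gS,\gS}$.
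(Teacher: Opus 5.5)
Your proposal is correct and is essentially the paper's own argument. The paper likewise reads off $\varphi(c_{\gS,\gS}) = R_{S,S}$ and $\varphi(c'_{\gS,\gS}) = R_{S,S}^{-1}$ from \cite[Proposition 4.28 and Remark 4.29]{BER}, and then deduces that the two morphisms are mutually inverse from \eqref{eq:Xspan} via the linear independence trick of Lemma \ref{lem:otherR2}. The closure step you flag as the main obstacle is already settled by your item 2: $\xx^{(n)}$ is the gray cup-cap, so $\xx^{(n)}\xx = (-1)^n ``[n][n+1]" \xx^{(n)}$ by \eqref{eq:otherdigonS}.
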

\begin{proof}
It follows from \cite[Proposition 4.28 and Remark 4.29]{BER} that
\[
\varphi(c_{\gS,\gS}) = R_{S,S} \quad \text{and} \quad \varphi(c_{\gS,\gS}') = R_{S,S}^{-1} \, .
	\]
Invertibility of $c_{\gS,\gS}$ and $c_{\gS,\gS}'$ then follows from \eqref{eq:Xspan}
using the linear independence trick employed in the proof of Lemma \ref{lem:otherR2}.
	\end{proof}
	
Henceforth, denote diagrammatically as follows
\[
c_{\gS,\gS} =:
\begin{tikzpicture}[scale=.4, anchorbase]
	\draw[very thick,gray] (1,0) to [out=90,in=270] (0,1.5);
	\draw[overcross] (0,0) to [out=90,in=270] (1,1.5);
	\draw[very thick, gray] (0,0) to [out=90,in=270] (1,1.5);
\end{tikzpicture}
\quad , \qquad
c_{\gS,\gS}'= c_{\gS,\gS}^{-1} =:
\begin{tikzpicture}[scale=.4, anchorbase]
	\draw[very thick,gray] (0,0) to [out=90,in=270] (1,1.5);
	\draw[overcross] (1,0) to [out=90,in=270] (0,1.5);
	\draw[very thick, gray] (1,0) to [out=90,in=270] (0,1.5);
\end{tikzpicture} \, .
	\]
Proposition \ref{prop:SpinBraiding} implies that these morphisms 
satisfy the Reidemeister II relation.

\begin{rem}\label{rem:couldproveSSSRIII}
It is possible to establish various additional tangle and tangled web 
relations involving $c_{\gS,\gS}^{\pm 1}$,
either by (sometimes difficult) direct computation
or by using the linear independence trick from the proof of Lemma \ref{lem:otherR2}.
\end{rem}

\begin{example}
For instance, it is possible to show
\begin{equation}\label{eq:Sforkslide}
\begin{tikzpicture}[scale=.375, tinynodes,anchorbase]
	\draw[very thick,gray] (0,0) to [out=90,in=210] (.5,.75);
	\draw[very thick,gray] (1,0) to [out=90,in=330] (.5,.75);
	\draw[very thick] (.5,.75) to [out=90,in=270] (-.5,2.5);
	\draw[overcross] (-1,0) to [out=90,in=270] (.5,2.5);
	\draw[very thick,gray] (-1,0) to [out=90,in=270] (.5,2.5);
\end{tikzpicture}
=
\begin{tikzpicture}[scale=.375, tinynodes,anchorbase]
	\draw[very thick,gray] (0,0) to [out=90,in=210] (-.5,1.75);
	\draw[very thick,gray] (1,0) to [out=90,in=330] (-.5,1.75);
	\draw[very thick] (-.5,1.75) to [out=90,in=270] (-.5,2.5);
	\draw[overcross] (-1,0) to [out=90,in=270] (.5,2.5);
	\draw[very thick,gray] (-1,0) to [out=90,in=270] (.5,2.5);
\end{tikzpicture}
	\end{equation}
via a tedious direct computation involving complicated coefficients. 
Alternatively, it is possible to show that both sides of this equation 
are in the span of the webs
\[
\Bigg\{ \!\!\!
\begin{tikzpicture}[scale=.375, tinynodes,anchorbase]
	\draw[very thick,gray] (-.5,0) to [out=90,in=210] (0,.75);
	\draw[very thick,gray] (.5,0) to [out=90,in=330] (0,.75);
	\draw[very thick] (0,.75) to [out=90,in=270] node[left=-2pt]{$\ell{-}1$} (0,1.5);
	\draw[very thick] (-1,3) to [out=270,in=150] (0,1.5);
	\draw[very thick,gray] (0,3) to [out=270,in=150] (.5,2.25);
	\draw[very thick,gray] (1,3) to [out=270,in=30] (.5,2.25);
	\draw[very thick] (.5,2.25) to [out=270,in=30] node[right=-2pt,yshift=-1pt]{$\ell$} (0,1.5);
\end{tikzpicture}
\Bigg\}_{\ell=1}^{n-1}
\cup
\Bigg\{ \!\!\!
\begin{tikzpicture}[scale=.375, tinynodes,anchorbase]
	\draw[very thick,gray] (-.5,0) to [out=90,in=210] (0,.75);
	\draw[very thick,gray] (.5,0) to [out=90,in=330] (0,.75);
	\draw[very thick] (0,.75) to [out=90,in=270] node[left=-2pt]{$\ell{+}1$} (0,1.5);
	\draw[very thick] (-1,3) to [out=270,in=150] (0,1.5);
	\draw[very thick,gray] (0,3) to [out=270,in=150] (.5,2.25);
	\draw[very thick,gray] (1,3) to [out=270,in=30] (.5,2.25);
	\draw[very thick] (.5,2.25) to [out=270,in=30] node[right=-2pt,yshift=-1pt]{$\ell$} (0,1.5);
\end{tikzpicture}
\Bigg\}_{\ell=0}^{n-2}
\cup
\Bigg\{
\begin{tikzpicture}[scale=.375, tinynodes,anchorbase]
	\draw[very thick,gray] (-.5,0) to [out=90,in=210] (0,.75);
	\draw[very thick,gray] (.5,0) to [out=90,in=330] (0,.75);
	\draw[very thick] (0,.75) to [out=90,in=270] node[left=-2pt]{$n{-}1$} (0,1.5);
	\draw[very thick] (-1,3) to [out=270,in=150] (-.5,2.25);
	\draw[very thick,gray] (0,3) to [out=270,in=30] (-.5,2.25);
	\draw[very thick,gray] (1,3) to [out=270,in=30] (0,1.5);
	\draw[very thick,gray] (-.5,2.25) to [out=270,in=150] (0,1.5);
\end{tikzpicture}
\, , \,
\begin{tikzpicture}[scale=.375, tinynodes,anchorbase]
	\draw[very thick,gray] (-.5,0) to [out=90,in=240] (-.375,1.25);
	\draw[very thick,gray] (.5,0) to [out=90,in=300] (.375,1.25);
	\draw[very thick,gray] (-.375,1.25) to (.375,1.25);
	\draw[very thick] (-1,3) to [out=270,in=120] (-.375,1.25);
	\draw[very thick,gray] (0,3) to [out=270,in=150] (.5,2.25);
	\draw[very thick,gray] (1,3) to [out=270,in=30] (.5,2.25);
	\draw[very thick] (.5,2.25) to [out=270,in=60] node[right=-2pt,yshift=-1pt]{$n{-}1$} (.375,1.25);
\end{tikzpicture}
\, , \,
\begin{tikzpicture}[scale=.375, tinynodes,anchorbase]
	\draw[very thick] (-1,3) to [out=270,in=150] (-.5,2.25);
	\draw[very thick,gray] (0,3) to [out=270,in=30] (-.5,2.25);
	\draw[very thick,gray] (1,3) to [out=270,in=90] (.5,0);
	\draw[very thick,gray] (-.5,2.25) to [out=270,in=90] (-.5,0);
\end{tikzpicture}
	\Bigg\} \, .
	\]
Then, one can show (via a somewhat non-trivial argument) 
that these webs are sent by $\varphi$ to a basis of 
$\Hom_{U_q(\son)}(S \otimes S , V_1 \otimes S \otimes S)$.
Naturality of the braiding guarantees that the image of \eqref{eq:Sforkslide} 
under $\varphi$ holds in $\Rep(U_q(\son))$, 
so \eqref{eq:Sforkslide} must hold in $\Web(\son)$ via the linear independence trick.
\end{example}

\begin{rem}\label{rem:n=1}
When $n=1$, there are no $1$-labeled edges. 
Hence, the results from \S\ref{ss:1braid} are vacuous. 
Nevertheless, we use these results in the following section 
in the ``ladderization'' step in our proof of Theorem \ref{thm:main}
(recall \S \ref{ss:ladder}).
We now discuss how to extend these results to the $n=1$ case.

The reader will observe in \S \ref{s:ladder} that, if we attempt to run the ladderization argument when $n=1$, 
it will only require $1$-labeled edges that terminate at trivalent vertices containing $\gS$-labeled edges.
In this case, we can set
\[
\begin{tikzpicture}[scale=.2, tinynodes, anchorbase]
	\draw[very thick,gray] (-1,0) to (0,1);
	\draw[very thick,gray] (1,0) to (0,1);
	\draw[very thick,gray] (0,2.5) to (-1,3.5);
	\draw[very thick,gray] (0,2.5) to (1,3.5);
	\draw[very thick] (0,1) to node[right=-2pt]{$1$} (0,2.5);
	\end{tikzpicture}
=
\begin{tikzpicture}[scale=.2, tinynodes, anchorbase]
	\draw[very thick,gray] (-.75,0) to (-.75,3.5);
	\draw[very thick,gray] (.75,0) to (.75,3.5);
	\end{tikzpicture}
+
\frac{1}{[2]}
\begin{tikzpicture}[scale=.4, tinynodes, anchorbase]
	\draw[very thick,gray] (0,0) to [out=90,in=180] (.5,.625) 
		to [out=0,in=90] (1,0);
	\draw[very thick,gray] (0,2) to [out=270,in=180] (.5,1.375)
		to [out=0,in=270] (1,2);
\end{tikzpicture}
\]
which is the $2$-strand \emph{Jones--Wenzl projector}.
Having done so, the stand-ins for the results in \S \ref{ss:1braid} are results involving 
the morphisms \eqref{eq:SpinBraiding} that are easy to check.
(They amount to standard facts concerning the Kauffman bracket \cite{Kauff} approach to the Jones polynomial.)
In this $n=1$ case, Theorem \ref{thm:ladder} gives a set of ``ladder webs'' 
that is a unitriangular change of basis from the usual Temperley--Lieb basis.
\end{rem}

\begin{rem}\label{rem:promisedStanglerlns}
Save for the $n=1$ case just discussed,
\textbf{we do not use tangled web relations involving $c_{\gS,\gS}^{\pm 1}$ to prove Theorem \ref{thm:main}}.
Hence, we can avoid both direct computation and the linear independence trick in establishing such relations. 
Indeed, we will more economically prove that tangled web relations involving $c_{\gS,\gS}^{\pm 1}$ hold in $\Web(\son)$
as a consequence of the equivalence $\Web(\son)\cong \FRep(U_q(\son))$ provided by Theorem \ref{thm:main}. 
See Corollary \ref{cor:Stangledweb}.
\end{rem}

\section{Ladderization}\label{s:ladder}

We now begin our main task of showing that the functor 
from Theorem \ref{thm:functor} is fully faithful. 
We first need the following easy result.

\begin{lem}[{\cite[Lemma 5.5]{BERT}}]
	\label{lem:reduction}
Let $\mathcal{D}$ and $\mathcal{R}$ be $\mathbb{C}(q)$-linear monoidal categories. 
Suppose that $\mathcal{D}'$ is a full monoidal subcategory of $\mathcal{D}$ such that
every object in $\mathcal{D}$ is a direct summand
of an iterated tensor product of objects in $\mathcal{D}'$, 
after embedding both in $\Kar(\mathcal{D})$.
Let $\Psi \colon \mathcal{D}\to \mathcal{R}$ be a $\mathbb{C}(q)$-linear (monoidal) functor,
then $\Psi$ is full if and only if $\Psi|_{\mathcal{D}'}$ is full 
and $\Psi$ is faithful if and only if $\Psi|_{\mathcal{D}'}$ is faithful. \qed
	\end{lem}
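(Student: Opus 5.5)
The plan is to reduce fullness and faithfulness of $\Psi$ to statements about morphism spaces between objects of $\mathcal{D}'$. We do this by conjugating with the idempotents that exhibit arbitrary objects as summands.

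Fix objects $X,Y$ of $\mathcal{D}$. By hypothesis there are objects $X',Y'$ of $\mathcal{D}$ that are iterated tensor products of objects of $\mathcal{D}'$; since $\mathcal{D}'$ is a full monoidal subcategory, $X',Y'$ are themselves objects of $\mathcal{D}'$. There are also morphisms
\[
i_X\colon X\to X' ,\quad p_X\colon X'\to X ,\quad i_Y\colon Y\to Y' ,\quad p_Y\colon Y'\to Y
\]
in $\mathcal{D}$ with $p_X i_X=\id_X$ and $p_Y i_Y=\id_Y$. Here we use that $X$ being a summand of $X'$ in $\Kar(\mathcal{D})$ means exactly that such a split pair exists in $\mathcal{D}$, since both are honest objects of $\mathcal{D}$ and $\mathcal{D}$ is full in $\Kar(\mathcal{D})$. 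Consider the linear maps
\[
\Hom_{\mathcal{D}}(X,Y)\xrightarrow{\ f\mapsto i_Y f p_X\ }\Hom_{\mathcal{D}'}(X',Y')\xrightarrow{\ g\mapsto p_Y g i_X\ }\Hom_{\mathcal{D}}(X,Y).
\]
Their composite is the identity. The same construction applies in $\mathcal{R}$ with $\Psi(i),\Psi(p)$, and $\Psi$ intertwines these maps by functoriality.

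For faithfulness, suppose $\Psi|_{\mathcal{D}'}$ is faithful and $\Psi(f)=0$. Then $\Psi(i_Y f p_X)=\Psi(i_Y)\Psi(f)\Psi(p_X)=0$, so $i_Y f p_X=0$. Hence $f=p_Y(i_Y f p_X)i_X=0$.

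For fullness, suppose $\Psi|_{\mathcal{D}'}$ is full and $h\colon\Psi X\to\Psi Y$. Put $g'=\Psi(i_Y)\,h\,\Psi(p_X)$, which is a morphism $\Psi X'\to\Psi Y'$, and choose $g$ in $\mathcal{D}'$ with $\Psi(g)=g'$. Then
\[
\Psi(p_Y g i_X)=\Psi(p_Y)\Psi(i_Y)\,h\,\Psi(p_X)\Psi(i_X)=h .
\]

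The converse implications are trivial, because $\mathcal{D}'$ is full in $\mathcal{D}$ and so its Hom spaces are Hom spaces of $\mathcal{D}$. Monoidality of $\Psi$ is not actually needed beyond ensuring that $X'$ lies in $\mathcal{D}'$. There is no real obstacle; the only point needing care is checking that the summand condition in $\Kar(\mathcal{D})$ yields the split idempotent data inside $\mathcal{D}$ itself, as noted above.
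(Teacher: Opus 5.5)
Your proof is correct. It is the standard retract argument: every $\Hom_{\mathcal{D}}(X,Y)$ is a retract of some $\Hom_{\mathcal{D}'}(X',Y')$, compatibly with $\Psi$. The paper does not prove this lemma itself; it quotes it as an easy result from Lemma 5.5 of the type $C$ webs paper, and this retract argument is exactly what that citation relies on.

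One small wording slip: it is the closure of $\mathcal{D}'$ under $\otimes$ that puts $X'$ in $\mathcal{D}'$, not monoidality of $\Psi$. Monoidality of $\Psi$ plays no role anywhere in your argument.
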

	
Using this, we now reduce to investigating endomorphism spaces 
of tensor powers of the spin representation.

\begin{prop}\label{prop:FF}
The functor in Theorem \ref{thm:functor} is faithful (respectively, full) if and only if 
all of the corresponding algebra homomorphisms
\[
\varphi \colon \End_{\Web(\son)}(\gS^{\otimes m}) \to \End_{U_q(\son)}(S^{\otimes m})
	\]
are injective (respectively, surjective).
	\end{prop}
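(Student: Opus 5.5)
The plan is to apply Lemma \ref{lem:reduction} twice: once to shrink $\Web(\son)$ to the full monoidal subcategory generated by $\gS$, and once more to pass from arbitrary morphism spaces to endomorphism algebras of tensor powers of $\gS$ via cups and caps.

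First, take $\mathcal{D} = \Web(\son)$, $\mathcal{R} = \FRep(U_q(\son))$, $\Psi = \varphi$, and let $\mathcal{D}'$ be the full monoidal subcategory on the objects $\gS^{\otimes m}$, $m \ge 0$. To apply Lemma \ref{lem:reduction}, we must check that each generating object $k$, for $1 \le k \le n-1$, is a direct summand of $\gS \otimes \gS$ in $\Kar(\Web(\son))$; summands of tensor products then follow for all objects. By \eqref{eq:digonS}, the composite of the trivalent vertex $k \to \gS \otimes \gS$ with the vertex $\gS\otimes\gS \to k$ equals $(-1)^{\binom{n-k+1}{2}}\dd_{n-k}\,\id_k$. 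Since $\dd_{n-k}$ is nonzero in $\C(q)$, rescaling one vertex by its inverse exhibits $k$ as a retract of $\gS\otimes\gS$. Hence $\varphi$ is full (resp.\ faithful) if and only if its restriction to $\mathcal{D}'$ is.

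Next, on $\mathcal{D}'$ we use pivotality. The cup and cap on $\gS$ give the standard adjunction isomorphisms
\[
\Hom_{\Web(\son)}(\gS^{\otimes a},\gS^{\otimes b}) \cong \Hom_{\Web(\son)}(\gS^{\otimes (a+b)}, \mathbb{1}),
\]
and likewise in $\FRep(U_q(\son))$, because $S$ is self-dual. Since $\varphi$ is pivotal and sends the $\gS$-cup and cap to those of $S$, these isomorphisms commute with $\varphi$. It therefore suffices to treat the spaces $\Hom(\gS^{\otimes N},\mathbb{1})$.

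If $N$ is odd, the target space $\Hom_{U_q(\son)}(S^{\otimes N},\C(q))$ vanishes, because all weights of $S^{\otimes N}$ are half-integral. The source space also vanishes: every web contributing to it would be a planar graph with an odd number of gray boundary endpoints, but each generator is incident to an even number of gray half-edges, so a handshake argument forces zero. If $N = 2m$ is even, bend half the strands to identify $\Hom(\gS^{\otimes 2m},\mathbb{1})$ with $\End(\gS^{\otimes m})$ on both sides, again compatibly with $\varphi$. Under this identification $\varphi$ restricts to the stated algebra homomorphism, so injectivity (resp.\ surjectivity) for all $m$ gives faithfulness (resp.\ fullness), and conversely.

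The only mildly delicate point is the odd-parity vanishing on the web side. If the handshake count feels insufficient, the alternative is to pair $\gS^{\otimes N}$ with $\gS^{\otimes (N+1)}$ via a cap, or simply to observe that both sides are zero in each parity. The direction "faithful implies injective" is immediate, since the algebra maps are restrictions of $\varphi$.
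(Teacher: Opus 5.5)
Your proof is correct and follows the paper's argument. You apply Lemma \ref{lem:reduction} with $\mathcal{D}'$ tensor-generated by $\gS$, using \eqref{eq:digonS} to exhibit each $k$ as a retract of $\gS\otimes\gS$. You handle odd total parity by counting gray endpoints on the web side and by a weight (equivalently, central-character) argument on the representation side, and in the even case you bend strands with cups and caps. Calling this applying the lemma ``twice'' is a misnomer, since the second step is just the cup/cap adjunction, but the argument itself is sound.
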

\begin{proof}
We apply Lemma \ref{lem:reduction} in the case that $\mathcal{D} = \Web(\son)$, 
$\mathcal{D}'$ is the full subcategory tensor-generated by the object $\gS$, 
$\mathcal{R} = \Rep(U_q(\son))$, and $\Psi$ is the functor $\varphi$ from Theorem \ref{thm:functor}.
The hypotheses are satisfied since equations \eqref{eq:digonS} and \eqref{eq:zerodigonS} 
guarantee that each of the objects $\{1,\ldots,n-1\} \in \Web(\son)$ are summands of $\gS \otimes \gS$ 
in $\Kar(\Web(\son))$.

We thus see that $\varphi$ is faithful (respectively, full) if and only if the maps
\[
\varphi \colon \Hom_{\Web(\son)}(\gS^{\otimes m_1},\gS^{\otimes m_2}) 
	\to \Hom_{U_q(\son)}(S^{\otimes m_1},S^{\otimes m_2})
	\]
are injective (respectively, surjective). If $m_1+m_2$ is odd, then the domain is zero 
(the gray $1$-manifold with boundary obtained by erasing all black edges 
must have an even number of boundary points) 
and the codomain is zero 
(the trivial representation and an odd tensor power of $S$ have different central character). 
Since the domain and codomain are both zero, injectivity and surjectivity is immediate if $m_1+m_2$ is odd. 

Otherwise $m_1+m_2=2m$, and we have the commutative diagram
\begin{equation}\label{eqn:monoidalduality}
\begin{tikzcd}
\Hom_{\Web(\son)}(\gS^{\otimes m_1},\gS^{\otimes m_2}) \ar[rr,"\varphi"] \ar[d,"\cong"]
& & \Hom_{U_q(\son)}(S^{\otimes m_1},S^{\otimes m_2}) \ar[d,"\cong"] \\
\End_{\Web(\son)}(\gS^{\otimes m}) \ar[rr,"\varphi"]
& & \End_{U_q(\son)}(S^{\otimes m})
	\end{tikzcd}
\end{equation}
where the vertical isomorphisms are given by cap/cup morphisms in $\Web(\son)$ 
and their images under $\varphi$. 
The map in the top row is thus injective (respectively, surjective)
if and only if the map in the bottom row is injective (respectively, surjective).
	\end{proof}

Consequently, we turn our attention to the algebras $\End_{\Web(\son)}(\gS^{\otimes m})$. 
We first aim to show that this algebra is equal to the following, 
which a priori is simply a subalgebra.

\begin{defn}\label{def:ladder}
For $1 \leq i \leq m-1$, set
\[
\rung_i := \rung_{i,i+1} :=
\begin{tikzpicture}[scale=.5, tinynodes, anchorbase]
	\draw[thick, black] (1,0) to node[below=-1pt]{$1$} (2,0);
	\draw[very thick,gray] (0,-1) to (0,1);
	\node at (.5,.5) {$\mydots$};
	\draw[very thick,gray] (1,-1) to (1,1);
	\draw[very thick,gray] (2,-1) to (2,1);
	\node at (2.5,.5) {$\mydots$};
	\draw[very thick,gray] (3,-1) to (3,1);	
\end{tikzpicture}
\in \End_{\Web(\son)}(\gS^{\otimes m})
\]
where the $1$-labeled ``rung''-edge lies between 
the $i$-th and $(i+1)$-st $\gS$-labeled ``uprights.''
Let $\Lad_m \subseteq \End_{\Web(\son)}(\gS^{\otimes m})$ 
be the unital subalgebra generated by the elements $\{r_i\}_{i=1}^{m-1}$.
\end{defn}

\begin{defn}
For $1 \leq i < j \leq m$, set
\begin{equation}\label{eq:grung}
\rung_{i,j} :=
\begin{tikzpicture}[scale=.5, tinynodes, anchorbase]
	\draw[thick, black] (1,0) to node[below=-1pt]{$1$} (4,0);
	\draw[very thick,gray] (0,-1) to (0,1);
	\node at (.5,.5) {$\mydots$};
	\draw[very thick,gray] (1,-1) to (1,1);
	\draw[overcross] (2,-1) to (2,1);
	\draw[very thick,gray] (2,-1) to (2,1);
	\node at (2.5,.5) {$\mydots$};
 	\draw[overcross] (3,-1) to (3,1);
	\draw[very thick,gray] (3,-1) to (3,1);
	\draw[very thick,gray] (4,-1) to (4,1);
	\node at (4.5,.5) {$\mydots$};
	\draw[very thick,gray] (5,-1) to (5,1);
\end{tikzpicture}
\in \End_{\Web(\son)}(\gS^{\otimes m})
	\end{equation}
where here the rung connects the $i$-th and $j$-th uprights.
\end{defn}
	
\begin{lem}\label{lem:genrung} 
The subalgebra $\Lad_m$ is equal to the subalgebra generated by $\{r_{i,j}\}_{1 \leq i < j \leq m}$. 
	\end{lem}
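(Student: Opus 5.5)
Since $r_i = r_{i,i+1}$, the subalgebra generated by the $r_{i,j}$ contains $\Lad_m$. It remains to show that each $r_{i,j}$ with $j > i+1$ lies in $\Lad_m$. I would argue by induction on $j-i$, expressing $r_{i,j}$ as a conjugate of a shorter rung by a spin--spin ladder element. The natural tool is relation \eqref{eq:preiQ} from the proof of Proposition \ref{prop:S1}. Up to the specific form recorded there, it writes a $1$-labeled rung that passes over (or under) a gray upright as a combination of products of two adjacent rungs and lower terms. In particular, it expresses a rung from upright $i$ to upright $i+2$ crossing upright $i+1$ via $r_i r_{i+1}$, $r_{i+1} r_i$, $r_i$, $r_{i+1}$ and the identity.

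\textbf{Key steps.} First, isolate the three-strand case $m=3$, $(i,j)=(1,3)$. Use \eqref{eq:preiQ}, or equivalently the computation preceding it, which combines \eqref{eq:blackbraiding}, the gray--black Reidemeister II relation from Lemma \ref{lem:otherR2}, and the fork-slide \eqref{eq:1forkslide}. The goal is a formula
\[
r_{1,3} = \alpha\, r_1 r_2 + \beta\, r_2 r_1 + \gamma\, r_1 + \delta\, r_2 + \epsilon\, \id
\]
for some scalars in $\C(q)$. In fact it suffices to have $r_{1,3}$ in the span of words in $r_1, r_2$.

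For general $i<j$, I would then slide the black rung across the intermediate uprights $i+1,\dots,j-1$ one at a time. Concretely, apply the three-strand identity to the triple of uprights $(i, j-1, j)$ after isotoping the rung so that it crosses only upright $j-1$ locally. This step uses the isotopy-invariance relations among $c_{1,\gS}$, $c_{\gS,1}$ and trivalent vertices from Proposition \ref{prop:S1}, namely \eqref{eq:forktwist}, \eqref{eq:1forkslide} and \eqref{eq:S11R3}, so that the intermediate crossings may be moved freely. The result writes $r_{i,j}$ as a polynomial in $r_{i,j-1}$ and $r_{j-1}$, and induction on $j-i$ finishes.

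\textbf{Main obstacle.} The delicate point is the three-strand relation and making the rung-sliding rigorous. The rung $r_{i,j}$ is defined with a specific over-crossing convention, so I must check that the local picture around upright $j-1$ matches exactly the left-hand side of \eqref{eq:preiQ}, rather than its mirror image. If it is the mirror image, I would use the inverse crossings $c'$ and the analogous relation obtained by reflection. Everything else reduces to the Reidemeister-type relations already established in \S\ref{ss:1braid}.
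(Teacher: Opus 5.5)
Your reduction to showing $r_{i,j}\in\Lad_m$ is right. So is the inductive shape $r_{i,j}\in\operatorname{span}\{r_{i,j-1}r_{j-1,j},\,r_{j-1,j}r_{i,j-1}\}$; it is the recursion \eqref{eq:longHdef}, realized diagrammatically in \eqref{eq:longHweb}. The gap is that the step carrying all the content is never established.

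You attribute the three-upright identity to \eqref{eq:preiQ} ``up to the specific form recorded there'', i.e.\ without knowing what that relation says. In the paper it is an auxiliary relation, obtained from Lemma \ref{lem:otherR2} and the fork-slide relation on the way to \eqref{eq:S11MSslide} and later used for the $\iota$Serre relation. Nothing in your proposal shows that it produces a formula for $r_{1,3}$.

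The transplanting step is also not justified as described. The rung from upright $i$ to upright $j-1$ must cross every intermediate upright, so ``isotoping the rung so that it crosses only upright $j-1$ locally'' removes nothing. An identity proved in $\End(\gS^{\otimes 3})$ can be applied to the non-adjacent triple $(i,j-1,j)$ only if its proof is local to a disk around the crossing with upright $j-1$, or if the intermediate uprights can be passed over every step of that proof. You check neither. If some step produced gray arcs between uprights $i$ and $j-1$ (as \eqref{eq:spinH=I} does), moving the intermediate uprights past them would need gray--gray crossing relations. Those are only available after Theorem \ref{thm:main} (Remark \ref{rem:promisedStanglerlns}).

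The missing idea is that the gray--black crossings in $r_{i,j}$ are not primitive. \eqref{eq:blackgraybraiding} \emph{defines} each one as a linear combination of two webs. In each web the black strand is cut and its two ends are attached to the gray strand at two different heights, in one order or the other; the sideways orientation is covered by \eqref{eq:S1crossingrotate}. Substituting this definition at the crossing with upright $j-1$ is a purely local replacement that leaves the other crossings untouched. It gives directly
\[
r_{i,j}=q^{-1}\,r_{i,j-1}\,r_{j-1,j}+q\,r_{j-1,j}\,r_{i,j-1},
\]
which is the general form of \eqref{eq:webrung13}. Alternatively, expanding all $j-i-1$ crossings at once writes $r_{i,j}$ as a combination of $2^{j-i-1}$ products of the adjacent rungs $r_{i,i+1},\dots,r_{j-1,j}$. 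This is the paper's one-line proof. It needs neither \eqref{eq:preiQ} nor any Reidemeister-type move. Your concern about mirror images also disappears, since the inverse crossings are, by definition, combinations of the same two kinds of webs.
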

	
\begin{proof} 
One need only prove that $r_{i,j} \in \Lad_m$ for all $1 \leq i < j \leq m$.
This is an immediate consequence of \eqref{eq:blackgraybraiding}. \end{proof}

\begin{example}
A single application of \eqref{eq:blackgraybraiding} results in the equation
\begin{equation}\label{eq:webrung13}
\begin{tikzpicture}[scale=.5, tinynodes, anchorbase]
	\draw[thick, black] (1,0) to (3,0);
	\draw[very thick,gray] (0,-1) to (0,1);
	\node at (.5,.5) {$\mydots$};
	\draw[very thick,gray] (1,-1) to (1,1);
	\draw[overcross] (2,-1) to (2,1);
	\draw[very thick,gray] (2,-1) to (2,1);
	\draw[very thick,gray] (3,-1) to (3,1);
	\node at (3.5,.5) {$\mydots$};
	\draw[very thick,gray] (4,-1) to (4,1);
\end{tikzpicture} \ 
= q^{-1} \ 
\begin{tikzpicture}[scale=.5, tinynodes, anchorbase]
	\draw[thick, black] (2,-.5) to (3,-.5);
	\draw[thick, black] (1,.5) to (2,.5);
	\draw[very thick,gray] (0,-1) to (0,1);
	\node at (.5,.5) {$\mydots$};
	\draw[very thick,gray] (1,-1) to (1,1);
	\draw[very thick,gray] (2,-1) to (2,1);
	\draw[very thick,gray] (3,-1) to (3,1);
	\node at (3.5,.5) {$\mydots$};
	\draw[very thick,gray] (4,-1) to (4,1);
\end{tikzpicture}
+ q \ 
\begin{tikzpicture}[scale=.5, tinynodes, anchorbase]
	\draw[thick, black] (1,-.5) to (2,-.5);
	\draw[thick, black] (2,.5) to (3,.5);
	\draw[very thick,gray] (0,-1) to (0,1);
	\node at (.5,.5) {$\mydots$};
	\draw[very thick,gray] (1,-1) to (1,1);
	\draw[very thick,gray] (2,-1) to (2,1);
	\draw[very thick,gray] (3,-1) to (3,1);
	\node at (3.5,.5) {$\mydots$};
	\draw[very thick,gray] (4,-1) to (4,1);
\end{tikzpicture}
\end{equation}
that is, $\rung_{i,i+2}= q^{-1}\rung_{i,i+1}\rung_{i+1, i+2} + q\rung_{i+1,i+2}r_{i,i+1}\in \Lad_m$.
\end{example}
	
The main result of this section is the following.

\begin{thm}\label{thm:ladder}
For $m \geq 1$, 
$\Lad_m = \End_{\Web(\son)}(\gS^{\otimes m})$.
	\end{thm}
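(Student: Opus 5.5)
We prove $\End_{\Web(\son)}(\gS^{\otimes m}) \subseteq \Lad_m$ by showing that every web $W \colon \gS^{\otimes m} \to \gS^{\otimes m}$ can be rewritten, using the relations and the braidings $c_{1,1}$, $c_{1,\gS}$, $c_{\gS,1}$ of \S\ref{ss:1braid}, as a linear combination of products of the generalized rungs $\rung_{i,j}$. By Lemma \ref{lem:genrung}, such products lie in $\Lad_m$.

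The first step is to reduce to webs with only $1$-labeled black edges. Every black edge labeled $k$ with $2 \le k \le n-1$ can be traded for gray strands. Indeed, \eqref{eq:digonS} lets us replace an identity on a $k$-labeled strand by a nonzero multiple of a $k$-edge that splits into two $\gS$ strands and recombines. Doing this on every black edge of label at least $2$ (and on label $1$ edges where convenient) turns each vertex of $W$ into a small subweb. A black trivalent vertex $1 \otimes k \to k+1$ becomes a triangle with gray sides and black corners, which \eqref{eq:blackspinH=I} (with \eqref{eq:refblackspinH=I}, \eqref{eq:ggb-triangle+}, \eqref{eq:ggb-triangle-}) rewrites into webs whose black edges have smaller labels. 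By induction on the maximal black label (or on the total sum of black labels), we may assume every black edge of $W$ is labeled $1$. In the extreme case, a $k$-edge between two gray-gray-black vertices is itself expressible via \eqref{eq:spinH=I}/\eqref{eq:blackrungtriangle} in terms of $1$-rungs: $\xx^{(i)}$ is a polynomial in $\xx = \rung - \frac{1}{[2]}$, and \eqref{eq:Xspan} shows the $k$-labeled $H$-webs lie in $\Lad_2$. So after this step $W$ is a planar graph whose gray part is a $1$-manifold and whose black part consists of $1$-labeled edges attached at gray-gray-black vertices, plus possibly closed black loops and black-only trivalent vertices that we have already removed.

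Next comes the ladderization itself, which mimics the type $A$/$C$ arguments of \cite{CKM,BERT}. Gray closed components are removed by \eqref{eq:circleS}, or by isotoping them using the $\gS$–$1$ braiding. Any gray arc with both endpoints on the same boundary side is straightened to a through-strand: we apply the topological moves of Proposition \ref{prop:S1} (Reidemeister II/III, fork slides, and the twist relation \eqref{eq:forktwist}) to pull rungs attached to it across other strands. This introduces only crossings $c_{1,\gS}^{\pm}$ and $c_{1,1}^{\pm}$, which are expanded back into webs by \eqref{eq:blackgraybraiding} and \eqref{eq:blackbraiding}. The second expression of $c_{1,1}$ involves $2$-labeled edges, which step one handles. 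The outcome is that each black edge becomes a rung $\rung_{i,j}$ passing over the intermediate uprights. A cap/cup pair of gray strands is itself the $k=0$ term: using \eqref{eq:spinH=I} read backwards (\eqref{eq:Xspan} with $\xx^{(n)}$ equal to cup-cap), it lies in $\Lad_2$.

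The main obstacle is making this induction terminate honestly: we need a complexity measure (number of trivalent vertices, then number of gray turning points) that strictly decreases when the crossings introduced by fork-sliding are expanded. I would first try the filtration by number of gray–black vertices. The relations \eqref{eq:blackgraybraiding} replace a crossing by two webs with the same vertex count, while gray caps are eliminated via $\xx^{(n)}$, so the number of local maxima of gray strands strictly drops. For $n=1,2$, I would invoke Remarks \ref{rem:n=1} and \ref{rem:n=2}.
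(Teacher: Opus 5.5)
Your first step can be salvaged, although as written it is garbled. Composing \eqref{eq:blackspinH=I} (or its flip \eqref{eq:refblackspinH=I}) on top with a gray vertex, and using \eqref{eq:digonS} together with the vanishing of gray digons between distinct labels (Proposition \ref{prop:hardrel}), shows that every black-only vertex is a nonzero multiple of a gray triangle. After that, every black edge runs between gray--gray--black vertices, and \eqref{eq:Xspan} trades it for $1$-labeled rungs. This differs from the paper, which keeps the black parts as $1$-labeled tangles via Lemma \ref{lem:rewriteblack}, and it even yields a crossingless web. The genuine gap is in the ladderization.

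Isotopies and the moves of Lemma \ref{lem:otherR2} and Proposition \ref{prop:S1} never change which gray endpoints are joined. So they cannot straighten a gray cap into a through-strand, and they cannot remove a closed gray component carrying black edges; \eqref{eq:circleS} only applies to empty circles. Only the saddle move, i.e.\ writing the gray cup--cap as $\sum_i A^{-1}_{n,i}\rung_1^i$, changes gray connectivity. Closed components must be saddled into a component meeting the boundary, which is exactly where $m\ge 1$ is used.

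More seriously, you have no induction that terminates. Each saddle adds up to $2n$ gray--black vertices, and each expansion via \eqref{eq:blackgraybraiding} adds two more. Expanding $c_{1,1}$ via \eqref{eq:blackbraiding} produces either a closed gray square or a $2$-labeled edge, which your first step converts into new gray triangles, i.e.\ new closed gray components and new gray turning points. So neither coordinate of your lexicographic measure is monotone, and the procedure can cycle.

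Your claim that ``each black edge becomes a rung $\rung_{i,j}$'' is also unjustified. Once gray has been saddled over black, a black strand can wind around other attachment points or link with other black strands, and such a strand is not $\rung_{i,j}$.

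The missing mechanism is the paper's Steps 2--4:
\begin{enumerate}
\item Do all saddles first, always keeping gray over black, so that the gray part is exactly the $m$ uprights and never changes again.
\item Never expand black--black crossings. Instead, isotope all black material into a single $1$-labeled tangle $T$ as in \eqref{eq:twebright}; these moves preserve the number $2v$ of gray--gray--black vertices.
\item Reduce $T$ with \eqref{eq:BMWskein} to chosen lifts of reduced matchings (Lemma \ref{lem:BMW-span}).
\item Induct on $2v$, pulling the strand at the lowest endpoint of $T$ back to form a generalized rung.
\end{enumerate}
What makes the count drop is \eqref{eq:Reshetikhin}: an upright absorbs a $1$-labeled crossing up to a scalar, modulo a term with two fewer trivalent vertices. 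A strand returning to the same upright is disposed of by \eqref{eq:otherdigonS}.

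Alternatively, since your repaired first step produces a crossingless web, you could try to stay planar and saddle only inside faces of the gray-plus-black graph. That would require a combinatorial lemma guaranteeing a complexity-reducing saddle in some face, which you have not supplied.
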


In fact, the $m=0$ case holds as well; see Corollary \ref{cor:ladder0}. 
Before proving Theorem \ref{thm:ladder}, 
we establish some simplifying results concerning 
``all-black'' webs: those which have no edge labeled $\gS$.

\begin{defn} For $k \ge 2$, let the full merge and full split webs be given recursively by
\[
\begin{tikzpicture}[scale=.375, anchorbase,yscale=-1]
	\draw[very thick] (0,-2) node[above=-2pt]{\scs$k$} to (0,-1);
	\draw[very thick] (0,-1) to (1,1) node[below=-2pt]{\scs$1$};
	\draw[very thick] (0,-1) to (-1,1) node[below=-2pt]{\scs$1$};
	\node at (0,.5) {$\mydots$};
\end{tikzpicture}
:=
\begin{tikzpicture}[scale=.375, anchorbase,yscale=-1]
	\draw[very thick] (0,-2) node[above=-2pt]{\scs$k$} to (0,-1);
	\draw[very thick] (0,-1) to (1,1) node[below=-2pt]{\scs$1$};
	\draw[very thick] (-.5,0) to (0,1) node[below=-2pt]{\scs$1$};
	\draw[very thick] (0,-1) to node[pos=.25,left=-2pt]{\scs$k{-}1$} (-1,1) node[below=-2pt]{\scs$1$};
	\node at (-.5,.75) {\scs$\mysdots$};
\end{tikzpicture}
\quad , \quad
\begin{tikzpicture}[scale=.375, anchorbase]
	\draw[very thick] (0,-2) node[below=-2pt]{\scs$k$} to (0,-1);
	\draw[very thick] (0,-1) to (1,1) node[above=-2pt]{\scs$1$};
	\draw[very thick] (0,-1) to (-1,1) node[above=-2pt]{\scs$1$};
	\node at (0,.5) {$\mydots$};
\end{tikzpicture}
:=
\begin{tikzpicture}[scale=.375, anchorbase]
	\draw[very thick] (0,-2) node[below=-2pt]{\scs$k$} to (0,-1);
	\draw[very thick] (0,-1) to (1,1) node[above=-2pt]{\scs$1$};
	\draw[very thick] (-.5,0) to (0,1) node[above=-2pt]{\scs$1$};
	\draw[very thick] (0,-1) to node[pos=.25,left=-2pt]{\scs$k{-}1$} (-1,1) node[above=-2pt]{\scs$1$};
	\node at (-.5,.75) {\scs$\mysdots$};
\end{tikzpicture} \, .
	\]
\end{defn}	

\begin{lem} For $k \geq 2$, we have
\begin{equation}
	\label{eq:fulldigon}
\begin{tikzpicture}[scale=.175,tinynodes, anchorbase]
	\draw [very thick] (0,.75) to (0,2.5) node[above,yshift=-3pt]{$k$};
	\draw [very thick] (0,-2.75) to [out=30,in=330] node[right,xshift=-2pt]{$1$} (0,.75);
	\node at (0,-1) {$\mydots$};
	\draw [very thick] (0,-2.75) to [out=150,in=210] node[left,xshift=2pt]{$1$} (0,.75);
	\draw [very thick] (0,-4.5) node[below,yshift=2pt]{$k$} to (0,-2.75);
\end{tikzpicture}
= (-1)^{\binom{k}{2}} 
(``[k]^2")!
\begin{tikzpicture}[scale=.175,tinynodes, anchorbase]
	\draw [very thick] (0,-4.5) node[below,yshift=2pt]{$k$} to (0,2.5);
\end{tikzpicture} \, .
	\end{equation}
\end{lem}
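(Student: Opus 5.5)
We argue by induction on $k$, peeling off one strand at a time using the recursive definition of the full merge and full split together with the digon relation \eqref{eq:digon1}. For the base case $k=2$, the full split $2 \to 1\otimes 1$ and full merge $1 \otimes 1 \to 2$ are just the single trivalent vertices. The composite is then exactly the $k=2$ instance of \eqref{eq:digon1}, with value $(-1)^{1}``[2]^2" = (-1)^{\binom{2}{2}}(``[2]^2")(``[1]^2")$, since $``[1]^2"=1$.

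For the inductive step, write the full split of $k$ as the trivalent split $k \to (k-1)\otimes 1$ followed by the full split of $k-1$ on the left branch. Dually, write the full merge of $k$ as the full merge of $k-1$ on the left strands followed by the merge $(k-1)\otimes 1 \to k$. The $1$-labeled outermost strand runs straight from the bottom vertex to the top vertex. Inside, the full split of $k-1$ is directly followed by the full merge of $k-1$. By the inductive hypothesis this inner composite equals $(-1)^{\binom{k-1}{2}}(``[k-1]^2")!$ times the identity on the $(k-1)$-strand. What remains is the digon with edges labeled $k-1$ and $1$, which \eqref{eq:digon1} evaluates to $(-1)^{k-1}``[k]^2"$ times the identity on $k$. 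Multiplying the two scalars gives $(-1)^{\binom{k-1}{2}+(k-1)}(``[k]^2")! = (-1)^{\binom{k}{2}}(``[k]^2")!$, as claimed.

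The only point requiring care is that the recursive definitions of merge and split nest compatibly. Both peel off the \emph{rightmost} $1$-strand in the same way: the split's top is the mirror of the merge's bottom. So the inner full split and full merge really meet directly, with no reassociation needed. The figures should be checked on this point. If the conventions instead put the peeled strand on opposite sides, we would first apply \eqref{eq:assoc} repeatedly to reassociate before contracting. The signs would be unchanged, because \eqref{eq:assoc} holds on the nose.
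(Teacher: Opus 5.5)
Your proof is correct and is the same induction on \eqref{eq:digon1} that the paper uses; the recursive definitions do both peel off the rightmost $1$-strand, so the inner full split and full merge meet directly and no reassociation is needed. One small point you skip: the leftover digon has the $(k{-}1)$-edge on the left and the $1$-edge on the right, which is the mirror of \eqref{eq:digon1} as drawn, but it is the $180^\circ$ rotation of \eqref{eq:digon1}, so pivotality gives the same scalar $(-1)^{k-1}``[k]^2"$.
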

\begin{proof} 
This follows by induction using \eqref{eq:digon1}. 
	\end{proof}

\begin{lem}
	\label{lem:fullMS}
For all $k \geq 2$, the morphism
\begin{equation}\label{eq:fullMS}
\begin{tikzpicture}[scale=.375, anchorbase]
	\draw[very thick] (-1,-1.75) node[below=-2pt]{\scs$1$} to (0,-.5);
	\node at (0,-1.375) {$\mydots$};
	\draw[very thick] (1,-1.75) node[below=-2pt]{\scs$1$} to (0,-.5);
	\draw[very thick] (0,-.5) to node[right=-2pt]{\scs$k$} (0,.5);
	\draw[very thick] (0,.5) to (1,1.75) node[above=-2pt]{\scs$1$};
	\node at (0,1.375) {$\mydots$};
	\draw[very thick] (0,.5) to (-1,1.75) node[above=-2pt]{\scs$1$};
\end{tikzpicture} \in \End_{\Web(\son)}(1^{\otimes k})
	\end{equation}
is in the span of $1$-labeled $(k,k)$-tangles.
	\end{lem}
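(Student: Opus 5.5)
We argue by induction on $k$, showing that the full merge-split $M_k$ (the web in \eqref{eq:fullMS}) lies in the subalgebra $\mathcal{T}_k\subseteq\End_{\Web(\son)}(1^{\otimes k})$ spanned by $1$-labeled $(k,k)$-tangles built from $c_{1,1}^{\pm1}$, cups and caps. By Lemmas \ref{lem:otherR2} and \ref{lem:1R13}, $\mathcal{T}_k$ is closed under composition and tensor products with identities.

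\textbf{Base case $k=2$.} Rearranging the second formula for $c_{1,1}$ in \eqref{eq:blackbraiding} gives
\[
M_2 = c_{1,1} - q^2\,\id_{1\otimes 1} + \frac{q^{1-2n}(q^2-q^{-2})}{[2]_{2n-1}}\,\cup\circ\cap ,
\]
which lies in $\mathcal{T}_2$. For $n=2$ we argue the same way, starting from the first formula for $c_{1,1}$ and using Remark \ref{rem:n=2}.

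\textbf{Inductive step.} Assume $M_j\in\mathcal{T}_j$ for all $j<k$. Peel off one strand from the top split and one from the bottom merge. Using the recursive definition of full merges and splits, together with the flow vertices and \eqref{eq:flowassoc}, we can write
\[
M_k = (\mathrm{split}_{k-1}\otimes \id_1)\circ E\circ(\mathrm{merge}_{k-1}\otimes\id_1),
\]
where $E\in\End((k-1)\otimes 1)$ is the $H$-shaped web in which a $(k-1)$-edge and a $1$-edge merge to $k$ and then split again.

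Relation \eqref{eq:blackH=I}, after rotation, expresses the $H$-web through a $k$-labeled edge as a combination of three kinds of terms:
\begin{itemize}
\item the $I$-shaped web through a $(k-2)$-edge,
\item the web through a $2$-labeled edge with a $(k-1)\to(k-2)\to(k-1)$ rung,
\item a cup-cap term.
\end{itemize}
Using \eqref{eq:fulldigon} and the flow digon \eqref{eq:flowdigon}, we divide by the nonzero scalars $(``[k-1]^2")!$ as needed. After sandwiching with the full merge and split, each term becomes a composite of:
\begin{itemize}
\item full merge-splits on fewer strands,
\item the web $M_2$ on the last two strands,
\item cups and caps.
\end{itemize}
By the induction hypothesis and the base case, every such composite lies in $\mathcal{T}_k$.

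\textbf{Main obstacle.} The hard part is the bookkeeping of the rung terms. The $(k-1)\to(k-2)$ rung must be decomposed via associativity \eqref{eq:flowassoc} so that a single $1$-strand splits off and re-enters, producing a $2$-labeled merge-split between strand $k$ and one of the other strands. That $2$-labeled piece is then replaced using the base case.

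An alternative route is to find an explicit quasi-idempotent formula for $M_k$, as a $q$-antisymmetrizer-like polynomial in $c_{1,1}$ modulo cup-cap terms. The inductive reduction avoids this and only needs to show that the span is closed. For that, we would verify carefully that each term produced by \eqref{eq:blackH=I} has fewer $k$-labeled edges, so that the induction on the maximal label terminates.
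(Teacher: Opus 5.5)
Your proposal is correct and follows the paper's proof. The base case is read off from the second formula in \eqref{eq:blackbraiding}, and the inductive step applies \eqref{eq:blackH=I} (with $k-1$ in place of $k$) to the $(k-1)\otimes 1 \to k \to (k-1)\otimes 1$ piece of the full merge-split, after which \eqref{eq:fulldigon} on the $(k-2)$-labeled edges exhibits partial closures of $M_{k-1}$ together with a rotated copy of $M_2$. Three small points: the detour through flow vertices and \eqref{eq:flowassoc} is unnecessary, the cup-cap term becomes $M_{k-1}\otimes \id_1$ after rotation, and your separate $n=2$ remark is vacuous since there are no edges labeled $\geq 2$ in that case.
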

\begin{proof}
We argue via induction on $k\geq 2$.
For the base case, \eqref{eq:blackbraiding} gives
\begin{equation}\label{eq:1labelbase}
\begin{tikzpicture}[scale=.2, tinynodes, anchorbase]
	\draw[very thick] (-1,0) to (0,1);
	\draw[very thick] (1,0) to (0,1);
	\draw[very thick] (0,2.5) to (-1,3.5);
	\draw[very thick] (0,2.5) to (1,3.5);
	\draw[very thick] (0,1) to node[right=-2pt]{$2$} (0,2.5);
\end{tikzpicture}
=
\begin{tikzpicture}[scale=.4, anchorbase,tinynodes]
	\draw[very thick] (1,0) to [out=90,in=270] (0,1.5);
	\draw[overcross] (0,0) to [out=90,in=270] (1,1.5);
	\draw[very thick] (0,0) to [out=90,in=270] (1,1.5);
\end{tikzpicture}
-
q^2 \
\begin{tikzpicture}[scale=.2, tinynodes, anchorbase]
	\draw[very thick] (-.75,0) to (-.75,3.5);
	\draw[very thick] (.75,0) to (.75,3.5);
	\end{tikzpicture}
+ \frac{q^{1-2n} (q^2-q^{-2})}{[2]_{2n-1}}
\begin{tikzpicture}[scale=.4, tinynodes, anchorbase]
	\draw[very thick] (0,0) to [out=90,in=180] (.5,.625) 
		to [out=0,in=90] (1,0);
	\draw[very thick] (0,2) to [out=270,in=180] (.5,1.375)
		to [out=0,in=270] (1,2);
\end{tikzpicture} \, .
	\end{equation}
	Let $k> 2$. It follows from \eqref{eq:blackH=I} that the diagram in \eqref{eq:fullMS} is equal to
\begin{equation}\label{eq:1labelinduction}
\begin{tikzpicture}[scale=.375, anchorbase,yscale=1]
	\draw[very thick] (1.5,0) to (1.5,3) node[above=2pt]{\scs$1$};
	\draw[very thick] (1.5,0) to (1.5,-2) node[below=2pt]{\scs$1$};
	\draw[very thick] (-.5,2) node[left=-2pt]{\scs${k{-}1}$} to (-.5,.5);
	\draw[very thick] (-.5,.5) to (-.5,-1) node[left=-2pt]{\scs${k{-}1}$};
	\draw[very thick] (-.5,2) to (0,3) node[above=2pt]{\scs$1$};
	\draw[very thick] (-.5,2) to (-1,3) node[above=2pt]{\scs$1$};
	\node at (-.5,2.75) {\scs$\mysdots$};
	\draw[very thick] (-.5,-1) to (0,-2) node[below=-2pt]{\scs$1$};
	\draw[very thick] (-.5,-1) to (-1,-2) node[below=-2pt]{\scs$1$};
	\draw[very thick] (-.5,1) to (.5,.5);
	\draw[very thick] (-.5,0) to (.5,.5);
	\draw[very thick] (.5,.5) to (1,.5);
	\draw[very thick] (1,.5) node[above=-2pt]{\scs$2$} to (1.5,.5);
	\node at (-.5,-1.75) {\scs$\mysdots$};
	\node at (-1.5,.5) {\scs$k{-}2$};
\end{tikzpicture}
+\frac{[2]_{2n{-}2k{+}1}}{[2]_{2n{-}2k{+}3}}
\begin{tikzpicture}[scale=.375, anchorbase,yscale=1]
	\draw[very thick] (0,1) to (1,3) node[above=2pt]{\scs$1$};
	\draw[very thick] (-.5,2) to (0,3) node[above=2pt]{\scs$1$};
	\draw[very thick] (0,1) to node[pos=.25,left=-2pt]{\scs$k{-}1$} (-1,3) node[above=2pt]{\scs$1$};
	\node at (-.5,2.75) {\scs$\mysdots$};
	\draw[very thick] (0,1) to (0,0.5) node[right=-2pt]{\scs${k{-}2}$};
	\draw[very thick] (0,0) to (0,0.5);
	\draw[very thick] (0,0) to (1,-2) node[below=-2pt]{\scs$1$};
	\draw[very thick] (-.5,-1) to (0,-2) node[below=-2pt]{\scs$1$};
	\draw[very thick] (0,0) to node[pos=.25,left=-2pt]{\scs$k{-}1$} (-1,-2) node[below=-2pt]{\scs$1$};
	\node at (-.5,-1.75) {\scs$\mysdots$};
\end{tikzpicture}
+ (-1)^k
\frac{[2]_{2n{-}2k{+}1}}{[2]_{2n{-}1}}
\begin{tikzpicture}[scale=.375, anchorbase,yscale=1]
	\draw[very thick] (1,0) to (1,3) node[above=2pt]{\scs$1$};
	\draw[very thick] (1,0) to (1,-2) node[below=-2pt]{\scs$1$};
	\draw[very thick] (-.5,2) to (-.5,.5) node[left=-2pt]{\scs${k{-}1}$};
	\draw[very thick] (-.5,.5) to (-.5,-1);
	\draw[very thick] (-.5,2) to (0,3) node[above=2pt]{\scs$1$};
	\draw[very thick] (-.5,2) to (-1,3) node[above=2pt]{\scs$1$};
	\node at (-.5,2.75) {\scs$\mysdots$};
	\draw[very thick] (-.5,-1) to (0,-2) node[below=-2pt]{\scs$1$};
	\draw[very thick] (-.5,-1) to (-1,-2) node[below=-2pt]{\scs$1$};
	\node at (-.5,-1.75) {\scs$\mysdots$};
\end{tikzpicture}
\ .
\end{equation}
So it suffices to show that each diagram in \eqref{eq:1labelinduction} is in the span of $1$-labelled tangles. 

It is an immediate consequence of induction that the third diagram in \eqref{eq:1labelinduction} 
is in the span of $1$-labelled tangles. 
Next, applying \eqref{eq:fulldigon} to the $k-2$ labelled edge in the second diagram in \eqref{eq:1labelinduction} 
introduces two subdiagrams of the form \eqref{eq:fullMS} (with $k$ replaced by $k-1$). 
It follows by induction that the second diagram in \eqref{eq:1labelinduction} is in the span of $1$-labelled tangles. 
Lastly, applying \eqref{eq:fulldigon} to the $k-2$ labelled edge in the first diagram in \eqref{eq:1labelinduction} 
introduces two subdiagrams of the form \eqref{eq:fullMS} (with $k$ replaced by $k-1$), 
so the first diagram is in the span of $1$-labelled tangles by induction and \eqref{eq:1labelbase}. 
This result is 
\[
\begin{tikzpicture}[scale=.625, anchorbase]
	\draw[very thick] (-1,-1.75) node[below=-2pt]{\scs$1$} to (0,-.5);
	\node at (0,-1.375) {$\mydots$};
	\draw[very thick] (1,-1.75) node[below=-2pt]{\scs$1$} to (0,-.5);
	\draw[very thick] (0,-.5) to node[right=-2pt]{\scs$k$} (0,.5);
	\draw[very thick] (0,.5) to (1,1.75) node[above=-2pt]{\scs$1$};
	\node at (0,1.375) {$\mydots$};
	\draw[very thick] (0,.5) to (-1,1.75) node[above=-2pt]{\scs$1$};
\end{tikzpicture} \ 
= \ 
\begin{tikzpicture}[scale=.375, anchorbase,yscale=1]
	\draw[very thick] (1.5,1.5) to (1.5,3.5) node[above=-2pt]{\scs$1$};
	\draw[very thick] (0,2.5) to (0,3.5) node[above=-2pt]{\scs$1$};
	\draw[very thick] (-1,2.5) to (-1,3.5) node[above=-2pt]{\scs$1$};
		\draw[very thick] (.2,1.5) rectangle (-1.2,2.5);
	\node at (-.5,3.25) {\scs$\mysdots$};
	\draw[very thick] (1.5,-0.5) to (1.5,-2.5) node[below=-2pt]{\scs$1$};
	\draw[very thick] (0,-1.5) to (0,-2.5) node[below=-2pt]{\scs$1$};
	\draw[very thick] (-1,-1.5) to (-1,-2.5) node[below=-2pt]{\scs$1$};
		\draw[very thick] (-1.2,0-.5) rectangle (0.2,-1.5);
	\node at (-.5,-2.25) {\scs$\mysdots$};
		\draw[very thick] (0,-.5) to [out=90,in=180] (1,.125) 
		to [out=0,in=90] (1.5,-.5);
	\draw[very thick] (0,1.5) to [out=270,in=180] (1,.875)
		to [out=0,in=270] (1.5,1.5);
	\draw[very thick] (-1,-0.5) to (-1,1.5);
	\node at (-.5,.5) {\scs$\mysdots$};
	\draw[very thick, fill=white] (1.1,1.5) rectangle (0.6,-.5);
\end{tikzpicture}
+ \ 
\begin{tikzpicture}[scale=.375, anchorbase,yscale=1]
	\draw[very thick] (1.5,1.5) to (1.5,3.5) node[above=-2pt]{\scs$1$};
	\draw[very thick] (0,2.5) to (0,3.5) node[above=-2pt]{\scs$1$};
	\draw[very thick] (-1,2.5) to (-1,3.5) node[above=-2pt]{\scs$1$};
		\draw[very thick] (.2,1.5) rectangle (-1.2,2.5);
	\node at (-.5,3.25) {\scs$\mysdots$};
	\draw[very thick] (1.5,-0.5) to (1.5,-2.5) node[below=-2pt]{\scs$1$};
	\draw[very thick] (0,-1.5) to (0,-2.5) node[below=-2pt]{\scs$1$};
	\draw[very thick] (-1,-1.5) to (-1,-2.5) node[below=-2pt]{\scs$1$};
		\draw[very thick] (-1.2,0-.5) rectangle (0.2,-1.5);
	\node at (-.5,-2.25) {\scs$\mysdots$};
		\draw[very thick] (0,-.5) to [out=90,in=180] (.75,.125) 
		to [out=0,in=90] (1.5,-.5);
	\draw[very thick] (0,1.5) to [out=270,in=180] (.75,.875)
		to [out=0,in=270] (1.5,1.5);
	\draw[very thick] (-1,-0.5) to (-1,1.5);
	\node at (-.5,.5) {\scs$\mysdots$};
\end{tikzpicture}
+ \ 
\begin{tikzpicture}[scale=.375, anchorbase,yscale=1]
	\draw[very thick] (1.5,0) to (1.5,3.5) node[above=-2pt]{\scs$1$};
	\draw[very thick] (0,1) to (0,3.5) node[above=-2pt]{\scs$1$};
	\draw[very thick] (-1,1) to (-1,3.5) node[above=-2pt]{\scs$1$};
	\node at (-.5,3.25) {\scs$\mysdots$};
	\draw[very thick] (1.5,0) to (1.5,-2.5) node[below=-2pt]{\scs$1$};
	\draw[very thick] (0,0) to (0,-2.5) node[below=-2pt]{\scs$1$};
	\draw[very thick] (-1,0) to (-1,-2.5) node[below=-2pt]{\scs$1$};
		\draw[very thick] (-1.2,1) rectangle (0.2,0);
	\node at (-.5,-2.25) {\scs$\mysdots$};
\end{tikzpicture}
\ ,
\]
where each rectangle represents some linear combination of $1$-labelled tangles.
	\end{proof}
	
\begin{lem} \label{lem:rewriteblack} 
Any all-black web whose boundary strands are all $1$-labeled is in the span of $1$-labeled tangles. 
	\end{lem}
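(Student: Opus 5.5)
The idea is to explode every thick edge into $1$-labeled strands using the full merge/split webs, and then apply Lemma \ref{lem:fullMS} to each resulting merge--split pair. Let $W$ be an all-black web with all boundary strands $1$-labeled. Its edges carry labels in $\{1,\ldots,n-1\}$, and its vertices are the generating trivalent vertices $1\otimes k\to k+1$ and $k\otimes 1\to k+1$, together with their rotations and caps/cups. We argue by induction on the number of edges of $W$ with label at least $2$. If there are none, then every vertex would have to be a $1\otimes 1\to 1$ vertex, and no such generator exists. Hence $W$ consists only of $1$-labeled caps, cups and identities, which is already a $1$-labeled tangle.

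For the inductive step, pick an edge $e$ of $W$ with label $k\ge 2$. Since $``[k]^2"\neq 0$ over $\C(q)$, equation \eqref{eq:fulldigon} lets us replace a small segment of $e$ by
\[
(-1)^{\binom{k}{2}}\big((``[k]^2")!\big)^{-1}\cdot(\text{full split } k\to 1^{\otimes k})\circ(\text{full merge } 1^{\otimes k}\to k).
\]
Insert one such merge--split pair near each endpoint of $e$. We then regroup the resulting web as the composite of two pieces. The middle piece is (full merge)$\circ$(full split) along $e$, which is exactly the morphism \eqref{eq:fullMS}. By Lemma \ref{lem:fullMS}, this piece lies in the span of $1$-labeled $(k,k)$-tangles. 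Substituting that span, the edge $e$ is replaced by $k$ parallel $1$-labeled strands, with crossings and caps/cups between them.

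In each new term, these strands attach to the two endpoints of $e$ through a full split and a full merge. Each full split or merge uses only edges labeled strictly less than $k$, apart from the outer stub adjacent to the original vertex, which is still labeled $k$. To remove that stub as well, do not rebalance the counting. Instead, absorb the trivalent vertex at each endpoint. If the endpoint is a vertex $(k-1)\otimes 1\to k$ or $1\otimes(k-1)\to k$ (up to rotation), then composing it with the full split $k\to 1^{\otimes k}$ and applying \eqref{eq:assoc} (or \eqref{eq:flowassoc}) rewrites the composite as a full split of the $(k-1)$ edge tensored with a $1$-strand. This removes the $k$-labeled edge entirely.

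Before the induction can close, the crossings coming from Lemma \ref{lem:fullMS} must be handled. Using the definition \eqref{eq:blackbraiding} of $c_{1,1}$ and $c_{1,1}'$, each crossing is a combination of the identity, a cup--cap, and a $2$-labeled merge--split. A $2$-labeled edge could reintroduce labels, so the induction measure needs care. To avoid this, we keep crossings as formal tangle pieces: we work in the span of webs built from all-black webs and the morphisms $c_{1,1}^{\pm1}$, and we count only the thick edges outside these crossings.

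With this convention, the number of thick edges with label at least $2$ strictly decreases at each step, since the edge $e$ is removed and no new thick edges are created outside crossings. The induction therefore terminates at a combination of diagrams built only from $1$-labeled caps, cups and crossings, which are $1$-labeled tangles.

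The main obstacle is making this bookkeeping rigorous. We must check that absorbing the endpoint vertices via \eqref{eq:assoc} never produces new edges of label at least $2$ outside the crossing pieces. If that fails, the alternative is a different measure: induct on the maximum label appearing, and within a fixed maximum, on the number of edges attaining it. Exploding all edges of the maximal label $k$ simultaneously creates only edges of label at most $k-1$, since the full merges and splits use labels $k-1,\ldots,2$. This gives the induction directly.
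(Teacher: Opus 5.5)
Your proposal has a genuine gap at the step meant to remove the $k$-labeled stub at each endpoint of $e$. Associativity (\eqref{eq:assoc} or \eqref{eq:flowassoc}) only rebrackets trees of merges, or trees of splits. It cannot cancel a merge against a split. In particular, the vertex $(k{-}1)\otimes 1\to k$ followed by the full split $k\to 1^{\otimes k}$ is \emph{not} the full split of $k{-}1$ tensored with $\id_1$:
\begin{itemize}
\item under $\varphi$ the former factors through $V_k$, which is a proper summand of $V_{k-1}\otimes V_1$ by Proposition~\ref{prop:decomp}, so it has a nonzero kernel;
\item the latter has a left inverse by \eqref{eq:fulldigon}.
\end{itemize}
So the stubs survive, and neither of your induction measures decreases. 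Exploding $e$ replaces one thick edge by two $k$-labeled stubs, plus the internal edges labeled $k{-}1,\dots,2$ of the inserted full split and full merge. These internal edges lie outside every crossing, so ``no new thick edges are created'' is false already for $k\ge 3$. In the max-label fallback, the stubs still carry the maximal label $k$. So exploding the maximal edges does not leave only labels $\le k-1$; each maximal edge joining two vertices is replaced by two maximal stubs.

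The missing idea is to absorb each trivalent vertex into trees on its \emph{other} incident edges, where merges fuse with merges and associativity does apply. For example, the vertex $(k{-}1)\otimes 1\to k$ precomposed with the full merge $1^{\otimes(k-1)}\to k{-}1$ on its $(k{-}1)$-edge is the full merge $1^{\otimes k}\to k$. The paper does this globally, with no induction:
\begin{itemize}
\item Insert \eqref{eq:fulldigon} on every edge labeled $\ge 2$ at once. Then at each vertex, every incident thick edge ends in a full merge/split tree whose leaves point away from the vertex.
\item By \eqref{eq:assoc}, fuse the vertex with the trees on two of its edges into a single full merge/split of the third label. Together with the tree on that third edge, this is exactly \eqref{eq:fullMS}. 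Thick closed loops give \eqref{eq:fullMS} directly.
\item Lemma~\ref{lem:fullMS} then replaces each such piece by $1$-labeled tangles.
\end{itemize}
The same local move also repairs your max-label induction. If $k$ is maximal, every vertex touching a $k$-edge has labels $\{k{-}1,1,k\}$. Fuse each such vertex with a full merge/split inserted on its $(k{-}1)$-edge. Each maximal edge then has a tree at both ends, which is an instance of \eqref{eq:fullMS}, and only labels $\le k-1$ remain.
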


\begin{proof} 
Apply \eqref{eq:fulldigon} in the middle of every edge labeled $\geq 2$. 
After doing so, each $k$-labeled edge either abuts two full merge/split diagrams or one full merge/split diagram and one trivalent vertex. 
In the first case, the edge takes the form \eqref{eq:fullMS}, so we can apply Lemma \ref{lem:fullMS} to replace it with a 
linear combination of $1$-labeled tangles.

In the second case, we necessarily have three edges meeting at a trivalent vertex, 
with each edge having a full merge/split at their other end. 
In this case, we have
\[
\begin{tikzpicture}[scale=.375, anchorbase,tinynodes]
	\draw[very thick] (0,0) to node[right=-2pt]{$k{+}\ell$} (0,1);
	\draw[very thick] (0,1) to (1,2) node[above=-2pt]{$1$};
	\draw[very thick] (0,1) to (-1,2) node[above=-2pt]{$1$};
	\node at (0,1.75) {$\mydots$};
\begin{scope}[rotate=120]
	\draw[very thick] (0,0) to node[below=-1pt]{$k$} (0,1);
	\draw[very thick] (0,1) to (1,2) node[left=-2pt]{$1$};
	\draw[very thick] (0,1) to (-1,2) node[below=-2pt]{$1$};
	\node[rotate=120] at (0,1.75) {$\mydots$};
	\end{scope}
\begin{scope}[rotate=240]
	\draw[very thick] (0,0) to node[below=-1pt]{$\ell$} (0,1);
	\draw[very thick] (0,1) to (1,2) node[below=-2pt]{$1$};
	\draw[very thick] (0,1) to (-1,2) node[right=-2pt]{$1$};
	\node[rotate=240] at (0,1.75) {$\mydots$};
	\end{scope}
\end{tikzpicture}
	\]
where either $k=1$ or $\ell=1$.
Repeat application of \eqref{eq:assoc}  shows that this is equal to a $(k+\ell)$-labeled edge having a full merge/split at each end. 
We can then again appeal to Lemma \ref{lem:fullMS} to replace this with a linear combination of $1$-labeled tangles.
\end{proof}

We thus turn our attention to the span of $1$-labeled tangles.
Observe that \eqref{eq:blackbraiding} implies that the BMW skein relation
\begin{equation}
	\label{eq:BMWskein}
\begin{tikzpicture}[scale=.4, anchorbase,tinynodes]
	\draw[very thick] (1,0) to [out=90,in=270] (0,1.5);
	\draw[overcross] (0,0) to [out=90,in=270] (1,1.5);
	\draw[very thick] (0,0) to [out=90,in=270] (1,1.5);
\end{tikzpicture}
-
\begin{tikzpicture}[scale=.4, anchorbase,xscale=-1]
	\draw[very thick] (1,0) to [out=90,in=270] (0,1.5);
	\draw[overcross] (0,0) to [out=90,in=270] (1,1.5);
	\draw[very thick] (0,0) to [out=90,in=270] (1,1.5);
\end{tikzpicture}
=
(q^2 - q^{-2})
\left(
\begin{tikzpicture}[scale=.2, tinynodes, anchorbase]
	\draw[very thick] (-.75,0) to (-.75,3.5);
	\draw[very thick] (.75,0) to (.75,3.5);
	\end{tikzpicture}
-
\begin{tikzpicture}[scale=.4, tinynodes, anchorbase]
	\draw[very thick] (0,0) to [out=90,in=180] (.5,.625) 
		to [out=0,in=90] (1,0);
	\draw[very thick] (0,2) to [out=270,in=180] (.5,1.375)
		to [out=0,in=270] (1,2);
\end{tikzpicture}
\right)
	\end{equation}
is satisfied by all-black $1$-labeled crossings in $\Web(\son)$. 
This allows us to replace over-crossings with under-crossings (and vice-versa) 
up to lower terms, i.e.~terms with fewer crossings. 
This suggests that we focus on which boundary points are connected by strands in a tangle
and ignore the topology of its embedding.

\begin{defn}
A \emph{matching} of $2v$ points in the boundary of a disk 
is given by $v$ immersed curves in the disk intersecting generically 
and meeting the $2v$ boundary points transversely, 
e.g.~
\begin{equation}\label{eq:non-red-match}
\begin{tikzpicture}[scale=1, smallnodes, anchorbase]
	\draw[densely dashed] (0,0) circle (1);
	\draw[very thick] (1,0) to [out=180,in=60] (-.5,-.866) ;
	\draw[very thick] (.5,.866) to [out=240,in=120] (.5,-.866) ;
	\draw[very thick] (-.5,.866) to [out=300,in=60] (.577,-.333) to [out=240,in=0] (-1,0) ;
\end{tikzpicture} \, .
\end{equation}
Such a matching is \emph{reduced} if no curve intersects itself, 
and if any given pair of curves intersects at most once. 
The matching in \eqref{eq:non-red-match} is not reduced, 
but the matching
\begin{equation}\label{eq:red-match}
\begin{tikzpicture}[scale=1, smallnodes, anchorbase]
	\draw[densely dashed] (0,0) circle (1);
	\draw[very thick] (1,0) to [out=180,in=60] (-.5,-.866) ;
	\draw[very thick] (.5,.866) to [out=240,in=120] (.5,-.866) ;
	\draw[very thick] (-.5,.866) to [out=300,in=60] (0,0) to [out=240,in=0] (-1,0);
\end{tikzpicture}
\end{equation}
is reduced.
A \emph{lift} of a given matching
replaces each intersection point with either an over- or an under-crossing.
\end{defn}

\begin{lem}\label{lem:BMW-span}
The space spanned by tangles in the disk with $2v$ boundary points
modulo the BMW skein relation \eqref{eq:BMWskein}, 
the circle relation \eqref{eq:circle1}, 
and the 1-labeled Reidemeister moves from Lemmata \ref{lem:otherR2} and \ref{lem:1R13}
has a basis consisting of any chosen lifts of the set of reduced matchings of $2v$ points.
\end{lem}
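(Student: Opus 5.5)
We prove that chosen lifts of reduced matchings both span the quotient and are linearly independent in it. Throughout, let $\mathcal{T}_{2v}$ denote the space of $1$-labeled tangles in the disk with $2v$ boundary points, modulo the relations listed in the statement.

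\emph{Spanning.} This is the standard BMW/Kauffman argument, by induction on the number of crossings and, for fixed crossing number, on a complexity measure. Let $T$ be a tangle diagram with underlying immersed matching $\mu(T)$. If $T$ has a closed component or a self-crossing, or if two strands cross twice, we use the skein relation \eqref{eq:BMWskein} to switch crossings. This only costs terms with strictly fewer crossings, which are handled by induction. After switching, the offending component or strand can be made to lie entirely above the rest of the diagram. Then the Reidemeister II and III moves (Lemma \ref{lem:otherR2} and \eqref{eq:111R3}) let us isotope it so as to remove the bigon, or shrink the closed component. Reidemeister I (\eqref{eq:1R1}) removes kinks at the cost of a scalar, and the circle relation \eqref{eq:circle1} evaluates free loops. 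Iterating reduces $T$ to a combination of diagrams whose underlying matching is reduced.

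Any two lifts of the same reduced matching differ by crossing changes, so they agree modulo terms with fewer crossings. Moreover, two diagrams realizing the same reduced matching are related by Reidemeister III moves together with planar isotopy. Hence, by induction, the chosen lifts span $\mathcal{T}_{2v}$.

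\emph{Linear independence.} The number of reduced matchings of $2v$ points equals the number of perfect matchings, namely $(2v-1)!!$. So it suffices to show $\dim \mathcal{T}_{2v} \ge (2v-1)!!$. We propose to get this by mapping to the Brauer algebra specialization. Via $\varphi$ and the cap/cup isomorphisms, $\mathcal{T}_{2v}$ maps onto the span of the images in $\Hom_{U_q(\son)}(\C(q), V_1^{\otimes 2v})$. Rather than use representation theory, which only gives independence for $v$ small relative to $n$, we work abstractly. The relations in the statement are exactly those of the Kauffman skein module of the disk with $2v$ points. It is a classical theorem (Morton--Wassermann, or the Kauffman polynomial argument via the Kauffman bracket-type state sum) that this module is free on lifts of matchings, and is in particular nondegenerate at generic parameters. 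Here the parameters are the twist $q^{4n}$ (from \eqref{eq:1R1}), $z = q^2 - q^{-2}$, and the circle value \eqref{eq:circle1}. We check that these satisfy the compatibility condition $\delta = 1 + (a - a^{-1})/z$, so that we are at a valid specialization of the two-parameter Kauffman skein. The Kauffman invariant of the closure then provides a well-defined linear functional on $\mathcal{T}_{2v}$.

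\emph{Main obstacle.} The hard step is independence. Freeness of the skein module for arbitrary coefficients requires the Kauffman polynomial's existence, which holds for all parameters satisfying the relation above. Our first attempt would be to construct, by induction on $v$, a pairing $\mathcal{T}_{2v} \times \mathcal{T}_{2v} \to \C(q)$ via closure and the Kauffman invariant. We would then show that its Gram matrix on reduced matchings has leading term $\delta^{v}$ on the diagonal, for example by $q$-degree considerations; this forces $\det \ne 0$ generically.
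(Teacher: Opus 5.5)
Your spanning argument is the standard one and is fine as a sketch. It is also the only half of the lemma the paper actually uses, in Step 4 of the proof of Theorem \ref{thm:ladder}. For the full statement the paper gives no argument of its own; it simply cites the folklore result (Birman--Wenzl, Morton--Wassermann and related references). Your independence half, however, has a genuine gap.

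\textbf{Why the proposed Gram-matrix argument fails.}

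\emph{The pairing is the representation-theoretic one.} The relations in the statement reduce every closed $1$-labeled diagram to a scalar multiple of the empty diagram. So any linear functional on closed diagrams that respects them is determined by its value on the empty diagram, and $\varphi$ is such a functional. Hence the Kauffman polynomial at $a=q^{4n}$, $z=q^2-q^{-2}$, $\delta=[2]_{2n-1}[2n+1]/[2]$ \emph{is} the $U_q(\son)$ evaluation. Your pairing is therefore $\langle D_i,D_j\rangle=\varphi(\overline{D_j})\circ\varphi(D_i)$, where $\overline{D_j}$ is the reflected diagram you glue on.

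\emph{The Gram matrix is singular for large $v$.} It has rank at most $\dim\Hom_{U_q(\son)}(\C(q),V_1^{\otimes 2v})\le (2n+1)^{2v}$, which is smaller than $(2v-1)!!$ once $v$ is large. So the Gram determinant is identically zero in $\C(q)$ for such $v$, and no $q$-degree argument can show otherwise. This is exactly the limitation of representation theory you wanted to avoid, in disguise.

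\emph{Where the heuristic breaks.} The claim that $\delta^v$ dominates the diagonal fails because $\delta$ is not an independent variable here. The twist $a=q^{4n}$ has $q$-degree comparable to that of $\delta$, so writhe and crossing contributions in the off-diagonal entries compete with it. Likewise, existence of the Kauffman polynomial at a given specialization only gives a well-defined functional; it does not give freeness there.

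\textbf{The missing idea: freeness over a universal ring, then base change.}
\begin{enumerate}
\item Work over $\Lambda=\Z[a^{\pm1},z,\delta]/(a-a^{-1}-z(\delta-1))$. Your spanning argument goes through verbatim there, since it only multiplies by $a^{\pm1}$, $z$ and $\delta$.
\item Over $\Lambda$ the Gram determinant is a nonzero element. Under $a\mapsto 1$, $z\mapsto 0$, with $\delta$ left free, it becomes the Brauer Gram determinant. There your diagonal-dominance argument is valid: gluing two matchings yields at most $v$ loops, with equality only when they coincide.
\item Hence the chosen lifts form a $\Lambda$-basis.
\item The skein module is presented by generators and relations, and $-\otimes_\Lambda\C(q)$ is right exact. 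Take the specialization $a\mapsto q^{4n}$, $z\mapsto q^2-q^{-2}$, $\delta\mapsto[2]_{2n-1}[2n+1]/[2]$, which is legitimate by the compatibility $\delta=1+(a-a^{-1})/z$ that you checked. This identifies $\mathcal{T}_{2v}$ with the base change of a free module, so the chosen lifts remain a basis.
\end{enumerate}
Freeness over the generic ring, together with its stability under such specializations, is what the references cited in the paper supply.
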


\begin{proof}
This is a standard folklore fact (it can be inferred from e.g.~\cite{BW}).
For details proofs, see e.g.~\cite{WilBMW,MortonBMW}.
\end{proof}

\begin{example}
The following give such a basis when $v=2$ 
(here, we replace the disc with a half-plane):
\begin{equation}\label{eq:matchingex}
\begin{tikzpicture}[scale=.5, tinynodes, anchorbase]
	\draw[densely dashed] (0,3.5) to (0,-0.5);
	\draw[very thick] (0,3) to [out=0,in=90] (.75,2.5) to [out=270,in=0] (0,2);
	\draw[very thick] (0,1) to [out=0,in=90] (.75,.5) to [out=270,in=0] (0,0);
\end{tikzpicture}
\quad , \quad
\begin{tikzpicture}[scale=.5, tinynodes, anchorbase]
	\draw[densely dashed] (0,3.5) to (0,-0.5);
	\draw[very thick] (0,3) to [out=0,in=90] (1.5,1.5) to [out=270,in=0] (0,0);
	\draw[very thick] (0,2) to [out=0,in=90] (.75,1.5) to [out=270,in=0] (0,1);
\end{tikzpicture}
\quad , \quad
\begin{tikzpicture}[scale=.5, tinynodes, anchorbase]
	\draw[densely dashed] (0,3.5) to (0,-0.5);
	\draw[very thick] (0,2) to [out=0,in=90] (1.125,1) to [out=270,in=0] (0,0);
	\draw[overcross] (0,3) to [out=0,in=90] (1.125,2) to [out=270,in=0] (0,1);
	\draw[very thick] (0,3) to [out=0,in=90] (1.125,2) to [out=270,in=0] (0,1);
\end{tikzpicture}
	\end{equation}
\end{example}

As a final preparatory result, 
we record the interaction between $\gS$-labeled strands and $1$-labeled tangles.

\begin{lem}\label{lem:tangleeating}
\begin{equation}\label{eq:Reshetikhin}
\begin{tikzpicture}[scale=.4, anchorbase]
	\draw[very thick] (1,0) to [out=90,in=270] (0,1.5);
	\draw[overcross] (0,0) to [out=90,in=270] (1,1.5);
	\draw[very thick] (0,0) to [out=90,in=270] (1,1.5);
	\draw[very thick, gray] (1,0) to (0,0);
	\draw[very thick, gray] (1,0) to (1,-1);
	\draw[very thick, gray] (0,0) to (0,-1);
\end{tikzpicture}
= -q^{-2} \;
\begin{tikzpicture}[scale=.4, anchorbase]
	\draw[very thick] (1,0) to (1,1);
	\draw[very thick] (0,0) to (0,1);
	\draw[very thick, gray] (1,0) to (0,0);
	\draw[very thick, gray] (1,0) to (1,-1);
	\draw[very thick, gray] (0,0) to (0,-1);
\end{tikzpicture}
+ (-1)^nq^{-2n-1}
\begin{tikzpicture}[scale=.4, tinynodes, anchorbase]
	\draw[very thick, gray] (0,0) to [out=90,in=180] (.5,.625) 
		to [out=0,in=90] (1,0);
	\draw[very thick] (0,2) to [out=270,in=180] (.5,1.375)
		to [out=0,in=270] (1,2);
\end{tikzpicture}
\quad , \qquad 
\begin{tikzpicture}[scale=.4, anchorbase]
	\draw[very thick] (0,0) to [out=90,in=270] (1,1.5);
	\draw[overcross] (1,0) to [out=90,in=270] (0,1.5);
	\draw[very thick] (1,0) to [out=90,in=270] (0,1.5);
	\draw[very thick, gray] (1,0) to (0,0);
	\draw[very thick, gray] (1,0) to (1,-1);
	\draw[very thick, gray] (0,0) to (0,-1);
\end{tikzpicture}
= -q^{2} \;
\begin{tikzpicture}[scale=.4, anchorbase]
	\draw[very thick] (1,0) to (1,1);
	\draw[very thick] (0,0) to (0,1);
	\draw[very thick, gray] (1,0) to (0,0);
	\draw[very thick, gray] (1,0) to (1,-1);
	\draw[very thick, gray] (0,0) to (0,-1);
\end{tikzpicture}
+ (-1)^nq^{2n+1}
\begin{tikzpicture}[scale=.4, tinynodes, anchorbase]
	\draw[very thick, gray] (0,0) to [out=90,in=180] (.5,.625) 
		to [out=0,in=90] (1,0);
	\draw[very thick] (0,2) to [out=270,in=180] (.5,1.375)
		to [out=0,in=270] (1,2);
\end{tikzpicture}
\end{equation}
	\end{lem}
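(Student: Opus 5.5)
The plan is to expand the crossing using the definition \eqref{eq:blackbraiding} in its second form, valid for $n>2$ (the case $n=2$ is checked directly, as in Remark \ref{rem:n=2}). Stacking $c_{1,1}$ on the web $\gS\otimes\gS\to 1\otimes 1$ (a gray rung with two black legs going up) gives three terms. The first is $q^2$ times the web itself. The last is $-\frac{q^{1-2n}(q^2-q^{-2})}{[2]_{2n-1}}$ times a black cap on the gray--gray--black--black square, followed by a black cup. The middle term is the square composed with the $2$-labeled H-web.

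For the cap term, the black cap closes the two black legs into a digon with a gray edge. By \eqref{eq:otherdigonS} (after rotating, so the digon has gray sides and black rung) this gives $(-1)^n\frac{[2n+1]}{[2]}$ times a gray cap. Thus the cap term is a scalar multiple of the gray-cap/black-cup web.

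For the middle term, the two black legs of the rung web feed into the merge $1\otimes 1\to 2$. This produces a triangle with two gray--black--black vertices and one black merge vertex, whose output is the $2$-labeled edge. Applying the $k=1$ case of \eqref{eq:ggb-triangle+} reduces it to $-[2]_{q^2}$ times the gray--gray--$2$ vertex, followed by the split $2\to 1\otimes 1$. I then need to re-express the web $\gS\otimes\gS\to 2\to 1\otimes 1$ in the basis $\{\text{rung web},\ \text{gray cap/black cup}\}$ of $\Hom(\gS\otimes\gS,1\otimes 1)$. This uses \eqref{eq:blackspinH=I} with $k=1$, read upside down and rotated so that its left side is exactly the rung web with legs attached differently. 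That relation (or its reflection \eqref{eq:refblackspinH=I}) writes "gray pair $\to 1\otimes 1$ via a gray middle edge" as the $2$-labeled factorization plus $(-1)^n\frac{1}{[2]_{2n-1}}$ times the $0$-labeled factorization, i.e. gray cap and black cup. Solving for the $2$-labeled term expresses it as the rung web minus $(-1)^n/[2]_{2n-1}$ times the cap/cup web. The main care here is matching the rotated pictures precisely, which I would do by using pivotality and the planar symmetries established in Proposition \ref{prop:hardrel}.

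Collecting terms, the coefficient of the rung web is $q^2-[2]_{q^2}=q^2-(q^2+q^{-2})=-q^{-2}$, as claimed. The coefficient of the cap/cup web is a sum of two contributions: $[2]_{q^2}(-1)^n/[2]_{2n-1}$ from the middle term, and $-\frac{q^{1-2n}(q^2-q^{-2})}{[2]_{2n-1}}\cdot(-1)^n\frac{[2n+1]}{[2]}$ from the cap term. Verifying that this equals $(-1)^nq^{-2n-1}$ is a quantum-arithmetic identity: multiply through by $[2]_{2n-1}=q^{2n-1}+q^{1-2n}$ and expand $(q^2-q^{-2})[2n+1]/[2]=(q-q^{-1})[2n+1]=q^{2n+1}-q^{-2n-1}$. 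I expect this bookkeeping of signs and scalars to be the main obstacle.

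The second identity follows by the same computation with $c_{1,1}'$, swapping $q\leftrightarrow q^{-1}$ in all coefficients. Alternatively, it can be obtained from the first by applying the bar-type symmetry. A third check is the linear independence trick: both sides lie in a two-dimensional space whose basis maps under $\varphi$ to a basis, and $\varphi$ of the left side is computed from \eqref{eq:untwistovercrossing}-type eigenvalues of the braiding on $V_1\otimes V_1$ restricted to the images of $S\otimes S$.
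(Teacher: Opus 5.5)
Your proposal is correct and follows essentially the paper's route. You expand $c_{1,1}$ by the second form of \eqref{eq:blackbraiding}, collapse the triangle with the $k=1$ case of \eqref{eq:ggb-triangle+}, rewrite the $2$-labeled factorization via the $180^\circ$ rotation of the $k=1$ case of \eqref{eq:blackspinH=I}, and evaluate the cap term with \eqref{eq:otherdigonS}; the coefficients do come out to $-q^{-2}$ and $(-1)^nq^{-2n-1}$, and the second identity follows by $q\mapsto q^{-1}$ because every other coefficient involved is bar-invariant.
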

\begin{proof}
Direct computation, using \eqref{eq:blackbraiding}, \eqref{eq:ggb-triangle+}, 
and \eqref{eq:blackspinH=I}.
\end{proof}

Consequently, $\gS$-labeled strands can ``absorb'' $1$-labeled crossings up to scalar, 
modulo lower terms. 
By \eqref{eq:otherdigonS}, $\gS$-labeled strands can also absorb $1$-labeled cap/cups.

\begin{proof}[Proof of Theorem \ref{thm:ladder}]
Let $W$ be a web in $\End_{\Web(\son)}(\gS^{\otimes m})$.
We will show that $W$ is a linear combination of products of the elements 
$\{r_{i,j}\}_{1 \leq i < j \leq m}$.
Observe that, if we ignore all black edges in $W$, 
we obtain a $1$-manifold $\gM_W$ embedded in $\R \times [0,1]$ with 
$m$ boundary points at both its top and bottom, e.g.~
\[
\begin{tikzpicture}
	\draw[very thick, gray] (0,0) to [out=90,in=270] (2,2);
	\draw[very thick, gray] (4,0) to [out=90,in=270] (3,2);
	\draw[very thick, gray] (0,2) to [out=270,in=180] (.5,1.5) to [out=0,in=270] (1,2);
	\draw[very thick, gray] (1,0) to [out=90,in=180] (1.5,.5) to [out=0,in=90] (2,0);
	\draw[very thick, gray] (2.5,1) circle (.5);
	\draw[very thick, gray] (4.5,1.25) circle (.5);
	\end{tikzpicture} \, .
\]
The web $W$ in its entirety consists of $\gM_W$ as well as its ``all-black'' components, 
which live in the complement of $\gM_W$ and are attached to $\gM_W$ via strands 
carrying the labels $\{1,\ldots,n-1\}$.
We now undertake a step-by-step process which shows that 
$W$ lies in the span of successively simpler webs.
\newline

\noindent \textbf{Step 1: reduce black components to $1$-labeled tangles.}
Our first reduction is to rewrite $W$ as a non-zero multiple of a web 
where the interface between $\gM_W$ and the black portion of the web only carries the label $1$. 
Using \eqref{eq:ggb-triangle+}, 
we can rewrite any $(\gS, \gS, k+1)$ trivalent vertex as a web which only has $(\gS, \gS, 1)$ 
and $(\gS, \gS, k)$ trivalent vertices (up to scalar). 
Doing this repeatedly achieves our goal. 
Now each ``all-black'' component can be viewed as a web whose boundary 
consists only of $1$-labeled strands. 
We use Lemma \ref{lem:rewriteblack} 
to write all of the black portions of the web in the span of $1$-labeled tangles.

Consequently, we can assume $W$ takes this form; 
note that it is no longer a web, but rather a ``tangled web'', e.g.~
\[
\begin{tikzpicture}
		\draw[very thick] (1.25,.45) to [out=120,in=305] (.875,.95);
		\draw[very thick] (2,1) to [out=180,in=90] (1.5,.5);
		\draw[overcross] (2.5,1) circle (.675);
		\draw[very thick] (2.5,1) circle (.675);
		\draw[overcross] (2.5,.5) to [out=270,in=210] (3.875,.5);
		\draw[very thick] (2.5,.5) to [out=270,in=210] (3.875,.5);
		\begin{scope}[xshift=-57pt,yshift=-6.5pt]
		\clip (4.5,1.25) circle (.5);
		\draw[very thick] (4.75,1.25) to [out=90,in=270] (4.25,1.75);
		\draw[overcross] (4.75,.75) to [out=90,in=270] (4.25,1.25) to [out=90,in=270] (4.75,1.75);
		\draw[very thick] (4.75,.75) to [out=90,in=270] (4.25,1.25) to [out=90,in=270] (4.75,1.75);
		\draw[overcross] (4.25,.75) to [out=90,in=270] (4.75,1.25);
		\draw[very thick] (4.25,.75) to [out=90,in=270] (4.75,1.25);
		\end{scope}
	\draw[very thick, gray] (2.5,1) circle (.5);
		\begin{scope}
		\clip (4.5,1.25) circle (.5);
		\draw[very thick] (4.75,1.25) to [out=90,in=270] (4.25,1.75);
		\draw[overcross] (4.75,.75) to [out=90,in=270] (4.25,1.25) to [out=90,in=270] (4.75,1.75);
		\draw[very thick] (4.75,.75) to [out=90,in=270] (4.25,1.25) to [out=90,in=270] (4.75,1.75);
		\draw[overcross] (4.25,.75) to [out=90,in=270] (4.75,1.25);
		\draw[very thick] (4.25,.75) to [out=90,in=270] (4.75,1.25);
		\end{scope}
	\draw[very thick, gray] (4.5,1.25) circle (.5);
		\draw[very thick] (.25,1.55) to [out=240,in=120] (1.25,1.125);
		\draw[overcross] (.5,1.5) to [out=270,in=135] (.25,.6);
		\draw[very thick] (.5,1.5) to [out=270,in=135] (.25,.6);
	\draw[very thick, gray] (0,0) to [out=90,in=270] (2,2);
	\draw[very thick, gray] (4,0) to [out=90,in=270] (3,2);
	\draw[very thick, gray] (0,2) to [out=270,in=180] (.5,1.5) to [out=0,in=270] (1,2);
	\draw[very thick, gray] (1,0) to [out=90,in=180] (1.5,.5) to [out=0,in=90] (2,0);
	\end{tikzpicture} \, .
	\]
All black strands now have the label $1$, and, by construction, 
the tangled black portions of the web lie entirely in the complement 
of the planar gray $1$-manifold $\gM_W$. 
In particular, at this point there are no black-gray (or gray-gray) crossings.
\newline

\noindent \textbf{Step 2: Create uprights.}
We now show that the tangled webs $W$ produced in Step 1
are in the span of tangled webs wherein the planar gray $1$-manifold $\gM_W$ 
agrees with the $1$-manifold $\gray{I}_W$ consisting of $m$ ``vertical'' segments, 
i.e.~$\gM_W$ agrees with the gray $1$-manifold underlying any diagram in $\Lad_m$. 
This comes at a (temporary) cost: the introduction of black-gray crossings.

Observe that there exists a sequence of saddle moves
\[
\begin{tikzpicture}[scale=.5, tinynodes, anchorbase]
	\draw[dashed] (0,0) circle (1);
	\draw[very thick,gray] (-1,0) to [out=0,in=90] (0,-1);
	\draw[very thick,gray] (1,0) to [out=180,in=270] (0,1);
	\end{tikzpicture}
\mapsto
\begin{tikzpicture}[scale=.5, tinynodes, anchorbase]
	\draw[dashed] (0,0) circle (1);
	\draw[very thick,gray] (-1,0) to [out=0,in=270] (0,1);
	\draw[very thick,gray] (1,0) to [out=180,in=90] (0,-1);
	\end{tikzpicture}
\]
taking $\gM_W$ to $\gray{I}_W$: 
for example, we can first use saddle moves to iteratively remove 
any closed components in $\gM_W$ by joining them to components meeting the boundary,
and then use saddle moves to pass from the resulting crossingless matching to $\gray{I}_W$. 
Note that we use the $m\ge 1$ hypothesis of the theorem to ensure there is some gray component meeting the boundary. 
(If $m=0$, so there is no gray component meeting the boundary, 
then we certainly cannot saddle to join a given closed gray component to the boundary! 
For the $m=0$ case, see instead Corollary \ref{cor:ladder0}.)

In order to implement such saddle moves in $\Web(\son)$, 
we use \eqref{eq:spinH=I}. 
This relation implies that there exist scalars $A_{i,j} \in \C(q)$ for $0 \leq i,j \leq n$ so that 
\[
(\rung_1)^i :=
i \Big\{
\begin{tikzpicture}[scale=.375, tinynodes, anchorbase]
	\draw[very thick, gray] (0,-1.25) to (0,1.25);
	\draw[very thick, gray] (1,-1.25) to (1,1.25);
	\draw[very thick] (0,-.5) to node[below=-2pt]{$1$} (1,-.5);
	\node at (.5,-.25) {$\cdot$}; \node at (.5,0) {$\cdot$}; \node at (.5,.25) {$\cdot$};
	\draw[very thick] (0,.5) to node[above=-2pt]{$1$} (1,.5);
	\end{tikzpicture}
= A_{i,0} \begin{tikzpicture}[scale=.2, tinynodes, anchorbase]
	\draw[very thick,gray] (-.75,0) to (-.75,3.5);
	\draw[very thick,gray] (.75,0) to (.75,3.5);
	\end{tikzpicture}
+ \sum _{j=1}^{n}
A_{i,j}
\begin{tikzpicture}[scale=.2, tinynodes, anchorbase]
	\draw[very thick,gray] (-1,0) to (0,1);
	\draw[very thick,gray] (1,0) to (0,1);
	\draw[very thick,gray] (0,2.5) to (-1,3.5);
	\draw[very thick,gray] (0,2.5) to (1,3.5);
	\draw[very thick] (0,1) to node[right=-2pt]{$n{-}j$} (0,2.5);
	\end{tikzpicture} \, .
	\]
It follows from Definition \ref{eq:x1defn} and \cite[Proposition 4.25]{BER} that
$\{ \varphi\left((\rung_1)^i\right) \}_{i=0}^n \in \End_{U_q(\son)}(S \otimes S)$ is a basis. 
Since the webs \eqref{SSbasis} are also sent by $\varphi$ to a basis, 
this implies that the matrix $(A_{i,j})_{0 \leq i,j \leq n}$ is invertible.
Consequently, we find that 
\[
\begin{tikzpicture}[scale=.4, tinynodes, anchorbase]
	\draw[very thick,gray] (0,0) to [out=90,in=180] (.5,.625) 
		to [out=0,in=90] (1,0);
	\draw[very thick,gray] (0,2) to [out=270,in=180] (.5,1.375)
		to [out=0,in=270] (1,2);
\end{tikzpicture}
=
\sum_{i=0}^n A^{-1}_{n,i} \
\begin{tikzpicture}[scale=.375, tinynodes, anchorbase]
	\draw[very thick, gray] (0,-1.25) to (0,1.25);
	\draw[very thick, gray] (1,-1.25) to (1,1.25);
	\draw[very thick] (0,-.5) to node[below=-2pt]{$1$} (1,-.5);
	\node at (.5,-.25) {$\cdot$}; \node at (.5,0) {$\cdot$}; \node at (.5,.25) {$\cdot$};
	\draw[very thick] (0,.5) to node[above=-2pt]{$1$} (1,.5);
	\end{tikzpicture} \Big\} i \, .
\]
Using this, 
together with the gray-black Reidemeister II move established in Lemma \ref{lem:otherR2}, 
we can perform saddle moves on tangled webs, e.g.~
\[
\begin{tikzpicture}[scale=.5, tinynodes, anchorbase]
	\draw[very thick] (-.707,.707) to (.707,-.707);
	\draw[very thick,gray] (-1,0) to [out=0,in=90] (0,-1);
	\draw[very thick,gray] (1,0) to [out=180,in=270] (0,1);
	\draw[dashed] (0,0) circle (1);
	\end{tikzpicture}
=
\begin{tikzpicture}[scale=.5, tinynodes, anchorbase]
	\draw[very thick] (-.707,.707) to [out=315,in=135] (-.5,-.5) to [out=315,in=135] (.707,-.707);
	\draw[overcross] (-1,0) to [out=0, in=135] (0,0) to [out=315,in=90] (0,-1);
	\draw[very thick,gray] (-1,0) to [out=0, in=135] (0,0) to [out=315,in=90] (0,-1);
	\draw[very thick,gray] (1,0) to [out=180,in=270] (0,1);
	\draw[dashed] (0,0) circle (1);
	\end{tikzpicture}
=
\sum_{i=0}^n A^{-1}_{n,i} \
\begin{tikzpicture}[scale=.5, tinynodes, anchorbase]
	\draw[very thick] (-.707,.707) to [out=315,in=135] (-.5,-.5) to [out=315,in=135] (.707,-.707);
	\draw[overcross] (-1,0) to [out=0,in=270] (0,1);
	\draw[overcross] (1,0) to [out=180,in=90] (0,-1);	
	\draw[very thick] (-.5,.125) to (.125,-.5);
	\draw[very thick] (.5,-.125) to (-.125,.5);
	\draw[very thick,gray] (-1,0) to [out=0,in=270] (0,1);
	\draw[very thick,gray] (1,0) to [out=180,in=90] (0,-1);
	\draw[dashed] (0,0) circle (1);
	\node at (-.125,-.125) {$\cdot$}; \node at (0,0) {$\cdot$}; \node at (.125,.125) {$\cdot$};
	\end{tikzpicture} \, .
\]
As in this example, if we use gray-black Reidemeister II moves during this step, 
we will always use the version of this move that keeps the gray strands on top of any black strands.
\newline

\noindent \textbf{Step 3: Swing the black strands outside.} 
We consider a tangled web $W$ arising from Step 2, 
which necessarily satisfies $\gM_W = \gray{I}_W$ and 
may contain both black-black crossings and gray-black crossings with 
gray strands always passing over black strands.

Since the gray strands all lie above the black strands, 
when viewing this tangled web 3-dimensionally, it e.g.~takes the form
\begin{equation}\label{eq:web3d}
\begin{tikzpicture}[scale=.5, anchorbase]
	\draw[very thick] (.5,1) to (1.5,-.5);
	\draw[very thick] (.75,1.5) to (1.75,-.5);
	\draw[very thick,gray] (0,0) to (1,2);
	\draw[overcros] (1,0) to (2,2);
	\draw[very thick] (1.5,1) to (2,-.5);
	\draw[very thick] (1.75,1.5) to (2.25,-.5);	
	\draw[very thick,gray] (1,0) to (2,2);
	\node at (2.5,1) {\scs$\cdots$};
	\draw[very thick] (4.5,1) to (3.5,-.5);
	\draw[very thick] (4.25,1.5) to (3.25,-.5);
	\draw[very thick,gray] (5,0) to (4,2);
	\draw[overcros] (4,0) to (3,2);
	\draw[very thick] (3.5,1) to (3,-.5);
	\draw[very thick] (3.25,1.5) to (2.75,-.5);
	\draw[very thick,gray] (4,0) to (3,2);
	\draw[very thick] (1,-.5) rectangle (4,-1.5);
	\node at (2.5,-1) {$T'$};
	\end{tikzpicture} \, .
	\end{equation}
where $T'$ denotes some all-black $1$-labeled tangle.
Via isotopy, pulling all interesting parts of the tangle $T'$ to the right
results in a tangled web e.g.~of the form
\begin{equation}\label{eq:twebright}
\begin{tikzpicture}[scale=.5, anchorbase]
	\draw[very thick] (5,3.5) rectangle (6,.5);
	\node at (5.5,2) {$T$};
	\draw[very thick] (0,3.25) to (5,3.25);
	\draw[very thick] (0,3) to (5,3);
	\draw[overcross] (1,0) to (1,4);
	\draw[very thick] (1,2.625) to (5,2.625);
	\draw[very thick] (1,2.375) to (5,2.375);
	\draw[overcross] (3,0) to (3,4);	
	\draw[very thick] (3,1.625) to (5,1.625);
	\draw[very thick] (3,1.375) to (5,1.375);
	\draw[overcross] (4,0) to (4,4);		
	\draw[very thick] (4,1) to (5,1);
	\draw[very thick] (4,.75) to (5,.75);	
	\draw[very thick,gray] (0,0) to (0,4);
	\draw[very thick,gray] (1,0) to (1,4);
	\node at (2,2) {$\cdots$};
	\draw[very thick,gray] (3,0) to (3,4);
	\draw[very thick,gray] (4,0) to (4,4);
	\end{tikzpicture} \, .
	\end{equation}
Here, again, $T$ denotes some all-black $1$-labeled tangle.

We now claim that the original tangled web $W$ is equal in $\Web(\son)$ 
to the isotopic tangled web in \eqref{eq:twebright}, 
up to an invertible scalar.
In terms of the original tangled web $W$, 
we arrive at one of the form \eqref{eq:twebright} via a sequence of 
all-black Reidemeister moves (Lemmata \ref{lem:otherR2} and \ref{lem:1R13}), 
gray-black Reidemeister II moves (Lemma \ref{lem:otherR2}), 
gray-black-black Reidemeister III moves (Proposition \ref{prop:S1}), 
and fork twist moves \eqref{eq:forktwist}. 
As given in those results, these moves all hold up to factors of the form $\pm q^d$.

The isotopy relating $W$ and \eqref{eq:twebright} 
does not change the number of gray-gray-black trivalent vertices. 
The number of such vertices is necessarily even, as it equals the 
the number of black strands entering the tangle $T$.
(In both \eqref{eq:web3d} and \eqref{eq:twebright} we have depicted 
exactly two black strands attaching to each gray upright, 
but in general there may be more or fewer.)
\newline

\noindent \textbf{Step 4: Create (generalized) rungs.}
We now assume that $W$ takes the form \eqref{eq:twebright} and argue,
via induction on the number $2v$ of gray-gray-black trivalent vertices, 
that $W$ lies in the subalgebra of $\End_{\Web(\son)}(\gS^{\otimes m})$ generated by the $r_{i,j}$.

Given \eqref{eq:BMWskein}, 
we can use Lemma \ref{lem:BMW-span} to write $T$ in the span of a chosen set of 
lifts of reduced matchings
(here, our ``disk'' is a rectangle with all of the boundary points along its left side).
We choose our lifts so that curves with lower bottom boundary are behind 
those with higher bottom boundary (as in \eqref{eq:matchingex}). 
We thus can assume that $T$ takes this form, which, for the duration, 
we refer to simply as a lifted matching.
In particular, this establishes the base $v=0$ case of our induction, 
since the tangle $T$ is reduced to a multiple of the empty diagram
(the only reduced matching of $0$ points is the empty matching).

Now, consider $W$ as in \eqref{eq:twebright}, 
such that $T$ is a lifted matching having $2v$ boundary points with $v \geq 1$. 
We consider the strand (highlighted in \bl{blue}) 
corresponding to the lowest boundary point of $T$, 
and now consider two cases. 
In the following computations, we use the symbol $\approx$ to mean 
``equal up to non-zero scalar.''

In the first case, this blue strand connects two distinct uprights, and we compute e.g.~
\begin{equation}\label{eq:createrung}
W =
\begin{tikzpicture}[scale=.5, anchorbase]
	\draw[very thick] (0,3.25) to (5,3.25);
	\draw[very thick] (0,3) to (5,3);
	\draw[overcross] (1,0) to (1,4);
	\draw[very thick,blue] (1,2.625) to (5,2.625);
	\draw[very thick] (1,2.375) to (5,2.375);
	\draw[overcross] (3,0) to (3,4);	
	\draw[very thick] (3,1.625) to (5,1.625);
	\draw[very thick] (3,1.375) to (5,1.375);
	\draw[overcross] (4,0) to (4,4);		
	\draw[very thick] (4,1) to (5,1);
	\draw[very thick,blue] (4,.75) to (5,.75);	
	\draw[very thick,gray] (0,0) to (0,4);
	\draw[very thick,gray] (1,0) to (1,4);
	\node at (2,2) {$\cdots$};
	\draw[very thick,gray] (3,0) to (3,4);
	\draw[very thick,gray] (4,0) to (4,4);
	\draw[very thick] (5,3.5) rectangle (6,.5);
	\node at (5.5,2) {$T$};
	\end{tikzpicture}
=
\begin{tikzpicture}[scale=.5, anchorbase]
	\draw[very thick] (0,3.25) to (5,3.25);
	\draw[very thick] (0,3) to (5,3);
	\draw[overcross] (1,0) to (1,4);
	\draw[very thick,blue] (1,2.625) to (4.25,2.625);
	\draw[very thick, blue] (4.2,2.625) to (4.25,2.625) to [out=0,in=0]
		(4.25,.75) to (4.2,.75);
	\draw[overcros] (1,2.375) to (5,2.375);	
	\draw[very thick] (1,2.375) to (5,2.375);
	\draw[overcross] (3,0) to (3,4);
	\draw[overcros] (3,1.625) to (5,1.625);	
	\draw[very thick] (3,1.625) to (5,1.625);
	\draw[overcros] (3,1.375) to (5,1.375);	
	\draw[very thick] (3,1.375) to (5,1.375);
	\draw[overcross] (4,0) to (4,4);
	\draw[overcros] (4,1) to (5,1);
	\draw[very thick] (4,1) to (5,1);
	\draw[very thick,blue] (4,.75) to (4.25,.75);	
	\draw[very thick,gray] (0,0) to (0,4);
	\draw[very thick,gray] (1,0) to (1,4);
	\node at (2,2) {$\cdots$};
	\draw[very thick,gray] (3,0) to (3,4);
	\draw[very thick,gray] (4,0) to (4,4);
	\draw[very thick] (5,3.5) rectangle (6,.5);
	\node at (5.5,2) {$\tilde{T}$};
	\end{tikzpicture}
\approx
\begin{tikzpicture}[scale=.5, anchorbase]
	\draw[very thick] (0,3.25) to (5,3.25);
	\draw[very thick] (0,3) to (5,3);
	\draw[overcross] (1,0) to (1,4);
	\draw[very thick,blue] (1,2.625) to [out=0,in=180] (3,.75) to (4,.75);
	\draw[overcros] (1,2.375) to (5,2.375);	
	\draw[very thick] (1,2.375) to (5,2.375);
	\draw[overcross] (3,0) to (3,4);
	\draw[overcros] (3,1.625) to (5,1.625);	
	\draw[very thick] (3,1.625) to (5,1.625);
	\draw[overcros] (3,1.375) to (5,1.375);	
	\draw[very thick] (3,1.375) to (5,1.375);
	\draw[overcross] (4,0) to (4,4);
	\draw[very thick,blue] (3.5,.75) to (4,.75);	
	\draw[overcros] (4,1) to (5,1);
	\draw[very thick] (4,1) to (5,1);
	\draw[very thick,gray] (0,0) to (0,4);
	\draw[very thick,gray] (1,0) to (1,4);
	\node at (2,2.75) {$\cdots$};
	\draw[very thick,gray] (3,0) to (3,4);
	\draw[very thick,gray] (4,0) to (4,4);
	\draw[very thick] (5,3.5) rectangle (6,.5);
	\node at (5.5,2) {$\tilde{T}$};
	\end{tikzpicture} \, .
\end{equation}
The last step here uses \eqref{eq:S11R3} and \eqref{eq:forktwist}.
Continuing the computation in \eqref{eq:createrung}, 
we have
\[
\stackrel{{\eqref{eq:Reshetikhin}}}{\approx}
\begin{tikzpicture}[scale=.5, anchorbase]
	\draw[very thick] (0,3.25) to (5,3.25);
	\draw[very thick] (0,3) to (5,3);
	\draw[overcross] (1,0) to (1,4);
	\draw[very thick,blue] (1,.5) to [out=0,in=180] (3,.5) to (4,.5);
	\draw[overcros] (1,2.375) to (5,2.375);	
	\draw[very thick] (1,2.375) to (5,2.375);
	\draw[overcross] (3,0) to (3,4);
	\draw[overcros] (3,1.625) to (5,1.625);	
	\draw[very thick] (3,1.625) to (5,1.625);
	\draw[overcros] (3,1.375) to (5,1.375);	
	\draw[very thick] (3,1.375) to (5,1.375);
	\draw[overcross] (4,0) to (4,4);
	\draw[very thick,blue] (3.5,.5) to (4,.5);	
	\draw[overcros] (4,1) to (5,1);
	\draw[very thick] (4,1) to (5,1);
	\draw[very thick,gray] (0,0) to (0,4);
	\draw[very thick,gray] (1,0) to (1,4);
	\node at (2,2) {$\cdots$};
	\draw[very thick,gray] (3,0) to (3,4);
	\draw[very thick,gray] (4,0) to (4,4);
	\draw[very thick] (5,3.5) rectangle (6,.5);
	\node at (5.5,2) {$\tilde{T}$};
	\end{tikzpicture}
	+ LOT_{< 2v} \, .
	\]
Here, 
$LOT_{< 2v}$ denotes a linear combination of terms of the form \eqref{eq:twebright} 
with strictly fewer than $2v$ trivalent vertices.
The result now follows by induction, since $\tilde{T}$ (also) has strictly fewer than $2v$ boundary points.

In the second case, the blue strand connects an upright to itself.
We do a similar computation, but at the last step instead arrive at
\[
\stackrel{{\eqref{eq:Reshetikhin}}}{\approx}
\begin{tikzpicture}[scale=.5, anchorbase]
	\draw[very thick] (0,3.25) to (5,3.25);
	\draw[very thick] (0,3) to (5,3);
	\draw[overcross] (1,0) to (1,4);
	\draw[overcros] (1,2.375) to (5,2.375);	
	\draw[very thick] (1,2.375) to (5,2.375);
	\draw[overcross] (3,0) to (3,4);
	\draw[overcros] (3,1.625) to (5,1.625);	
	\draw[very thick] (3,1.625) to (5,1.625);
	\draw[overcros] (3,1.375) to (5,1.375);	
	\draw[very thick] (3,1.375) to (5,1.375);
	\draw[overcross] (4,0) to (4,4);
	\draw[very thick,blue] (4,.25) to [out=0,in=270] (4.25,.5) to [out=90,in=0] (4,.75);	
	\draw[overcros] (4,1) to (5,1);
	\draw[very thick] (4,1) to (5,1);
	\draw[very thick,gray] (0,0) to (0,4);
	\draw[very thick,gray] (1,0) to (1,4);
	\node at (2,2) {$\cdots$};
	\draw[very thick,gray] (3,0) to (3,4);
	\draw[very thick,gray] (4,0) to (4,4);
	\draw[very thick] (5,3.5) rectangle (6,.5);
	\node at (5.5,2) {$\tilde{T}$};
	\end{tikzpicture}
	+ LOT_{< 2v}
\stackrel{{\eqref{eq:otherdigonS}}}{=} LOT_{< 2v} \, . \qedhere
	\]
\end{proof}

\begin{cor}\label{cor:ladder0}
The space $\End_{\Web(\son)}(\varnothing)$ of endomorphisms of the 
empty sequence $\varnothing$ (the monoidal unit in $\Web(\son)$) 
is spanned by the empty web. 
In other words, Theorem \ref{thm:ladder} also holds when $m=0$.
Consequently, $\End_{\Web(\son)}(\varnothing) \cong \C(q)$.
\end{cor}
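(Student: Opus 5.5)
Given a web $W \in \End_{\Web(\son)}(\varnothing)$, I will show it is a scalar multiple of the empty web by reducing to the $m \geq 1$ case of Theorem \ref{thm:ladder}. The trick is to cut open a strand. If $W$ has no gray edges, it is an all-black closed web, and Lemma \ref{lem:rewriteblack} writes it as a combination of closed $1$-labeled tangles. Lemma \ref{lem:BMW-span} with $v=0$ then shows this span is one-dimensional, spanned by the empty diagram; the circle relation \eqref{eq:circle1} and the BMW skein relation \eqref{eq:BMWskein} are what reduce everything to the empty matching. This case is therefore done.

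If $W$ contains a gray edge, I use pivotality to isotope $W$ so that some gray edge is a vertical segment. Cutting that segment presents $W$ as the closure (trace) of a web $W' \in \End_{\Web(\son)}(\gS)$: precisely, $W$ is $W'$ composed with a gray cup below and a gray cap above, routing one extra strand around. Theorem \ref{thm:ladder} with $m=1$ gives $W' \in \Lad_1$. Since $\Lad_1$ has no rungs, it is spanned by $\id_\gS$, so $W' = c \cdot \id_\gS$ for some $c \in \C(q)$. Hence $W = c$ times the gray circle. By \eqref{eq:circleS}, that circle equals $(-1)^{\binom{n+1}{2}}\dd_n$ times the empty web, so $W$ is a scalar multiple of the empty web.

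Together the two cases show that $\End_{\Web(\son)}(\varnothing)$ is spanned by the empty web. To get $\End_{\Web(\son)}(\varnothing) \cong \C(q)$, I still need the empty web to be nonzero. This follows from applying the functor $\varphi$ of Theorem \ref{thm:functor}, which sends the empty web to $\id_{\C(q)} \neq 0$.

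The only step needing care is the cutting, i.e.\ checking that the closure description is valid in a pivotal category. Since $\gS$ is self-dual with cap and cup morphisms satisfying the zigzag relations, the closure of $W'$ is isotopic to $W$, and $\Web(\son)$ is defined as a pivotal quotient, so isotopic webs are equal. This step is therefore routine.
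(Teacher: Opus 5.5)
Your Case~1 is sound. It is the same reduction the paper uses in the $v=0$ base case of Step~4 of the proof of Theorem~\ref{thm:ladder}. Your remark that the empty web is nonzero because $\varphi$ sends it to $\id_{\C(q)}$ is also a correct detail. The gap is the cutting step in Case~2, which is not routine and fails as stated.

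A diagram of the form ``$W'$ closed up by a gray cup below and a gray cap above'' always has its closing gray arc adjacent to the unbounded region of the plane. So $W$ is planar isotopic to such a closure only if some gray edge of $W$ borders the unbounded face, and this need not happen. Examples:
\begin{itemize}
\item a gray circle nested inside a $1$-labeled circle;
\item (for $n\ge 3$) a connected web: a gray circle carrying two $(\gS,\gS,1)$ vertices, whose $1$-labeled legs run outward to two $(1,1,2)$ vertices, which are joined by an outer $1$-labeled edge and an outer $2$-labeled edge. Its outer face is bounded only by black edges.
\end{itemize}
Dragging the closing arc around the rest of the diagram is an isotopy of the sphere, not of the plane. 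The zigzag relations only straighten strands; they give planar isotopy invariance and nothing more. Invariance of closed diagrams under isotopies through the point at infinity is sphericality. That property is not part of the definition of $\Web(\son)$ and is not available at this stage; it is close to what the corollary is meant to establish.

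The repair is to create a gray edge on the outside rather than cut an existing one. Juxtaposing a gray circle with $W$ multiplies $W$ by the unit $(-1)^{\binom{n+1}{2}}\dd_n$, by \eqref{eq:circleS}. The resulting diagram is literally the closure of $\id_\gS\otimes W \in \End_{\Web(\son)}(\gS)=\Lad_1=\C(q)\,\id_\gS$. Hence $W$ is a multiple of the gray circle, and so of the empty web. This is exactly the paper's argument via $\iota_\gS$ and $\pi_\gS$. It also covers all-black webs, so your Case~1 and the appeal to Lemmas~\ref{lem:rewriteblack} and~\ref{lem:BMW-span} become unnecessary.
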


\begin{proof}
Consider the linear map
\[
\iota_{\gS} \colon \End_{\Web(\son)}(\varnothing) \to \End_{\Web(\son)}(\gS)
\]
which adds a gray strand to the left of a closed web, 
and the linear map
\[
\pi_{\gS} \colon \End_{\Web(\son)}(\gS) \to \End_{\Web(\son)}(\varnothing)
\]
which takes the closure (to the left) of the web, 
connecting the gray boundary points. 
The composition $\pi_{\gS} \circ \iota_{\gS}$ adds a gray circle to the left of the closed web, 
which by \eqref{eq:circleS} is just multiplication by a unit. 
Thus $\pi_{\gS}$ is surjective. 
By Theorem \ref{thm:ladder}, 
$\End_{\Web(\son)}(\gS) = \Lad_1$ is spanned by a single gray upright. 
Applying $\pi_{\gS}$, 
$\End_{\Web(\son)}(\varnothing)$ is spanned by the gray circle, 
or, equivalently, by the empty diagram.
\end{proof}

\section{Fully faithfullness via duality with $\iota$quantum groups}\label{ss:FF}

To finish our proof of Theorem \ref{thm:main}, 
we study the following algebra, 
which is in duality with $U_q(\son)$ on the module $S^{\otimes m}$.

\begin{defn}\label{def:iQ}
The algebra
$\iqUsom$ is the $\C(q)$-algebra with generators $b_1, \dots, b_{m-1}$ and relations 
\begin{equation}\label{eqn:farcommute}
b_ib_j = b_jb_i \quad |i-j|>1
\end{equation}
\begin{equation}\label{eqn:iSerre-alg-version}
b_i^2b_{i\pm 1} + b_{i\pm 1}b_i^2 = -[2]_{q^2}b_ib_{i\pm 1}b_i + b_{i\pm 1} \, .
\end{equation}
\end{defn}

As outlined in \S\ref{S:outline}, 
we will use the structure theory and representation theory of this algebra to prove fully faithfulness. 
Some prefatory comments on its representation theory are helpful to set the context.

The algebra $\iqUsom$ appears in two distinct guises. 
As noted previously, 
$\iqUsom$ is the $\iota$\emph{quantum group} associated with the split symmetric pair $(\slm, \som)$, 
meaning that it appears as a (coideal) subalgebra of the quantum group $U_q(\slm)$. 
Finite-dimensional (type I) representations of the latter
can thus be restricted to produce finite-dimensional representations of $\iqUsom$.
These restrictions are called \emph{classical} representations. 
By \cite{MR2143754}, the operators $b_i$ act diagonalizably on these classical representations
with eigenvalues $[k]_{-q^2}$ for $k\in \mathbb{Z}$. 
There are also additional representations which are called classical; 
see Remark \ref{rem:moreclassical}.

Meanwhile, Wenzl \cite{Wenzl-Spin} 
proved that $\iqUsom$ also acts by $U_q(\son)$-intertwiners on the representation $S^{\otimes m}$. 
On these representations, 
the operators $b_i$ again act diagonalizably, 
but now the eigenvalues are the signed quantum numbers $(-1)^{j-1} \frac{[2j-1]}{[2]}$ for $j \ge 1$. 
The same is true for direct summands of $S^{\otimes m}$, 
and irreducible representations of this form are called \emph{type I nonclassical} representations. 
Thus, one can distinguish between classical and nonclassical representations using the spectrum of $b_i$. 
In this paper, we focus on type I nonclassical representations of $\iqUsom$.

\begin{rem} 
Many aspects of $\iota$quantum groups 
(e.g.~canonical bases, $K$-matrices, etcetera) 
have recently been developed in the literature \cite{WangICM}, 
but this literature focuses on classical representations restricted from $U_q(\slm)$. 
These developments are not known to have analogues for nonclassical representations. 
\end{rem}

\subsection{Relating $\iota$quantum groups to ladders}

To begin, we record the following consequence of \eqref{eq:preiQ}.

\begin{prop}\label{prop:iQtoLad}
The assignments 
\[
b_i\mapsto
\rung_{i,i+1} = 
\begin{tikzpicture}[scale=.5, tinynodes, anchorbase]
	\draw[thick, black] (1,0) to node[below=-1pt]{\scs$1$} (2,0);
	\draw[very thick,gray] (0,-1) to (0,1);
	\node at (.5,0) {$\mydots$};
	\draw[very thick,gray] (1,-1) to (1,1);
	\draw[very thick,gray] (2,-1) to (2,1);
	\node at (2.5,0) {$\mydots$};
	\draw[very thick,gray] (3,-1) to (3,1);	
\end{tikzpicture}
\]
define a surjective $\C(q)$-algebra homomorphism 
$\psi \colon \iqUsom \to
\End_{\Web(\son)}(\gS^{\otimes m})$.
\end{prop}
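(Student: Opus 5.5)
The proof has three parts: well-definedness, surjectivity, and compatibility with Wenzl's map.

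\textbf{Well-definedness.} By Definition \ref{def:iQ}, $\iqUsom$ is presented by generators $b_1,\dots,b_{m-1}$ and relations \eqref{eqn:farcommute} and \eqref{eqn:iSerre-alg-version}. So it suffices to check that the rungs $\rung_{i,i+1}$ satisfy these relations in $\End_{\Web(\son)}(\gS^{\otimes m})$.

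The far-commutativity relation \eqref{eqn:farcommute} holds for free. When $|i-j|>1$, the rungs $\rung_i$ and $\rung_j$ involve disjoint sets of uprights. Planar isotopy, which is available because $\Web(\son)$ is a (pivotal) monoidal category, then gives $\rung_i\rung_j=\rung_j\rung_i$ by the interchange law.

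For the $\iota$Serre relation, everything is local: only the three uprights $i$, $i{+}1$, $i{\pm}1$ are involved. It therefore suffices to treat $m=3$ and verify the diagrammatic relation \eqref{eq:iSW} in $\End_{\Web(\son)}(\gS^{\otimes 3})$. This is where I invoke \eqref{eq:preiQ}, which was derived in the proof of Proposition \ref{prop:S1} via the gray--black Reidemeister II move of Lemma \ref{lem:otherR2} and the fork slide \eqref{eq:1forkslide}.

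\begin{itemize}
\item I expect \eqref{eq:preiQ} to express a product of two adjacent rungs in terms of the generalized rung $\rung_{i,i+2}$ of \eqref{eq:grung}, together with lower terms. Expanding $\rung_{i,i+2}$ by \eqref{eq:webrung13} gives $q^{-1}\rung_{i}\rung_{i+1}+q\rung_{i+1}\rung_{i}$.
\item With this in hand, compare the two expressions for $\rung_i\rung_{i+1}\rung_i$ obtained by sliding rungs in the two possible orders. The difference between them should rearrange exactly into $b_i^2b_{i+1}+b_{i+1}b_i^2+[2]_{q^2}b_ib_{i+1}b_i-b_{i+1}$.
\item The relation with $i-1$ in place of $i+1$ then follows by horizontal reflection, using the symmetry established in Proposition \ref{prop:hardrel}.
\end{itemize}

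I expect this step to be the main obstacle: keeping track of the scalars (powers of $q$, and the $1/[2]$ terms coming from \eqref{eq:spinH=I}) so that the coefficient comes out to exactly $[2]_{q^2}$ with the stated signs.

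If bookkeeping becomes unwieldy, I will fall back on the linear independence trick from the proof of Lemma \ref{lem:otherR2}. The plan would be:
\begin{enumerate}
\item Both sides of \eqref{eq:iSW} reduce, via \eqref{eq:spinH=I}, \eqref{eq:blackspinH=I} and \eqref{eq:blackrungtriangle}, to combinations of a small explicit family of webs in $\End(\gS^{\otimes3})$.
\item Their images under $\varphi$ are linearly independent.
\item The images of the two sides agree, since Wenzl's operators satisfy the $\iota$Serre relation \cite{Wenzl-Spin} and, by \cite{BER}, $\varphi(\rung_i)$ equals Wenzl's image of $b_i$.
\end{enumerate}
The weak point of this fallback is step 2, since proving independence requires a dimension count for $\End(S^{\otimes 3})$.

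\textbf{Surjectivity.} By construction, the image of $\psi$ is the subalgebra generated by the $\rung_i$, which is $\Lad_m$. Theorem \ref{thm:ladder} identifies $\Lad_m$ with all of $\End_{\Web(\son)}(\gS^{\otimes m})$ for $m\ge1$, and Corollary \ref{cor:ladder0} covers $m=0$. Hence $\psi$ is surjective.
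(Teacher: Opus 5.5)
\textbf{Gap: the $\iota$Serre relation is never derived.} Your handling of \eqref{eqn:farcommute} by isotopy and of surjectivity via Theorem~\ref{thm:ladder} and Corollary~\ref{cor:ladder0} matches the paper and is fine. The $i-1$ version also needs less than you use: rotation by $180^\circ$ is an anti-automorphism of $\End_{\Web(\son)}(\gS^{\otimes m})$ with $r_i\mapsto r_{m-i}$, so it carries the $(i,i+1)$ relation to the $(m-i,m-i-1)$ relation without Proposition~\ref{prop:hardrel}. The problem is \eqref{eqn:iSerre} itself, the only nonformal content of the proposition. Your plan does not contain the input needed to prove it.

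Apart from \eqref{eq:preiQ}, whose content you only guess, the relation you actually work with is \eqref{eq:webrung13}. It gives
\[
r_ir_{i+1}r_i \;=\; q\,r_{i,i+2}r_i - q^{2}\,r_{i+1}r_i^2 \;=\; q^{-1}r_ir_{i,i+2} - q^{-2}\,r_i^2r_{i+1} \, ,
\]
and ``comparing'' these two expansions is an identity, not a relation. Adding $q^{-2}$ times the first to $q^{2}$ times the second shows that, modulo \eqref{eq:webrung13}, \eqref{eqn:iSerre} is \emph{equivalent} to
\[
q^{-1}\,r_{i,i+2}\,r_i + q\,r_i\,r_{i,i+2} \;=\; r_{i+1} \, .
\]
This is the web form of $b_{i,i+1}b_i=-q^2b_ib_{i,i+1}+qb_{i+1}$, used at the start of Appendix~\ref{Appendix:proof-of-commutator-lemma}.

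That cubic relation is the real content. It does not follow from \eqref{eq:webrung13}, which only expands $r_{i,i+2}$ and imposes nothing on $r_i,r_{i+1}$. It has to be extracted from the defining relations near uprights $i,i+1,i+2$; the paper does this by simplifying both sides with \eqref{eq:blackspinH=I}, \eqref{eq:preiQ} and \eqref{eq:blackrungtriangle}. Your sentence that the difference ``should rearrange exactly'' into the $\iota$Serre relation assumes this step rather than proving it.

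Your guessed role for \eqref{eq:preiQ} does not supply it either. A rule rewriting a product of two adjacent rungs through $r_{i,i+2}$ plus lower-degree terms becomes, once \eqref{eq:webrung13} is substituted, either \eqref{eq:webrung13} again or a quadratic relation between $r_i$ and $r_{i+1}$. Neither is the cubic relation above, and your outline gives no route to it.

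\textbf{The fallback does not close the gap.} The linear independence trick needs an explicit family of webs in $\End_{\Web(\son)}(\gS^{\otimes 3})$ whose span provably contains both reduced sides of \eqref{eqn:iSerre}, and whose images under $\varphi$ are provably linearly independent. You specify neither the family nor a way to prove independence (as you note yourself), so this is the same missing computation in a different form. Also, the ``compatibility with Wenzl's map'' you list as a third part is not part of the statement; it only enters inside this fallback.
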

\begin{proof}
By Definition \ref{def:ladder} and Theorem \ref{thm:ladder}, 
it suffices to show that these assignments give a well-defined algebra homomorphism 
$\iqUsom \to \Lad_m$.
Thus, we check that the relations in Definition \ref{def:iQ} are satisfied in $\Lad_m$.
Equation \eqref{eqn:farcommute} holds via isotopy, 
so it suffices to check \eqref{eqn:iSerre-alg-version} which
is the web relation
\begin{equation}\label{eqn:iSerre}
\begin{tikzpicture}[scale=.4, anchorbase]
	\draw[very thick] (0,1.5) to (1,1.5);
	\draw[very thick] (0,.75) to (1,.75);
	\draw[very thick] (1,2.25) to (2,2.25);
	\draw[very thick, gray] (0,0) to (0,3);
	\draw[very thick, gray] (1,0) to (1,3);
	\draw[very thick, gray] (2,0) to (2,3);
\end{tikzpicture}
+ 
\begin{tikzpicture}[scale=.4, anchorbase]
	\draw[very thick] (0,1.5) to (1,1.5);
	\draw[very thick] (0,2.25) to (1,2.25);
	\draw[very thick] (1,.75) to (2,.75);
	\draw[very thick, gray] (0,0) to (0,3);
	\draw[very thick, gray] (1,0) to (1,3);
	\draw[very thick, gray] (2,0) to (2,3);
\end{tikzpicture}
= 
-(q^2 + q^{-2})
\begin{tikzpicture}[scale=.4, anchorbase]
	\draw[very thick, gray] (0,0) to (0,3);
	\draw[very thick, gray] (1,0) to (1,3);
	\draw[very thick, gray] (2,0) to (2,3);
	\draw[very thick] (0,.75) to (1,.75);
	\draw[very thick] (1,1.5) to (2,1.5);
	\draw[very thick] (0,2.25) to (1,2.25);
\end{tikzpicture}
+
\begin{tikzpicture}[scale=.4, anchorbase]
	\draw[very thick, gray] (0,0) to (0,3);
	\draw[very thick, gray] (1,0) to (1,3);
	\draw[very thick, gray] (2,0) to (2,3);
	\draw[very thick] (1,1.5) to (2,1.5);
\end{tikzpicture} \, .
\end{equation}
This is a straightforward computation using 
\eqref{eq:blackspinH=I}, \eqref{eq:preiQ}, and \eqref{eq:blackrungtriangle} to simplify both sides.
	\end{proof}

The functor $\varphi$ from Theorem \ref{thm:functor} 
gives algebra homomorphisms 
\begin{equation}\label{eq:phi}
\varphi \colon
\End_{\Web(\son)}(\gS^{\otimes m}) \rightarrow \End_{U_q(\son)}(S^{\otimes m}) \, ,
	\end{equation}
so pre-composing with $\psi$ yields an algebra homomorphism
\begin{equation}\label{eq:phipsi}
\varphi \circ \psi \colon \iqUsom \to \End_{U_q(\son)}(S^{\otimes m}) \, .
	\end{equation}
Our previous work gives the following.

\begin{prop}[{\cite[Proposition B.10]{BER}}]\label{prop:phipsi-isWenzl}
The algebra homomorphism in \eqref{eq:phipsi}
agrees with the homomorphism constructed by Wenzl in \cite{Wenzl-Spin}. \qed
\end{prop}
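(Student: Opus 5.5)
Both homomorphisms are $\C(q)$-algebra maps out of $\iqUsom$, which is generated by $b_1,\dots,b_{m-1}$. So it suffices to check that they agree on each generator $b_i$. By Proposition \ref{prop:iQtoLad}, $\varphi\circ\psi(b_i)=\varphi(\rung_{i,i+1})$. Since $\varphi$ is monoidal, this equals $\id_{S^{\otimes(i-1)}}\otimes\varphi(\rung_1)\otimes\id_{S^{\otimes(m-i-1)}}$, where $\rung_1\in\End_{\Web(\son)}(\gS\otimes\gS)$ is the composite of the trivalent vertex $\gS\otimes\gS\to 1$ with its rotation $1\to\gS\otimes\gS$. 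Wenzl's map in \cite{Wenzl-Spin} is defined in the same local way: $b_i$ acts on the $i$-th and $(i+1)$-st tensor factors by a fixed operator $B\in\End_{U_q(\son)}(S\otimes S)$. The statement therefore reduces to the rank-two identity $\varphi(\rung_1)=B$.

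To prove this identity I would use the decomposition \eqref{eq:SSdecomp}. Wenzl's $B$ is built from the projection $S\otimes S\to V_1$ followed by the inclusion $V_1\to S\otimes S$, suitably normalized. The morphism $\varphi(\rung_1)$ factors through the one-dimensional spaces $\Hom_{U_q(\son)}(S\otimes S,V_1)$ and $\Hom_{U_q(\son)}(V_1,S\otimes S)$. It is therefore a scalar multiple of the projection onto the $V_1$-isotypic summand, and by \eqref{eq:digonS} with $k=1$ that scalar is explicitly $(-1)^{\binom{n}{2}}\dd_{n-1}$.

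The remaining step is to match this scalar against Wenzl's normalization. The cleanest approach is to compare eigenvalues, since both operators are diagonalizable on $S\otimes S$. From \eqref{eq:spinH=I}, $\rung_1$ acts on the $V_{n-k}$ summand with eigenvalue $\tfrac{1}{[2]}+(-1)^{\binom{k}{2}}``[k][k+1]"\dd_k^{-1}\cdot(\text{digon scalar})$, which simplifies to $(-1)^{j-1}\tfrac{[2j-1]}{[2]}$ for appropriate $j$. One then checks that this matches the spectrum in \cite[Theorem 5.3(a)]{Wenzl-Spin}, summand by summand.

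The main obstacle is bookkeeping: reconciling Wenzl's conventions for $S$, for the coproduct, and for the sign and scaling of $q$ with those fixed in Convention \ref{conv:pivotal} and in \cite[\S 4]{BER}. Because $B$ is determined by its eigenvalue on each isotypic component of $S\otimes S$, equality of the spectra on every summand $V_{2\varpi_n},V_{n-1},\dots,V_0$ forces $\varphi(\rung_1)=B$, and the proposition follows.
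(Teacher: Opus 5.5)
Reducing to the generators, and then (by locality and monoidality of $\varphi$) to the rank-two identity $\varphi(\rung_1)=B$ in $\End_{U_q(\son)}(S\otimes S)$, is fine. Your second paragraph, however, misidentifies the morphism.

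The rung $\rung_1$ is not the composite $\gS\otimes\gS\to 1\to\gS\otimes\gS$. That composite is the ``I''-shaped web with a vertical $1$-labeled edge, and it appears only as one term on the right-hand side of \eqref{eq:spinH=I}. The rung is the ``H''-shaped web on the left-hand side of \eqref{eq:spinH=I}. There the $1$-labeled edge runs horizontally from one gray upright to the other, with endpoints the rotated vertices $\gS\to\gS\otimes 1$ and $1\otimes\gS\to\gS$.

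So $\varphi(\rung_1)$ does not factor through $V_1$ and is not a multiple of the $V_1$-isotypic projection. By \eqref{eq:spinH=I} it equals $\tfrac{1}{[2]}\id$ plus a combination of the I-webs through every summand $V_{n-k}$, $1\le k\le n$. All of its eigenvalues are nonzero, so it is invertible, whereas anything factoring through $V_1$ has rank at most $\dim V_1$. Likewise, Wenzl's generator is not built from the projection onto $V_1$, since its spectrum $\{(-1)^{j-1}[2j-1]/[2]\}$ contains no zero. Your second paragraph therefore asserts something false and contradicts your third paragraph; it should be deleted.

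Your third paragraph is the correct computation, and it is all you need on the web side. By \eqref{eq:digonS} with label $n-k$, the I-web through $V_{n-k}$ acts on that summand by $(-1)^{\binom{k+1}{2}}\dd_k$ and by $0$ on the others. Using $[2]\,``[k][k+1]"=[2k+1]+(-1)^{k+1}$, the rung therefore acts on $V_{n-k}$ by
\[
\frac{1}{[2]}+(-1)^{k}``[k][k+1]" = (-1)^k\frac{[2k+1]}{[2]} \qquad (1\le k\le n),
\]
and on $V_{2\varpi_n}$ by $\tfrac{1}{[2]}$.

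Since \eqref{eq:SSdecomp} is multiplicity free, an intertwiner of $S\otimes S$ is determined by its scalar on each summand. What remains is to show that Wenzl's operator acts on the summand of highest weight $\varpi_{n-k}$ (resp.\ $2\varpi_n$) by exactly this value. Equality of spectra as \emph{sets} does not suffice, because it leaves a permutation of the $n+1$ summands undetermined. You need the labeled values from Wenzl's definition, in conventions ($q$, coproduct, signs of the $b_i$) matched to the paper's.

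That labeled comparison is the actual content of the statement, and the paper gives no argument for it beyond citing \cite[Proposition B.10]{BER}. With paragraph~2 removed your outline has the right shape, but the step you call bookkeeping is where the proof lives.
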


\begin{prop}\label{prop:alg-hom-surj}
The algebra homomorphism in \eqref{eq:phipsi} is surjective.
\end{prop}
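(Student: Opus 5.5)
The plan is to reduce surjectivity of $\varphi\circ\psi$ directly to Wenzl's theorem. By Proposition \ref{prop:phipsi-isWenzl}, the composite $\varphi\circ\psi\colon \iqUsom \to \End_{U_q(\son)}(S^{\otimes m})$ coincides with Wenzl's homomorphism $b_i \mapsto$ (image of the rung $\rung_{i,i+1}$). So it suffices to recall that Wenzl's map \eqref{E:wenzls-map} is surjective, which is \cite[Theorem 5.2]{Wenzl-Spin}. I would state this as the main step and cite it, checking only that the normalizations agree. Specifically, Wenzl's generators are the images of the $b_i$ acting on adjacent tensor factors, and Proposition \ref{prop:phipsi-isWenzl} already identifies these with $\varphi(\rung_{i,i+1})$, so no rescaling issue arises.

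As a sanity check, and as a fallback if one prefers not to lean entirely on Wenzl's surjectivity statement, I would sketch the underlying reason it holds. $\End_{U_q(\son)}(S^{\otimes m})$ is generated by the operators acting on adjacent pairs of factors, $\id^{\otimes i-1}\otimes \End(S\otimes S)\otimes \id^{\otimes m-i-1}$. This is the content of Wenzl's double-centralizer argument, inductively using $S\otimes S$ and the branching $S^{\otimes m}\otimes S$.

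On each adjacent pair, the image of $b_i$ generates all of $\End(S\otimes S)$. Indeed, $\End(S\otimes S)$ is commutative of dimension $n+1$ by \eqref{eq:SSdecomp}. The image of $b_i$ is $\varphi(\xx)+\frac{1}{[2]}$, which has the $n+1$ distinct eigenvalues $(-1)^k``[k][k+1]"+\frac1{[2]}$ by \eqref{eq:devilsalg}. Hence powers of $b_i$ span $\End(S\otimes S)$; this is exactly the basis statement for $\{\varphi(\rung_1^i)\}_{i=0}^n$ already used in Step 2 of the proof of Theorem \ref{thm:ladder}.

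The main obstacle is the generation claim in the fallback route: that adjacent-pair endomorphisms generate the whole centralizer. This is nontrivial and is precisely where Wenzl's work is needed. I would therefore not reprove it, but cite \cite[Theorem 5.2]{Wenzl-Spin}, together with Proposition \ref{prop:phipsi-isWenzl}, to conclude. As a consequence, combined with Proposition \ref{prop:iQtoLad}, the map \eqref{eq:phi} is surjective, i.e.\ $\varphi$ is full.
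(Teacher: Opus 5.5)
Your proposal is correct and matches the paper's proof: identify $\varphi\circ\psi$ with Wenzl's homomorphism via Proposition \ref{prop:phipsi-isWenzl}, then invoke Wenzl's surjectivity result (the paper cites \cite[Theorem 5.3]{Wenzl-Spin} for this, though its outline refers to Theorem 5.2). The fallback sketch is not needed, and you are right not to rely on its generation claim, which is exactly the content supplied by Wenzl.
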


\begin{proof}
In \cite[Theorem 5.3]{Wenzl-Spin}, Wenzl shows that his homomorphism, 
and thus \eqref{eq:phipsi}, is surjective.
\end{proof}

It remains to show that \eqref{eq:phi} is injective, 
which in particular requires that its domain is finite-dimensional.
However, the algebra $\iqUsom$ is infinite-dimensional, 
thus the homomorphism $\psi$ from Proposition \ref{prop:iQtoLad} should have a kernel. 
We now begin to study this kernel.
The preimage of the elements in $\Web(\son)$ from Definition \ref{def:grayX} will be crucial to 
our analysis.

\begin{lem}\label{lem:xpsiX}
For $1 \leq i \leq m-1$, set
\begin{equation}\label{eq:blackx}
\xxb_i := \xxb_i^{(1)} := b_i - \frac{1}{[2]}
\quad \text{and} \quad
\xxb_i^{(k+1)} := \dfrac{(-1)^k}{``[k+1]^2"}\left(\xxb_i^{(k)} \xxb_i - (-1)^{k} ``[k][k+1]"\xxb_i^{(k)}\right)
\, \text{for } k \geq 1 \, .
	\end{equation}
Then, 
\begin{equation}\label{eq:xpsiX}
\psi(\xxb_i^{(k)}) 
= \id_{\gS^{\otimes i-1}} \otimes \xx^{(k)} \otimes \id_{\gS^{\otimes m-i-1}}
=: \xx_i^{(k)} \, .
\end{equation}
	\end{lem}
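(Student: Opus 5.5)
This is an induction on $k$, using only that $\psi$ is an algebra homomorphism and that the two recursions match.

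For the base case $k=1$, the defining assignment of Proposition \ref{prop:iQtoLad} gives $\psi(b_i)=\rung_{i,i+1}$, and $\psi$ is unital. Hence
\[
\psi(\xxb_i)=\rung_{i,i+1}-\tfrac{1}{[2]}\id_{\gS^{\otimes m}} .
\]
Next I observe that $\rung_{i,i+1}$ equals $\id_{\gS^{\otimes i-1}}\otimes \rung_1 \otimes \id_{\gS^{\otimes m-i-1}}$, where $\rung_1\in\End(\gS\otimes\gS)$ is the $1$-labeled rung, and likewise for the identity. By \eqref{eq:x1defn}, this is exactly $\xx_i^{(1)}$.

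For the inductive step, assume $\psi(\xxb_i^{(k)})=\xx_i^{(k)}$. Applying $\psi$ to the recursion \eqref{eq:blackx} and using multiplicativity and linearity of $\psi$ gives
\[
\psi(\xxb_i^{(k+1)})=\tfrac{(-1)^k}{``[k+1]^2"}\big(\xx_i^{(k)}\xx_i-(-1)^k``[k][k+1]"\xx_i^{(k)}\big).
\]
The functor $X\mapsto \id_{\gS^{\otimes i-1}}\otimes X\otimes\id_{\gS^{\otimes m-i-1}}$ is a unital algebra map $\End(\gS\otimes\gS)\to\End(\gS^{\otimes m})$. Pulling the right-hand side back through it, it becomes the recursion \eqref{eq:xidefn} of Definition \ref{def:grayX}, placed in slots $i,i+1$. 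So the right-hand side is $\xx_i^{(k+1)}$.

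One point needs care. Definition \ref{def:grayX} only defines $\xx^{(i)}$ for $0\le i\le n$ (recursion for $0\le i\le n-1$), while \eqref{eq:blackx} runs for all $k$. For $k\le n$ the argument above suffices. For $k>n$ I would either read $\xx^{(k)}$ as defined by the same recursion, so that the identity holds tautologically, or note that $\xx^{(n+1)}=0$ in $\Web(\son)$. I expect this to be the only subtle step, and I would handle it as follows. By \eqref{eq:ItoX} and \eqref{eq:Xspan}, $\xx^{(n)}$ is the gray cap-cup, since $\prescript{0}{}\lambda$ vanishes except for the top term, which equals $1$. Composing with $\xx$ and using \eqref{eq:otherdigonS} and \eqref{eq:lolliS}, $\xx^{(n)}\xx$ is a scalar multiple of $\xx^{(n)}$. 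One then checks that this scalar is $(-1)^n``[n][n+1]"$, which is exactly the quantum-arithmetic identity already used in the proof of \eqref{eq:otherdigonS}. Hence $\xx^{(n+1)}=0$, and all higher terms vanish too, so the statement holds for every $k$.
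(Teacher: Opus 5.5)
Your proof is correct and is the argument the paper intends: the paper just says the lemma is immediate from \eqref{eq:x1defn}, \eqref{eq:xidefn} and Proposition \ref{prop:iQtoLad}, since $\psi$ is a unital algebra map sending $b_i$ to $\rung_{i,i+1}$ and the two recursions coincide. Your extra handling of $k>n$, showing $\xx^{(n)}\xx=(-1)^n``[n][n+1]"\xx^{(n)}$ and hence $\xx^{(n+1)}=0$, is the same computation the paper carries out later in Proposition \ref{prop:minpoly}; only \eqref{eq:otherdigonS} is needed there, not \eqref{eq:lolliS}.
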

\begin{proof}
This is immediate from \eqref{eq:x1defn}, \eqref{eq:xidefn}, 
and Proposition \ref{prop:iQtoLad}.
\end{proof}

\begin{lem}\label{lem:minpoly-vs-dividedpower}
If $p_k(t):= \prod_{i=1}^k (t -(-1)^{i-1}\frac{[2i-1]}{[2]})$, then 
$(-1)^{\binom{k}{2}}\frac{p_k(b_i)}{(``[k]^2")!} = \xxb_i^{(k)}$.
\end{lem}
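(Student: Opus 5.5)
The plan is to compare the closed product formula \eqref{E:xxk-expanded} (which holds verbatim for $\xxb_i^{(k)}$, since the recursion \eqref{eq:blackx} is identical to \eqref{eq:xidefn}) with $p_k(b_i)$. Everything happens in the commutative subalgebra $\C(q)[b_i]$, so this reduces to an identity of polynomials in one variable $t=b_i$.

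\textbf{Step 1: the product formula.} First I would establish, by induction on $k$ from \eqref{eq:blackx}, that
\[
\xxb_i^{(k)} = (-1)^{\binom{k}{2}}\frac{\prod_{j=1}^{k}\big(\xxb_i+(-1)^{j}``[j-1][j]"\big)}{(``[k]^2")!}.
\]
For the inductive step, one multiplies by the factor $\big(\xxb_i - (-1)^{k}``[k][k+1]"\big)$ and by $(-1)^k/``[k+1]^2"$. The new factor is the $j=k+1$ term, because $-(-1)^k = (-1)^{k+1}$. The signs combine as $(-1)^{\binom{k}{2}+k}=(-1)^{\binom{k+1}{2}}$.

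\textbf{Step 2: match the factors.} Substituting $\xxb_i=b_i-\frac{1}{[2]}$, the $j$-th factor becomes $b_i - \big(\frac{1}{[2]} - (-1)^j``[j-1][j]"\big)$. It then suffices to prove the scalar identity
\[
\frac{1}{[2]} + (-1)^{j-1}``[j-1][j]" = (-1)^{j-1}\frac{[2j-1]}{[2]}, \qquad j\ge 1.
\]
The case $j=1$ is immediate, since $``[0][1]"=0$. For general $j$, I would multiply through by $[2]$ and use $``[j-1][j]" = [2j-2]-[2j-4]+\cdots\pm[2]$. The product $[2]\,``[j-1][j]"$ then telescopes via $[2][a]=[a+1]+[a-1]$ to $[2j-1]+(-1)^{j}[1]$, up to a sign check. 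With that, both sides agree.

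This is exactly the same arithmetic as the identity left to the reader in the proof of \eqref{eq:otherdigonS} (the case $j=n+1$ there). I expect the telescoping sign bookkeeping to be the only real obstacle. To guard against sign errors, I would verify small cases such as $j=2$: here $``[1][2]"=[2]$, so the left side is $\frac{1}{[2]}-[2]=\frac{1-[2]^2}{[2]}=-\frac{[3]}{[2]}$, as required.

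Finally, the product over $j=1,\dots,k$ is then exactly $p_k(b_i)$, which gives the claim.
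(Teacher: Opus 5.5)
Your proposal is correct and follows the paper's proof. The paper likewise unrolls the recursion \eqref{eq:blackx} into a product of linear factors in $\xxb_i$, substitutes $\xxb_i=b_i-\frac{1}{[2]}$, and invokes the identity $``[k-1][k]" = \frac{[2k-1]-(-1)^{k-1}[1]}{[2]}$, which is exactly the scalar identity you establish by telescoping.
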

\begin{proof}
The recursion defining $\xxb_i^{(k)}$ can be rewritten as
\[
\xxb_i^{(k)} = (-1)^{k-1}\xxb_i^{(k-1)} \left( \frac{\xxb_i - (-1)^{k-1} ``[k-1][k]" }{``[k]^2"} \right) \, .
\]
Using $\xxb_i = b_i - \frac{1}{[2]}$, 
along with the identity 
$``[k-1][k]" = \frac{[2k-1] - (-1)^{k-1}[1]}{[2]}$, 
we find
\[
\xxb_i^{(k)} = (-1)^{\binom{k}{2}}\frac{(b_i - \frac{1}{[2]})(b_i + \frac{[3]}{[2]}) 
	\cdots (b_i - (-1)^{k-1}\frac{[2k-1]}{[2]})}{(``[k]^2")!} \, . \qedhere
\]
\end{proof}

\begin{prop}\label{prop:minpoly}
$\psi(p_{n+1}(b_i))=0$ for $i=1, \ldots, m-1$. 
\end{prop}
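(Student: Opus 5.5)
By Lemma \ref{lem:minpoly-vs-dividedpower}, $p_{n+1}(b_i)$ is a nonzero scalar multiple of $\xxb_i^{(n+1)}$; here we use that the factorials $(``[k]^2")!$ are invertible in $\C(q)$. By Lemma \ref{lem:xpsiX}, $\psi(\xxb_i^{(n+1)})=\xx_i^{(n+1)}=\id\otimes\xx^{(n+1)}\otimes\id$. It therefore suffices to show that $\xx^{(n+1)}=0$ in $\End_{\Web(\son)}(\gS\otimes\gS)$. By \eqref{eq:xidefn} this amounts to the single identity
\[
\xx^{(n)}\xx=(-1)^n``[n][n+1]"\,\xx^{(n)} .
\]

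The plan is to use the explicit expansion \eqref{eq:ItoX}. For $i=n$, the coefficients \eqref{eq:lambdas} vanish unless $\ell\ge n$. The only surviving term is the $\ell=n$ one, with coefficient ${}^{0}\lambda_{0}=1$. Hence $\xx^{(n)}$ equals the web with a $0$-labeled rung, i.e.\ the gray cap--cup $\cup\circ\cap$, as noted in the introduction. The proof of \eqref{eq:ItoX} is quoted as going through mutatis mutandis in $\Web(\son)$, using only \eqref{eq:spinH=I} and \eqref{eq:blackrungtriangle}, so this expansion holds in $\Web(\son)$ and not merely after applying $\varphi$.

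Next compute $\xx^{(n)}\xx=\cup\circ\cap\circ\bigl(\rung-\tfrac1{[2]}\id\bigr)$. The composite $\cap\circ\rung$ is a gray cap with a $1$-labeled rung beneath it. Rotating, this is a gray digon with a black edge, which \eqref{eq:otherdigonS} evaluates to $(-1)^n\frac{[2n+1]}{[2]}\cap$. This gives
\[
\xx^{(n)}\xx=\Bigl((-1)^n\tfrac{[2n+1]}{[2]}-\tfrac1{[2]}\Bigr)\xx^{(n)} .
\]
The proof of \eqref{eq:otherdigonS} already records the identity $(-1)^n\frac{[2n+1]}{[2]}=\frac1{[2]}+(-1)^{\binom n2+\binom{n+1}2}``[n][n+1]"$. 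Since $\binom n2+\binom{n+1}2=n^2\equiv n \pmod 2$, the scalar equals $(-1)^n``[n][n+1]"$, which is exactly what is needed. Therefore $\xx^{(n+1)}=0$.

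The main obstacle is ensuring that $\xx^{(n)}=\cup\circ\cap$ holds in $\Web(\son)$ itself. The formula for the $\lambda$'s is imported from \cite[Proposition 4.25]{BER}, and one must check that every step of that inductive computation uses only relations available in $\Web(\son)$. If this is awkward, a fallback is a direct induction on $k$. The induction would show that $\xx^{(k)}$ lies in the span of the gray-rung webs with labels $n-\ell$ for $\ell\ge k$, with the leading coefficient equal to $1$. Each step would multiply by $\xx$ using \eqref{eq:blackrungtriangle}, where only the top coefficient needs to be tracked. The bottom case $k=n$ then yields $\cup\circ\cap$, since the label-$0$ rung is the cap--cup.
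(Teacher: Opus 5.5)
Your proposal is correct and follows the paper's proof essentially step for step. You identify $\xx^{(n)}$ with the gray cup--cap via \eqref{eq:ItoX} and \eqref{eq:lambdas}, evaluate $\xx^{(n)}\xx$ using \eqref{eq:otherdigonS}, and match the resulting scalar with $(-1)^n``[n][n+1]"$, so that \eqref{eq:xidefn} extended to $i=n$ gives $\xx^{(n+1)}=0$; Lemma \ref{lem:minpoly-vs-dividedpower} then transfers this back to $p_{n+1}(b_i)$ (and in your fallback, the triangularity alone already forces $\xx^{(n+1)}=0$, so the leading coefficient need not be tracked).
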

\begin{proof}
If we define $\xx_i^{(n+1)} := \psi(\xxb_i^{(n+1)})$, then \eqref{eq:blackx} implies that
\[
\xx_i^{(n+1)} = \dfrac{(-1)^n}{``[n+1]^2"}\left(\xx_i^{(n)} \xx_i - (-1)^{n} ``[n][n+1]"\xx_i^{(n)}\right)
\]
(i.e.~the extension of \eqref{eq:xidefn} to the case $i=n$).
Now, \eqref{eq:ItoX} and \eqref{eq:lambdas} imply that 
$\xx^{(n)} = 
\begin{tikzpicture}[scale=.3, tinynodes, anchorbase]
	\draw[very thick,gray] (0,0) to [out=90,in=180] (.5,.625) 
		to [out=0,in=90] (1,0);
	\draw[very thick,gray] (0,2) to [out=270,in=180] (.5,1.375)
		to [out=0,in=270] (1,2);
\end{tikzpicture}$, thus
\[
\xx^{(n)} \xx = 
\begin{tikzpicture}[scale=.3, tinynodes, anchorbase]
	\draw[very thick,gray] (0,-.5) to (0,0) to [out=90,in=180] (.5,.625) 
		to [out=0,in=90] (1,0) to (1,-.5);
	\draw[very thick] (0,0) to (1,0);
	\draw[very thick,gray] (0,2) to [out=270,in=180] (.5,1.375)
		to [out=0,in=270] (1,2);
\end{tikzpicture}
-
\frac{1}{[2]}
\begin{tikzpicture}[scale=.3, tinynodes, anchorbase]
	\draw[very thick,gray] (0,0) to [out=90,in=180] (.5,.625) 
		to [out=0,in=90] (1,0);
	\draw[very thick,gray] (0,2) to [out=270,in=180] (.5,1.375)
		to [out=0,in=270] (1,2);
\end{tikzpicture}
\stackrel{{\eqref{eq:otherdigonS}}}{=}
(-1)^n \frac{[2n+1]}{[2]}
\begin{tikzpicture}[scale=.3, tinynodes, anchorbase]
	\draw[very thick,gray] (0,0) to [out=90,in=180] (.5,.625) 
		to [out=0,in=90] (1,0);
	\draw[very thick,gray] (0,2) to [out=270,in=180] (.5,1.375)
		to [out=0,in=270] (1,2);
\end{tikzpicture}
-
\frac{1}{[2]}
\begin{tikzpicture}[scale=.3, tinynodes, anchorbase]
	\draw[very thick,gray] (0,0) to [out=90,in=180] (.5,.625) 
		to [out=0,in=90] (1,0);
	\draw[very thick,gray] (0,2) to [out=270,in=180] (.5,1.375)
		to [out=0,in=270] (1,2);
\end{tikzpicture}
= (-1)^n ``[n][n+1]" \xx^{(n)} \, ,
\]
so we conclude that $\xx_i^{(n+1)}= 0$.
The result then follows from Lemma \ref{lem:minpoly-vs-dividedpower}.
\end{proof}

\begin{defn}\label{def:iQn}
Let the algebra $\iqUsom^{\le n}$ be the quotient of $\iqUsom$ 
by the two-sided ideal with generators $p_{n+1}(b_i)$ for $i=1, \ldots, m-1$. 
\end{defn}

Note that $\psi$ factors through $\iqUsom^{\le n}$. 
We denote the induced homomorphism 
$\iqUsom^{\le n}\rightarrow \End_{\Web(\son)}(\gS^{\otimes m})$
by $\psi^{\le n}$.

\begin{rem}\label{rem:bi-spectrum} 
The quotient defining $\iqUsom^{\le n}$ specifies the spectrum of the elements $b_i$.
Specifically, the representations of $\iqUsom^{\le n}$ are precisely 
the representations of $\iqUsom$ for which the eigenvalues of each $b_i$ 
are contained in the set $\{(-1)^{j-1} \frac{[2j-1]}{[2]}\}_{1 \le j \le n}$. 
\end{rem}

\subsection{Structure theory}

In this section, we establish the finite-dimensionality of $\iqUsom^{\le n}$. 
Our main tool is the PBW basis for $\iqUsom$ given by Iorgov--Klimyk \cite{MR1903121},
which we argue descends to a finite spanning set after quotienting by the relations $p_{n+1}(b_i)=0$.
Our approach is novel, and involves interpreting the image of the PBW basis in $\iqUsom^{\le n}$ 
using an analogue of Lusztig's quantum Weyl group for $\iqUsom$ suggested by the 
formulae \eqref{eq:SpinBraiding}. 

\begin{defn}
For $i=1, \ldots, m-1$, let
$\iQW_i^{\pm 1}\in \iqUsom^{\le n}$ be given by the formulae
\begin{equation} \label{defofbraidingviaX}
\iQW_i = \sum_{k\ge 0}q^{-k}\xxb_i^{(k)} 
\qquad \text{and} \qquad 
\iQW_i^{-1} = \sum_{k\ge 0}q^k\xxb_i^{(k)}
\end{equation}
\end{defn}

By Lemma \ref{lem:minpoly-vs-dividedpower} and Definition \ref{def:iQn}, 
$\xxb_i^{(k)} = 0$ in $\iqUsom^{\le n}$ for $k \geq n+1$. Thus, 
\eqref{defofbraidingviaX} are indeed finite sums.

\begin{rem}\label{rem:TisR}
Denote $\iQW^{\pm} := \iQW_1^{\pm} \in \iqUsom[2]^{\le n}$.
By \eqref{eq:SpinBraiding}, we have that
\[
\psi^{\leq n}(\iQW) = q^{-n/2} c_{\gS,\gS} 
\quad \text{and} \quad
\psi^{\leq n}(\iQW^{-1}) = q^{n/2} c_{\gS,\gS}'
\]
and thus Proposition \ref{prop:SpinBraiding} gives
\[
\varphi \circ \psi^{\leq n}(\iQW^{\pm 1}) = q^{\mp n/2} R_{S,S}^{\pm} \in \End_{U_q(\son)}(S \otimes S) \, .
\]
\end{rem}

\begin{prop}\label{prop:iQW-invertible}
We have $\iQW_i \iQW_i^{-1} = 1 = \iQW_i^{-1} \iQW_i$ in $\iqUsom^{\le n}$. 
\end{prop}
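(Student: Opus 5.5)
The identity is a one-variable statement. Each of $\iQW_i$ and $\iQW_i^{-1}$ is a polynomial in the single generator $b_i$. So the whole computation takes place in the commutative subalgebra $\C(q)[b_i]/(p_{n+1}(b_i))$, which is the image of $\C(q)[t]/(p_{n+1}(t))$ in $\iqUsom^{\le n}$. The two products $\iQW_i\iQW_i^{-1}$ and $\iQW_i^{-1}\iQW_i$ are then automatically equal. It suffices to prove the single polynomial identity
\[
\Big(\sum_{k=0}^{n} q^{-k} X^{(k)}(t)\Big)\Big(\sum_{k=0}^{n} q^{k} X^{(k)}(t)\Big) \equiv 1 \pmod{p_{n+1}(t)}
\]
in $\C(q)[t]$. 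Here $X^{(k)}(t) := (-1)^{\binom{k}{2}} p_k(t)/(``[k]^2")!$, following Lemma \ref{lem:minpoly-vs-dividedpower}.

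The polynomial $p_{n+1}(t)$ has $n+1$ distinct roots $\mu_j := (-1)^{j-1}\frac{[2j-1]}{[2]}$ for $1\le j\le n+1$; they are distinct because their absolute values are distinct elements of $\C(q)$. Hence $\C(q)[t]/(p_{n+1})\cong \prod_{j=1}^{n+1}\C(q)$ via evaluation at the $\mu_j$. It therefore suffices to check that $f(\mu_j)g(\mu_j)=1$ for each $j$, where $f$ and $g$ denote the two sums.

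Since $p_k(\mu_j)=0$ for $k\ge j$, only the terms with $k\le j-1$ survive, and we get
\[
f(\mu_j)=\sum_{k=0}^{j-1} q^{-k}(-1)^{\binom{k}{2}}\frac{p_k(\mu_j)}{(``[k]^2")!}.
\]
The plan is to evaluate this sum in closed form. The expected answer is a signed monomial $\epsilon_j q^{c_j}$ with $\epsilon_j=\pm1$, which is the eigenvalue of (a scalar multiple of) the braiding $R_{S,S}$ on the summand of $S\otimes S$ indexed by $j$. Then $g(\mu_j)$ is the same sum with $q\mapsto q^{-1}$. The quantities $\mu_j$, the quantum numbers, and the factorial $(``[k]^2")!$ are all bar-invariant, so $g(\mu_j)=\epsilon_j q^{-c_j}$, and therefore $f(\mu_j)g(\mu_j)=1$.

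The main obstacle is establishing this closed form. There are two routes. The first is direct: express $p_k(\mu_j)/(``[k]^2")!$ as a signed $q^2$-binomial coefficient, using $\mu_j-\mu_i$ factorizations of the form $\pm[j-i]_{q^2}$-type products times $[j+i-1]$-type terms, and then apply a $q$-binomial theorem. The second avoids the computation by using Remark \ref{rem:TisR}. The map $\varphi\circ\psi^{\le n}$ restricted to $\C(q)[b_1]$ acting on $S\otimes S$ sends $f(b_1)$ to $q^{-n/2}R_{S,S}$ and $g(b_1)$ to $q^{n/2}R_{S,S}^{-1}$. Moreover $b_1$ acts semisimply on $S\otimes S$ with every eigenvalue $\mu_j$, $1\le j\le n+1$, occurring; this follows from \eqref{eq:devilsalg} and the $n+1$ distinct summands in \eqref{eq:SSdecomp}. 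Consequently the evaluation map $\C(q)[t]/(p_{n+1})\to\End(S\otimes S)$ is injective, and $fg\equiv1$ follows from $R_{S,S}R_{S,S}^{-1}=\id$.

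I would use the second route. The only point needing care is the injectivity claim: the minimal polynomial of $\varphi(\psi(b_1))$ is exactly $p_{n+1}$. This follows from Proposition \ref{prop:minpoly} together with the fact that the $n+1$ images $\varphi(\psi(\xxb^{(k)}))$ for $0\le k\le n$ are linearly independent, which \eqref{eq:Xspan} provides since they span the $(n+1)$-dimensional space $\End(S\otimes S)$.
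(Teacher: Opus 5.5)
Your second route is correct and is essentially the paper's proof. The paper also works in the one-variable quotient $\iqUsom[2]^{\le n}=\C(q)[b]/(p_{n+1}(b))$, shows that $\varphi\circ\psi^{\le n}$ into the $(n+1)$-dimensional algebra $\End_{U_q(\son)}(S\otimes S)$ is an isomorphism, and concludes from Remark \ref{rem:TisR}. The only difference is how bijectivity is obtained: the paper uses Wenzl's surjectivity (Proposition \ref{prop:alg-hom-surj}) plus rank--nullity, while you use \eqref{eq:Xspan} and the minimal-polynomial argument, which is the same dimension count.
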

\begin{proof} 
Since $\iqUsom[2] = \C(q)[b]$ is a polynomial ring in one variable, it follows that $\iqUsom[2]^{\le n}$ is $n+1$ dimensional 
(it is the quotient of $\C(q)[b]$ by the degree $n+1$ polynomial $p_{n+1}$).
By \eqref{eq:SSdecomp} and Schur's lemma, the $\C(q)$-algebra $\End_{U_q(\son)}(S \otimes S)$ is also $n+1$ dimensional.

By Proposition \ref{prop:alg-hom-surj} and the rank-nullity theorem, $\varphi\circ \psi^{\leq n}$ is an isomorphism. 
So it suffices to show that the images of $\iQW$ and $\iQW^{-1}$ are mutually inverse under 
$\varphi\circ \psi^{\le n}$. This follows from Remark \ref{rem:TisR}.
\end{proof}

\begin{rem}
We will show that $\iQW_i\iQW_{i\pm1}\iQW_i = \iQW_{i\pm 1}\iQW_i\iQW_{i\pm1}$ in Theorem \ref{thm:iquantumWeylR3}.
\end{rem}

Motivated by \eqref{eq:longb} and \eqref{eq:blackgraybraiding},
we now set
\begin{equation}\label{eq:longb}
b_{i,i+1}:=q^{-1}b_ib_{i+1}+ qb_{i+1}b_{i} \, .
\end{equation}
Note that \eqref{eq:webrung13} gives
\[
\psi(b_{i,i+1}) = 
\rung_{i,i+2} :=
\begin{tikzpicture}[scale=.5, tinynodes, anchorbase]
	\draw[thick, black] (1,0) to (3,0);
	\draw[very thick,gray] (0,-1) to (0,1);
	\node at (.5,.5) {$\mydots$};
	\draw[very thick,gray] (1,-1) to (1,1);
	\draw[overcross] (2,-1) to (2,1);
	\draw[very thick,gray] (2,-1) to (2,1);
	\draw[very thick,gray] (3,-1) to (3,1);
	\node at (3.5,.5) {$\mydots$};
	\draw[very thick,gray] (4,-1) to (4,1);
\end{tikzpicture} \, .
	\]
(Warning: the indexing in $\psi(b_{i,i+1}) = r_{i,i+2}$ does not directly align; 
we remedy this in \eqref{eq:longHdef} below.)

The following essential ingredient in our proof of finite-dimensionality 
implies that the elements in \eqref{eq:longb} also satisfy $p_{n+1}$ in $\iqUsom^{\le n}$.

\begin{prop}\label{prop-braid-conjugates}
For $1 \leq i \leq m-2$, we have 
\begin{equation}\label{eq:braidconj}
b_{i,i+1} = \iQW_{i} b_{i+1} \iQW_{i}^{-1}
\end{equation}
in $\iqUsom^{\le n}$.
\end{prop}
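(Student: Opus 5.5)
The plan is to reduce \eqref{eq:braidconj} to a computation on the eigenspaces of $b_i$, using only the defining relations of $\iqUsom^{\le n}$. We cannot yet argue with webs or with $S^{\otimes m}$, because injectivity of $\psi^{\le n}$ and of $\varphi$ is exactly what this proposition is meant to help establish. Write $y:=b_{i+1}$ and $\lambda_j := (-1)^{j-1}\frac{[2j-1]}{[2]}$ for $1\le j\le n+1$.

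Since $p_{n+1}(b_i)=0$ in $\iqUsom^{\le n}$ and the roots $\lambda_1,\dots,\lambda_{n+1}$ of $p_{n+1}$ are pairwise distinct in $\C(q)$, the Lagrange interpolation polynomials give orthogonal idempotents $e_j := \prod_{l\neq j}\frac{b_i-\lambda_l}{\lambda_j-\lambda_l}$. These satisfy $\sum_j e_j=1$ and $b_ie_j=\lambda_je_j=e_jb_i$. Every polynomial in $b_i$ is diagonal in this decomposition. In particular, by \eqref{defofbraidingviaX} and Lemma~\ref{lem:minpoly-vs-dividedpower}, we have $\iQW_i^{\pm1}=\sum_j t_j^{\pm1}e_j$, where $t_j:=\sum_{k\ge0}q^{-k}\xxb^{(k)}(\lambda_j)$ is a scalar. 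Here $\xxb^{(k)}(t)$ denotes the polynomial of Lemma~\ref{lem:minpoly-vs-dividedpower} evaluated at $t$. Only the terms with $k<j$ survive, since $\xxb^{(k)}(\lambda_j)$ has the factor $(\lambda_j-\lambda_j)$ for $k\ge j$. Proposition~\ref{prop:iQW-invertible} guarantees $t_j\neq0$; alternatively, Remark~\ref{rem:TisR} identifies $q^{n/2}t_j$ with the eigenvalue of $R_{S,S}$ on the $\lambda_j$-eigenspace, which should be of the form $\pm q^{c_j}$.

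Next, sandwich the $\iota$Serre relation \eqref{eqn:iSerre-alg-version}, written as $b_i^2y+yb_i^2+[2]_{q^2}b_iyb_i-y=0$, between $e_j$ and $e_k$. This gives
\[
\big(\lambda_j^2+\lambda_k^2+[2]_{q^2}\lambda_j\lambda_k-1\big)\,e_jye_k=0 .
\]
A quantum-arithmetic check should show that this scalar vanishes exactly when $|j-k|=1$. Hence $y=\sum_{|j-k|=1}e_jye_k$. Sandwiching both sides of the desired identity, in the form $b_{i,i+1}\iQW_i=\iQW_iy$, between $e_j$ and $e_k$ reduces \eqref{eq:braidconj} to the scalar identities
\[
(q^{-1}\lambda_j+q\lambda_k)\,t_k = t_j \qquad \text{for } |j-k|=1 .
\]

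The main obstacle is computing $t_j$ in closed form and verifying this identity; the vanishing claim for the Serre scalar is a secondary check. To find $t_j$, I would evaluate the alternating-sign product formula for $\xxb^{(k)}(\lambda_j)$ and guess a form like $t_j=(-1)^{j-1}q^{\,j(j-1)-c}$, using $n=1,2$ (where $\iqUsom[2]^{\le n}$ is tiny) as a check. If the closed form resists, I would instead use Remark~\ref{rem:TisR}: the map $\varphi\circ\psi^{\le n}$ is an isomorphism in rank one, so $t_j$ is $q^{n/2}$ times the eigenvalue of $R_{S,S}$ on $V_{n+1-j}$ (with $V_n:=V_{2\varpi_n}$). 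That eigenvalue is given by the standard twist formula $\pm q^{(\mu,\mu+2\rho)/2-(\varpi_n,\varpi_n+2\rho)}$. Checking $t_j/t_{j\pm1}=q^{-1}\lambda_j+q\lambda_{j\pm1}$ is then a two-line verification with $\lambda_j=(-1)^{j-1}[2j-1]/[2]$.
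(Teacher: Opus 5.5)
Your spectral reduction is sound and genuinely different from the paper's argument. As written, though, it contains one false intermediate claim and leaves the decisive computation undone.

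\textbf{The Serre scalar has an extra zero.} It does not vanish only for $|j-k|=1$. Write $\lambda_j=(-1)^{j-1}\frac{q^{2j-1}-q^{1-2j}}{q^2-q^{-2}}$. Then the quadratic $x^2+[2]_{q^2}\lambda_jx+\lambda_j^2-1$ has roots exactly $\lambda_{j-1}$ and $\lambda_{j+1}$. The same formula gives $\lambda_0=\frac{1}{[2]}=\lambda_1$, so $(j,k)=(1,1)$ is also a zero; indeed $(2+[2]_{q^2})\lambda_1^2-1=[2]^2\lambda_1^2-1=0$. The block $e_1b_{i+1}e_1$ is therefore not killed, and it is nonzero already for $n=1$, $m=3$, where $b_i\mapsto\frac{1}{[2]}+U_i$ with $U_i$ the cup--cap in Temperley--Lieb. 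So you must also check the $(1,1)$ identity. Luckily it is automatic: $(q^{-1}+q)\lambda_1t_1=t_1$, since $[2]\lambda_1=1$.

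\textbf{The eigenvalues $t_j$.} Everything now rests on these, and your guessed shape $(-1)^{j-1}q^{j(j-1)-c}$ is wrong. In fact $t_j=(-1)^{\binom{j}{2}}q^{-j(j-1)}$; for example $t_2=-q^{-2}$, $t_3=-q^{-6}$, $t_4=q^{-12}$. Since
\[ q^{-1}\lambda_j+q\lambda_{j+1}=(-1)^jq^{2j}, \qquad q\lambda_j+q^{-1}\lambda_{j+1}=(-1)^jq^{-2j}, \]
both nearest-neighbour identities are equivalent to $t_{j+1}=(-1)^jq^{-2j}t_j$. This is what you still have to prove. There are two ways:
\begin{itemize}
\item prove it directly as a signed $q$-binomial identity for $\sum_{k<j}q^{-k}\xxb^{(k)}(\lambda_j)$; or
\item go through Remark~\ref{rem:TisR}, which is legitimately available because the rank-one isomorphism is proved in Proposition~\ref{prop:iQW-invertible}.
\end{itemize}
On the second route the ``$\pm$'' is the real content, not a detail. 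The Casimir formula gives the magnitudes $q^{-2j}$. The signs $(-1)^j$ require the classical fact that $\Lambda^kV\subset\mathrm{Sym}^2S$ exactly when $k\equiv n,n-3\pmod 4$. Also, your two normalizations of $t_j$ against $R_{S,S}$ disagree. The correct one is $t_j=q^{-n/2}$ times the eigenvalue, though only ratios matter here.

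\textbf{Comparison with the paper.} The paper never computes the eigenvalues of $\iQW_i$. It proves, already in $\iqUsom$, the commutation formula of Lemma~\ref{lem:techcomm} between $b_{i,i+1}$ and each divided power $\xxb_i^{(k)}$, by an induction in Appendix~\ref{Appendix:proof-of-commutator-lemma}. It then sums over $k$. Two families of terms telescope, using only $\xxb_i^{(k)}=0$ for $k>n$, and what remains is $b_{i,i+1}\iQW_i=\iQW_ib_{i+1}$.

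Your route needs the quotient from the start, to have the idempotents $e_j$, and it moves all the arithmetic into the closed form of $t_j$, which the telescoping argument sidesteps. In exchange it is more conceptual. It shows that the $\iota$Serre relation makes $b_{i+1}$ nearest-neighbour (plus the $(1,1)$ block) with respect to the $b_i$-eigenspaces. It also shows that the proposition is equivalent to the single ratio $t_{j+1}/t_j=(-1)^jq^{-2j}$, that is, to $\iQW_i$ acting as a normalized spin $R$-matrix.
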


Thanks to Proposition \ref{prop:iQW-invertible}, 
the equality \eqref{eq:braidconj} claimed in Proposition \ref{prop-braid-conjugates} is equivalent to 
\[
b_{i,i+1}\iQW_i = \iQW_{i}b_{i+1}, 
\]
which by \eqref{defofbraidingviaX} is equivalent to
\begin{equation}\label{eqn:longBbraidconjugate-suffices}
\sum_{k\ge 0}q^{-k}b_{i,i+1}\xxb_{i}^{(k)} = \sum_{k \ge 0}q^{-k}\xxb_{i}^{(k)} b_{i+1} \, .
\end{equation}
To establish this equivalent formulation of Proposition \ref{prop-braid-conjugates}, 
we first state a technical lemma proven in Appendix \ref{Appendix:proof-of-commutator-lemma}.

\begin{lem}\label{lem:techcomm}
For $k\in \mathbb{Z}_{\ge 0}$, the following relation holds in $\iqUsom$:
\[
\pushQED{\qed} 
q^{-k}b_{i,i+1}\xxb_{i}^{(k)} = (-1)^kq^{k}\xxb_{i}^{(k)}b_{i,i+1} 
	+(-1)^kq^{k-1}\xxb_i^{(k-1)}b_{i,i+1} + q^{-k+1}\xxb_i^{(k-1)}b_{i+1} \, . 
\qedhere \popQED
	\]
\end{lem}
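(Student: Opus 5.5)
We prove the identity by induction on $k$, working in $\iqUsom$ with only the defining relations \eqref{eqn:farcommute} and \eqref{eqn:iSerre-alg-version} together with the recursion \eqref{eq:blackx}. For $k=0$ the claimed identity reads $b_{i,i+1} = b_{i,i+1} + 0 + 0$ once we interpret $\xxb_i^{(-1)}=0$, so it holds trivially.

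The case $k=1$ carries all the content. Substituting $\xxb_i = b_i - \frac{1}{[2]}$, the claim becomes
\[
q^{-1} b_{i,i+1}\Big(b_i - \tfrac{1}{[2]}\Big) = -q\Big(b_i-\tfrac{1}{[2]}\Big) b_{i,i+1} - b_{i,i+1} + b_{i+1}.
\]
Expanding $b_{i,i+1} = q^{-1}b_ib_{i+1}+qb_{i+1}b_i$ turns this into an identity among the monomials $b_ib_{i+1}b_i$, $b_i^2b_{i+1}$, $b_{i+1}b_i^2$, $b_ib_{i+1}$, $b_{i+1}b_i$ and $b_{i+1}$. The cubic terms give $q^{-2}b_ib_{i+1}b_i + b_{i+1}b_i^2 = -b_i^2b_{i+1} - q^2 b_ib_{i+1}b_i + \cdots$, so we have to check $b_i^2b_{i+1}+b_{i+1}b_i^2 = -[2]_{q^2}b_ib_{i+1}b_i + b_{i+1}$, which is exactly the $\iota$Serre relation \eqref{eqn:iSerre-alg-version}. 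After that we only need to confirm that the quadratic and linear coefficients match, using $q+q^{-1}=[2]$. This is a routine coefficient comparison.

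For the inductive step we write $\xxb_i^{(k+1)} = \frac{(-1)^k}{``[k+1]^2"}\,\xxb_i^{(k)}\big(\xxb_i - (-1)^k``[k][k+1]"\big)$. The plan is to compute $q^{-k-1}b_{i,i+1}\xxb_i^{(k)}\xxb_i$ in two moves. First we apply the inductive hypothesis to move $b_{i,i+1}$ past $\xxb_i^{(k)}$. This produces the terms $\xxb_i^{(k)} b_{i,i+1}\xxb_i$, $\xxb_i^{(k-1)}b_{i,i+1}\xxb_i$ and $\xxb_i^{(k-1)}b_{i+1}\xxb_i$. Second, we use the $k=1$ case to move $b_{i,i+1}$ past the remaining $\xxb_i$.

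That leaves a product $b_{i+1}\xxb_i$, which we handle by solving the $k=1$ identity for $b_{i+1}\xxb_i$ in terms of $b_{i,i+1}$. Concretely, $b_{i+1}b_i = q^{-1}b_{i,i+1} - q^{-2}b_ib_{i+1}$, and each term then has a factor of $\xxb_i$ on the left.

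All $\xxb_i$-factors now sit on the left, as products $\xxb_i^{(j)}\xxb_i$ with $j \le k$. We absorb these using the recursion $\xxb_i^{(j)}\xxb_i = (-1)^j``[j+1]^2"\,\xxb_i^{(j+1)} + (-1)^j``[j][j+1]"\,\xxb_i^{(j)}$. This expresses everything in the spanning set $\{\xxb_i^{(j)}b_{i,i+1}, \xxb_i^{(j)}b_{i+1}\}_j$, and we compare coefficients with the claimed right-hand side at level $k+1$.

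The main obstacle is this final coefficient check. It requires scalar identities in the devil's arithmetic, such as $``[k+1]^2" = [k+1]_{q^2}$ and the relation between $``[k][k+1]"$ and $``[k+1][k+2]"$; the identity $``[k-1][k]" = \frac{[2k-1]-(-1)^{k-1}}{[2]}$ from Lemma \ref{lem:minpoly-vs-dividedpower} should make each of these a one-line check.

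If the bookkeeping becomes unwieldy, a cleaner alternative is to work with the product formula of Lemma \ref{lem:minpoly-vs-dividedpower}. There $\xxb_i^{(k)}$ is a normalized product of linear factors $(b_i - \lambda_j)$, and the $k=1$ identity, applied with the shifted eigenvalue $\lambda_j$ in place of $\frac{1}{[2]}$, gives a twisted commutation $b_{i,i+1}(b_i-\lambda) = (\text{scalar})(b_i-\lambda')b_{i,i+1} + \cdots$. Iterating this factor by factor then yields the lemma.
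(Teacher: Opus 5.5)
Your proposal is correct and is essentially the paper's proof (Appendix~\ref{Appendix:proof-of-commutator-lemma}). Both argue by induction on $k$. The base case is exactly the $\iota$Serre relation, rewritten as $b_{i,i+1}b_i = -q^2 b_ib_{i,i+1} + qb_{i+1}$. The auxiliary relation $b_{i+1}\xxb_i = -q^{-2}\xxb_i b_{i+1} + q^{-1}b_{i,i+1} - q^{-1}b_{i+1}$ follows from the definition of $b_{i,i+1}$, not from the $k=1$ identity, as your concrete formula in fact shows. One then absorbs $\xxb_i^{(j)}\xxb_i$ via the recursion. The remaining coefficient check, which the paper carries out, shows that the coefficients of $\xxb_i^{(k-1)}b_{i,i+1}$ and $\xxb_i^{(k-1)}b_{i+1}$ vanish, while those of $\xxb_i^{(k)}b_{i,i+1}$ and $\xxb_i^{(k)}b_{i+1}$ equal $(-1)^{k+1}q^{k}$ and $q^{-k}$.
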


\begin{proof}[Proof of Proposition \ref{prop-braid-conjugates}]
It follows from Lemma \ref{lem:techcomm} that
\begin{equation}\label{eqn:longBbraidconjugate-sumtoshow}
\sum_{k\ge 0}q^{-k}b_{i,i+1}\xxb_{i}^{(k)} 
= \sum_{k\ge 0}\bigg((-1)^kq^{k}\xxb_{i}^{(k)}b_{i,i+1} +(-1)^kq^{k-1}\xxb_i^{(k-1)}b_{i,i+1} + q^{-k+1}\xxb_i^{(k-1)}b_{i+1}\bigg)
\end{equation}
The sum 
\[
\sum_{k\ge 0}\bigg((-1)^kq^{k}\xxb_{i}^{(k)}b_{i,i+1} +(-1)^kq^{k-1}\xxb_i^{(k-1)}b_{i,i+1}\bigg)
\]
telescopes and therefore is zero. 
This leaves $\sum_{k \ge 0}q^{-k+1}\xxb_i^{(k-1)}b_{i+1}$ 
on the right hand side of \eqref{eqn:longBbraidconjugate-sumtoshow}, 
which establishes \eqref{eqn:longBbraidconjugate-suffices}.
\end{proof}

\begin{cor}
For $1 \leq i \leq m-2$, we also have
\begin{equation}\label{eqn:braid-conj-both}
b_{i,i+1} = \iQW_{i+1}^{-1}b_{i}\iQW_{i+1} \, .
\end{equation}
\end{cor}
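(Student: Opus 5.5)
We deduce \eqref{eqn:braid-conj-both} from Proposition \ref{prop-braid-conjugates} by a symmetry of $\iqUsom$, rather than repeating the computation of Lemma \ref{lem:techcomm}.

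Consider the $\C(q)$-linear anti-automorphism $\omega$ of $\iqUsom$ fixing each $b_j$. It respects the defining relations: \eqref{eqn:farcommute} is symmetric under reversal, and \eqref{eqn:iSerre-alg-version} is sent to itself. The ideal generated by the $p_{n+1}(b_j)$ is preserved, so $\omega$ descends to $\iqUsom^{\le n}$. Since each $\xxb_i^{(k)}$ is a polynomial in $b_i$, $\omega$ fixes $\xxb_i^{(k)}$ and hence $\iQW_i^{\pm1}$. It also sends $b_{i,i+1}=q^{-1}b_ib_{i+1}+qb_{i+1}b_i$ to $q^{-1}b_{i+1}b_i+qb_ib_{i+1}$, which is not $b_{i,i+1}$, so $\omega$ alone is the wrong tool.

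Instead, compose $\omega$ with the bar-type involution $q\mapsto q^{-1}$. This is harmless: the relations involve only $[2]_{q^2}$ and the roots of $p_{n+1}$, namely $(-1)^{j-1}[2j-1]/[2]$, all of which are bar-invariant. The composite is a $\C$-linear (semilinear) anti-automorphism $\sigma$ of $\iqUsom^{\le n}$. It fixes $b_j$ and $\xxb_i^{(k)}$, since the coefficients in \eqref{eq:blackx} are bar-invariant. It fixes $b_{i,i+1}$, since $q\mapsto q^{-1}$ swaps the two coefficients while $\omega$ swaps the order of the products. Finally, it exchanges $\iQW_i$ and $\iQW_i^{-1}$ by \eqref{defofbraidingviaX}. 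Applying $\sigma$ to \eqref{eq:braidconj} then gives
\[
b_{i,i+1}=\iQW_i\,b_{i+1}\,\iQW_i^{-1},
\]
so $\sigma$ alone produces nothing new. We also need the diagram automorphism $b_j\mapsto b_{m-j}$, or equivalently index reversal $j\mapsto i+2-(j-i)$ on the relevant pair.

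The algebra automorphism $\rho\colon b_i\leftrightarrow b_{i+1}$ (restricted to the rank-two subalgebra generated by $b_i,b_{i+1}$, or the global flip $b_j\mapsto b_{m-j}$) preserves \eqref{eqn:farcommute}, \eqref{eqn:iSerre-alg-version} and the $p_{n+1}$-ideal. It sends $b_{i,i+1}$ to $q^{-1}b_{i+1}b_i+qb_ib_{i+1}$ and swaps $\iQW_i$ with $\iQW_{i+1}$. Composing with the bar involution $q\mapsto q^{-1}$, which is an algebra automorphism with all relevant data bar-invariant, gives a semilinear automorphism $\tau$ with the following properties:
\begin{itemize}
\item it fixes $b_{i,i+1}$;
\item it sends $b_{i+1}$ to $b_i$;
\item it sends $\iQW_i$ to $\iQW_{i+1}^{-1}$ and $\iQW_i^{-1}$ to $\iQW_{i+1}$.
\end{itemize}
Applying $\tau$ to \eqref{eq:braidconj} yields exactly \eqref{eqn:braid-conj-both}. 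The main care needed is to check that the bar map is well defined on $\iqUsom^{\le n}$, i.e. that \eqref{eqn:iSerre-alg-version} and $p_{n+1}$ have bar-invariant coefficients; this is immediate since $[k]$ is bar-invariant. If the global flip is preferred, one first applies it to reindex, and the same argument goes through.
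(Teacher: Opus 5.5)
Your argument is correct, but it uses a different symmetry from the paper.

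\textbf{The paper's route.} The paper reduces to $m=3$, $i=1$ and applies the $\C(q)$-linear automorphism of $\iqUsom[3]$ that cyclically permutes $b_1\mapsto b_2\mapsto b_{1,2}\mapsto b_1$. This is the inverse of the shift automorphism in Lemma \ref{lem:shiftauto}. It sends $\iQW_1\mapsto\iQW_2$, so \eqref{eq:braidconj} becomes $b_1=\iQW_2\, b_{1,2}\,\iQW_2^{-1}$, and one then conjugates by $\iQW_2^{-1}$. That route stays $\C(q)$-linear and matches the rotation/braid-group picture. However, to know the automorphism descends to $\iqUsom[3]^{\le n}$ one needs $p_{n+1}(b_{1,2})=0$ there. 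That fact is itself a consequence of Proposition \ref{prop-braid-conjugates}, as in Lemma \ref{lem:higherlongbsarezero}.

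\textbf{Your route.} Your $\tau$, the diagram flip composed with the bar involution, is only semilinear. In exchange, it visibly permutes the generators $p_{n+1}(b_j)$ of the defining ideal, which have bar-invariant coefficients. So descent to $\iqUsom^{\le n}$ is automatic, and no reduction to $m=3$ is needed. It is precisely the bar involution that trades $\iQW$ for $\iQW^{-1}$, and this is what reverses the direction of conjugation. This uses \eqref{defofbraidingviaX} together with the bar-invariance of $\xxb_i^{(k)}$ from Lemma \ref{lem:minpoly-vs-dividedpower}.

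\textbf{One point of phrasing.} The swap $b_i\leftrightarrow b_{i+1}$ ``restricted to the rank-two subalgebra'' of $\iqUsom^{\le n}$ is not obviously well defined, since you have no presentation of that subalgebra. There are two clean fixes.
\begin{itemize}
\item Use the global flip $b_j\mapsto b_{m-j}$, applied to \eqref{eq:braidconj} at index $m-1-i$.
\item Alternatively, prove the case $m=3$ in $\iqUsom[3]^{\le n}$, where $b_1\leftrightarrow b_2$ is a genuine automorphism. Then push forward along the homomorphism $b_1\mapsto b_i$, $b_2\mapsto b_{i+1}$.
\end{itemize}
The exploratory discussion of $\omega$ and $\sigma$ can simply be dropped.
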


We content ourselves with a sketch, 
since this result does not play a technical role in our remaining arguments.

\begin{proof}[Proof (sketch)]
It suffices to establish the $m=3$, $i=1$ case. 
Observe that the assignments 
$b_1\mapsto b_2$, $b_2\mapsto b_{1,2}$ determine an algebra automorphism of $\iqUsom[3]$
(see Lemma \ref{lem:shiftauto} below, which records its inverse).
It is possible to check that this automorphism descends to the quotient $\iqUsom[3]^{\le n}$.
Further, it sends $b_{1,2} \mapsto b_1$ and $\iQW_{1} \mapsto \iQW_{2}$, 
and therefore acts on the $m=3$, $i=1$ case of \eqref{eq:braidconj} as follows:
\[
\Big(b_{1,2} = \iQW_{1} b_{2} \iQW_{1}^{-1} \Big) \mapsto
\Big(b_{1} = \iQW_{2} b_{1,2} \iQW_{2}^{-1} \Big) \, .
	\]
Conjugating the latter by $\iQW_{2}^{-1}$ gives
the desired relation.
	\end{proof}

\begin{rem}
Applying $\psi^{\le n}$ to the relations \eqref{eq:braidconj} and \eqref{eqn:braid-conj-both} in 
$\iqUsom^{\le n}$ results in the following web relations.
\begin{equation}\label{eq:TbTweb}
\begin{tikzpicture}[scale=.5, tinynodes, anchorbase, rotate=180]
	\draw[thick, black] (1,0) to (2,0);
	\draw[very thick,gray] (1,-1) to (1,1);
	\draw[very thick,gray] (3,-1) to [out=90,in=270] (2,0) to [out=90,in=270] (3,1);
	\draw[overcross] (2,-1) to [out=90,in=270] (3,0) to [out=90,in=270] (2,1);
	\draw[very thick,gray] (2,-1) to [out=90,in=270] (3,0) to [out=90,in=270] (2,1);
\end{tikzpicture} \
= \
\begin{tikzpicture}[scale=.5, anchorbase]
	\draw[thick, black] (1,0) to (3,0);
	\draw[very thick,gray] (1,-1) to (1,1);
	\draw[overcross] (2,-1) to (2,1);
	\draw[very thick,gray] (2,-1) to (2,1);
	\draw[very thick,gray] (3,-1) to (3,1);
\end{tikzpicture} \
= \
\begin{tikzpicture}[scale=.5, anchorbase]
	\draw[thick, black] (1,0) to (2,0);
	\draw[very thick,gray] (1,-1) to (1,1);
	\draw[very thick,gray] (3,-1) to [out=90,in=270] (2,0) to [out=90,in=270] (3,1);
	\draw[overcross] (2,-1) to [out=90,in=270] (3,0) to [out=90,in=270] (2,1);
	\draw[very thick,gray] (2,-1) to [out=90,in=270] (3,0) to [out=90,in=270] (2,1);
\end{tikzpicture} \, 
	\end{equation}
The web relations in \eqref{eq:TbTweb} also follow from the 
\emph{not yet established} web relation \eqref{eq:Sforkslide}.
However, even if we first established web relation \eqref{eq:Sforkslide}, 
then we still could not conclude \eqref{eq:braidconj} and \eqref{eqn:braid-conj-both} 
without Lemma \ref{lem:techcomm}, 
since we have  \emph{not yet shown} that $\psi^{\le n}$ is injective. 
Instead, we will use \eqref{eq:braidconj} and \eqref{eqn:braid-conj-both} 
in our proof that $\psi^{\le n}$ is injective 
(see Corollary \ref{cor:inj}). 
From this, we deduce Theorem \ref{thm:main} 
and finally conclude that the web relation \eqref{eq:Sforkslide} holds, 
in Corollary \ref{cor:Stangledweb}. 
\end{rem}

We now introduce the notation
$\hh_{i,i+1}:=b_i$ and $\hh_{i,i+2}:=b_{i,i+1}$, for $1\leq i < j \leq m$. 
More generally, for $1 \leq i < j \leq m$ we set
\begin{equation}\label{eq:longHdef}
\hh_{i,j}:= q^{-1}\hh_{i,j-1}\hh_{j-1,j}+ q\hh_{j-1,j}\hh_{i,j-1}
	\end{equation}
and observe that \eqref{eq:blackgraybraiding} implies that
\begin{equation}\label{eq:longHweb}
\psi(\hh_{i,j}) = 
\begin{tikzpicture}[scale=.5, tinynodes, anchorbase]
	\draw[thick, black] (1,0) to (4,0);
	\draw[very thick,gray] (0,-1) to (0,1);
	\node at (.5,.5) {$\mydots$};
	\draw[very thick,gray] (1,-1) to (1,1);
	\draw[overcross] (2,-1) to (2,1);
	\draw[very thick,gray] (2,-1) to (2,1);
	\node at (2.5,.5) {$\mydots$};
 	\draw[overcross] (3,-1) to (3,1);
	\draw[very thick,gray] (3,-1) to (3,1);
	\draw[very thick,gray] (4,-1) to (4,1);
	\node at (4.5,.5) {$\mydots$};
	\draw[very thick,gray] (5,-1) to (5,1);
\end{tikzpicture}
=
\rung_{i,j} \, .
	\end{equation}

In fact, the elements $\hh_{i,j}$ vastly predate 
our topological interpretation provided by \eqref{eq:longHweb}.
They are the building blocks for the PBW basis for $\iqUsom$ 
constructed in \cite[Theorem 2]{MR1903121}.
We now recall that result.

\begin{thm}[Iorgov--Klimyk]\label{thm:PBWbasis}
The set 
\[
\left\{\hh_{1,2}^{e_{1,2}}\hh_{1,3}^{e_{1,3}}\dots \hh_{1,m}^{e_{1,m}}\hh_{2,3}^{e_{2,3}}\hh_{2,4}^{e_{2,4}} 
	\dots \hh_{2,m}^{e_{2,m}}\dots \hh_{m-1,m}^{e_{m-1,m}}\right\}_{e_{i,j}\in \mathbb{Z}_{\ge 0}}
	\]
is a $\C(q)$-basis for $\iqUsom$. \qed
\end{thm}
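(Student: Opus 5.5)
The plan is the standard two-part PBW argument: first show that the ordered monomials span, then show that they are linearly independent by specializing to $q=1$, where the classical PBW theorem for $U(\som)$ applies.

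\textbf{Spanning.} Filter $\iqUsom$ by total degree in the generators $b_i$, and give $\hh_{i,j}$ degree $j-i$; this is consistent with \eqref{eq:longHdef}. The first and longest step is to derive commutation relations among the $\hh_{i,j}$ from \eqref{eqn:farcommute} and \eqref{eqn:iSerre-alg-version}, by induction on $j-i$ using \eqref{eq:longHdef}. The relations I would aim for are the following.
\begin{itemize}
\item If the index pairs $\{i,j\}$ and $\{k,l\}$ are disjoint and non-nested, then $\hh_{i,j}$ and $\hh_{k,l}$ commute.
\item If the pairs are nested or share an endpoint, then $\hh_{i,j}\hh_{k,l}$ equals $q^{\pm2}\hh_{k,l}\hh_{i,j}$ (or $\hh_{k,l}\hh_{i,j}$), plus a $\C(q)$-combination of single $\hh$'s and of products of lower filtration degree. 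For example, $q\hh_{i,j}\hh_{j,k}-q^{-1}\hh_{j,k}\hh_{i,j}=\hh_{i,k}$.
\item If the pairs interlace, $i<k<j<l$, then the commutator $[\hh_{i,j},\hh_{k,l}]$ is a combination of terms $\hh_{i,k}\hh_{j,l}$, $\hh_{i,l}\hh_{k,j}$ and lower terms.
\end{itemize}
These are $q$-analogues of the relations $[L_{ij},L_{kl}]=\delta_{jk}L_{il}\pm\cdots$ in $\som$.

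Given these relations, a diamond-lemma style induction on filtration degree, and then on the number of inversions relative to the lexicographic order on pairs $(i,j)$, shows that every word in the $b_i$ can be rewritten as a combination of ordered monomials $\prod\hh_{i,j}^{e_{i,j}}$. Since only spanning is needed here, the rewriting need not be confluent. Consequently the ordered monomials of filtration degree $\le d$ span the $d$-th filtered piece $F_d$.

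\textbf{Linear independence.} Let $\mathcal{R}=\C[q^{\pm1}]_{(q-1)}$ and let $U_{\mathcal R}\subset\iqUsom$ be the $\mathcal R$-subalgebra generated by the $b_i$. All coefficients arising in the commutation relations (powers of $q$ and $[2]_{q^2}$) lie in $\mathcal R$. Hence the spanning argument runs verbatim over $\mathcal R$, and each filtered piece $F_d^{\mathcal R}$ is spanned over $\mathcal R$ by the ordered monomials of degree $\le d$.

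At $q=1$, the relation \eqref{eqn:iSerre-alg-version} becomes $b_i^2b_{i\pm1}+b_{i\pm1}b_i^2=-2b_ib_{i\pm1}b_i+b_{i\pm1}$. These relations, together with \eqref{eqn:farcommute}, are satisfied by $b_i\mapsto \sqrt{-1}\,(E_{i,i+1}-E_{i+1,i})\in\som$. This gives a surjection $U_{\mathcal R}\otimes_{\mathcal R}\C\twoheadrightarrow U(\som)$ (surjective because these elements generate $\som$), and under it $\hh_{i,j}$ maps to a nonzero multiple of $E_{ij}-E_{ji}$. The classical PBW theorem says the ordered monomials in the $E_{ij}-E_{ji}$ are independent in $U(\som)$. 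Counting dimensions in each filtered piece shows that the images of the ordered monomials of degree $\le d$ are linearly independent at $q=1$.

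Now suppose some nontrivial $\C(q)$-combination of ordered monomials vanishes. Clearing denominators and dividing by the largest common power of $(q-1)$, we may assume the coefficients lie in $\mathcal R$ and are not all divisible by $q-1$. Specializing to $q=1$ then produces a nontrivial vanishing combination in $U(\som)$, which is a contradiction.

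\textbf{Main obstacle.} The hard part is establishing the commutation relations for interlacing index pairs, with lower-order terms that are explicitly integral over $\mathcal R$. My first approach would be to transport the relations through the algebra automorphisms in the spirit of Lemma \ref{lem:shiftauto} (which maps $b_1\mapsto b_2$ and $b_2\mapsto b_{1,2}$, and its analogues). This reduces every case to a small number of rank $m\le4$ computations, which can then be checked directly from \eqref{eqn:iSerre-alg-version}.
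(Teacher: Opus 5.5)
The paper gives no proof of this statement; it quotes it from Iorgov--Klimyk \cite[Theorem 2]{MR1903121}. Judged on its own, your spanning half is a plausible but unverified sketch. The independence half fails as written.

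First, $q=1$ is the wrong specialization. There $[2]_{q^2}=2$, so \eqref{eqn:iSerre-alg-version} reads $b_i^2b_{i\pm1}+2b_ib_{i\pm1}b_i+b_{i\pm1}b_i^2=b_{i\pm1}$, and your assignment violates it. Take $L_{ij}=E_{ij}-E_{ji}$, $x=\mathsf{i}L_{12}$ and $y=\mathsf{i}L_{23}$. Then $[L_{12},[L_{12},L_{23}]]=-L_{23}$, hence $x^2y+2xyx+yx^2-y=-4\mathsf{i}\,L_{12}L_{23}L_{12}\neq 0$ in $U(\som)$. The classical point is $q=\mathsf{i}$, i.e.\ $-q^2=1$, as noted in \S\ref{s:reptheory}. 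There $[2]_{q^2}=-2$ and the relation becomes $[b_i,[b_i,b_{i\pm1}]]=b_{i\pm1}$, which your assignment does satisfy. So you should localize at $q-\mathsf{i}$ instead.

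The more serious gap survives that fix: the sentence ``this gives a surjection $U_{\mathcal R}\otimes_{\mathcal R}\C\twoheadrightarrow U(\som)$'' is unjustified.
\begin{itemize}
\item Checking the defining relations at $q_0$ only gives a map out of the naive form $A_{\mathcal R}=\mathcal R\langle b_1,\dots,b_{m-1}\rangle/(\text{defining relations})$.
\item Your $U_{\mathcal R}$ is the subring of $\iqUsom$, so it is $A_{\mathcal R}$ modulo its torsion. It therefore also satisfies every integral relation that follows from the defining ones only after dividing by powers of $q-q_0$. Nothing you have shown forces that torsion to map to zero in $U(\som)$.
\item Without this, ``spanned by $N_d$ ordered monomials'' plus ``special fibre of dimension $N_d$'' is consistent with the $d$-th filtered piece of $\iqUsom$ having $\C(q)$-dimension $<N_d$. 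The special fibre only bounds the generic fibre from above.
\item Given your spanning result, torsion-freeness of $A_{\mathcal R}$ is equivalent to the theorem itself.
\end{itemize}
So the missing idea is an independent lower bound. There are two natural ways to get one.
\begin{itemize}
\item Actually verify confluence of your rewriting system, which you explicitly skip.
\item Build the map on $U_{\mathcal R}$ through representations. You would need $\iqUsom$-modules with $b_i$-stable $\mathcal R$-lattices whose specializations at $q=\mathsf{i}$ form a family of $\som$-modules on which the $d$-th filtered piece of $U(\som)$ acts faithfully. Restrictions of tensor powers of the vector representation along Noumi's embedding into $U_{-q^2}(\slm)$ are the natural candidate.
\end{itemize}
With such a map in hand, your rank count and denominator-clearing step do go through.

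Separately, your sample relation $q\hh_{i,j}\hh_{j,k}-q^{-1}\hh_{j,k}\hh_{i,j}=\hh_{i,k}$ contradicts \eqref{eq:longHdef} already for $k=j+1$.
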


We now show that this basis descends to a finite spanning 
set for the algebra $\iqUsom^{\leq n}$.
First, we need the
following generalization of \eqref{eq:braidconj} for the elements $\hh_{i,j}$.

\begin{prop}\label{prop:longb-braid-conjugates}
For $2 \leq i+1 < j \leq m$, 
we have $\hh_{i,j} = \iQW_{i}\hh_{i+1,j}\iQW_{i}^{-1}$
in $\iqUsom^{\leq n}$.
\end{prop}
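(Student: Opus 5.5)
We will prove the claim by induction on $j-i$. The base case $j = i+2$ is exactly Proposition \ref{prop-braid-conjugates}: there $\hh_{i+1,i+2}=b_{i+1}$ and $\hh_{i,i+2}=b_{i,i+1}$, so \eqref{eq:braidconj} is precisely $\hh_{i,i+2}=\iQW_i\hh_{i+1,i+2}\iQW_i^{-1}$.

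For the inductive step, fix $j>i+2$ and assume $\hh_{i,j-1}=\iQW_i\hh_{i+1,j-1}\iQW_i^{-1}$. We expand both sides using the recursion \eqref{eq:longHdef}:
\[
\hh_{i,j}=q^{-1}\hh_{i,j-1}\hh_{j-1,j}+q\hh_{j-1,j}\hh_{i,j-1},
\qquad
\hh_{i+1,j}=q^{-1}\hh_{i+1,j-1}\hh_{j-1,j}+q\hh_{j-1,j}\hh_{i+1,j-1}.
\]
Conjugating the second identity by $\iQW_i$ and using multiplicativity of conjugation, it suffices to have
\begin{itemize}
\item[(a)] the inductive hypothesis for $\hh_{i+1,j-1}$, and
\item[(b)] the relation $\iQW_i\hh_{j-1,j}\iQW_i^{-1}=\hh_{j-1,j}$.
\end{itemize}

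For (b), note that $\hh_{j-1,j}=b_{j-1}$. Since $j>i+2$, we have $j-1\ge i+2$, so $|i-(j-1)|>1$. By \eqref{eqn:farcommute}, $b_{j-1}$ commutes with $b_i$. It therefore commutes with every polynomial in $b_i$, in particular with each $\xxb_i^{(k)}$, and hence with $\iQW_i^{\pm1}$ by \eqref{defofbraidingviaX}. Proposition \ref{prop:iQW-invertible} guarantees that $\iQW_i^{-1}$ really is the inverse, so conjugation is an algebra automorphism of $\iqUsom^{\le n}$.

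Combining (a) and (b), conjugating $\hh_{i+1,j}$ by $\iQW_i$ yields $q^{-1}\hh_{i,j-1}\hh_{j-1,j}+q\hh_{j-1,j}\hh_{i,j-1}=\hh_{i,j}$, which completes the induction.

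The only real input is the base case, which rests on Lemma \ref{lem:techcomm}. We are taking that lemma as given, so the induction itself is mechanical. The one point to check is that the recursion \eqref{eq:longHdef} peels off the right index $j-1$ and not the left one. This matters because $b_{j-1}$ lies far from $b_i$, whereas peeling from the left would bring in $b_{i+1}$, which does not commute with $\iQW_i$.
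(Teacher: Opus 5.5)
Your proposal is correct and follows essentially the same argument as the paper. The paper also inducts on $j-i$, takes Proposition \ref{prop-braid-conjugates} as the base case, expands $\hh_{i,j}$ via \eqref{eq:longHdef}, applies the inductive hypothesis to $\hh_{i,j-1}$, and uses \eqref{eqn:farcommute} to move $\iQW_i^{\pm1}$ past $\hh_{j-1,j}=b_{j-1}$.
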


\begin{proof}
By induction on $j-i\ge 2$. 
The base case $j=i+2$ is Proposition \ref{prop-braid-conjugates}. 
Suppose that $j-i > 2$, then we compute
\begin{align*}
\hh_{i,j} &\!\!\!\!\stackrel{\eqref{eq:longHdef}}{=} q^{-1}\hh_{i,j-1}\hh_{j-1,j}+ q\hh_{j-1,j}\hh_{i,j-1} \\
&=q^{-1}\iQW_{i}\hh_{i+1,j-1}\iQW_{i}^{-1}\hh_{j-1,j}+ q\hh_{j-1,j}\iQW_{i}\hh_{i+1,j-1}\iQW_{i}^{-1} \\
&=q^{-1}\iQW_{i}\hh_{i+1,j-1}\hh_{j-1,j}\iQW_{i}^{-1} + q\iQW_{i}\hh_{j-1,j}\hh_{i+1,j-1}\iQW_{i}^{-1}
\stackrel{\eqref{eq:longHdef}}{=} \iQW_{i}\hh_{i+1,j}\iQW_{i}^{-1} \, .
\end{align*}
Here, 
the second equality is by induction, 
the third equality uses that $(j-1) - i > 1$ 
(so $\iQW_{i}$ and and $\iQW_{i}^{-1}$ commute with $\hh_{j-1,j}$ by \eqref{eqn:farcommute}).
\end{proof}

Next, we prove that the elements $\hh_{i,j}$ 
have the same minimal polynomial as the $b_i$'s in $\iqUsom^{\le n}$. 

\begin{lem}\label{lem:higherlongbsarezero}
In the algebra $\iqUsom^{\le n}$, 
we have $p_{n+1}(\hh_{i,j})=0$ for all $1\le i< j\le m$. 
\end{lem}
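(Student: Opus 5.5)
The plan is to deduce the lemma from Proposition \ref{prop:longb-braid-conjugates}, which says that each $\hh_{i,j}$ is conjugate to a generator $b_\ell$ by an invertible element of $\iqUsom^{\le n}$. Conjugation is an algebra automorphism, so it commutes with evaluation of the polynomial $p_{n+1}$. Once $\hh_{i,j}$ is conjugate to $b_{j-1}$, the relation $p_{n+1}(b_{j-1})=0$ from Definition \ref{def:iQn} transfers to $\hh_{i,j}$.

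The argument is an induction on $j-i\geq 1$. In the base case $j=i+1$ we have $\hh_{i,i+1}=b_i$, and $p_{n+1}(b_i)=0$ holds by definition of $\iqUsom^{\le n}$. For the inductive step, suppose $j-i\geq 2$. Proposition \ref{prop:longb-braid-conjugates} gives $\hh_{i,j}=\iQW_i\hh_{i+1,j}\iQW_i^{-1}$ in $\iqUsom^{\le n}$. Here $\iQW_i$ is a well-defined element of $\iqUsom^{\le n}$ because $\xxb_i^{(k)}=0$ for $k\ge n+1$, and it is invertible with inverse $\iQW_i^{-1}$ by Proposition \ref{prop:iQW-invertible}. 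Hence $\operatorname{Ad}(\iQW_i)$ is a unital algebra automorphism, and
\[
p_{n+1}(\hh_{i,j}) \;=\; \iQW_i\, p_{n+1}(\hh_{i+1,j})\, \iQW_i^{-1}.
\]
The right-hand side vanishes by the inductive hypothesis, since $j-(i+1)<j-i$. Unwinding the induction gives $\hh_{i,j}=(\iQW_i\iQW_{i+1}\cdots\iQW_{j-2})\,b_{j-1}\,(\iQW_i\iQW_{i+1}\cdots\iQW_{j-2})^{-1}$.

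There is no real obstacle here. The only points to check are that Proposition \ref{prop:longb-braid-conjugates} and the invertibility of $\iQW_i$ hold in $\iqUsom^{\le n}$ and not merely in $\iqUsom$, and both results are stated in that quotient. The substantive input is Proposition \ref{prop-braid-conjugates}, through Lemma \ref{lem:techcomm}, and that has already been established.
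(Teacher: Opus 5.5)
Your proposal is correct and matches the paper's proof: it iterates Proposition~\ref{prop:longb-braid-conjugates} to conjugate $\hh_{i,j}$ to $\hh_{j-1,j}=b_{j-1}$ by $\iQW_i\iQW_{i+1}\cdots\iQW_{j-2}$. Since conjugation by an invertible element commutes with evaluating $p_{n+1}$, the relation $p_{n+1}(b_{j-1})=0$ gives $p_{n+1}(\hh_{i,j})=0$. Your induction on $j-i$ is the same argument written step by step.
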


\begin{proof}
Iterating Proposition \ref{prop:longb-braid-conjugates}, we find that
\[
\hh_{i,j} = \iQW_{i}\hh_{i+1,j}\iQW_{i}^{-1} = \cdots
= \iQW_{i}\iQW_{i+1} \cdots \iQW_{j-2} \hh_{j-1,j} \iQW_{j-2}^{-1} \cdots \iQW_{i+1}^{-1} \iQW_{i}^{-1}
	\]
and thus 
\begin{align*}
p_{n+1}(\hh_{i,j}) &= p_{n+1}\big((\iQW_{i} \iQW_{i+1} \cdots \iQW_{j-2}) \hh_{j-1,j} (\iQW_{i} \iQW_{i+1} \cdots \iQW_{j-2})^{-1}\big) \\
	&= (\iQW_{i} \iQW_{i+1} \cdots \iQW_{j-2}) p_{n+1}(\hh_{j-1,j}) (\iQW_{i} \iQW_{i+1} \cdots \iQW_{j-2})^{-1} \\
	&= (\iQW_{i} \iQW_{i+1} \cdots \iQW_{j-2}) p_{n+1}(b_{j-1}) (\iQW_{i} \iQW_{i+1} \cdots \iQW_{j-2})^{-1} = 0 \, . \qedhere
	\end{align*}
	\end{proof}

At last, we pair this result with Iorgov--Klimyk's basis to establish the main result of this section.

\begin{prop}\label{prop:finite-dim}
The $\C(q)$-algebra $\iqUsom^{\le n}$ is finite-dimensional. 
\end{prop}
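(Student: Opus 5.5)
The plan is to show that the image in $\iqUsom^{\le n}$ of the Iorgov--Klimyk PBW basis of Theorem \ref{thm:PBWbasis} can be cut down to a finite spanning set. Since the quotient map $\iqUsom \twoheadrightarrow \iqUsom^{\le n}$ is surjective, the images of the ordered monomials
\[
\hh_{1,2}^{e_{1,2}}\hh_{1,3}^{e_{1,3}}\cdots \hh_{m-1,m}^{e_{m-1,m}}, \qquad e_{i,j}\in \mathbb{Z}_{\ge 0},
\]
span $\iqUsom^{\le n}$. By Lemma \ref{lem:higherlongbsarezero}, each $\hh_{i,j}$ satisfies the monic polynomial $p_{n+1}$ of degree $n+1$ in the quotient. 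Therefore every power $\hh_{i,j}^{e}$ with $e\ge n+1$ is a $\C(q)$-linear combination of $1,\hh_{i,j},\ldots,\hh_{i,j}^{n}$.

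The key step is that this reduction happens one factor at a time, inside a fixed PBW monomial. Replacing the factor $\hh_{i,j}^{e_{i,j}}$ by such a combination does not disturb the ordering of the other factors. The result is a linear combination of ordered monomials of the same shape, with that exponent now at most $n$ and all other exponents unchanged. Inducting on the number of exponents exceeding $n$ (or on $\sum_{i<j}\max(e_{i,j}-n,0)$), every ordered monomial reduces to a combination of ordered monomials with all $e_{i,j}\le n$. No commutation relations are needed, because we never reorder factors. This gives the spanning set
\[
\Big\{\hh_{1,2}^{e_{1,2}}\cdots \hh_{m-1,m}^{e_{m-1,m}} \;:\; 0\le e_{i,j}\le n\Big\},
\]
so $\dim \iqUsom^{\le n}\le (n+1)^{\binom{m}{2}}$.

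There is no serious obstacle at this point. All the substantive work is already contained in Lemma \ref{lem:higherlongbsarezero}, which rests on Propositions \ref{prop:iQW-invertible} and \ref{prop:longb-braid-conjugates}, together with the PBW theorem. The one thing to check is that Theorem \ref{thm:PBWbasis} uses exactly the elements $\hh_{i,j}$ defined by the recursion \eqref{eq:longHdef}, so that Lemma \ref{lem:higherlongbsarezero} applies to the same generators that appear in the basis.
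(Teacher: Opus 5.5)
Your proposal is correct and follows essentially the same route as the paper's proof. In both, the images of the Iorgov--Klimyk PBW monomials span the quotient, and Lemma \ref{lem:higherlongbsarezero} caps each exponent at $n$. Your explicit induction on the excess exponents, and the resulting bound $(n+1)^{\binom{m}{2}}$, just spell out what the paper leaves implicit.
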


\begin{proof}
The map $\iqUsom\rightarrow \iqUsom^{\le n}$ necessarily sends the basis from Theorem \ref{thm:PBWbasis} to a spanning set. 
It follows from Lemma \ref{lem:higherlongbsarezero} that, if $N> n$, 
then $\hh_{i,j}^N$ is in the span of $1, \hh_{i,j}, \hh_{i,j}^2, \dots, \hh_{i,j}^n$. 
Therefore, the finite set
\[
\left\{\hh_{1,2}^{e_{1,2}}\hh_{1,3}^{e_{1,3}}\dots \hh_{1,m}^{e_{1,m}}\hh_{2,3}^{e_{2,3}}\hh_{2,4}^{e_{2,4}}
	\dots \hh_{2,m}^{e_{2,m}}\dots \hh_{m-1,m}^{e_{m-1,m}}\right\}_{e_{i,j}\in [0, n]}
	\]
spans $\iqUsom^{\le n}$.
\end{proof}

\begin{rem}
Relations \eqref{eq:braidconj} and \eqref{eqn:braid-conj-both} imply that 
each $b_i$ is conjugate to $b_1$ in $\iqUsom^{\le n}$.
Given this, it is tempting to suspect that only $p_{n+1}(b_1)$ 
is required to generate the kernel of $\iqUsom \to \iqUsom^{\le n}$,
as happens for cyclotomic quotients of Hecke algebras.

However, it is easy to see that this cannot be the case. 
For any representation of $\iqUsom^{\le n}$, 
the spectrum of $b_i$ is a subset of $\{(-1)^k\tfrac{[2k+1]}{[2]}\}_{0 \leq k \leq n}$ 
and is the same for all $b_i$ (since they are conjugate).
But, in a general ``nonclassical'' representation of $\iqUsom$, 
the spectrum of the $b_i$ need only be a subset of $\{\pm \tfrac{[2k+1]}{[2]}\}_{k > 0}$ 
and need not agree for each $b_i$ (see the next section for the terminology and more details).

As a simple example, let $m=3$. 
By \eqref{eqn:farcommute} and \eqref{eqn:iSerre-alg-version}, 
the assignments $b_1 \mapsto b_1$, $b_2 \mapsto -b_2$ determine an algebra 
automorphism $\rho$ of $\iqUsom[3]$.
Now, given a representation $V$ of $\iqUsom[3]^{\le n}$, 
consider the representation $\tilde{V}$ of $\iqUsom[3]$ given by
\[
\iqUsom[3] \xrightarrow{\rho} \iqUsom[3] \to \iqUsom[3]^{\le n} \to \End_{\C(q)}(V) \, .
	\]
This gives a representation of $\iqUsom[3] / (p_{n+1}(b_1))$ 
which is not a representation of $\iqUsom[3]^{\le n}$.
	\end{rem}
	
For later use, we pause to record some results concerning automorphisms of $\iqUsom$ 
which act like conjugation by $\iQW_i^{\pm 1}$, 
but on $\iqUsom$ (i.e.~before taking the quotient to $\iqUsom^{\le n}$). 

\begin{defn}
Let $\overline{(-)} \colon \iqUsom\rightarrow \iqUsom$ be the $\C$-algebra automorphism that is defined by 
$\overline{q}=q^{-1}$ and $\overline{b_i} = b_i$.
\end{defn}

It is clear from the presentation in Definition \ref{def:iQ} that this is indeed an automorphism of $\iqUsom$.
Note that the standard generators $b_i= \hh_{i,i+1}=\overline{\hh_{i,i+1}}$ of $\iqUsom$ are fixed by $\overline{(-)}$, 
while the $\hh_{i,j}$ are not fixed when $j-i>1$, 
e.g.~$\overline{\hh_{i,i+2}} = \overline{q^{-1}b_ib_{i+1} + qb_{i+1}b_{i}}= qb_ib_{i+1} + q^{-1}b_{i+1}b_i\ne \hh_{i,i+2}$.

\begin{lem}\label{lem:internalbraid}
The assignments
\begin{align*}
\hh_{k,k+1}&\mapsto \hh_{k, k+1} \quad \text{if } k\ne i-1, i+1 \, , \\
\hh_{i-1, i}&\mapsto \overline{\hh_{i-1, i+1}} \, , \quad \text{and} \\
\hh_{i+1, i+2}&\mapsto \hh_{i, i+2}
\end{align*}
determine a $\C(q)$-algebra automorphism $\Ad_i \colon \iqUsom\rightarrow \iqUsom$. 
The inverse $\C(q)$-algebra automorphims $\Ad_i^{-1} \colon \iqUsom\rightarrow \iqUsom$
is given by
\begin{align*}
\hh_{k,k+1}&\mapsto \hh_{k, k+1} \quad \text{if } k\ne i-1, i+1 \, , \\
\hh_{i-1, i}&\mapsto \hh_{i-1, i+1} \, , \quad \text{and} \\
\hh_{i+1, i+2}&\mapsto \overline{\hh_{i, i+2}} \, .
\end{align*}
\end{lem}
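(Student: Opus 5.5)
The plan is to verify directly that the proposed images satisfy the defining relations \eqref{eqn:farcommute} and \eqref{eqn:iSerre-alg-version} of Definition \ref{def:iQ}. This gives well-defined algebra endomorphisms $\Ad_i$ and $\Ad_i^{-1}$ (the latter on the proposed inverse formulas). We then check that the two composites fix every generator $b_k$, which shows they are mutually inverse automorphisms.

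Write $A=b_{i-1}$, $B=b_i$, $C=b_{i+1}$. Under $\Ad_i$:
\begin{itemize}
\item $B\mapsto B$ and $b_k\mapsto b_k$ for $k\neq i\pm1$;
\item $A\mapsto \overline{\hh_{i-1,i+1}}=qAB+q^{-1}BA$;
\item $C\mapsto \hh_{i,i+2}=q^{-1}BC+qCB$.
\end{itemize}
The relations split into three families.

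First, relations involving only generators far from $\{A,C\}$ are untouched. Relations between a fixed $b_k$ and the image of $A$ or $C$ with $|k-(i\pm1)|>1$ also need nothing: such a $b_k$ commutes with every generator appearing in that image. (For example, $b_{i+2}$ commutes with $A$ and $B$, hence with $qAB+q^{-1}BA$.)

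Second come the genuinely rank-two checks, all living in a copy of $\iqUsom[3]$:
\begin{itemize}
\item the $\iota$Serre relations between $B$ and the image $qAB+q^{-1}BA$ of $A$, and between $B$ and the image $q^{-1}BC+qCB$ of $C$;
\item the $\iota$Serre relations between $b_{i-2}$ and the image of $A$, and between $b_{i+2}$ and the image of $C$.
\end{itemize}
For the pair $(B,\hh_{i,i+2})$ I would first derive from \eqref{eqn:iSerre-alg-version} the cyclic ``$q$-commutator'' relations of the nonstandard $U'_q(\mathfrak{so}_3)$:
\[
q^{-1}B\,\hh_{i,i+2}-q\,\hh_{i,i+2}B \;\propto\; C ,
\qquad
q^{-1}\hh_{i,i+2}\,C - q\,C\,\hh_{i,i+2} \;\propto\; B .
\]
These follow by expanding $\hh_{i,i+2}$ and using the $\iota$Serre relation once, with the proportionality constants fixed by that expansion. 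Substituting them into the $\iota$Serre expression for the pair $(B,\hh_{i,i+2})$ then reduces it to the original relation. The case of $A$ is the bar-image of the same computation with left and right reversed. The relations with $b_{i\pm2}$ involve the three consecutive generators $b_{i+2},B,C$ (resp.\ $b_{i-2},B,A$). Here I would expand, apply \eqref{eqn:farcommute} to move $b_{i+2}$ past $B$, and use the $\iota$Serre relation for the pair $(C,b_{i+2})$.

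Third, and this is the step I expect to be the main obstacle, we must show that the images of $A$ and $C$ commute, as \eqref{eqn:farcommute} requires:
\[
(qAB+q^{-1}BA)(q^{-1}BC+qCB)=(q^{-1}BC+qCB)(qAB+q^{-1}BA).
\]
This is a degree-four identity in three noncommuting generators. I would first try a direct reduction: rewrite the words containing $B^2$ using \eqref{eqn:iSerre-alg-version} (applied once with $A$ and once with $C$), and use $AC=CA$. If that bookkeeping becomes unmanageable, I would fall back on Noumi's embedding $\iqUsom\hookrightarrow U_{-q^2}(\slm)$. There I would check the commutation in $U_{-q^2}(\mathfrak{sl}_4)$ (equivalently $\mathfrak{sl}_3\subset\mathfrak{sl}_4$) on an explicit faithful family of weight computations, or identify $\Ad_i$ with the restriction of a suitably twisted Lusztig automorphism.

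Finally, for invertibility I would check $\Ad_i^{-1}\circ\Ad_i=\id$ on generators; the reverse composite is handled by the bar-symmetric argument. The only nontrivial cases are $C$ and $A$. For example,
\[
\Ad_i^{-1}(\hh_{i,i+2})=q^{-1}B\,\overline{\hh_{i,i+2}}+q\,\overline{\hh_{i,i+2}}\,B ,
\]
which expands to a combination of $B^2C$, $BCB$ and $CB^2$. By \eqref{eqn:iSerre-alg-version} this combination collapses to exactly $C$, since the coefficients $q^{\pm2}$ and $1+1$ combine into $-[2]_{q^2}$. The computation for $A$ is symmetric.
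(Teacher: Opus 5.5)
Your route is viable and differs from the paper, which gives no argument of its own and simply cites the proof of \cite[Theorem 2.1]{MolevRagoucyiBraid}. You verify \eqref{eqn:farcommute} and \eqref{eqn:iSerre-alg-version} on the proposed images, transport to the inverse via $\Ad_i^{-1}=\overline{(-)}\circ\Ad_i\circ\overline{(-)}$, and check the composites on generators. Your inverse computation is correct; it is just $B^2C+CB^2+[2]_{q^2}BCB=C$. However, two rank-two steps fail as written. Put $A=b_{i-1}$, $B=b_i$, $C=b_{i+1}$, $D=b_{i+2}$, $H=\hh_{i,i+2}$.

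First, your $q$-commutator relations are false. One has $q^{-1}BH-qHB=q^{-2}B^2C-q^2CB^2=q[2]_{q^2}BH-q^2C$, which is not a multiple of $C$ by Theorem~\ref{thm:PBWbasis}. The correct relations are of anticommutator type, $qBH+q^{-1}HB=C$ and $qHC+q^{-1}CH=B$; each is a one-line consequence of \eqref{eqn:iSerre-alg-version}. Set $W:=qBH+q^{-1}HB=C$. The identities $B^2H+HB^2+[2]_{q^2}BHB=q^{-1}BW+qWB$ and $H^2B+BH^2+[2]_{q^2}HBH=qHW+q^{-1}WH$ hold in any algebra, so they give both $\iota$Serre relations for $(B,H)$ at once.

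Second, you have misplaced the difficulty. The commutation of $\Ad_i(A)=qAB+q^{-1}BA$ with $H$ is short, and your primary plan suffices; Noumi's embedding is not needed. The terms $BACB=BCAB$ cancel, and $AB^2C-CB^2A=[2]_{q^2}(BCBA-BABC)$. So the commutator equals $q^2(ABCB-BABC+BCBA-CBAB)$. Now $[2]_{q^2}$ times this vanishes after rewriting each term with a $B^2$-type $\iota$Serre relation multiplied on one side by $A$ or $C$, using $AC=CA$.

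The genuinely hard check is $H^2D+DH^2+[2]_{q^2}HDH=D$. It has degree five and does not follow from $BD=DB$ and the $(C,D)$ relations alone, contrary to your sketch. Indeed, if $B$ were a central scalar $\beta$, the left side would be $(q+q^{-1})^2\beta^2D$. You also need both $\iota$Serre relations for $(B,C)$. With them it does go through:
\begin{itemize}
\item The left side is $qHZ+q^{-1}ZH$, where $Z:=qDH+q^{-1}HD=q^{-1}BK+qKB$ and $K:=qDC+q^{-1}CD$.
\item Since $BD=DB$, the pair $B,K$ satisfies the $B^2$-type relation.
\item Reduce $CB^2K$ and $KB^2C$ and use $qCK+q^{-1}KC=D$. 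This leaves $D+q^{-3}\big(q^2(BCBK-CBKB)+BKBC-KBCB\big)$.
\item The bracket vanishes after substituting $K$ and using $BCBC-CBCB=[2]_{q^2}^{-1}(C^2B^2-B^2C^2)$, then the $(C,D)$ relation, then the $B^2$-type $(B,C)$ relation.
\end{itemize}
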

\begin{proof}
See the proof of \cite[Theorem 2.1]{MolevRagoucyiBraid}.
\end{proof}

\begin{rem}
The automorphism $\Ad_i$ originates in \cite{MolevRagoucyiBraid} and \cite{ChekhoviBraid}.
It was later interpreted in \cite{KolbPellegriniiBraid} as the $\iota$quantum analogue 
of Lusztig's braid group action on Drinfeld--Jimbo quantum groups.
\end{rem}

\begin{lem}\label{lem:shiftauto}
The assignments $b_1 \mapsto \hh_{1,m}$ and 
$b_i \mapsto b_{i-1}$ for $2 \leq i \leq m-1$ determine an algebra automorphism of $\iqUsom$. 
Consequently, $\iqUsom$ is generated by $\hh_{1,m}, b_1,\ldots,b_{m-2}$.
	\end{lem}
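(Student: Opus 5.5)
The plan is to realize the claimed assignment as a composite of automorphisms already recorded in Lemma \ref{lem:internalbraid}, and then read off the generation statement.

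First, the target elements must satisfy the defining relations of Definition \ref{def:iQ}. Consider $\Phi := \Ad_1^{-1}\circ\Ad_2^{-1}\circ\cdots\circ\Ad_{m-2}^{-1}$, or a similar product of the $\Ad_i^{\pm1}$, chosen so that it sends $b_i\mapsto b_{i-1}$ for $i\ge 2$ and $b_1\mapsto \hh_{1,m}$ (possibly up to the bar involution). Since each $\Ad_i^{\pm1}$ is an automorphism, any such composite is an automorphism.

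To identify the composite, track images generator by generator, using the formulas in Lemma \ref{lem:internalbraid} together with the recursion \eqref{eq:longHdef}. The key bookkeeping fact is that $\Ad^{-1}_i$ sends $\hh_{i-1,i}\mapsto \hh_{i-1,i+1}$, which lengthens a rung by one. More generally, by applying $\Ad^{-1}_i$ to \eqref{eq:longHdef} and inducting, $\Ad_i^{-1}(\hh_{k,i})=\hh_{k,i+1}$ for $k<i$. Iterating $\Ad_2^{-1},\dots,\Ad_{m-1}^{-1}$ in the appropriate order therefore grows $b_1$ into $\hh_{1,m}$. The same composite should also shift the remaining generators down by one. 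If the shift does not come out directly, correct it with the "rotation" automorphisms. The fallback is a direct verification: check \eqref{eqn:farcommute} and \eqref{eqn:iSerre-alg-version} for the images $\hh_{1,m}, b_1,\dots,b_{m-2}$. Far-commutativity reduces to showing that $\hh_{1,m}$ commutes with $b_2,\dots,b_{m-2}$ and satisfies the $\iota$Serre relation with $b_1$ (and, cyclically, with $b_{m-2}$, reflecting the cyclic symmetry of $\som$). Those identities can be obtained from the $\Ad$ formulas applied to the known relations between $b_1$ and $b_2$.

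Bijectivity is cleanest if the map is exhibited as a composite of the $\Ad_i^{\pm1}$ and the algebra-level index shift. Otherwise, construct the inverse explicitly by sending $b_{i-1}\mapsto b_i$ and $\hh_{1,m}\mapsto b_1$. This inverse is well defined once $b_{m-1}$ is expressed via the images; that expression again uses the $\Ad$ formulas.

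The "consequently" clause is immediate: the image of a generating set under a surjective map generates, so $\hh_{1,m}, b_1,\dots,b_{m-2}$ generate $\iqUsom$. The main obstacle is the bookkeeping of the $q\leftrightarrow q^{-1}$ bars in Lemma \ref{lem:internalbraid}. One must choose each factor to be $\Ad_i$ or $\Ad_i^{-1}$ so that the output is $\hh_{1,m}$ rather than $\overline{\hh_{1,m}}$. The first attempt should be to use only $\Ad^{-1}$'s, since those produce unbarred $\hh_{i-1,i+1}$ on the left index.
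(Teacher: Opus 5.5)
Your overall strategy, realizing the map as a composite of the automorphisms $\Ad_i^{-1}$ from Lemma \ref{lem:internalbraid}, is the paper's. However, the proposal never identifies a composite that works, and the candidates you name do not. Already for $m=3$, $\Ad_1^{-1}\circ\cdots\circ\Ad_{m-2}^{-1}=\Ad_1^{-1}$ fixes $b_1=\hh_{1,2}$. The composite $\Ad_{m-1}^{-1}\circ\cdots\circ\Ad_2^{-1}$ does send $b_1\mapsto\hh_{1,m}$, but it does not shift the other generators. Indeed, $\Ad_2^{-1}$ fixes $b_2$, and then $\Ad_3^{-1},\dots,\Ad_{m-1}^{-1}$ grow it to $\hh_{2,m}$ by the very rule $\Ad_i^{-1}(\hh_{k,i})=\hh_{k,i+1}$ you quote. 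So ``the same composite should also shift the remaining generators'' is false as stated, and this shift is precisely the content of the lemma.

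Your fallback does not close the gap either. Checking the relations for $\hh_{1,m},b_1,\dots,b_{m-2}$ only produces an endomorphism. To define your proposed inverse on the generator $b_{m-1}$, you must already be able to write $b_{m-1}$ in terms of $\hh_{1,m},b_1,\dots,b_{m-2}$. That is the generation statement you intend to deduce from bijectivity, so the argument is circular. Also, for $m\ge 4$ the required relation between $\hh_{1,m}$ and $b_{m-2}$ is commutation, not an $\iota$Serre relation.

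The missing step is to apply $\Ad_1^{-1}$ first, i.e.\ to use $\Ad_{m-1}^{-1}\circ\cdots\circ\Ad_2^{-1}\circ\Ad_1^{-1}$, as the paper does. Since $\Ad_1^{-1}$ fixes $b_1$, the growth $b_1\mapsto\hh_{1,3}\mapsto\cdots\mapsto\hh_{1,m}$ is unchanged. For $2\le i\le m-1$, track $b_i$ through the factors in order:
the factors $\Ad_1^{-1},\dots,\Ad_{i-2}^{-1}$ fix $b_i$;
next, $\Ad_{i-1}^{-1}(b_i)=\overline{\hh_{i-1,i+1}}$;
then $\Ad_i^{-1}(\overline{\hh_{i-1,i+1}})=\overline{\hh_{i-1,i}}=b_{i-1}$;
and the remaining factors fix $b_{i-1}$.
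The third step is a one-line check. Expanding, $\Ad_i^{-1}(qb_{i-1}b_i+q^{-1}b_ib_{i-1})=b_{i-1}b_i^2+b_i^2b_{i-1}+[2]_{q^2}b_ib_{i-1}b_i$, which equals $b_{i-1}$ by \eqref{eqn:iSerre-alg-version}. So the bars you were worried about are not an obstacle to avoid: the barred element $\overline{\hh_{i-1,i+1}}$ at the intermediate stage is exactly what makes the shift come out. Once the composite is identified this way, bijectivity and the generation statement follow as you say.
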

	
\begin{proof}
One can inductively compute the action of $\Ad_i^{\pm 1}$ on the elements $\hh_{i, j}$ \cite[Corollary 2.2]{MolevRagoucyiBraid}, in particular
\begin{align*}
\Ad_{i}^{-1}(\hh_{k, i}) &= \hh_{k, i+1} \quad k< i, \\
\Ad_{i}^{-1}(\overline{\hh_{i+1, k}}) &= \overline{\hh_{i, k}} \quad i+1< k, \qquad \text{and}\\
\Ad_{i}^{-1}(\overline{\hh_{k,i+1}}) &= \overline{\hh_{k,i}} \quad k< i
\end{align*}
A consequence is that 
\begin{equation}\label{eq:rotateauto}
\Ad_{m-1}^{-1}\circ \dots \circ \Ad_2^{-1}\circ \Ad_1^{-1}
\end{equation}
acts on $b_1=\hh_{1,2}$ as
\[
\hh_{1,2}\xmapsto{\Ad_1^{-1}}\hh_{1,2}\xmapsto{\Ad_2^{-1}}\hh_{1,3}
	\xmapsto{\Ad_3^{-1}}\hh_{1,4}\xmapsto{\Ad_4^{-1}}\dots\xmapsto{\Ad_{m-1}^{-1}}\hh_{1,m}
\]
and on $b_i=\hh_{i,i+1}=\overline{\hh_{i,i+1}}$ as
\[
\overline{\hh_{i,i+1}}\xmapsto{\Ad_{i-2}^{-1}\circ\dots\circ \Ad_1^{-1}}\overline{\hh_{i,i+1}}
	\xmapsto{\Ad_{i-1}^{-1}}\overline{\hh_{i-1,i+1}}\xmapsto{\Ad_{i}^{-1}}\overline{\hh_{i-1,i}}=\hh_{i-1,i}
		\xmapsto{\Ad_{m-1}^{-1}\circ\dots\circ\Ad_{i+1}^{-1}}\hh_{i-1,i}
\]
Thus, \eqref{eq:rotateauto} is an automorphism of $\iqUsom$ sending $b_1\mapsto \hh_{1,m}$ and $b_i\mapsto b_{i-1}$ for $2\le i\le m-1$. 
	\end{proof}

\subsection{Representation theory}\label{s:reptheory}

In this section, we establish the final ingredient in our proof of Theorem \ref{thm:main}; 
namely, that
\begin{equation}\label{eq:phipsin}
\varphi \circ \psi^{\leq n} \colon \iqUsom^{\le n} \rightarrow \End_{U_q(\son)}(S^{\otimes m}) 
\end{equation}
is injective. 
Indeed, since $\psi^{\leq n}$ is surjective, 
this implies that 
\eqref{eq:phi}
is injective and then
Proposition \ref{prop:FF} shows that the functor $\varphi \colon \Web(\son) \to \Rep(U_q(\son))$ is faithful.

We establish this injectivity using the representation theory of $\iqUsom^{\le n}$.
We first argue that $\iqUsom^{\le n}$ is a finite-dimensional semisimple algebra, 
with simple modules corresponding to certain so-called ``nonclassical'' irreducible representations of $\iqUsom$. 
To show that $S^{\otimes m}$ is a faithful representation of $\iqUsom^{\le n}$, 
it then suffices to prove that each of these simple modules appears with non-zero multiplicity in $S^{\otimes m}$. 
We show this by matching the combinatorics controlling the irreducible representations of $\iqUsom^{\le n}$ 
with the irreducible summands of the $U_q(\son)$-representation $S^{\otimes m}$.

\begin{rem}\label{rem:Wenzldoesmore}
This matching appears in the work of Wenzl \cite[Theorem 5.3]{Wenzl-Spin}, 
but we re\"{e}stablish it here, in our conventions, for the sake of clarity. 
Wenzl actually proves more:
he relates the combinatorial matching to the correspondence induced by the double centralizer theorem for $\End_{\C(q)}(S^{\otimes m})$,
and also matches the branching graph for $\iqUsom^{\le n}$ with the tensor product graph for $S^{\otimes m}$, 
viewed as a representation of $U_q(\son)$. 
In Appendix \ref{Appendix:doublecentralizerbijection}, 
we re\"{e}stablish these refinements of the matching in our conventions, for the sake of completeness.  
\end{rem}

To begin, we recall some general facts\footnote{Most of the results we need are established 
by Iorgov--Klimyk in \cite{MR2143754};
when referencing that work, we will give
specific theorem/section/equation references to the arXiv version, as it is more readily available.} 
about finite-dimensional representations of $\iqUsom$.
The algebra $\iqUsom$ was originally studied from the following perspective: 
it is a $q$-analogue of the enveloping algebra $U(\som)$ which fits into a chain of algebras 
\begin{equation}\label{eqn:chain}
\cdots \subset  \iqUsom[m-1] \subset \iqUsom\subset \iqUsom[m+1] \subset \cdots \, .
\end{equation}
Here, $\iqUsom[m-1] \hookrightarrow \iqUsom$ is the standard inclusion 
of the subalgebra generated by $b_1,\ldots,b_{m-2}$.

This should be contrasted with the usual $q$-analogue of $\som$, 
namely, the Drinfeld--Jimbo quantum group $U_q(\mathfrak{so}_m)$,
which is defined via a $q$-analogue of the Serre presentation of $\mathfrak{so}_m$.
The Serre presentation is determined by the corresponding Dynkin diagram, 
which is type $B$ when $m$ is odd and type $D$ when $m$ is even.
Since neither of these diagrams naturally embed into one another, there is not 
an analogous chain of algebras for $U_q(\mathfrak{so}_m)$.

On the other hand, the Serre presentation is adapted to a triangular decomposition. 
Therefore, Drinfeld--Jimbo quantum groups admit triangular decompositions, 
which allows for their representation theory to be analyzed by highest weight theory. 
By contrast, there is no obvious triangular decomposition of $\iqUsom$, 
so the finite-dimensional representation theory thereof is studied 
by exploiting the chain of algebras \eqref{eqn:chain}.
The main tool is the following.

\begin{prop}[{\cite[Theorem 1]{MR2143754}}]\label{prop:mult-free}
The restriction of a finite-dimensional irreducible representation from $\iqUsom$ to $\iqUsom[m-1]$ 
is a direct sum of mutually non-isomorphic finite-dimensional irreducible representations. \qed
\end{prop}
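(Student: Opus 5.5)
This is the multiplicity-free branching theorem of Iorgov--Klimyk for $\iqUsom$; the paper cites it without proof. A proof along their lines is an explicit Gelfand--Tsetlin-type construction, and I would organize it in four steps.

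\textbf{Step 1: classify the irreducibles.} Following \cite{MR2143754}, I would classify finite-dimensional irreducible representations of $\iqUsom$. Each is labeled by a classical or nonclassical highest-weight-like datum $\mathbf{m}_m$ (integers or half-integers, with sign choices in the nonclassical case). Each is realized on a space with basis indexed by Gelfand--Tsetlin patterns $\{\mathbf{m}_m,\mathbf{m}_{m-1},\dots,\mathbf{m}_2\}$. The generator $b_{m-1}$ acts by explicit tridiagonal-type formulas that change only the row $\mathbf{m}_{m-1}$. The subalgebra $\iqUsom[m-1]$ acts only on the lower rows, by the same formulas one level down.

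\textbf{Step 2: read off the restriction.} From the action in Step 1, the span of the patterns with a fixed row $\mathbf{m}_{m-1}$ is an $\iqUsom[m-1]$-submodule. By induction on $m$, this submodule is isomorphic to the irreducible module labeled $\mathbf{m}_{m-1}$. Distinct admissible rows $\mathbf{m}_{m-1}$, meaning those satisfying the betweenness conditions relative to $\mathbf{m}_m$, give distinct labels. Therefore the restriction is the multiplicity-free sum $\bigoplus V_{\mathbf{m}_{m-1}}$.

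\textbf{Step 3: show the list is complete.} It remains to prove that every finite-dimensional irreducible module is one of those constructed in Step 1, so that the restriction statement covers all of them. The approach is induction along the chain \eqref{eqn:chain}. First decompose an arbitrary irreducible $V$ over $\iqUsom[m-1]$, which is semisimple by the inductive hypothesis together with a Casimir/complete-reducibility argument. Then use the $\iota$Serre relations \eqref{eqn:iSerre-alg-version} to show that $b_{m-1}$ maps each isotypic component into neighbouring ones with prescribed coefficients. This forces $V$ to match the Gelfand--Tsetlin model.

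\textbf{Main obstacle.} I expect Step 3 to be the hardest part. It needs complete reducibility, for which there is no triangular decomposition to rely on. It also needs a proof that the branching multiplicities are at most one, which can be obtained by showing that the centralizer of $\iqUsom[m-1]$ in $\iqUsom$, modulo the center, is commutative; this is the $q$-analogue of the Gelfand pair property for $(\som,\som[m-1])$. My first attempt would be a deformation argument: compare with $q=1$, where $\iqUsom$ specializes to $U(\som)$ and classical branching is multiplicity free, and show that the relevant centralizer dimensions are upper semicontinuous in $q$. Since the paper only uses the statement, it is reasonable simply to invoke \cite[Theorem 1]{MR2143754} here.
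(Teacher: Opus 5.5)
Your conclusion agrees with the paper: it gives no argument of its own and simply quotes \cite[Theorem 1]{MR2143754}, and within this paper that citation is the entire proof. The sketch you offer as a proof ``along their lines'', however, has a genuine gap and should not be presented as a proof. Steps 1--2 only show that modules \emph{already given} by explicit Gelfand--Tsetlin formulas restrict multiplicity-freely. The proposition is about an \emph{arbitrary} finite-dimensional irreducible $V$, and Step 3 uses exactly that claim as input. Knowing how $b_{m-1}$ moves between $\iqUsom[m-1]$-isotypic components determines $V$ only if those components are irreducible. With nontrivial multiplicity spaces, the blocks of $b_{m-1}$ are not determined by the $\iota$Serre relations. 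Nor is there a ready-made tensor-operator (Wigner--Eckart) theory for the coideal subalgebra $\iqUsom[m-1]$ that would reduce them to scalars. Complete reducibility of $V|_{\iqUsom[m-1]}$ is likewise not a routine Casimir argument here; in \cite{MR2143754} it is a separate theorem, quoted in Proposition \ref{prop-classification-thm}. So the whole content is deferred to your ``main obstacle''.

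The proposed resolution of that obstacle does not work as stated. In the paper's normalization the classical point is $q=\mathsf{i}$, not $q=1$. More importantly, the modules this paper needs are type I nonclassical. On these, $b_1$ has eigenvalues $(-1)^j[2j+1]/[2]$, which have a pole at $q=\mathsf{i}$. Hence $V$ has no $b_1$-stable lattice over the local ring at $q=\mathsf{i}$, and $\dim\End_{\iqUsom[m-1]}(V)$ cannot be compared with any classical multiplicity.

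Suppose instead you mean the abstract centralizer $C$ of $\iqUsom[m-1]$ in $\iqUsom$. Semicontinuity on PBW-filtered pieces at best bounds $\dim(C\cap F_N)$ from above. You would still need to exhibit enough commuting elements (e.g.\ central elements of both algebras) to attain that bound. Even then, commutativity of $C$ gives commutativity of $\End_{\iqUsom[m-1]}(V)$ only if $C\to\End_{\iqUsom[m-1]}(V)$ is surjective. Classically that surjectivity comes from averaging, i.e.\ complete reducibility of the adjoint action of $\som[m-1]$ on $U(\som)$. But $\iqUsom[m-1]$ is only a coideal subalgebra, not a Hopf subalgebra, and there is no evident adjoint action of it on $\iqUsom$ whose invariants are $C$. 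So the multiplicity-one step is exactly what your sketch is missing, and here it must come from the citation.
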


This result is used in \cite{MR2143754} to classify the irreducible finite-dimensional $\iqUsom$-representations.
Since the restriction of finite-dimensional irreducible representations at each step of the chain \eqref{eqn:chain} is multiplicity free, 
these representations have Gelfand--Tsetlin bases, which we describe below in certain cases.
(See e.g.~\cite{Molev} for a survey on such bases in the setting of classical Lie algebras.) 
In these bases, the action of the generators $b_i$ is given by explicit formulae \cite[Section 3]{MR2143754}. 
The irreducible representations are then split into two classes, depending on the eigenvalues of the generator $b_1$.
We now discuss this dichotomy.

The formulae in \cite[equations (9), (12), (15), and (16)]{MR2143754}
for the action of $b_1$ on a finite-dimensional irreducible representation 
show that this element is diagonalizable, and they explicitly describe its spectrum.
In our conventions, their $q$ is replaced with $-q^2=(\mathsf{i}q)^2$, where $\mathsf{i}=\sqrt{-1}$, 
and their generators $I_{i+1,i}$ correspond to $\mathsf{i} b_i$.

\begin{defn}\label{defn:classical}
A finite-dimensional irreducible representation of $\iqUsom$ is called \emph{classical} 
if the spectrum of $b_1$ consists of quantum numbers of the form
$\pm [\ell]_{-q^2}$ for  $\ell \in \tfrac{1}{2}\mathbb{Z}$. 

Here, when $\ell = \frac{2k+1}{2} \in \tfrac{1}{2}\mathbb{Z} \smallsetminus \Z$, 
the \emph{quantum half-integers} (in the variable $-q^2$) are defined as
\begin{equation}\label{eq:qhi}
\left[\frac{2k+1}{2}\right]_{-q^2} := \frac{(-q^2)^{\frac{2k+1}{2}} - (-q^2)^{\frac{-2k-1}{2}}}{(-q^2)-(-q^2)^{-1}} 
	= \frac{[2k+1]_{\mathsf{i}q}}{[2]_{\mathsf{i}q}} \, .
\end{equation}
\end{defn}

These classical representations are thusly named because they deform the irreducible representations of $U(\som)$, 
the classical specialization, which is obtained from $\iqUsom$ by setting $q \mapsto \mathsf{i}$.
Observe that the quantum half-integers indeed admit such a specialization:
$\frac{[2k+1]_{\mathsf{i}q}}{[2]_{\mathsf{i}q}} \mapsto \frac{[2k+1]_{-1}}{[2]_{-1}}=  -\frac{2k+1}{2}$.

\begin{rem}\label{rem:moreclassical}
As noted earlier, one can obtain representations of $\iqUsom$ by restriction from $U_{-q^2}(\slm)$. 
Representations obtained in this way have $b_1$-eigenvalues of the form $\pm [\ell]_{-q^2}$ for $\ell \in \mathbb{Z}$ 
(thus are classical), and they deform representations of $U(\som)$ whose weights lie in the root lattice. 
This is the expected behavior: 
the representations of $U(\som)$ with weights in the root lattice correspond to representations of $\mathrm{SO}_m$, 
which is a subgroup of $\mathrm{SL}_m$. 
On the other hand, representations of $U(\som)$ with weights not in the root lattice only come from 
the double cover $\mathrm{Spin}_m$ of $\mathrm{SO}_m$, which is not a subgroup of $\mathrm{SL}_m$. 
Classical representations with $b_1$-eigenvalues of the form $\pm [\ell]_{-q^2}$ for $\ell \in \frac{1}{2} + \mathbb{Z}$ 
deform these representations of $U(\son)$ whose weights do not lie in the root lattice. 
\end{rem}

On the other hand, there exist representations of $\iqUsom$ which do not deform representations of $U(\som)$. 

\begin{defn}\label{defn:nonclassical}
A finite-dimensional irreducible is called \emph{nonclassical} if 
the spectrum of $b_1$ consists of ratios of quantum integers of the form
\begin{equation}\label{eqn:no-class-spec}
\pm \frac{[2k+1]_q}{[2]_q} \, , \quad k \in \mathbb{Z} \, .
\end{equation}
\end{defn}

These \emph{half quantum integers} are distinct from both the quantum integers $\pm [\ell]_{-q^2}, \ell \in \Z$
and the quantum half-integers \eqref{eq:qhi}. 
Since $[2]_{q=\mathsf{i}} = 0$, these $b_1$-eigenvalues cannot be specialized at $q= \mathsf{i}$,
whence the term nonclassical.

In order to relate to the quotient $\iqUsom^{\le n}$ from Definition \ref{def:iQn}, 
we require information about the remaining generators $\{b_i\}_{i=2}^{m-1}$ of $\iqUsom$. 
The following is obtained in work of Wenzl.

\begin{lem}[{\cite[Theorem 3.11, part (d)]{WenzlUprime}}]\label{lem:Wenzlso3}
In every finite-dimensional irreducible representation of $\iqUsom[3]$, 
$b_1$ is conjugate to either $b_2$ or $-b_2$. \qed
	\end{lem}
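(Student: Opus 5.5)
The plan is to work entirely inside the rank-one-over-$\iqUsom[2]$ structure of $\iqUsom[3]$ and the explicit Gelfand--Tsetlin formulae of Iorgov--Klimyk. Fix a finite-dimensional irreducible representation $V$ of $\iqUsom[3]$. By Proposition \ref{prop:mult-free}, $V$ restricts to $\iqUsom[2]=\C(q)[b_1]$ as a multiplicity-free sum of one-dimensional representations, so $b_1$ is diagonalizable with simple spectrum $\sigma(b_1)$. The statement that $b_1$ is conjugate to $\pm b_2$ then reduces to showing two things: $b_2$ is diagonalizable, and its spectrum (with multiplicities) coincides with $\sigma(b_1)$ or with $-\sigma(b_1)$.

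To produce the conjugating element, I would use the symmetry of the presentation rather than formulae. There is an algebra automorphism of $\iqUsom[3]$ exchanging $b_1$ and $b_2$, since \eqref{eqn:iSerre-alg-version} is symmetric in $i$ and $i\pm1$. There is also the automorphism $\rho$ with $b_1\mapsto b_1$, $b_2\mapsto -b_2$. Twisting $V$ by the swap gives an irreducible representation $V'$ in which $b_1$ acts as $b_2$ did on $V$. Hence $b_2$ on $V$ is diagonalizable with simple spectrum, again by Proposition \ref{prop:mult-free} applied to $V'$.

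It remains to compare spectra. Here I would invoke the classification from \cite{MR2143754}: irreducibles of $\iqUsom[3]$ are labelled by a single highest label $\ell$ together with a sign, and possibly a classical or nonclassical type. The $b_1$-spectrum on the Gelfand--Tsetlin basis is read off from \cite[equations (9), (12), (15), (16)]{MR2143754}, and it is a symmetric or sign-shifted string of quantum (half-)integers determined by $(\ell,\text{type},\pm)$. The twisted module $V'$ is irreducible of the same dimension. Its $b_1$-spectrum is $\sigma(b_2)$, and it is again one of the listed strings. Since the dimension fixes $\ell$, and the strings with the same $\ell$ differ only by the global sign $\pm$ (which is the same as twisting by $\rho$), we get $\sigma(b_2)=\pm\sigma(b_1)$. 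Two diagonalizable operators with the same simple spectrum are conjugate, which finishes the argument.

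The main obstacle is this last matching. It requires knowing that, for fixed dimension, the possible $b_1$-spectra differ only by an overall sign, so that classical integer, classical half-integer, and nonclassical strings of equal length cannot be confused. First I would try to distinguish these strings by dimension parity and by the explicit eigenvalue sets. If that fails, I would instead compute the trace of $b_1$ and of $b_2$ via the Casimir-type central element of $\iqUsom[3]$. This element is symmetric in $b_1,b_2$ and acts by a scalar determining $\ell$, and it would pin down $\sigma(b_2)$ up to sign.
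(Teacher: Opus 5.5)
You set things up correctly: the swap $b_1\leftrightarrow b_2$ is an automorphism of $\iqUsom[3]$, so $b_2$ is diagonalizable with simple spectrum on $V$, and it suffices to show $\sigma(b_2)=\pm\sigma(b_1)$. (The paper gives no argument of its own here; it simply cites \cite[Theorem 3.11(d)]{WenzlUprime}.)

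The gap is the step ``the dimension fixes $\ell$, and strings with the same $\ell$ differ only by a global sign.'' This is false. Classical and nonclassical irreducibles both occur in every dimension $d$. Their $b_1$-spectra are $\{[j]_{-q^2}\}_{j=-\ell}^{\ell}$ (with $2\ell+1=d$) and $\pm\{(-1)^j\tfrac{[2j+1]}{[2]}\}_{j=0}^{d-1}$ respectively, and these are never negatives of one another. Already for $d=1$ there is the trivial representation, and there are the four representations with $b_1,b_2\in\{\pm\tfrac{1}{[2]}\}$. So your argument does not exclude that the swap-twist $V'$ is nonclassical while $V$ is classical, or vice versa. Excluding exactly this is what the lemma adds to the Iorgov--Klimyk classification.

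Telling the strings apart is easy; what you need is a reason why $V'$ has the same type as $V$. Dimension parity cannot supply this, since both types occur in all dimensions. A Casimir whose eigenvalue merely ``determines $\ell$'' cannot supply it either. You would have to compute the image of the Casimir under the swap and check that its eigenvalues separate the two types, and you have not done this.

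You can close the gap with a trace computation taken straight from \eqref{eqn:iSerre-alg-version}; no Casimir is needed.
\begin{itemize}
\item Let $c_{\lambda\mu}$ be the matrix entries of $b_2$ in a $b_1$-eigenbasis. The relation $b_1^2b_2+b_2b_1^2+[2]_{q^2}b_1b_2b_1=b_2$ gives $c_{\lambda\mu}(\lambda^2+[2]_{q^2}\lambda\mu+\mu^2-1)=0$.
\item For $\lambda=\mu$, since $2+[2]_{q^2}=[2]^2$, this forces $c_{\mu\mu}=0$ unless $\mu=\pm\tfrac{1}{[2]}$.
\item The classical eigenvalues $\pm[\ell]_{-q^2}$ are either Laurent polynomials in $q$ or lie in $\mathsf{i}\,\R(q)$, so they never equal $\pm\tfrac{1}{[2]}$. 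Hence $\mathrm{tr}_V(b_2)=0$ whenever $V$ is classical.
\item On the other hand $\mathrm{tr}_{V'}(b_1)=\mathrm{tr}_V(b_2)$, while a nonclassical irreducible of dimension $d$ has $\mathrm{tr}(b_1)=\pm[d]_{q^2}/[2]\neq 0$. Therefore the swap-twist of a classical irreducible is classical.
\item Applying this to $V'$, whose swap-twist is $V$, shows that the swap-twist of a nonclassical irreducible is nonclassical.
\end{itemize}
Once the type is known to be preserved, the dimension determines the $b_1$-spectrum up to sign within each type, and the rest of your argument goes through.
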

	
\begin{cor}\label{cor:biss}
Each generator $b_i$ is diagonalizable in every finite-dimensional irreducible representation of $\iqUsom$.
Further, the spectrum of $b_i$ is a subset of $\{\pm [\ell]_{-q^2} \mid \ell \in \tfrac{1}{2}\mathbb{Z} \}$ if the representation is classical
and is a subset of $\{\pm \frac{[2k+1]_q}{[2]_q} \mid k \in \mathbb{Z} \}$ if the representation is nonclassical.
	\end{cor}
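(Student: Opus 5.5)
The plan is to reduce to $b_1$ by conjugation, using the chain \eqref{eqn:chain} and Lemma \ref{lem:Wenzlso3}. Fix a finite-dimensional irreducible representation $V$ of $\iqUsom$. By Definitions \ref{defn:classical} and \ref{defn:nonclassical} (which rest on the explicit Gelfand--Tsetlin formulae of \cite{MR2143754}), $b_1$ acts diagonalizably on $V$, with spectrum in the classical or nonclassical set as appropriate. It then suffices to show that, for each $i$, the operator $b_i$ on $V$ is diagonalizable and its eigenvalues lie in $\pm(\text{spectrum of } b_1)$. Both candidate sets in the statement are closed under negation, so this finishes the proof.

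I would argue by induction on $i$, with base case $i=1$. For the inductive step, consider the subalgebra generated by $b_{i-1}$ and $b_i$. The relations \eqref{eqn:iSerre-alg-version} show that $b_1\mapsto b_{i-1}$, $b_2\mapsto b_i$ defines a homomorphism $\iqUsom[3]\to\iqUsom$. I restrict $V$ along this homomorphism.

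The key input is complete reducibility of this restriction. One could try iterating Proposition \ref{prop:mult-free}, but that only handles the standard chain $\iqUsom[3]\subset\cdots$, generated by $b_1,b_2$. Instead, I would use the automorphism of Lemma \ref{lem:shiftauto}, whose powers act as $b_j\mapsto b_{j-1}$ for $j\ge 2$. Its inverse powers let me transport the pair $(b_{i-1},b_i)$ to $(b_1,b_2)$: twist $V$ by the appropriate power, which preserves irreducibility and finite-dimensionality. In the twisted module, $b_{i-1}, b_i$ become $b_1,b_2$. Iterating Proposition \ref{prop:mult-free} down the chain then shows that the restriction to $\iqUsom[3]=\langle b_1,b_2\rangle$ is a direct sum of irreducibles.

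On each irreducible $\iqUsom[3]$-summand $W$, Lemma \ref{lem:Wenzlso3} says $b_2|_W$ is conjugate to $\pm b_1|_W$. Here $b_1|_W$ corresponds to $b_{i-1}$, which is diagonalizable with the allowed spectrum by induction, since it is a restriction of a diagonalizable operator to an invariant subspace. Hence $b_i|_W$ is diagonalizable with spectrum contained in $\pm\operatorname{spec}(b_{i-1})$, and summing over the summands $W$ gives the claim for $b_i$ on $V$.

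The main obstacle is the bookkeeping in the twisting step. Lemma \ref{lem:shiftauto} sends $b_1\mapsto \hh_{1,m}$, so the twisted module's "$b_1$" is not the original $b_1$. The classical/nonclassical dichotomy must therefore be read off in the twisted module rather than assumed. To avoid this subtlety, I would instead carry the spectral set as an invariant of the induction: apply Lemma \ref{lem:Wenzlso3} directly to the irreducible summands of the restriction to $\langle b_{i-1},b_i\rangle$. This needs only complete reducibility, which the twist provides, and never identifies the twisted module's $b_1$-spectrum.
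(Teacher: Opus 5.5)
Your argument is correct and follows the paper's proof. As there, you get diagonalizability of $b_1$ and its spectrum from the Iorgov--Klimyk input, then propagate along adjacent pairs $(b_{i-1},b_i)$ using Lemma \ref{lem:Wenzlso3}, and close the argument by noting that both spectral sets are stable under negation. Your main addition is an explicit proof that the restriction to $\langle b_{i-1},b_i\rangle$ is completely reducible: you twist by inverse powers of the automorphism of Lemma \ref{lem:shiftauto} and then iterate Proposition \ref{prop:mult-free}. The paper leaves this step implicit, and it could also be read off from the complete-reducibility clause of Proposition \ref{prop-classification-thm} applied to $\iqUsom[3]$.
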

\begin{proof}
That $b_1$ is diagonalizable in any finite-dimensional irreducible follows 
from repeated application of Proposition \ref{prop:mult-free}. 
The claim then follows from Lemma \ref{lem:Wenzlso3} by restricting, in turn, 
to the subalgebras of $\iqUsom$ generated by $b_i$ and $b_{i+1}$ for $i=1,\ldots,m-2$.
\end{proof}

We now look more closely at the signs that appear in the eigenvalues of the generators $b_i$ 
acting on a finite-dimensional irreducible representation, 
since they play a special role in our considerations.
Observe that, for each $i\in\{1, \dots, m-1\}$, there is an algebra automorphism of $\iqUsom$ which sends $b_i \mapsto -b_i$ 
and $b_j \mapsto b_j$ for $j\ne i$. 
More generally, for any sequence $\mathbf{\epsilon} = (\epsilon_1, \dots, \epsilon_{m-1})$ with $\epsilon_i \in \{\pm 1\}$, 
there is an algebra automorphism such that $b_i\mapsto \epsilon_ib_i$.
Thus, given a finite-dimensional irreducible representation of $\iqUsom$, 
one obtains another irreducible representation by precomposing the action map with such an automorphism.
We call this a \emph{sign twist} of the original representation.

\begin{rem}\label{rem:sgntwistclassical}
In the classical case, an irreducible representation will have an eigenvector for $b_i$ 
with eigenvalue $[\ell]_{-q^2}$ if and only if there is an eigenvector with eigenvalue $-[\ell]_{-q^2}$. 
This parallels how weights in $\som$-representations are evenly distributed about the origin 
(a consequence of invariance under the Weyl group). 
Correspondingly, any sign twist of a classical representation of $\iqUsom$ 
is isomorphic to the original representation.
\end{rem}

\begin{rem}\label{rem:moremoreclassical}
Continuing Remark \ref{rem:moreclassical}, 
specializing $q$ to $\mathsf{i}$ in the classical irreducible representations, we recover the irreducible representations of $\som$. 
The Lie subalgebra generated by $b_1, b_3, b_5, \ldots$ then becomes a Cartan subalgebra of $\som$, 
which acts with weights such that $b_i\mapsto \pm [\ell]_{-q^2= 1}= \pm \ell$ 
for some $\ell \in \frac{1}{2}\mathbb{Z}$. 
For example, consider $\som[3]\cong\mathfrak{sl}_2$. 
The generator $b_1$ spans a Cartan subalgebra which is conjugate (over $\C$) to the usual Cartan subalgebra. 
Specifically, $b_1\in \mathfrak{so}_3$ corresponds to $\frac{1}{2}h\in \mathfrak{sl}_2$.
\end{rem}

The behavior in Remark \ref{rem:sgntwistclassical} is directly contrasted in the nonclassical representations.

\begin{lem}\label{lem:sigmaplusorminus}
The spectrum of $b_1$ acting on a 
nonclassical irreducible representation is either a subset of 
\begin{equation}\label{eq:type+}
\sigma_{+} :=
\left\{\frac{[1]}{[2]}, -\frac{[3]}{[2]}, \frac{[5]}{[2]}, \dots, (-1)^{j}\frac{[2j+1]}{[2]}, \dots\right\}
\end{equation}
or a subset of 
\begin{equation}\label{eq:type-}
\sigma_{-} :=
\left\{-\frac{[1]}{[2]}, \frac{[3]}{[2]}, -\frac{[5]}{[2]}, \dots, (-1)^{j+1}\frac{[2j+1]}{[2]}, \dots\right\} \, .
\end{equation}
\end{lem}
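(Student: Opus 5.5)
The plan is to reduce to the explicit Gelfand--Tsetlin formulae for the action of $b_1$ recorded in \cite[equations (9), (12), (15), (16)]{MR2143754}, after the substitution $q\mapsto -q^2$ and $I_{2,1}\mapsto \mathsf{i} b_1$. In a Gelfand--Tsetlin basis of a nonclassical irreducible, $b_1$ acts on the relevant $\iqUsom[2]$-components. Since $\iqUsom[2]=\C(q)[b_1]$ is commutative, restricting along the chain \eqref{eqn:chain} down to $\iqUsom[3]$ and then to $\iqUsom[2]$ (multiplicity free by Proposition \ref{prop:mult-free}) decomposes the representation into one-dimensional $b_1$-eigenspaces. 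In the Iorgov--Klimyk parametrization, the eigenvalue on each such line is given by a single formula in the lowest Gelfand--Tsetlin row entry $m_{2}$, roughly $\epsilon\,\frac{[2m_2]_{\mathsf{i}q}}{[2]_{\mathsf{i}q}}$ up to normalization. Here $\epsilon\in\{\pm1\}$ is a sign attached to the nonclassical representation itself, and $m_2$ ranges over half-integers. The first step is therefore to transcribe this formula carefully into our conventions.

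The second step is a direct arithmetic identity: for $\ell=\tfrac{2j+1}{2}$,
\[
\frac{[2j+1]_{\mathsf{i}q}}{[2]_{\mathsf{i}q}} \;\text{ (appropriately normalized) }\; = (-1)^{j}\frac{[2j+1]}{[2]} .
\]
This holds because $(\mathsf{i}q)^{a}-(\mathsf{i}q)^{-a}=\mathsf{i}^{a}(q^{a}-(-1)^{a}q^{-a})$, and for odd $a=2j+1$ this equals $\mathsf{i}^{2j+1}(q^{a}+q^{-a})$. One must track the ratio $\mathsf{i}^{2j+1}/\mathsf{i}^{2}$ against the normalization of the nonclassical case; the overall factor of $\mathsf{i}$ cancels with the factor in $I_{2,1}=\mathsf{i}b_1$. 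The key point is that the alternating sign $(-1)^j$ depends only on $j$, while the remaining sign is the fixed $\epsilon$. Consequently every eigenvalue lies in $\epsilon\,\sigma_+$, which is either $\sigma_+$ or $\sigma_-$.

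The main obstacle is confirming from \cite{MR2143754} that the sign $\epsilon$ is indeed constant across the whole irreducible. The nonclassical family is indexed by sign data, and the sign appears as a global parameter in the formulae rather than varying with the basis vector. It must also be checked that, in the rewriting above, the expressions labelled $[2k+1]/[2]$ with $k$ of either parity do not secretly mix the two sign patterns.

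If reading the formulae is ambiguous, I would fall back on a structural argument. Restrict to $\iqUsom[3]$ and use Lemma \ref{lem:Wenzlso3} together with the known nonclassical $\iqUsom[3]$ irreducibles, whose $b_1$-spectra are $\{(-1)^j\epsilon[2j+1]/[2]\}_{j\le J}$. Since $b_1$ lies in each $\iqUsom[3]$-summand, the spectrum on each summand is alternating. What remains is to show that all summands share the same $\epsilon$. For this I would use the $\iota$Serre relation \eqref{eqn:iSerre-alg-version} connecting $b_1$-eigenvectors via $b_2$, and the fact that, with the spectra in \eqref{eqn:no-class-spec}, $b_2$ can only link eigenvalues differing in the way dictated by the Gelfand--Tsetlin raising and lowering.
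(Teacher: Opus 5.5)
Your main route is the same as the paper's. The paper's proof of this lemma is only the citation that it holds ``by direct construction'' in \cite{MR2143754}: one reads off the Gelfand--Tsetlin action of $b_1$, in which the sign is a global parameter of the nonclassical representation.

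However, the one computation you actually carry out is wrong, so your second step fails as written. The expression $\frac{[2j+1]_{\mathsf{i}q}}{[2]_{\mathsf{i}q}}$ is exactly the \emph{classical} quantum half-integer $\big[\tfrac{2j+1}{2}\big]_{-q^2}$ of \eqref{eq:qhi}, which the paper explicitly distinguishes from the nonclassical eigenvalues. Your own calculation shows its numerator is $\mathsf{i}^{2j+1}(q^{2j+1}+q^{-2j-1})$, and in fact
\[
\frac{[2j+1]_{\mathsf{i}q}}{[2]_{\mathsf{i}q}}=(-1)^{j+1}\,\mathsf{i}\,\frac{q^{2j+1}+q^{-2j-1}}{q^{2}-q^{-2}} .
\]
Its ratio to $(-1)^j\frac{[2j+1]}{[2]}$ is $-\mathsf{i}\,\frac{q^{2j+1}+q^{-2j-1}}{q^{2j+1}-q^{-2j-1}}$, which depends on $j$. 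So no ``appropriate normalization'' repairs it, and a formula of the shape $\epsilon[2m_2]_{\mathsf{i}q}/[2]_{\mathsf{i}q}$ would produce classical spectra.

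The nonclassical eigenvalues correspond instead, in the Iorgov--Klimyk variable $q_{IK}=-q^2$ with $q_{IK}^{1/2}=\mathsf{i}q$, to the ``plus'' numbers $q_{IK}^{m}+q_{IK}^{-m}$ with $m$ half-integral. There $(\mathsf{i}q)^{2j+1}+(\mathsf{i}q)^{-2j-1}=(-1)^j\mathsf{i}\,(q^{2j+1}-q^{-2j-1})$. This produces both $[2j+1]$ and the alternating sign $(-1)^j$, and leaves the global sign $\epsilon$ untouched. With that correction, your first route coincides with the paper's.

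Your fallback also puts the key step in the wrong place. Since $b_2\in\iqUsom[3]$, it preserves every $\iqUsom[3]$-summand and so cannot compare signs on different summands. Your $\iota$Serre idea does, however, give a complete, citation-free proof if you apply it directly to $b_1$-eigenspaces (take $m\ge 3$; $m=2$ is trivial).

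By Corollary \ref{cor:biss}, $b_1$ is diagonalizable. Let $M^{\pm}$ be the sum of its eigenspaces with eigenvalues in $\sigma_{\pm}$; then $M=M^+\oplus M^-$ because $\sigma_+\cap\sigma_-=\emptyset$. Suppose $b_1v=\lambda v$ and $b_2v=\sum_\mu w_\mu$ with $b_1w_\mu=\mu w_\mu$. Applying \eqref{eqn:iSerre-alg-version} to $v$ gives
\[
\textstyle\sum_\mu(\mu^2+[2]_{q^2}\lambda\mu+\lambda^2-1)\,w_\mu=0 .
\]
For $\lambda=\epsilon(-1)^j\frac{[2j+1]}{[2]}$, the two roots of this quadratic are $\epsilon(-1)^{j+1}\frac{[2j+3]}{[2]}$ and $\epsilon(-1)^{j-1}\frac{[2j-1]}{[2]}$; this follows from $[a+2]+[a-2]=[2]_{q^2}[a]$ and $[a+2][a-2]=[a]^2-[2]^2$. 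Both roots lie in $\epsilon\sigma_+$ (for $j=0$ the second is $\epsilon\frac{[1]}{[2]}$).

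So $b_2$ preserves $M^{\pm}$, as does $b_1$. The generators $b_3,\dots,b_{m-1}$ commute with $b_1$, so they preserve its eigenspaces. Hence $M^{\pm}$ are subrepresentations, and irreducibility forces one of them to vanish. This is the same device the paper uses in Lemma \ref{lem:typeIexist}, and it removes any dependence on the normalizations in \cite{MR2143754}.
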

\begin{proof}
By direct construction, this is shown in \cite{MR2143754}. 
\end{proof}

Implicit in \cite{MR2143754} is the following result; 
we provide full details since it is essential for our remaining arguments.

\begin{lem}\label{lem:typeIexist}
Let $M$ be a nonclassical irreducible representation of $\iqUsom$. 
There exists a sign twist of $M$ so that the spectrum of every generator $b_i$ is a subset of $\sigma_{+}$.
	\end{lem}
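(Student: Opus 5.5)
Fix a nonclassical irreducible $M$. The plan is to pick the signs $\epsilon_i$ one generator at a time, using Lemma \ref{lem:sigmaplusorminus} together with the conjugacy statement of Lemma \ref{lem:Wenzlso3}.

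First I would establish that Lemma \ref{lem:sigmaplusorminus} holds for every generator $b_i$, not only $b_1$. That is, on any nonclassical irreducible $M$ the spectrum of each $b_i$ lies entirely in $\sigma_+$ or entirely in $\sigma_-$. For this, restrict $M$ to the subalgebra generated by $b_{i-1},b_i$; this subalgebra is a copy of $\iqUsom[3]$. By Proposition \ref{prop:mult-free} (iterated along the chain \eqref{eqn:chain}, after a shift), the restriction is a direct sum of finite-dimensional irreducibles. On each such irreducible summand $N$, Lemma \ref{lem:Wenzlso3} says $b_i$ is conjugate to $\pm b_{i-1}$, so its spectrum is $\pm$ the spectrum of $b_{i-1}|_N$. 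Note that $\sigma_-=-\sigma_+$ and $\sigma_+\cap\sigma_-=\emptyset$, since these half quantum integers are generically distinct. Hence, by induction on $i$ starting from $b_1$, the spectrum of $b_i$ on each summand $N$ lies in $\sigma_+$ or in $\sigma_-$. (Here $b_1|_N$ has spectrum inside the global spectrum of $b_1$, which lies in one of $\sigma_\pm$.)

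The key point, and the main obstacle, is to rule out that different summands $N$ give different signs for $b_i$. Otherwise the spectrum of $b_i$ on $M$ would meet both $\sigma_+$ and $\sigma_-$. I would handle this by observing that the argument for Lemma \ref{lem:sigmaplusorminus} applies to any generator via the shift automorphism. The automorphism of Lemma \ref{lem:shiftauto}, iterated, or more simply the index-shifting reasoning, transports $b_i$ to a generator that plays the role of $b_1$ in an alternate chain of subalgebras. Twisting $M$ by that automorphism gives another nonclassical irreducible (irreducibility is preserved, and nonclassicality holds because the spectrum of $b_1$ on the twist is the spectrum of some conjugate element, which is half quantum integers by Corollary \ref{cor:biss}). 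Lemma \ref{lem:sigmaplusorminus} then says the spectrum of $b_i$ on $M$ lies in a single $\sigma_\pm$. If that transport fails, the fallback is the explicit Gelfand--Tsetlin formulae of \cite{MR2143754}. There I would read off that on a fixed irreducible the sign of the $b_i$-eigenvalues is determined by a single sign parameter attached to the $i$-th level of the Gelfand--Tsetlin pattern.

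Finally, define $\epsilon_i=+1$ if the spectrum of $b_i$ lies in $\sigma_+$, and $\epsilon_i=-1$ if it lies in $\sigma_-$. Precompose the action with the automorphism $b_i\mapsto\epsilon_ib_i$. The twisted generator acts as $\epsilon_ib_i$, whose spectrum is $\epsilon_i$ times the old spectrum, and this lies in $\sigma_+$ for every $i$, as required.
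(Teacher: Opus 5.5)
Your argument is correct, but it takes a different route from the paper.

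\textbf{The paper's route.} The paper inducts on $m$. The base case $m=3$ comes from Lemma \ref{lem:sigmaplusorminus} together with Lemma \ref{lem:Wenzlso3}. In the inductive step it restricts $M$ to $\iqUsom[m-1]$ and collects the summands on which $b_i$ has spectrum in $\sigma_{\pm}$ into $M_i^{\pm}$. It then shows these are subrepresentations by exhibiting an element that commutes with $b_i$ and, together with the subalgebra, generates $\iqUsom$:
\begin{itemize}
\item $b_{m-1}$ for $i\le m-3$;
\item $\hh_{1,m}$ for $i=m-2$;
\item $b_1$ together with the shifted copy $\langle b_2,\dots,b_{m-1}\rangle$ for $i=m-1$.
\end{itemize}
Irreducibility then kills one of $M_i^{\pm}$.

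\textbf{Your route.} You reduce each generator to $b_1$ by an automorphism. If $\theta$ is the automorphism of Lemma \ref{lem:shiftauto}, then $\theta^{-1}(b_j)=b_{j+1}$ for $j\le m-2$. Hence $\eta=\theta^{-(i-1)}$ satisfies $\eta(b_1)=b_i$; note it is the inverse of $\theta$, not $\theta$ itself, that you want. The twist $M^{\eta}$ (on which $x$ acts by $\eta(x)$) is irreducible, and it is nonclassical by Corollary \ref{cor:biss}. Applying Lemma \ref{lem:sigmaplusorminus} to $M^{\eta}$ says exactly that the spectrum of $b_i$ on $M$ lies in a single $\sigma_{\pm}$, after which independent sign twists finish the proof.

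This makes two parts of your proposal unnecessary. Your first step should be dropped: its induction on $i$ does not go through for $i\ge 3$, because the hypothesis concerns summands of a different restriction, and Corollary \ref{cor:biss} already gives what you need there. The Gelfand--Tsetlin fallback is also not needed, since the transport does not fail.

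\textbf{Comparison.} Your proof is shorter and avoids both the induction and the commutation check for $\hh_{1,m}$. However, it invokes Lemma \ref{lem:sigmaplusorminus} for arbitrary $m$ on twisted modules. The paper's induction only needs that lemma for $\iqUsom[3]$ and derives the uniformity of signs across summands from irreducibility, so it leans less on the explicit construction in \cite{MR2143754}.
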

\begin{proof}
First, since $M$ is nonclassical and irreducible, 
Corollary \ref{cor:biss} says $b_i$ is diagonalizable on $M$ with spectrum in $\sigma_{+}\cup \sigma_{-}$. 
Moreover, Lemma \ref{lem:sigmaplusorminus} says that $b_1$ has spectrum entirely contained
in either $\sigma_{+}$ or $\sigma_{-}$.

We now proceed via induction on $m$.
The base case ($m=3$) follows from Lemma \ref{lem:Wenzlso3} since we may first sign twist 
to ensure that $b_1$ has spectrum contained in $\sigma_{+}$ and then,
if necessary, sign twist again to guarantee that $b_1$ is conjugate to $b_2$.

Now, suppose that $m \geq 4$ and that the result holds for $\iqUsom[m-1]$. 
Given a nonclassical irreducible representation $M$ of $\iqUsom$, 
decompose the restriction $M|_{\iqUsom[m-1]}$ into irreducible summands.
By the induction hypothesis, the spectrum of each $b_i$ on each of these summands is 
contained in either $\sigma_{+}$ or $\sigma_{-}$.
Denote the direct sum of all summands on which the spectrum of $b_i$ is contained in 
$\sigma_{\pm}$ by $M_{i}^{\pm}$. Note that each of $M_{i}^{\pm}$ is the direct sum 
of all $b_{i}$-eigenspaces for eigenvalues in $\sigma_{\pm}$.

Suppose that $1 \leq i \leq m-3$, so $b_i$ commutes with $b_{m-1}$. 
It follows that $b_{m-1}$ preserves each $b_i$-eigenspace. 
Since $\sigma_{+} \cap \sigma_{-} = 0$, this implies that $b_{m-1}(M_{i}^{+}) \subset M_{i}^{+}$ 
and $b_{m-1}(M_{i}^{-}) \subset M_{i}^{-}$. 
Since $\iqUsom$ is generated by $\iqUsom[m-1]$ and $b_{m-1}$, 
it follows that $M_{i}^{+}$ and $M_{i}^{-}$ are subrepresentations, hence one of them must be zero.
Consequently, we may sign twist $M$ so that the spectrum of each $b_i$ with $1 \leq i \leq m-3$ 
is a subset of $\sigma_{+}$. 

Next, consider $b_{m-2}$. 
Again, we may decompose $M = M_{m-2}^{+} \oplus M_{m-2}^{-}$.
By Lemma \ref{lem:shiftauto}, $\iqUsom$ is generated by $\iqUsom[m-1]$ and $\hh_{1,m}$. 
Moreover, a routine check shows that $\hh_{1,m}$ commutes with $b_{m-2}$.  
Thus, we may repeat the above argument to see that 
one of $M_{m-2}^{\pm}$ must be zero. Again, we may therefore sign twist $M$ so that 
the spectrum of $b_{m-2}$ is a subset of $\sigma_{+}$.

Finally, we consider $b_{m-1}$. 
Repeating the above arguments for the restriction of $M$ to the copy of $\iqUsom[m-1]$ 
generated by $b_2,\ldots,b_{m-1}$ and using the fact that $b_{m-1}$ commutes with $b_1$ (since $m\ge 4$), 
we again see that the spectrum of $b_{m-1}$ is contained in one of $\sigma_{\pm}$. 
If necessary, a final sign twist ensures it is contained in $\sigma_{+}$.
	\end{proof}

We now give a name to the nonclassical representations from Lemma \ref{lem:typeIexist}.

\begin{defn}\label{def:nonclassical-typeI}
A nonclassical representation of $\iqUsom$ is of \emph{type I} 
provided the eigenvalues of all the $b_i$ are contained in the set $\sigma_+$ 
from \eqref{eq:type+}.
\end{defn}

By Remark \ref{rem:bi-spectrum}, 
all irreducible representations of the algebras $\iqUsom^{\le n}$
are type I nonclassical representations of $\iqUsom$.
With this terminology in place, we now record the main result of Iorgov--Klimyk.

\begin{prop}[{\cite[Theorem 4 and its Corollary]{MR2143754}}]\label{prop-classification-thm}
Each finite-dimensional irreducible 
representation of $\iqUsom$ is either classical or nonclassical.
Moreover, 
\begin{itemize}
\item The isomorphism classes of classical (finite-dimensional) irreducible representations are in bijection with dominant integral weights for $\som$. 
\item The isomorphism classes of type I nonclassical (finite-dimensional) irreducible representations are in bijection 
with weakly decreasing $k$-tuples $\ab = (a_1 \ge a_2 \ge \cdots \ge a_k)$ of positive half-integers 
(recall Convention \ref{convention:halfinteger}). Here, $k$ is determined by $m = 2k$ or $m=2k+1$, depending on the parity of $m$.
\item Every nonclassical (finite-dimensional) irreducible representation is isomorphic to a sign twist of a type I nonclassical irreducible representation
(and no two distinct such sign twists are isomorphic).
\end{itemize}
Finally, all finite-dimensional representations of $\iqUsom$ are completely reducible.
\end{prop}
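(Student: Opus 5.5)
This proposition is quoted from Iorgov--Klimyk \cite{MR2143754}, so the plan is to assemble it from their Gelfand--Tsetlin machinery in our conventions, substituting $q \mapsto -q^2$ and $I_{i+1,i} \leftrightarrow \mathsf{i} b_i$. We will not reprove their explicit formulae. The argument runs by induction along the chain \eqref{eqn:chain}.

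The first step is the dichotomy classical versus nonclassical. For $m=3$ we will use Wenzl's classification \cite{WenzlUprime} together with the explicit $\iqUsom[3]$-modules. In that case the spectrum of $b_1$ is forced to be of the form $\{\pm[\ell]_{-q^2}\}$ or $\{\pm\frac{[2k+1]}{[2]}\}$, according to which solution one takes of the quadratic-type constraint coming from \eqref{eqn:iSerre-alg-version} on adjacent eigenvalues. For general $m$ we will restrict to $\iqUsom[m-1]$. By Proposition \ref{prop:mult-free} the restriction is multiplicity free, so the module has a Gelfand--Tsetlin basis. Corollary \ref{cor:biss} and Lemma \ref{lem:sigmaplusorminus} then show that all the $b_i$ have spectrum of a single kind, so the whole representation is either classical or nonclassical.

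The second step is the parametrization. Following Iorgov--Klimyk, we will encode a Gelfand--Tsetlin basis vector as a pattern of rows $\mathbf{a}^{(m)},\mathbf{a}^{(m-1)},\dots$ satisfying the usual $\mathfrak{so}$ betweenness conditions. In these coordinates $b_{i}$ acts by the explicit tridiagonal formulae of \cite[Section 3]{MR2143754}. The top row is then an invariant of the irreducible module, which is determined by its highest (first) row. We must check two things: that the formulae define a representation exactly when the entries of that row are all integers or all half-integers (the classical case, dominant weights of $\som$), and that in the nonclassical case the rows consist of positive half-integers $a_1\ge\cdots\ge a_k$ with no sign-change freedom in the last entry. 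That the top row is invariant and determines the module follows by induction, since the row is read off from the branching to $\iqUsom[m-1]$ together with the $b_{m-1}$ action.

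For the sign-twist statement, Lemma \ref{lem:typeIexist} already provides existence. For uniqueness, two distinct sign vectors $\epsilon\ne\epsilon'$ applied to a type I module differ on some $b_i$. Since $\sigma_+\cap\sigma_-=\emptyset$, the spectra of that $b_i$ differ, so the twisted modules are not isomorphic.

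Complete reducibility is the main obstacle, because $\iqUsom$ is infinite-dimensional. Here we will invoke Iorgov--Klimyk's argument: use the Casimir-type central elements together with the multiplicity-free branching, and induct on $m$, so that extensions between nonisomorphic irreducibles split, and self-extensions vanish because $b_1$ acts semisimply.
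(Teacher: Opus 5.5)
The paper gives no argument for this proposition. It is imported from \cite[Theorem 4 and its Corollary]{MR2143754}, and Lemma \ref{lem:sigmaplusorminus} (hence Lemma \ref{lem:typeIexist}) is itself read off from Iorgov--Klimyk's explicit modules. Citing is fine, but the argument you build around the citation does not establish the statement.

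\textbf{Exhaustiveness is assumed.} This is the central gap, in your second step. Iterating Proposition \ref{prop:mult-free} gives an arbitrary irreducible $M$ a Gelfand--Tsetlin basis. It does not tell you which $\iqUsom[m-1]$-constituents occur, that their labels form the interlacing family of a single top row, or how $b_{m-1}$ acts between them. Asserting that ``in these coordinates $b_i$ acts by the explicit formulae of \cite[Section 3]{MR2143754}'', and that the top row is ``read off from the branching'', is exactly the claim that every finite-dimensional irreducible is isomorphic to one of the constructed modules. That claim is the substance of \cite[Theorem 4]{MR2143754}. What you propose to check (the formulae define modules for admissible labels, and the label is an invariant) gives only existence and irredundancy.

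\textbf{The dichotomy step is circular, but repairable.} The spectral statements in Corollary \ref{cor:biss} and Lemma \ref{lem:sigmaplusorminus} take as hypothesis that the representation is already classical or nonclassical, so they cannot be used to prove that it is. You can fix this with tools at hand. Iterating Proposition \ref{prop:mult-free}, the restriction of $M$ to the copy of $\iqUsom[3]$ generated by $b_1,b_2$ is a direct sum of irreducibles, each classical or nonclassical by the $m=3$ case. The two kinds of eigenvalue are distinct, so the splitting of $M$ into $b_1$-eigenspaces of each kind is stable under $b_1,b_2$. It is also stable under $b_j$ for $j\ge 3$, since these commute with $b_1$. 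Irreducibility then leaves only one kind, exactly as in the proof of Lemma \ref{lem:typeIexist}.

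\textbf{The complete reducibility step does not work as described.}
\begin{itemize}
\item Weyl's Casimir argument uses $\mathrm{Ext}^1(V,W)\cong\mathrm{Ext}^1(\mathbf{1},V^*\otimes W)$ to reduce to extensions of the trivial module. That requires a Hopf structure, which the coideal subalgebra $\iqUsom$ lacks.
\item Central elements do not separate the irreducibles you need to separate. The defining relations are homogeneous modulo $2$ in each $b_i$ (this is why sign twists exist). For instance, the Casimir element of $\iqUsom[3]$ has even degree in both $b_1$ and $b_2$. So it is fixed by every sign twist, and acts by the same scalar on a nonclassical irreducible and on each of its sign twists, which are pairwise non-isomorphic.
\item ``Self-extensions vanish because $b_1$ acts semisimply'' does not follow. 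Corollary \ref{cor:biss} gives semisimplicity only on irreducibles, not on an extension, and semisimplicity of one generator would not split an extension anyway.
\item The statement is false for $m=2$, where $\iqUsom[2]=\C(q)[b_1]$ has arbitrary eigenvalues and Jordan blocks. Any induction therefore needs a genuine base case at $m=3$.
\end{itemize}

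Your argument that distinct sign twists of a type I module are non-isomorphic, via $\sigma_+\cap\sigma_-=\varnothing$, is correct.
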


\begin{cor}\label{cor:complete-reducibility}
The algebra $\iqUsom^{\le n}$ is a finite-dimensional semisimple algebra.
\end{cor}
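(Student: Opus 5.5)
The plan is to reduce to a general fact: a finite-dimensional algebra all of whose finite-dimensional modules are completely reducible is semisimple. Finite-dimensionality of $\iqUsom^{\le n}$ is already Proposition \ref{prop:finite-dim}, so only complete reducibility needs an argument.

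First, view the left regular module $A:=\iqUsom^{\le n}$ as a finite-dimensional representation of $\iqUsom$ by pulling back along the quotient map $\iqUsom \twoheadrightarrow \iqUsom^{\le n}$. The last assertion of Proposition \ref{prop-classification-thm} says that every finite-dimensional $\iqUsom$-representation is completely reducible. Hence $A$ decomposes as a direct sum of irreducible $\iqUsom$-submodules.

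Next, observe that the $\iqUsom$-submodules of $A$ coincide with its $\iqUsom^{\le n}$-submodules. This holds because $\iqUsom$ acts through the surjection onto $\iqUsom^{\le n}$. The same reasoning shows that simplicity is the same notion for both algebras. Therefore the left regular module of $A$ is a direct sum of simple $A$-modules. A finite-dimensional algebra whose regular module is semisimple is a semisimple algebra, by Wedderburn--Artin, or equivalently because the Jacobson radical annihilates every simple module and hence annihilates $A$.

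I do not expect a real obstacle here. The only point needing care is that the cited complete reducibility result of Iorgov--Klimyk covers all finite-dimensional $\iqUsom$-representations, and not merely the classical ones or those with diagonalizable $b_i$. The regular representation of the quotient qualifies as such a representation, since it is finite-dimensional by Proposition \ref{prop:finite-dim}.

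It is also worth recording, for later use, a description of the simple modules. Every simple $\iqUsom^{\le n}$-module is a finite-dimensional irreducible $\iqUsom$-representation on which each $b_i$ has spectrum in $\{(-1)^{j-1}\tfrac{[2j-1]}{[2]}\}_{1\le j\le n}$, by Remark \ref{rem:bi-spectrum}. Such a module is therefore type I nonclassical, in the sense of Definition \ref{def:nonclassical-typeI}.
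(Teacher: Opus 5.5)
Your proposal is correct and matches the paper's proof: finite-dimensionality comes from Proposition~\ref{prop:finite-dim}, and semisimplicity comes from applying the complete reducibility statement at the end of Proposition~\ref{prop-classification-thm} to the left regular module; your remarks about submodules and Wedderburn--Artin just spell out steps the paper leaves implicit. A small aside on your closing remark, which is not needed for the corollary: $p_{n+1}$ has $n+1$ roots, so the spectrum of each $b_i$ lies in $\{(-1)^{j-1}\tfrac{[2j-1]}{[2]}\}_{1\le j\le n+1}$, and the range $1\le j\le n$ you took from Remark~\ref{rem:bi-spectrum} is off by one there.
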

\begin{proof}
By Proposition \ref{prop:finite-dim}, $\iqUsom^{\le n}$ is finite-dimensional 
and then, by Proposition \ref{prop-classification-thm}, the left regular module is completely reducible.
	\end{proof}

Henceforth, we exclusively focus on the type I nonclassical 
(finite-dimensional) irreducible representations for the algebra $\iqUsom$. 
To begin, we establish notation and terminology for the data which parametrize these representations.

\begin{defn} 
Let $m \ge 0$. 
An \emph{$m$-spin-partition} is a weakly decreasing $k$-tuple $\ab = (a_1 \ge a_2 \ge \ldots \ge a_k)$ of positive half-integers, 
where $m = 2k$ or $m=2k+1$, depending on parity. 
Its \emph{leading term} is $\lead(\ab) = a_1$, which is defined so long as $m \ge 2$. 
We write $\SP_{m}$ for the set of all $m$-spin-partitions and $\SP_m^{\le n}$ for the set of all $m$-spin-partitions 
with $\lead(\ab)\le \frac{2n+1}{2}$.
\end{defn}

For example, $\ab = (\frac{7}{2}, \frac{5}{2}, \frac{5}{2}, \frac{1}{2})$ is both an $8$-spin-partition 
and a $9$-spin-partition, and $\lead(\ab) = 7/2$. 
The empty tuple is the unique $0$-spin-partition and the unique $1$-spin-partition.

\begin{rem} 
Although an $m$-spin-partition determines a dominant weight in type $B_k$ (where $m=2k$ or $2k+1$), 
we encourage the reader to not think about $m$-spin-partitions this way. 
Below, we will use a different relationship between $m$-spin-partitions 
and dominant weights of type $B_n$ for some $n$ unrelated to $m$.
\end{rem} 

In this language, Proposition \ref{prop-classification-thm} asserts that the 
type I nonclassical representations of $\iqUsom$ are indexed by $m$-spin-partitions. 
We now aim to describe bases of these representations. 
The following combinatorial data will help in indexing these bases.

\begin{defn}
Let $\ab = (a_1 \ge a_2 \ge \cdots \ge a_k)$ be a $(2k+1)$-spin-partition (for $2k+1 \ge 1$) 
and let $\ab' = (a_1' \ge a_2' \ge \cdots \ge a_k')$ be a $2k$-spin-partition (both are $k$-tuples). 
We say that $\ab$ and $\ab'$ \emph{interlace} if
\[ 
a_1 \ge a_1' \ge a_2 \ge a_2' \ge \cdots \ge a_k \ge a_k' \, .
\]
Meanwhile, let $\ab = (a_1 \ge a_2 \ge \cdots \ge a_k)$ be a $2k$-spin-partition (for $2k \ge 2$) 
and let $\ab' = (a_1' \ge a_2' \ge \cdots \ge a_{k-1}')$ be a $(2k-1)$-spin-partition 
(now, the former is a $k$-tuple and latter a $(k-1)$-tuple). 
We say that $\ab$ and $\ab'$ \emph{interlace} if
\[
a_1 \ge a_1' \ge a_2 \ge a_2' \ge \cdots \ge a_{k-1} \ge a_{k-1}' \ge a_k \, .
\]
The unique $1$-spin-partition and $0$-spin-partition interlace, 
and any $2$-spin-partition interlaces with the unique $1$-spin-partition.
\end{defn}

\begin{rem}\label{rem:interlacesaturated}
If $\abb\in \SP_{m-1}$ interlaces $\ab\in \SP_{m}^{\le n}$, then $\abb\in \SP_{m-1}^{\le n}$. 
\end{rem}

\begin{defn} 
Let $m \ge 0$. 
An \emph{$m$-spin-tableau} is an $m$-tuple 
$(\ab_m,\ab_{m-1}, \ldots, \ab_1, \ab_0)$
where each $\ab_i\in \SP_i$ and where $\ab_i$ and $\ab_{i-1}$ interlace for all $1 \le i \le m$.
\end{defn}

\begin{example}\label{ex:spintable}
The following describes a $9$-spin-tableau:
\[
\begin{tikzcd}[column sep = 0.1 em, row sep =0.1 em]
\ab_9: & 9/2 & & 5/2 & & 5/2 & & 1/2 &  \\
\ab_8: &  & 9/2 & & 5/2 & & 5/2 & & 1/2 \\
\ab_7: &  & & 7/2 & & 5/2 & & 3/2 &  \\
\ab_6: &  & & & 5/2 & & 5/2 & & 1/2 \\
\ab_5: &  & & & & 5/2 & & 3/2 &  \\
\ab_4: &  & & & & & 5/2 & & 3/2 \\
\ab_3: &  & & & & & & 3/2 &  \\
\ab_2: &  & & & & & & & 3/2 \\
	\end{tikzcd}
	\]
We do not display $\ab_0$ and $\ab_1$, which are both the empty tuple.
	\end{example}

\begin{rem} 
The reader familiar with the Okounkov--Vershik \cite{MR1443185} approach to the
representation theory of symmetric groups 
should think of $m$-spin-partitions as being analogous to partitions of $m$, 
and $m$-spin-tableaux as being analogous to standard Young tableaux, 
i.e.~interlacing sequences of partitions.

The reader familiar with Gelfand--Tsetlin bases for irreducible representations of $\glm$ will observe the 
clear parallel between $m$-spin-tableau and (type $A$) Gelfand--Tsetlin patterns \cite{GelfandCetlin}.
\end{rem}

We next state a refinement of the type I nonclassical case of Proposition \ref{prop-classification-thm}.

\begin{thm}\label{thm:reptheorytheorem}
For each $m$-spin-partition $\ab\in \SP_m$, there is a 
type I nonclassical irreducible representation $M_{\ab}$ of $\iqUsom$. 
Upon restriction to $\iqUsom[m-1]$, we have
\begin{equation}\label{eq:resM}
M_{\ab}\cong \bigoplus_{\substack{ \abb\in \SP_{m-1} \\\abb \ \text{interlaces} \ \ab}} M_{\abb} \, , 
\end{equation}
and the representation $M_{\ab}$ has a compatible Gelfand--Tsetlin basis 
which is indexed by all $m$-spin-tableaux 
$(\ab_m,\ab_{m-1}, \ldots, \ab_1, \ab_0)$
with $\ab_m = \ab$.
Every type I nonclassical irreducible representation of $\iqUsom$ is isomorphic to $M_{\ab}$ 
for some $\ab\in \SP_m$, 
and $M_{\ab} \cong M_{\ab'}$ implies $\ab = \ab'$.
\end{thm}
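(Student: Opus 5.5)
We plan to deduce Theorem \ref{thm:reptheorytheorem} from the results of Iorgov--Klimyk \cite{MR2143754}, translated into our conventions. The translation replaces their $q$ by $-q^2$ and their $I_{i+1,i}$ by $\mathsf{i} b_i$. Everything is organised around the chain \eqref{eqn:chain} and Proposition \ref{prop:mult-free}.

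\textbf{Step 1: construct $M_{\ab}$.} For $\ab\in\SP_m$ we define $M_{\ab}$ as the $\C(q)$-vector space with basis indexed by $m$-spin-tableaux $(\ab_m=\ab,\ab_{m-1},\ldots,\ab_0)$. We let $b_i$ act by the explicit Gelfand--Tsetlin formulae of \cite[Section 3]{MR2143754} for the nonclassical series, specialized to the signs that make it type I. Concretely, $b_{i}$ acts on a basis vector by a diagonal term plus terms shifting the entries of $\ab_{i}$ by $\pm 1$, leaving the other rows unchanged. That these formulae satisfy \eqref{eqn:farcommute} and \eqref{eqn:iSerre-alg-version}, and yield an irreducible module, is \cite[Theorem 4]{MR2143754}; we only have to check that their parametrization by half-integer tuples matches our $\SP_m$. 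We then verify the type I condition, namely that each $b_i$ has spectrum in $\sigma_+$. By Corollary \ref{cor:biss} and Lemma \ref{lem:typeIexist}, it suffices to read off the eigenvalues of $b_1$ on the $\iqUsom[2]$-pieces and check the sign conventions against $\sigma_+$. We would do this via the explicit $b_1$-formula: on $M_{(a)}$ for $m=2$, $b_1$ acts by $(-1)^{a-1/2}[2a]/[2]$. Interlacing upward propagates this, and we fix the sign twist once so that the result lands in $\sigma_+$.

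\textbf{Step 2: branching \eqref{eq:resM}.} The formulae for $b_1,\dots,b_{m-2}$ involve only the rows $\ab_{m-1},\ldots,\ab_0$. Hence, for each $\abb$ interlacing $\ab$, the span of tableaux with $\ab_{m-1}=\abb$ is an $\iqUsom[m-1]$-submodule isomorphic to $M_{\abb}$. This gives \eqref{eq:resM} together with the compatibility of the Gelfand--Tsetlin basis, by induction on $m$.

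\textbf{Step 3: completeness and distinctness.} Every type I nonclassical irreducible representation is isomorphic to some $M_{\ab}$ by Proposition \ref{prop-classification-thm}, together with the observation that in each sign-twist class exactly one member is type I. Distinctness can be shown inductively: iterating \eqref{eq:resM} recovers $\ab$ from $M_{\ab}$. Alternatively, the restriction to $\iqUsom[m-1]$ determines the multiset of interlacing $\abb$, and from this $\ab$ is determined. Indeed, the set of $\abb$ interlacing $\ab$ recovers $\ab$: the maxima of the coordinates give $a_1,\dots$ in the odd case, and similarly in the even case.

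The main obstacle is Step 1's bookkeeping. We must match Iorgov--Klimyk's parametrization and sign conventions (their nonclassical labels and the $\mathsf{i}$-rescaling) with ours, and confirm that the type I twist corresponds exactly to positive half-integer tuples with spectrum in $\sigma_+$. We would settle this first in the cases $m=2,3$ by direct computation against Lemma \ref{lem:Wenzlso3}, and then propagate it by induction using Step 2.
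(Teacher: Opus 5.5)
Your proposal is correct and follows essentially the same route as the paper. The paper likewise takes Iorgov--Klimyk's nonclassical Gelfand--Tsetlin modules, passes to the unique type I sign twist via Lemma \ref{lem:typeIexist}, reads \eqref{eq:resM} off the compatibility of the Gelfand--Tsetlin formulae with the chain \eqref{eqn:chain}, and cites Iorgov--Klimyk for completeness and distinctness. Two small points need care. First, the $b_1$-spectrum only fixes the sign of $b_1$, since the sign twists of the other $b_i$ are independent; it is Lemma \ref{lem:typeIexist} that does the work. Second, your restriction argument for distinctness needs the base case $m=2$ handled by the $b_1$-eigenvalue $(-1)^{a-1/2}[2a]/[2]$, because every $2$-spin-partition interlaces only the empty $1$-spin-partition.
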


\begin{proof}
In \cite[Section 3]{MR2143754}, Iorgov--Klimyk construct a
nonclassical irreducible representation of $\iqUsom$ associated to each $\ab\in \SP_m$, 
and describe their Gelfand--Tsetlin basis.
Lemma \ref{lem:typeIexist} then shows that some (possibly trivial) 
sign twist of their representation is of type I. We denote this one by $M_{\ab}$.
The Gelfand--Tsetlin basis of $M_{\ab}$ is compatible with restriction maps, 
as $m$ varies in \eqref{eqn:chain},
determining explicitly the decomposition in \eqref{eq:resM}; 
see \cite[Remark in Section 3]{MR2143754}.
The final statement follows from \cite{MR1798769} and \cite[Theorem 4]{MR2143754}. 
\end{proof}

Using that the Gelfand--Tsetlin basis is compatible with restriction maps between representations of $\iqUsom$ 
as $m$ varies in \eqref{eqn:chain}, we can explicitly understand the restriction to 
${U}^{\iota}_{-q^2}(\som[2])$, which is generated by the element $b_1$. 

\begin{lem}\label{lem:b1restriction}
Let $v$ be the Gelfand--Tsetlin basis vector in $M_{\ab}$ 
indexed by the $m$-spin-tableau 
$(\ab_m,\ab_{m-1}, \ldots, \ab_1, \ab_0)$.
If $\lead(\ab_2)=\frac{2j+1}{2}$ for $j \in \Z_{\ge 0}$, then
\[
b_1 v = (-1)^j \frac{[2j+1]}{[2]} v \, .
\]
\end{lem}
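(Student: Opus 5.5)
The plan is to reduce to the rank-one algebra ${U}^{\iota}_{-q^2}(\som[2])=\C(q)[b_1]$ by using the compatibility of the Gelfand--Tsetlin basis with the chain \eqref{eqn:chain}. By Theorem \ref{thm:reptheorytheorem}, restricting $M_{\ab}$ step by step down to ${U}^{\iota}_{-q^2}(\som[2])$ decomposes it as a direct sum of the one-dimensional modules $M_{\ab_2}$. Here $\ab_2$ runs over the $2$-spin-partitions that appear as the second-to-last entries of spin-tableaux ending in $\ab$. Moreover, the basis vector $v$ indexed by $(\ab_m,\ldots,\ab_2,\ab_1,\ab_0)$ spans the summand $M_{\ab_2}$. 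Since $M_{\ab_2}$ is one-dimensional, $v$ is a $b_1$-eigenvector. Its eigenvalue depends only on $\ab_2=(\lead(\ab_2))$, a single positive half-integer $\tfrac{2j+1}{2}$. So the task reduces to identifying the scalar by which $b_1$ acts on the type I nonclassical irreducible $M_{(\frac{2j+1}{2})}$ of $\C(q)[b_1]$.

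For this identification I will read off the formula for $b_1$ in \cite[equations (9), (12), (15), (16)]{MR2143754}, with the substitution $q\mapsto -q^2$ and $I_{2,1}\mapsto \mathsf{i}b_1$. In the nonclassical series, the $\som[2]$ Gelfand--Tsetlin label $m_{1,2}=\tfrac{2j+1}{2}$ should give an eigenvalue of $I_{2,1}$ of the form $\pm\mathsf{i}\,[m_{1,2}]$ in their quantum-half-integer normalization. After the substitution this becomes $\pm\frac{[2j+1]_q}{[2]_q}$, up to a sign.

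The sign is where type I enters. By Definition \ref{def:nonclassical-typeI}, in $M_{\ab}$ every eigenvalue of $b_1$ lies in $\sigma_+$, and $\sigma_+$ contains exactly one element of absolute value $\frac{[2j+1]}{[2]}$, namely $(-1)^j\frac{[2j+1]}{[2]}$. So once the absolute value is known to be $\frac{[2j+1]}{[2]}$, the sign is forced, and any sign ambiguity from the conventions of \cite{MR2143754} or from the sign twist in Lemma \ref{lem:typeIexist} disappears.

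The main obstacle is the convention-matching step: checking that after $q\mapsto -q^2$ the Iorgov--Klimyk eigenvalue at label $\tfrac{2j+1}{2}$ really has absolute value $\frac{[2j+1]}{[2]}$, rather than something like $\frac{[2j-1]}{[2]}$ from an index shift in their labelling. I would handle this with the analogue of \eqref{eq:qhi}: $[\ell]$ in the variable $q$ with $\ell=\frac{2j+1}{2}$ equals $\frac{[2j+1]_{q^{1/2}}}{[2]_{q^{1/2}}}$, which becomes $\frac{[2j+1]_{\mathsf{i}q}}{[2]_{\mathsf{i}q}}$ in the variable $-q^2$. The nonclassical formula replaces this quantum half-integer by the corresponding half quantum integer, giving $\pm\frac{[2j+1]_q}{[2]_q}$. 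As a consistency check, the smallest label $\tfrac12$ must give $\frac{1}{[2]}$, the eigenvalue of $b_1$ on the trivial nonclassical module $\nctriv$.
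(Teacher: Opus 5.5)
Your proposal is correct and follows the paper's proof: the paper also notes that $v$ spans a $\iqUsom[2]$-submodule isomorphic to $M_{\ab_2}$. It then reads the scalar off Iorgov--Klimyk's formulas (their equations (14) and (15)) under the substitutions $q\mapsto -q^2$ and $I_{2,1}\mapsto \mathsf{i}b_1$. Your extra step, using the type I condition to force the sign $(-1)^j$ once the eigenvalue is known up to sign, is a valid refinement that makes the sign independent of conventions; the paper leaves it implicit.
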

\begin{proof}
By construction, the Gelfand--Tsetlin basis vector $v$ corresponding to 
$(\ab=\ab_m,\ab_{m-1}, \ldots, \ab_0)$
spans a $\iqUsom[2]$-submodule of $M_{\ab}$ 
which is isomorphic to $M_{\ab_2}$. 
If follows from \cite[Equations (14) and (15)]{MR2143754} 
that if $\lead(\ab_2) = \frac{2j+1}{2}$, then $b_1$ acts by $(-1)^j \frac{[2j+1]}{[2]}$.
\end{proof}

\begin{cor}\label{cor:b1-spectrum}
Let $\ab\in \SP_m$ be an $m$-spin-partition, and let $k$ be such that $\lead(\ab) = \frac{2k+1}{2}$. 
The set of eigenvalues (spectrum) of $b_1 \in \iqUsom$ acting on $M_{\ab}$ is equal to the set $\{ (-1)^j \frac{[2j+1]}{[2]} \}_{j=0}^{k}$. 
\end{cor}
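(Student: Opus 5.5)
The statement is Corollary \ref{cor:b1-spectrum}: the spectrum of $b_1$ on $M_{\ab}$ equals $\{(-1)^j\frac{[2j+1]}{[2]}\}_{j=0}^{k}$, where $\lead(\ab)=\frac{2k+1}{2}$. I would deduce it directly from Lemma \ref{lem:b1restriction}, using the Gelfand--Tsetlin basis of Theorem \ref{thm:reptheorytheorem}.

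That basis of $M_{\ab}$ is indexed by $m$-spin-tableaux with top entry $\ab$, and it diagonalizes $b_1$. The eigenvalue on the vector indexed by a tableau $(\ab_m,\ldots,\ab_0)$ is $(-1)^j\frac{[2j+1]}{[2]}$, where $\lead(\ab_2)=\frac{2j+1}{2}$. So the spectrum is exactly the set of values $(-1)^j\frac{[2j+1]}{[2]}$ as $\lead(\ab_2)$ ranges over all $m$-spin-tableaux with top $\ab$. These values are pairwise distinct for distinct $j\ge 0$, since $[2j+1]/[2]$ has distinct leading $q$-degree. It therefore suffices to show that the set of attainable $\lead(\ab_2)$ is exactly $\{\frac12,\frac32,\ldots,\frac{2k+1}{2}\}$.

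\emph{Upper bound.} Interlacing forces $\lead(\ab_{i-1})\le\lead(\ab_i)$ for each step $i\ge 3$. Hence $\lead(\ab_2)\le\lead(\ab)=\frac{2k+1}{2}$, which is Remark \ref{rem:interlacesaturated} iterated. Since all entries are positive half-integers, $\lead(\ab_2)\ge\frac12$.

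\emph{Realizability.} For each $0\le j\le k$ I would construct a tableau with $\lead(\ab_2)=\frac{2j+1}{2}$ by choosing each $\ab_{i-1}$ greedily from $\ab_i$ while descending. The plan is to keep the first coordinate equal to $\frac{2k+1}{2}$ until near the bottom, and to fill all remaining coordinates with $\frac12$ where allowed.

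The cleanest route is to show that, for any $\ab_i$ with $i\ge 3$ and any target value $c$ satisfying $a_2\le c\le a_1$, there is an interlacing $\ab_{i-1}$ with $\lead(\ab_{i-1})=c$. Here $a_2$ is read as $\frac12$ when $\ab_i$ has only one coordinate. For the odd-to-even step ($m=2k'+1\to 2k'$), take $a'_1=c$ and $a'_r=a_{r+1}$ for $r\ge 2$; these choices satisfy the interlacing inequalities. For the even-to-odd step, take $a'_1=c$ and $a'_r=a_{r+1}$ for the remaining coordinates, which again interlace. Then set $c=a_1$ at every step until $\ab_3$, which has a single entry equal to $\frac{2k+1}{2}$. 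Finally choose $\ab_2=(c)$ with any $\frac12\le c\le\frac{2k+1}{2}$, using that $\ab_3=(a_1)$ and $\ab_2=(a'_1)$ interlace iff $a_1\ge a'_1$. The step $\ab_2\to\ab_1$ is automatic.

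The only mildly delicate point is the index bookkeeping in the interlacing definitions across parities, and the small cases $m\le 2$. For $m=2$, $\ab=\ab_2$ is forced and the spectrum is a single value; this case must be matched against the statement as written.
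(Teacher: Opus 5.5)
Your argument is correct and is essentially the paper's proof. You read the eigenvalues off the Gelfand--Tsetlin basis via Lemma \ref{lem:b1restriction}, use that $\lead$ weakly decreases down a tableau, and realize each value of $\lead(\ab_2)$ by keeping the leading entry fixed down to $\ab_3$. The paper does the last step by letting $\ab_i$ be the first $\lfloor i/2\rfloor$ entries of $\ab$; this also avoids the undefined $a_{k'+1}$ in your odd-to-even step, which you can fix by setting that last coordinate to $\frac12$ or simply taking $\ab_{i-1}=\ab_i$. Your $m=2$ caveat is well founded: there $M_{\ab}$ is one-dimensional with spectrum $\{(-1)^k\frac{[2k+1]}{[2]}\}$, so the statement and the paper's proof tacitly need $m\ge 3$. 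The later use in Proposition \ref{prop:repsofiQn} only needs the spectrum to lie in $\{(-1)^j\frac{[2j+1]}{[2]}\}_{j=0}^{n}$ exactly when $k\le n$, and that remains true for $m=2$.
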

	
\begin{proof}
In any $m$-spin-tableau 
$(\ab=\ab_m,\ab_{m-1}, \ldots, \ab_0)$,
the value of $\lead(\ab_2)$ is some positive half-integer between $\frac{1}{2}$ and $\lead(\ab)$. 
Conversely, given any such half-integer $\frac{2j+1}{2}$, 
it is easy to construct an $m$-spin-tableau such that $\lead(\ab_2) = \frac{2j+1}{2}$ and $\ab_m =\ab$.
For $3 \leq i \leq m$ we simply let $\ab_i$ consist of the first $\lfloor i/2 \rfloor$ entries of $\ab$.
\end{proof}

\begin{example}
As an instance of the final construction in the preceding proof, 
the $9$-spin partition $\ab=\ab_9=(9/2, 5/2, 5/2, 1/2)$, 
is such that $\frac{1}{2}< \frac{7}{2} < \frac{9}{2} = \lead(\ab)$. 
In the notation of Example \ref{ex:spintable},
the construction gives the following $9$-spin-tableau 
\[
\begin{tikzcd}[column sep = 0.1 em, row sep =0.1 em]
\ab_9: & 9/2 & & 5/2 & & 5/2 & & 1/2 &  \\
\ab_8: &  & 9/2 & & 5/2 & & 5/2 & & 1/2 \\
\ab_7: &  & & 9/2 & & 5/2 & & 5/2 &  \\
\ab_6: &  & & & 9/2 & & 5/2 & & 5/2 \\
\ab_5: &  & & & & 9/2 & & 5/2 &  \\
\ab_4: &  & & & & & 9/2 & & 5/2 \\
\ab_3: &  & & & & & & 9/2 &  \\
\ab_2: &  & & & & & & & 7/2 \\
	\end{tikzcd}
	\]
with $\lead(\ab_2)=\frac{7}{2}$.
\end{example}

\begin{rem} 
The action of the elements $b_i$ for $i > 1$ on the vectors $v$ from Lemma \ref{lem:b1restriction} 
is more complicated; again, see \cite[equations (14) and (15)]{MR2143754}. 
Indeed, $v$ is a simultaneous eigenvector for 
some commutative Gelfand--Tsetlin subalgebra of $\iqUsom$, 
but not all elements $b_i$ are in that commutative subalgebra. 
E.g.~$b_2$ does not commute with $b_1$, 
so they are not simultaneously diagonalizable.
\end{rem}

\begin{prop}\label{prop:repsofiQn}
For $m \ge 2$, the representations $M_{\ab}$ with $\ab \in \SP_m^{\le n}$ 
give a complete, irredundant collection of irreducible representations of $\iqUsom^{\le n}$. 
\end{prop}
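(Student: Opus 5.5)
We need to show: for $m\ge 2$, the irreducible representations of $\iqUsom^{\le n}$ are exactly the $M_{\ab}$ with $\ab\in\SP_m^{\le n}$, each appearing once up to isomorphism.

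The plan is to combine Corollary \ref{cor:complete-reducibility} with the classification of Theorem \ref{thm:reptheorytheorem} and the spectral description of Corollary \ref{cor:b1-spectrum}. First, an irreducible representation of $\iqUsom^{\le n}$ is a finite-dimensional irreducible representation of $\iqUsom$ on which every $p_{n+1}(b_i)$ acts by zero. By Remark \ref{rem:bi-spectrum}, each $b_i$ then has spectrum contained in $\{(-1)^{j-1}\frac{[2j-1]}{[2]}\}_{1\le j\le n+1}\subset\sigma_+$. Such a representation cannot be classical: the half quantum integers $\pm\frac{[2k+1]}{[2]}$ are distinct from $\pm[\ell]_{-q^2}$ for $\ell\in\tfrac12\Z$, which one checks by comparing Laurent expansions or by specializing at $q=\mathsf{i}$. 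Hence it is nonclassical, and since all $b_i$-spectra lie in $\sigma_+$, it is of type I. By Theorem \ref{thm:reptheorytheorem} it is isomorphic to a unique $M_{\ab}$ with $\ab\in\SP_m$. Corollary \ref{cor:b1-spectrum} says the $b_1$-spectrum of $M_{\ab}$ is $\{(-1)^j\frac{[2j+1]}{[2]}\}_{j=0}^{k}$ with $\lead(\ab)=\frac{2k+1}{2}$. Because this spectrum must lie in the roots of $p_{n+1}$, we get $k\le n$, i.e.\ $\ab\in\SP_m^{\le n}$. Irredundancy follows directly from the last sentence of Theorem \ref{thm:reptheorytheorem}.

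Conversely, we must show that each $M_{\ab}$ with $\lead(\ab)\le\frac{2n+1}{2}$ factors through $\iqUsom^{\le n}$. By Corollary \ref{cor:biss} each $b_i$ is diagonalizable on $M_{\ab}$, so it suffices to show every eigenvalue of every $b_i$ is a root of $p_{n+1}$; then $p_{n+1}(b_i)=0$ on $M_{\ab}$. For $b_1$ this is Corollary \ref{cor:b1-spectrum} together with $k\le n$.

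The main obstacle is controlling the spectrum of $b_i$ for $i>1$, since Lemma \ref{lem:b1restriction} only addresses $b_1$. I would use a conjugation argument in $\iqUsom$ itself rather than the quotient. The automorphism of Lemma \ref{lem:shiftauto}, iterated, sends $b_i$ to $b_1$ or to $\hh_{1,m}$, and this is awkward. A cleaner route is to restrict $M_{\ab}$ to the copy of $\iqUsom[3]$ generated by $b_{i-1},b_i$. Its irreducible summands are type I nonclassical, since $M_{\ab}$ is of type I. By Lemma \ref{lem:Wenzlso3}, on each summand $b_i$ is conjugate to $\pm b_{i-1}$, and the type I condition forces the sign $+$, because $\sigma_+\cap\sigma_-=\varnothing$. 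So the spectrum of $b_i$ on $M_{\ab}$ equals the union of the $b_{i-1}$-spectra over these summands, which is contained in the spectrum of $b_{i-1}$ on $M_{\ab}$. By induction on $i$, every $b_i$ has spectrum inside that of $b_1$, which is contained in the roots of $p_{n+1}$. This completes both directions.

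If the sign argument for the $\iqUsom[3]$-summands turns out to be delicate, a fallback is to read off the diagonal Gelfand--Tsetlin data from \cite[equations (14), (15)]{MR2143754}. Alternatively, one can use the braid-conjugation $b_i=\iQW\cdots b_1\cdots\iQW^{-1}$ in the relevant subquotient.
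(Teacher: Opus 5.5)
Your proposal is correct and is essentially the paper's argument: the paper likewise uses Lemma~\ref{lem:Wenzlso3} to see that every $b_i$ is conjugate to $b_1$ on a type I nonclassical irreducible, then uses Corollary~\ref{cor:b1-spectrum} to identify the spectra and compares them with the relations $p_{n+1}(b_i)=0$ that define the quotient. Your explicit sign argument on the summands for $b_{i-1},b_i$ via $\sigma_+\cap\sigma_-=\varnothing$, and your check that irreducibles of the quotient are finite-dimensional, type I and nonclassical, are exactly the details the paper leaves implicit, so the fallback routes you mention are not needed.
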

\begin{proof} 
By Lemma \ref{lem:Wenzlso3}, if $M$ is a type I nonclassical (finite-dimensional irreducible) representation, 
then the action of $b_1$ on $M$ is conjugate to the action of $b_i$ on $M$ for $2\leq i \leq m-1$. 
Corollary \ref{cor:b1-spectrum} then implies that the representations $M_{\ab}$ with $\ab\in \SP_m^{\le n}$ 
are precisely the irreducible representations of $\iqUsom$ for which the spectrum of all $b_i$ are 
contained in $\{ (-1)^j \frac{[2j+1]}{[2]} \}_{j=0}^{n}$.
By Definition \ref{def:iQn}, these are exactly the irreducible representations of $\iqUsom^{\leq n}$.
\end{proof}

Having gained control over the irreducible representations of $\iqUsom^{\le n}$, 
we now argue that these irreducible representations are in natural bijection 
with the isomorphism classes of irreducible summands of the $U_q(\son)$-representation $S^{\ot m}$.
Once this is established, an appeal to the following basic lemma gives the desired injectivity of \eqref{eq:phipsin}.

\begin{lem}\label{lem:fullforssisfaithful}
If $A$ is a finite-dimensional semisimple $\K$-algebra 
and $V$ is a representation of $A$ such that each irreducible representation of $A$ appears in $V$, 
then $A\rightarrow \End_{\K}(V)$ is injective.
\end{lem}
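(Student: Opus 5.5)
The plan is to use the Wedderburn structure of $A$: since $\K$ is arbitrary, we only use that $A$ is finite-dimensional and semisimple, not that it is split.

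First, decompose the left regular module into simple summands, ${}_AA \cong \bigoplus_{i=1}^r L_i^{\oplus n_i}$, where $L_1,\dots,L_r$ are pairwise non-isomorphic simple modules. Every simple $A$-module is a quotient of ${}_AA$, and hence is isomorphic to one of the $L_i$. In particular the $L_i$ exhaust all irreducible representations.

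Next, let $a$ lie in the kernel of $A\to\End_{\K}(V)$, so $a$ acts by zero on $V$. By hypothesis each $L_i$ appears in $V$, which we take to mean that $L_i$ is isomorphic to a submodule (or a direct summand) of $V$. Then $a$ acts by zero on each $L_i$. It therefore acts by zero on the direct sum $\bigoplus_i L_i^{\oplus n_i}\cong {}_AA$, since the action on a direct sum is computed componentwise. Evaluating on the identity gives $a = a\cdot 1 = 0$, so the map is injective.

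No step is a real obstacle. The only point to watch is the meaning of ``appears in $V$'', which could mean a submodule or a subquotient of $V$. If $V$ is not assumed finite-dimensional or semisimple, either reading suffices. Zero action on $V$ implies zero action on any submodule and on any quotient of a submodule, hence on each $L_i$, and the argument above goes through unchanged. Equivalently, the annihilator of $V$ is a two-sided ideal of $A$ that annihilates a faithful module, namely ${}_AA$ realized as a sum of copies of the $L_i$, so it is zero.
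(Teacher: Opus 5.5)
Your proof is correct and is essentially the paper's argument. Both rest on the faithfulness of the regular module ${}_AA$ together with its decomposition into copies of the simple modules, each of which occurs in $V$. The paper phrases this as ${}_AA$ being a direct summand of $V^{\oplus N}$, which uses complete reducibility of $V$ and writes the multiplicities as $\dim S_i$, tacitly assuming $A$ is split. Your direct argument that $a$ kills each $L_i$ needs neither assumption.
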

\begin{proof}
Suppose $S_1, \dots, S_r\subset V$ are all the irreducible representations of $A$ (up to isomorphism). 
By complete reducibility, $V$ contains $S_1\oplus \dots \oplus S_r$ as a summand. 
Thus, ${}_AA \cong S_1^{\oplus \dim S_1}\oplus \dots \oplus S_r^{\oplus \dim S_r}$ is isomorphic to a summand of $V^{\oplus N}$, 
for large enough $N$. 
Since ${}_AA$ is faithful, it follows that $V^{\oplus N}$ is faithful, 
and therefore $V$ is faithful (if $a \in A$ acted by $0$ on $V$, then $a$ would act by $0$ on $V^{\oplus N}$). 
\end{proof}

To motivate the desired bijection, recall the \emph{double centralizer theorem}, 
which says that if $N$ is a finite-dimensional $\K$-vector space and $A \subset \End_{\K}(N)$ is a semisimple subalgebra, 
then $\End_A(N) \subset \End_{\K}(N)$ is a semisimple subalgebra 
with isomorphism classes of irreducible representations in bijection with irreducible representations of $A$. 
Explicitly, for an irreducible $A$-module $V$, the bijection is realized as $V\mapsto \Hom_A(V, N)$, 
where $\End_A(N)$ acts on the latter via postcomposition.

\begin{prop}\label{prop:u'-doublecantralizer}
The set
\begin{equation}\label{eq:spinsummands}
\left\{\Hom_{U_q(\son)}(V_{\lambda}, S^{\otimes m}) \mid 
\text{$\lambda \in X_+(\son)$ and $V_{\lambda}$ is a $U_q(\son)$ summand of $S^{\otimes m}$}\right\}
\end{equation}
is a complete and irredundant set of irreducible $\iqUsom^{\le n}$-representations appearing 
as summands of $S^{\otimes m}$.
\end{prop}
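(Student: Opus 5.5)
The plan is to apply the double centralizer theorem to $N = S^{\otimes m}$ with $A$ the image of $U_q(\son)$ in $\End_{\C(q)}(S^{\otimes m})$, and then transport the conclusion to $\iqUsom^{\le n}$ along the surjection \eqref{eq:phipsin}.

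First, since $\Rep(U_q(\son))$ is split semisimple, the image $A$ of $U_q(\son)$ in $\End_{\C(q)}(S^{\otimes m})$ is a semisimple subalgebra. Its irreducible modules are exactly the $V_\lambda$ occurring as summands of $S^{\otimes m}$, and these are pairwise non-isomorphic for distinct $\lambda$. The double centralizer theorem then gives three things about the centralizer $B := \End_{U_q(\son)}(S^{\otimes m})$:
\begin{itemize}
\item $B$ is semisimple;
\item the modules $\Hom_{U_q(\son)}(V_\lambda, S^{\otimes m})$, as $\lambda$ ranges over highest weights of summands, are irreducible, nonzero and pairwise non-isomorphic $B$-modules;
\item they exhaust the irreducible $B$-modules.
\end{itemize}
The semisimplicity of $B$ can also be read off directly from $B \cong \prod_\lambda \mathrm{Mat}_{d_\lambda}(\C(q))$, where $d_\lambda$ is the multiplicity of $V_\lambda$. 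Moreover, as a $B$-module, $S^{\otimes m} \cong \bigoplus_\lambda V_\lambda \otimes \Hom(V_\lambda, S^{\otimes m})$, so each $\Hom(V_\lambda,S^{\otimes m})$ genuinely appears as a $B$-summand of $S^{\otimes m}$.

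Next, pull back along $\varphi\circ\psi^{\le n} \colon \iqUsom^{\le n} \to B$. This map is surjective by Proposition \ref{prop:alg-hom-surj}, since it factors through $\iqUsom^{\le n}$ by Proposition \ref{prop:minpoly}. Pulling back along a surjective algebra map preserves the following:
\begin{itemize}
\item irreducibility, because submodules for $\iqUsom^{\le n}$ and for $B$ coincide;
\item non-isomorphism, because a $\iqUsom^{\le n}$-linear map is $B$-linear when the action factors through a surjection;
\item the summand decomposition of $S^{\otimes m}$.
\end{itemize}
Hence the set \eqref{eq:spinsummands} consists of pairwise non-isomorphic irreducible $\iqUsom^{\le n}$-modules, each appearing as a summand of $S^{\otimes m}$.

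The remaining claim is completeness: every irreducible $\iqUsom^{\le n}$-summand of $S^{\otimes m}$ is of this form. Any $\iqUsom^{\le n}$-submodule of $S^{\otimes m}$ is a $B$-submodule, again by surjectivity. The decomposition of $S^{\otimes m}$ into $B$-isotypic pieces is therefore also a decomposition into $\iqUsom^{\le n}$-isotypic pieces. So any irreducible summand is isomorphic to some $\Hom(V_\lambda, S^{\otimes m})$.

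I expect the only real subtlety to be the bookkeeping in this transport, namely ensuring that surjectivity, rather than injectivity, suffices at each step. No injectivity is needed here, because everything is checked inside $S^{\otimes m}$. The matching with $\SP_m^{\le n}$ and the consequent injectivity via Lemma \ref{lem:fullforssisfaithful} are left for later.
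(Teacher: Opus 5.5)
Your proposal is correct and follows the paper's own argument. The paper likewise combines the surjectivity of $\varphi\circ\psi^{\le n}$ onto $\End_{U_q(\son)}(S^{\otimes m})$ (Proposition \ref{prop:alg-hom-surj}) with the double centralizer theorem for $U_q(\son)$ acting on $S^{\otimes m}$. You additionally spell out why irreducibility, non-isomorphism, and summands transfer along the surjection, a step the paper leaves implicit.
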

\begin{proof}
Proposition \ref{prop:alg-hom-surj} gives that the 
algebra homomorphism $\iqUsom^{\le n}\rightarrow \End_{U_q(\son)}(S^{\otimes m})$ is surjective,
thus, the claim follows from the double centralizer theorem.  
\end{proof}

By Proposition \ref{prop:repsofiQn}, each representation appearing in \eqref{eq:spinsummands}
is isomorphic to a type I classical representation $M_{\ab}$ for some $\ab \in \SP_m^{\le n}$.

\begin{defn}\label{defn:2xcentralizer-map-on-weights}
Set
\[
X_+(\son)^{S^{\otimes m}} := \{\lambda\in X_+(\son) \mid \Hom_{U_q(\son)}(V_{\lambda}, S^{\otimes m})\ne 0\} \, .
\] 
For $\lambda \in X_+(\son)^{S^{\otimes m}}$, 
denote by $\ab_{m,n}(\lambda) \in \SP_m^{\le n}$ 
the $m$-spin-partition such that 
\[
M_{\ab_{m,n}(\lambda)} \cong \Hom_{U_q(\son)}(V_{\lambda}, S^{\otimes m})\, .
\]
\end{defn}

By the above recollection on the double centralizer theorem, the assignment 
\begin{equation}\label{eq:ourbijection}
X_+(\son)^{S^{\otimes m}} \xrightarrow{\ab_{m,n}(-)} \SP_m^{\le n} 
\end{equation}
is injective. In fact, it is a bijection: 
this follows from \cite[Proposition 5.5]{Wenzl-Spin}, but we now work to re\"{e}stablish it here. 
By the injectivity already noted, 
it suffices to construct some bijection between $X_+(\son)^{S^{\otimes m}}$ and $\SP_m^{\le n}$.
Before giving the general construction, we consider some easy examples 
(which establish the base case for an inductive argument).

\begin{example}\label{ex:m2discussionformatching}
When $m \le 1$, both $S^{\ot 0}$ and $S^{\ot 1}$ are irreducible, 
so there is exactly one $U_q(\son)$-irreducible summand. 
Meanwhile, there is exactly one $m$-spin-partition for $m=0, 1$: 
the empty tuple.

When $m=2$, Proposition \ref{prop:decomp} shows that the
irreducible summands of $S^{\ot 2}$ are $V_i$ for $0 \le i \le n-1$ and $V_{2 \varpi_n}$, 
each with multiplicity one.
There are $n+1$ summands, and their highest weights are
\[
X_+(\son)^{S^{\otimes 2}}=\{(0,\ldots, 0), (1,0,\ldots, 0), (1,1,0,\ldots, 0), \ldots, (1,1,\ldots, 1)\}
	= \{0,\varpi_1,\ldots,\varpi_{n-1},2\varpi_n\} \, .
\]
Meanwhile, we have
\[
\SP_2^{\le n} = \left\{\frac{1}{2}, \frac{3}{2}, \dots, \frac{2n+1}{2}\right\} 
= \left\{ \frac{2j-1}{2} \right\}_{j=1}^{n+1} \, .
\] 
Recall from (the $m=2$ case of) Lemma \ref{lem:b1restriction} that $b_1$ acts on 
$M_{\frac{2k+1}{2}}$ by $(-1)^k \frac{[2k+1]}{[2]}$.
Comparing with the relation \eqref{eq:blackrungtriangle}, 
we see that the map $\lambda\mapsto \ab_{2,n}(\lambda)$ 
from Definition \ref{defn:2xcentralizer-map-on-weights} is such that 
$\ab_{2,n}(0) = \frac{2n+1}{2}$, $\ab_{2,n}(\varpi_1) = \frac{2n-1}{2}$, $\ab_{2,n}(\varpi_2) = \frac{2n-3}{2}$, etc., 
and $\ab_{2,n}(2 \varpi_n) = \frac{1}{2}$.
Hence, $\ab_{2,n}(-)$ is a bijection.
\end{example}

This matching of $m$-spin-partitions and highest weights in $X_+(\son)^{S^{\otimes m}}$ is generalized as follows.

\begin{defn}\label{def:xi}
Fix $n \ge 1$ and $m \ge 0$. 
Let $\ab \in\SP_m^{\le n}$
(so $\ab= (a_1 \ge a_2 \ge \cdots \ge a_k)$ and $\lead(\ab) = a_1 \le \frac{2n+1}{2}$). 
For each $1 \le j \le n$, let 
\begin{equation}\label{eq:defofcs}
c_j = \# \left\{i \ \bigg| \ a_i \le \frac{2(n-j)+1}{2} \right\} \, .
\end{equation}
Set
\[
\Xi_{m,n}(\ab) = (c_1, c_2, \ldots, c_n) :=
\begin{cases}
(c_1, c_2, \ldots, c_n) & m \text{ is even} \\
(c_1 + \frac{1}{2}, c_2 + \frac{1}{2}, \ldots, c_n + \frac{1}{2}) & m \text{ is odd}
	\end{cases}
\]
\end{defn}

\begin{lem}\label{lem:xi-in-dominant}
Definition \ref{def:xi} gives a map 
\[
\Xi_{m,n} \colon \SP_m^{\le n}\rightarrow X_+(\son) \, ,
\]
if we identify $X_+(\son)$ with nonnegative decreasing sequences of either all integers or all half-integers.
\end{lem}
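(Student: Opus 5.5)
The claim is that $\Xi_{m,n}(\ab)$ lies in $X_+(\son)$, i.e.\ that it is a weakly decreasing sequence of nonnegative entries which are either all integers or all half-integers. The plan is to check these three conditions directly from the counts $c_j$ in \eqref{eq:defofcs}.

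\textbf{Monotonicity.} For $1 \le j < j' \le n$ the thresholds satisfy $\frac{2(n-j')+1}{2} < \frac{2(n-j)+1}{2}$. Hence the set $\{i \mid a_i \le \frac{2(n-j')+1}{2}\}$ is contained in $\{i \mid a_i \le \frac{2(n-j)+1}{2}\}$, and so $c_{j'} \le c_j$. This gives $c_1 \ge c_2 \ge \cdots \ge c_n$. In the odd case we add the same $\frac12$ to every entry, which preserves the weak decrease. In coordinates this is the condition $a_i - a_{i+1} \in \N$ for the image.

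\textbf{Nonnegativity and integrality.} Each $c_j$ is the cardinality of a finite set, so $c_j \in \N$. If $m$ is even, every entry of $\Xi_{m,n}(\ab)$ is a nonnegative integer. If $m$ is odd, every entry lies in $\N + \frac12$, so all entries are positive half-integers. Either way the last coordinate lies in $\frac12\N$, and the entries are uniformly all integers or all half-integers, as Convention~\ref{convention:halfinteger} requires.

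The hypothesis $\lead(\ab) \le \frac{2n+1}{2}$ is not needed for dominance. It only records that $c_j \le k$, with $c_0 := k$ counting all entries, which will matter later when $\Xi_{m,n}$ is compared with the weights of $S^{\otimes m}$. We note it for that purpose but do not use it here. No step is a real obstacle; the only care needed is in matching the paper's coordinate description of $X_+(\son)$ and the half-integer convention.
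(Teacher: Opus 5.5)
Your proof is correct and is the same argument as the paper's: the threshold sets are nested, so $c_1 \ge \cdots \ge c_n \ge 0$ with each $c_j$ an integer, and for odd $m$ the uniform shift by $\frac12$ gives the all-half-integer case. One small point in your aside: $c_j \le k$ holds automatically, and the bound $\lead(\ab) \le \frac{2n+1}{2}$ instead says that every entry of $\ab$ lies below the $j=0$ threshold; since you never use it, the proof is unaffected.
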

\begin{proof}
It follows from \eqref{eq:defofcs} that $c_1 \ge c_2 \ge \ldots \ge c_n \ge 0$ and the $c_i$ are all integers. 
Thus, $\Xi_{m,n}(\ab)$ is a decreasing sequence consisting of either all integers or all half-integers.
\end{proof}

We sometimes write $\Xi$ when the numbers $m$ and $n$ are understood, 
but it is important to note that the definition of $c_j$ does depend on $n$, 
and therefore the map $\Xi_{m,n}$ also depends on $n$.

\begin{example}\label{ex:basecase}
When $m=0$, 
$\Xi(\emptyset) = (0,0,\ldots, 0)$, the highest weight of $S^{\otimes 0}$. 
When $m=1$, $\Xi(\emptyset) = (\frac{1}{2}, \frac{1}{2}, \ldots, \frac{1}{2})
= \varpi_n$, the highest weight of $S^{\otimes 1}$. 

When $m=2$, $\Xi(\frac{2n+1}{2}) = (0,0,\ldots,0)$, $\Xi(\frac{2n-1}{2}) = (1,0,0\ldots,0)= \varpi_1$, 
$\Xi(\frac{2n-3}{2}) = (1,1,0\ldots,0)= \varpi_2$, and $\Xi(\frac{1}{2}) = (1,1,1\ldots,1)= 2 \varpi_n$, 
just as in Example \ref{ex:m2discussionformatching}. 
\end{example}

As noted in Lemma \ref{lem:xi-in-dominant}, the image of $\Xi_{m,n}$ consists of dominant $\son$-weights, 
but, a priori, it is not clear that these weights are all contained in $X_+(\son)^{S^{\otimes m}}$.
Our next result is crucial for the inductive argument that establishes this.

\begin{lem}\label{lem:bijection}
Let $m \geq 1$.
Given any $\ab_{m-1}\in \SP_{m-1}^{\le n}$, 
the irreducible summands of $V_{\Xi_{m-1,n}(\ab_{m-1})} \otimes S$ 
are precisely the representations $V_{\Xi_{m,n}(\ab_m)}$ for 
$\ab_m\in \SP_m^{\le n}$ which interlace $\ab_{m-1}$. 
\end{lem}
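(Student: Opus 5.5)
The plan is to compute the tensor product $V_\mu \otimes S$ directly, with $\mu := \Xi_{m-1,n}(\ab_{m-1})$, and then compare it combinatorially with the set of interlacing spin-partitions. Since $S$ is minuscule, the weights of $S$ are $(\pm\tfrac12,\ldots,\pm\tfrac12)$, each with multiplicity one. The standard minuscule tensor product rule (Brauer/Klimyk formula, which applies verbatim over $\C(q)$ because characters agree with the classical case) then gives
\[
V_\mu \otimes S \cong \bigoplus_{\epsilon} V_{\mu+\epsilon},
\]
where $\epsilon$ ranges over sign vectors $(\pm\tfrac12,\ldots,\pm\tfrac12)$ such that $\mu+\epsilon$ is dominant. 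In coordinates, the condition is that $\mu+\epsilon$ is weakly decreasing with nonnegative last entry. Every summand appears with multiplicity one. The concrete form of the condition is: whenever $\mu_i=\mu_{i+1}$, the pattern $(\epsilon_i,\epsilon_{i+1}) = (-\tfrac12,+\tfrac12)$ is forbidden, and if $\mu_n=0$ then $\epsilon_n=+\tfrac12$. This is the first step, and I regard it as citable (it is the classical Pieri-type rule for spin, cf.\ the reasoning in Proposition~\ref{prop:decomp}).

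The second step is to translate $\Xi$ into a "counting function." For $\ab$ a spin-partition, set $N_\ab(x) = \#\{i : a_i \le x\}$. Then $\Xi_{m,n}(\ab)_j = N_\ab\bigl(\tfrac{2(n-j)+1}{2}\bigr)$, plus $\tfrac12$ when $m$ is odd. The effect of passing from $\ab_{m-1}$ to an interlacing $\ab_m$ is then as follows.

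\begin{itemize}
\item If $m$ is even, $\ab_m$ and $\ab_{m-1}$ have the same length $k$, and interlacing $a_1\ge a'_1\ge\cdots\ge a_k\ge a'_k$ means $N_{\ab_m}(x) - N_{\ab_{m-1}}(x) \in \{0,-1\}$ for every half-integer $x$.
\item If $m$ is odd, $\ab_m$ has one more entry, and interlacing means $N_{\ab_m}(x) - N_{\ab_{m-1}}(x) \in \{0,1\}$.
\end{itemize}

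In both cases, the $\pm\tfrac12$ shift in $\Xi$ converts this difference into $\epsilon_j = \pm\tfrac12$. This gives an injective map from $\{\ab_m \in \SP_m^{\le n} \text{ interlacing } \ab_{m-1}\}$ to sign vectors $\epsilon$ with $\Xi_{m,n}(\ab_m) = \mu + \epsilon$. The map is injective because $\ab_m$ is recovered from the values $N_{\ab_m}$ at the half-integers $\le \tfrac{2n+1}{2}$, all entries being in that range.

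The third step, which I expect to be the main obstacle, is to check that the image consists exactly of the $\epsilon$ with $\mu+\epsilon$ dominant. One direction is automatic from Lemma~\ref{lem:xi-in-dominant}. For the converse, given an admissible $\epsilon$, I define $N$ on the half-integers by $N = N_{\ab_{m-1}} + (\text{difference prescribed by } \epsilon)$. I must show three things:
\begin{itemize}
\item $N$ is weakly increasing, so that it is the counting function of some spin-partition $\ab_m$. This is exactly the dominance condition: equal consecutive entries of $\mu$ correspond to no entries of $\ab_{m-1}$ in the intervening half-integer slot, and there a drop $-,+$ would make $N$ decrease.
\item $N$ has the correct total length, namely its value at $\tfrac{2n+1}{2}$, where no sign is applied. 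This is automatic.
\item $N(\tfrac12)$-type boundary conditions hold: entries must be positive half-integers, and the counting below $\tfrac12$ is zero. This corresponds to the condition $\mu_n+\epsilon_n \ge 0$.
\end{itemize}
Finally, the interlacing inequalities must be re-derived from the difference being in $\{0,\pm1\}$, checking both parities of $m$ separately, including the edge case $m=1$, where $\ab_0$ is empty and $\mu=0$.
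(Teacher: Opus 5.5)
Your argument is essentially the paper's: the minuscule rule $V_\lambda\otimes S\cong\bigoplus_{\epsilon}V_{\lambda+\epsilon}$, followed by a combinatorial bijection between the admissible sign vectors and the interlacing spin-partitions. Your counting functions $N_{\ab}$ repackage the paper's blocks of equal entries and plus-to-minus sequences, and they make ``$N$ weakly increasing $\Leftrightarrow$ $\mu+\epsilon$ dominant'' a one-line check.

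One slip needs fixing: your parity labels are reversed. For $m=2k+1$ odd, $\ab_m\in\SP_{2k+1}$ and $\ab_{m-1}\in\SP_{2k}$ are both $k$-tuples, so $N_{\ab_m}-N_{\ab_{m-1}}\in\{0,-1\}$ and the $+\tfrac12$ shift sits on $\Xi_{m,n}(\ab_m)$. For $m=2k+2$ even, $\ab_m$ has one extra entry, so the difference lies in $\{0,1\}$ and the shift sits on $\mu$. With your stated pairing the bookkeeping would give $\epsilon_j\in\{-\tfrac12,-\tfrac32\}$ or $\{\tfrac12,\tfrac32\}$; once you swap the labels, every step goes through as you describe.
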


\begin{proof}
To begin, since $S$ is a minuscule $U_q(\son)$-representation, 
\cite[Corollary 30.7]{EtingofLie} gives that
\begin{equation}\label{eq:tensor-product}
V_{\lambda}\otimes S \cong 
\bigoplus_{\substack{\mu\in  \operatorname{wt}(S) \\ \lambda + \mu \in X_+(\son)}} 
	V_{\lambda+ \mu} 
\end{equation}
for any $\lambda\in X_+(\son)$.
Observe that in \eqref{eq:tensor-product} 
the sum is indexed by all weights $\mu$ of $S$ such that $\lambda + \mu$ is dominant, 
and that each summand appears with multiplicity one. 
Each weight of $S$ has the form
\[ 
\left(\pm \frac{1}{2}, \pm \frac{1}{2}, \ldots, \pm \frac{1}{2}\right),
\]
and we will encode such weights as $(\ep_1, \ldots, \ep_n) \cdot \frac{1}{2}$ for $\ep_i \in \{+, -\}$.

Suppose that $m = 2k+1$ is odd and that $\ab_{m-1} = (a_1 \ge a_2 \ge \ldots \ge a_k)$ 
is an $(m-1)$-spin-partition with $\Xi(\ab_{m-1}) = (c_1, c_2, \ldots, c_n) =: \lambda$. 
Set $a_0 = \frac{2n+1}{2}$, $a_{k+1} = \frac{1}{2}$, and, for $0 \le i \le k$, set $q_i = a_i - a_{i+1}$.
It follows that $\lambda$ contains a sequence of $q_i$ consecutive entries $c_j$, where $c_j = k - i$.

We now characterize the weights $\mu = (\ep_1, \ldots, \ep_n) \cdot \frac{1}{2} $ in $S$ such that $\lambda + \mu$ is dominant. 
For each $i$, the corresponding $q_i$ consecutive entries in $\mu$ must be a \emph{plus-to-minus sequence}, 
meaning that in those entries we see all instances of $+\frac{1}{2}$ followed by all instances of $-\frac{1}{2}$.
This is required since, in order for $\lambda + \mu$ to be dominant, it must be a (weakly) decreasing sequence. 
Observe that there are $q_i + 1$ choices of plus-to-minus sequence of length $q_i$:  
namely, one with $q_i - p_i$ pluses followed by $p_i$ minuses, for each $0 \le p_i \le q_i$. 
In addition to this, for the $q_k$ entries with $c_j = 0$, we must have $\mu_j = +\frac{1}{2}$, 
since in order for $\lambda + \mu$ to be dominant, it also must have no negative entries.

We now show that the set of such weights $\lambda + \mu$ is exactly the image under $\Xi_{m,n}$ 
of the $m$-spin-partitions $\ab_m=(b_1 \ge b_2 \ge \ldots \ge b_k)$ that interlace with $\ab_{m-1}$, 
i.e.~such that $b_1 \ge a_1 \ge b_2 \ge a_2 \ge \ldots \ge b_k \ge a_k$.
The choice of each $b_i$ is independent of the others, and there are $q_i + 1$ half-integers between $a_i$ and $a_{i+1}$ (inclusive): 
those of the form $a_{i+1} + p_i$ for $0 \le p_i \le q_i$. 
Setting $b_i := a_i + p_{i-1}$, 
it is straightforward to then verify that 
$\Xi_{m,n}(\ab_m) = \lambda + \mu$,
where $\mu = (\ep_1, \ldots, \ep_n)$ is the weight corresponding to the $p_i$'s as described above.
(This is most easily checked by noting that $b_i - b_{i+1} = a_i + p_{i-1} - a_{i+1} - p_i = (q_i - p_i) + p_{i-1}$, 
which records the number of $(k-i+\tfrac{1}{2})$'s that appear in $\Xi_{m,n}(\ab_m)$.)
Note that $p_k$ (and hence $q_k$) plays no role in the construction of $\ab_m$; 
this corresponds to the fact that, in the corresponding entries of $\mu$, 
we were required to take the ``all-plus'' sequence.

By \eqref{eq:tensor-product}, this concludes the proof when $m$ is odd, 
as we have shown that $\Xi_{m,n}$ provides a bijection between 
the $\prod_{i=0}^{k-1} (q_i + 1)$ choices of interlacing $m$-spin-partition and 
the choices of weight $\mu$ such that $\lambda + \mu$ is dominant.

The proof when $m = 2k+2$ is similar. 
Now, there are $\prod_{i=0}^{k} (q_i+1)$ choices of interlacing $m$-spin-partition (now, a product of $k+1$ terms), 
since there is also a choice of $b_{k+1}$ satisfying $a_k \ge b_{k+1} \ge \frac{1}{2}$. 
Correspondingly, $\lambda$ is a string of half-integers with $q_k$ copies of $\frac{1}{2}$, 
so the final plus-to-minus sequence is permitted to have minuses since $\frac{1}{2} - \frac{1}{2} = 0$ is still non-negative.
\end{proof}

Before proceeding, 
we give explicit instances of the correspondence
just described.

\begin{example}
Let $m=9$ and consider $\ab_8=(9/2, 5/2, 5/2, 1/2) \in \SP_{8}^{\le 4}$. 
In the language of the proof of Lemma \ref{lem:bijection} we have that 
$q_0=0, q_1=2, q_2=0, q_3=2, q_4=0$.
Correspondingly, we see that
$\Xi_{8,4}(\ab_8) = (3,3,1,1)$, 
since here there are $q_1=2$ threes, followed by $q_3=2$ ones, and there 
are no fours, twos, or zeros since $q_0=q_2=q_4=0$.
The choices of $\ab_9$ that interlace $\ab_8$ are as follows:
\[
\begin{tikzcd}[column sep = 0.1 em, row sep =0.1 em]
\ab_9: & 9/2 & & b_2 & & 5/2 & & b_4 &  \\
\ab_8: &  & 9/2 & & 5/2 & & 5/2 & & 1/2
	\end{tikzcd}
	\]
with $b_2 \in \{9/2,7/2,5/2\}$ and $b_4 \in \{5/2,3/2,1/2\}$.
The three choices of $b_2$ correspond to the plus-to-minus sequences $(+,+), (+,-), (-,-)$ that 
could appear in the first two entries of $\mu = (\ep_1, \ep_2, \ep_3, \ep_4) \cdot \frac{1}{2}$ 
and the three choices of $b_4$ correspond to the same plus-to-minus sequences that could appear as 
the last two entries of $\mu$.
Taking $\mu= (+,-,-,-) \cdot \frac{1}{2}$ corresponds to taking $p_1=1$ and $p_3=2$, 
which yields $\ab_9 = (9/2,7/2,5/2,5/2)$ and we see
\[
\Xi_{9,4}(\ab_9) = (3+\tfrac{1}{2},2+\tfrac{1}{2}, \tfrac{1}{2}, \tfrac{1}{2}) 
	= (3,3,1,1) + (+\tfrac{1}{2},-\tfrac{1}{2}, -\tfrac{1}{2}, -\tfrac{1}{2}) \, .
	\]
	\end{example}

\begin{example}
Again, let $m=9$ but consider $\ab_8=(7/2, 5/2, 3/2, 3/2) \in \SP_{8}^{\le 4}$. 
Now, we have $q_0=1, q_1=1, q_2=1, q_3=0, q_4=1$, and therefore
$\Xi_{8,4}(\ab_8) = (4,3,2,0)$. 
The $\ab_9$ that interlace $\ab_8$ are:
\[
\begin{tikzcd}[column sep = 0.1 em, row sep =0.1 em]
\ab_9: & b_1 & & b_2 & & b_3 & & 3/2 &  \\
\ab_8: &  & 7/2 & & 5/2 & & 3/2 & & 3/2
	\end{tikzcd}
	\]
with $b_1 \in \{9/2,7/2\}$, $b_2 \in \{7/2,5/2\}$ and $b_3 \in \{5/2,3/2\}$. 
The two possible choices in each case corresponds to the fact that $q_0=q_1=q_2=1$.
Although $q_4 \neq 0$, this does not play a role in the possible choices 
(since there is no entry $b_5$ in $\ab_9$).
	\end{example}

\begin{example} 
Let $m=10$ and consider 
$\ab_9 = (41/2, 31/2, 21/2, 9/2)\in \SP_9^{\le 24}$. 
We now have $q_0=4, q_1=5, q_2=5, q_3=6$, and $q_4=4$, 
which, given Definition \ref{def:xi}, are now the number of $4+\tfrac{1}{2}$'s, $3+\tfrac{1}{2}$'s, 
etc.~appearing in $\Xi(\ab_9)$, i.e.
\[
\lambda := \Xi(\ab_9) = (4,4,4,4,3,3,3,3,3,2,2,2,2,2,1,1,1,1,1,1,0,0,0,0) + (\tfrac{1}{2},\ldots,\tfrac{1}{2}) \, .
	\]
The choices of interlacing $\ab_{10}$ are indicated as follows:
\[
\begin{tikzcd}[column sep = 0.1 em, row sep =0.1 em]
\ab_{10}: & b_1 & & b_2 & & b_3 & & b_4 && b_5  \\
\ab_9: &  & 41/2 & & 31/2 & & 21/2 & & 9/2 &
	\end{tikzcd}
	\]
where 
$b_1 \in \{49/2, \ldots, 41/2\}$, 
$b_2 \in \{41/2, \ldots, 31/2\}$, 
$b_3 \in \{31/2, \ldots, 21/2\}$, 
$b_4 \in \{21/2, \ldots, 9/2\}$,
and $b_5 \in \{9/2,\ldots,1/2\}$.
As noted above, the number of choices for each $b_i=a_{i}+p_{i-1}$ 
is exactly equal to $q_{i-1}+1$. 
Observe here that, since $m$ is even, 
we do have five possible choices $b_5$, 
corresponding to having $q_4=4$.
Taking $p_0=1, p_1=3, p_2=2, p_3=1$, and $p_4=3$, we have
\[
\mu = (+,+,+,-,+,+,-,-,-,+,+,+,-,-,+,+,+,+,+,-,+,-,-,-) \cdot \tfrac{1}{2}
\]
so
$\lambda + \mu = (5,5,5,4,4,4,3,3,3,3,3,3,2,2,2,2,2,2,2,1,1,0,0,0)$.
One can easily verify that this equals 
$\Xi_{10,24}\big( (43/2, 37/2, 25/2,11/2, 7/2) \big)$.
\end{example}

\begin{thm}\label{thm:bijection}
The map $\lambda \mapsto \ab_{m,n}(\lambda)$ 
is a bijection $X_+(\son)^{S^{\otimes m}}\longrightarrow \SP_m^{\le n}$.
\end{thm}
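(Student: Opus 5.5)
The injectivity of $\ab_{m,n}(-)$ was already noted after Definition \ref{defn:2xcentralizer-map-on-weights}, via the double centralizer theorem. So I will only show that the two finite sets $X_+(\son)^{S^{\otimes m}}$ and $\SP_m^{\le n}$ have the same cardinality. I will do this by showing that $\Xi_{m,n}$ restricts to a bijection $\SP_m^{\le n} \to X_+(\son)^{S^{\otimes m}}$. An injective map between finite sets of equal size is then a bijection.

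\textbf{Injectivity of $\Xi_{m,n}$.} I would argue directly from Definition \ref{def:xi}. The numbers $c_j$ record, for each threshold $\frac{2(n-j)+1}{2}$, how many $a_i$ lie at or below it. Since all $a_i$ are half-integers in $[\frac12,\frac{2n+1}{2}]$, the function $j\mapsto c_j$ determines the multiset $\{a_i\}$. The length $k$ is fixed by $m$, which handles the entries equal to $\frac{2n+1}{2}$. A weakly decreasing tuple is determined by its multiset, so $\ab$ is recovered from $\Xi_{m,n}(\ab)$.

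\textbf{Image of $\Xi_{m,n}$ equals $X_+(\son)^{S^{\otimes m}}$.} I would induct on $m$, with base cases $m=0,1$ given by Example \ref{ex:basecase}. Suppose $\Xi_{m-1,n}(\SP_{m-1}^{\le n}) = X_+(\son)^{S^{\otimes(m-1)}}$. Since $S^{\otimes m} = S^{\otimes(m-1)}\otimes S$, the highest weights occurring in $S^{\otimes m}$ are exactly the highest weights of summands of $V_\lambda\otimes S$, as $\lambda$ ranges over $X_+(\son)^{S^{\otimes(m-1)}}$. By the inductive hypothesis, each such $\lambda$ has the form $\Xi_{m-1,n}(\ab_{m-1})$. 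By Lemma \ref{lem:bijection}, the summands of $V_{\Xi_{m-1,n}(\ab_{m-1})}\otimes S$ are exactly $V_{\Xi_{m,n}(\ab_m)}$ for those $\ab_m\in\SP_m^{\le n}$ interlacing $\ab_{m-1}$. Hence $X_+(\son)^{S^{\otimes m}}$ is the set of $\Xi_{m,n}(\ab_m)$ over all $\ab_m\in \SP_m^{\le n}$ that interlace some element of $\SP_{m-1}^{\le n}$.

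It remains to check that every $\ab_m\in\SP_m^{\le n}$ interlaces some $\ab_{m-1}\in\SP_{m-1}^{\le n}$. I would produce one by truncation, as in the proof of Corollary \ref{cor:b1-spectrum}: take the first $\lfloor (m-1)/2\rfloor$ entries of $\ab_m$. This tuple interlaces $\ab_m$, and it lies in $\SP_{m-1}^{\le n}$ by Remark \ref{rem:interlacesaturated}. This gives $\Xi_{m,n}(\SP_m^{\le n}) = X_+(\son)^{S^{\otimes m}}$.

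\textbf{Conclusion and main obstacle.} Combining the two steps, $|X_+(\son)^{S^{\otimes m}}| = |\SP_m^{\le n}|$, so the injective map $\ab_{m,n}(-)$ is a bijection. The only delicate point I expect is the injectivity of $\Xi_{m,n}$, specifically the bookkeeping at the extreme values $\frac12$ and $\frac{2n+1}{2}$ and the half-integer shift when $m$ is odd. I would handle this by writing out the inverse map explicitly: the number of $a_i$ equal to $\frac{2(n-j)+1}{2}$ is $c_j - c_{j+1}$, with the conventions $c_0=k$ and $c_{n+1}=0$.
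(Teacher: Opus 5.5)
Your proof is correct and follows essentially the same route as the paper. Both arguments combine injectivity of $\ab_{m,n}(-)$ (from the double centralizer theorem) with an induction on $m$ via Lemma \ref{lem:bijection}, showing that $\Xi_{m,n}$ is a bijection $\SP_m^{\le n}\to X_+(\son)^{S^{\otimes m}}$. You also make explicit two points the paper leaves implicit, namely injectivity of $\Xi_{m,n}$ read off from the counts $c_j$, and the fact that every $\ab_m\in\SP_m^{\le n}$ interlaces its truncation in $\SP_{m-1}^{\le n}$; both check out.
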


\begin{proof}
As already noted above, the map $\ab_{m,n}(-)$ in \eqref{eq:ourbijection} is injective, 
so it suffices to find some bijection between 
the finite sets $X_+(\son)^{S^{\otimes m}}$ and $\SP_m^{\le n}$.
By Lemma \ref{lem:bijection}, $\Xi_{m,n} \colon \SP_m^{\le n} \to X_+(\son)^{S^{\otimes m}}$ 
is a bijection if $\Xi_{m-1,n} \colon \SP_{m-1}^{\le n} \to X_+(\son)^{S^{\otimes m-1}}$ is a bijection.
The result then follows by induction, with the base case established as part of Example \ref{ex:basecase}.
	\end{proof}
	
\begin{cor}\label{cor:inj}
The algebra homomorphism 
$\varphi \circ \psi^{\leq n} \colon \iqUsom^{\le n} \rightarrow \End_{U_q(\son)}(S^{\otimes m})$
from \eqref{eq:phipsin} is an isomorphism.
	\end{cor}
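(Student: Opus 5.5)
Surjectivity is already in hand: Proposition \ref{prop:alg-hom-surj} says that $\varphi\circ\psi$ is surjective, and it factors through $\iqUsom^{\le n}$. So the whole task is injectivity, and the plan is to apply Lemma \ref{lem:fullforssisfaithful} with $A=\iqUsom^{\le n}$ and $V=S^{\otimes m}$. Corollary \ref{cor:complete-reducibility} shows that $A$ is finite-dimensional and semisimple. It then suffices to check that every irreducible $A$-module occurs as a summand of $S^{\otimes m}$; after that, the map $A\to\End_{\C(q)}(S^{\otimes m})$ is injective. This map is exactly $\varphi\circ\psi^{\le n}$ followed by the inclusion $\End_{U_q(\son)}(S^{\otimes m})\subset\End_{\C(q)}(S^{\otimes m})$, so $\varphi\circ\psi^{\le n}$ is injective too.

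To show that every irreducible appears, I would first treat $m\ge 2$. By Proposition \ref{prop:repsofiQn}, the irreducible $A$-modules up to isomorphism are exactly the $M_{\ab}$ with $\ab\in\SP_m^{\le n}$. By Proposition \ref{prop:u'-doublecantralizer}, the irreducible $A$-summands of $S^{\otimes m}$ are the spaces $\Hom_{U_q(\son)}(V_\lambda,S^{\otimes m})$ for $\lambda\in X_+(\son)^{S^{\otimes m}}$, and these are $M_{\ab_{m,n}(\lambda)}$ by Definition \ref{defn:2xcentralizer-map-on-weights}. Theorem \ref{thm:bijection} says that $\lambda\mapsto\ab_{m,n}(\lambda)$ is surjective onto $\SP_m^{\le n}$, so every $M_{\ab}$ with $\ab\in\SP_m^{\le n}$ occurs in $S^{\otimes m}$. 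This is the step that carries the content. It relies on the counting bijection $\Xi_{m,n}$ of Lemma \ref{lem:bijection} together with the injectivity coming from the double centralizer theorem, and both are already established.

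The cases $m\le 1$ are degenerate and I would dispatch them directly. Here $\iqUsom$ has no generators, so it is $\C(q)$, and so is the quotient. Also $S^{\otimes m}$ is irreducible, so $\End_{U_q(\son)}(S^{\otimes m})=\C(q)$, and the unital map $\C(q)\to\C(q)$ is an isomorphism. The only point needing a little care is making sure that the identification of $\varphi\circ\psi^{\le n}$ with the action map used in Lemma \ref{lem:fullforssisfaithful} is legitimate. This holds because the $A$-module structure on $S^{\otimes m}$ is defined through that very homomorphism, by Proposition \ref{prop:phipsi-isWenzl}.
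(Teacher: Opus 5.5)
Your proposal is correct and follows the paper's own proof. Surjectivity comes from Proposition~\ref{prop:alg-hom-surj}, and injectivity comes from Lemma~\ref{lem:fullforssisfaithful} applied to the finite-dimensional semisimple algebra $\iqUsom^{\le n}$ (Corollary~\ref{cor:complete-reducibility}), with every $M_{\ab}$, $\ab\in\SP_m^{\le n}$, occurring in $S^{\otimes m}$ by Proposition~\ref{prop:repsofiQn}, Theorem~\ref{thm:bijection}, and Proposition~\ref{prop:u'-doublecantralizer}. Your separate handling of the degenerate cases $m\le 1$, where Proposition~\ref{prop:repsofiQn} is not stated, is a harmless addition that the paper leaves implicit.
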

\begin{proof}
Proposition \ref{prop:alg-hom-surj} implies that \eqref{eq:phipsin} is surjective, 
so it suffices to show \eqref{eq:phipsin} is injective. 
By Corollary \ref{cor:complete-reducibility}, $\iqUsom^{\le n}$ is a finite-dimensional semisimple algebra. 
By Lemma \ref{lem:fullforssisfaithful} and Proposition \ref{prop:repsofiQn}, 
it suffices to show that for all $\ab\in \SP_{m}^{\le n}$, $M_{\ab}$ is isomorphic to a direct summand of $S^{\otimes m}$. 
This is forced by Theorem \ref{thm:bijection}, and Proposition \ref{prop:u'-doublecantralizer}.
	\end{proof}
		
\section{Putting it all together}

To conclude, we restate Theorem \ref{thm:main} and piece together the results that give its proof.

\begin{thm}
There is an equivalence of $\C(q)$-linear pivotal categories 
\[
\varphi \colon \Web(\son) \to \FRep(U_q(\son))
\]
sending $k \mapsto V_{\varpi_k}$ (for $1 \le k \le n-1$) and $\gS \mapsto S$.
\end{thm}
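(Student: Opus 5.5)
The plan is simply to assemble the results already established. Theorem \ref{thm:functor} provides a pivotal, $\C(q)$-linear, essentially surjective functor $\varphi \colon \Web(\son) \to \FRep(U_q(\son))$ with $k \mapsto V_{\varpi_k}$ and $\gS \mapsto S$. An equivalence is then the same as a fully faithful such functor, and a pivotal functor that is an equivalence is automatically an equivalence of pivotal categories. So it remains to prove that $\varphi$ is full and faithful. By Proposition \ref{prop:FF}, which rests on Lemma \ref{lem:reduction} and the fact that each $k$ is a summand of $\gS\otimes\gS$ in $\Kar(\Web(\son))$ via \eqref{eq:digonS} and \eqref{eq:zerodigonS}, this reduces to showing that, for every $m \ge 0$, the algebra map $\varphi \colon \End_{\Web(\son)}(\gS^{\otimes m}) \to \End_{U_q(\son)}(S^{\otimes m})$ is bijective.

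For $m \ge 2$ I would use the following commutative triangle:
\[
\iqUsom^{\le n} \xrightarrow{\psi^{\le n}} \End_{\Web(\son)}(\gS^{\otimes m}) \xrightarrow{\varphi} \End_{U_q(\son)}(S^{\otimes m}).
\]
The map $\psi^{\le n}$ is surjective because Proposition \ref{prop:iQtoLad} and Theorem \ref{thm:ladder} show that the rungs generate, and Proposition \ref{prop:minpoly} shows that $\psi$ factors through the quotient. The composite is an isomorphism by Corollary \ref{cor:inj}. Since the composite is surjective, $\varphi$ is surjective. For injectivity, take $x$ with $\varphi(x)=0$ and write $x=\psi^{\le n}(y)$. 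Then $(\varphi\circ\psi^{\le n})(y)=0$, so $y=0$ and hence $x=0$. Thus $\varphi$ is an isomorphism, and in the process $\psi^{\le n}$ is shown to be an isomorphism as well.

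For small $m$ separate arguments are needed. For $m=0$, Corollary \ref{cor:ladder0} gives $\End_{\Web(\son)}(\varnothing)=\C(q)$, spanned by the empty web; it maps to $\id$, so $\varphi$ is an isomorphism onto $\End(\C(q))=\C(q)$. For $m=1$, Theorem \ref{thm:ladder} says $\End(\gS)$ is spanned by the identity, and its image is nonzero in the one-dimensional space $\End_{U_q(\son)}(S)$, so again $\varphi$ is an isomorphism. Odd total boundary was already handled inside Proposition \ref{prop:FF}.

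The only point needing care is the low-rank cases $n=1,2$, where Remarks \ref{rem:n=2} and \ref{rem:n=1} replace the $2$-labeled computations used in ladderization. Alternatively, for $n\le 2$ one can cite the known Temperley--Lieb and Kuperberg equivalences. Given those remarks, nothing substantive remains; the main obstacle was really Corollary \ref{cor:inj} itself, which is already established.
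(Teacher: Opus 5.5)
Your proposal is correct and follows the paper's proof. The paper likewise takes essential surjectivity from Theorem \ref{thm:functor} and reduces to the algebras $\End_{\Web(\son)}(\gS^{\otimes m})$ via Proposition \ref{prop:FF}; it then gets bijectivity of $\varphi$ from surjectivity of $\psi^{\le n}$ together with Corollary \ref{cor:inj}. Your separate treatment of $m=0,1$ (via Corollary \ref{cor:ladder0} and Theorem \ref{thm:ladder}) simply makes explicit degenerate cases the paper leaves implicit.
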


\begin{proof}
The functor $\varphi$ is defined in Theorem \ref{thm:functor}, and, by definition, 
it is essentially surjective.
Next, Proposition \ref{prop:FF} shows that this functor is fully faithful if and only if 
all of the corresponding algebra homomorphisms
\begin{equation}\label{eq:alghom}
\varphi \colon \End_{\Web(\son)}(\gS^{\otimes m}) \to \End_{U_q(\son)}(S^{\otimes m})
	\end{equation}
are isomorphisms. 
Corollary \ref{cor:inj} implies that the composition $\varphi \circ \psi^{\leq n}$ is an isomorphism, 
where $\psi^{\leq n} \colon \iqUsom^{\le n} \rightarrow \End_{\Web(\son)}(\gS^{\otimes m})$ 
is the surjective homomorphism induced by Proposition \ref{prop:iQtoLad}.
This implies that \eqref{eq:alghom} is an isomorphism.
\end{proof}

Having established Theorem \ref{thm:main}, 
we can deduce the tangle and tangled web relations involving $c_{\gS,\gS}^{\pm 1}$ 
that were promised in Remark \ref{rem:promisedStanglerlns}.

\begin{cor}\label{cor:Stangledweb}
We have
\begin{equation}\label{eq:Stangle}
\begin{tikzpicture}[scale=.4, anchorbase]
	\draw[very thick, gray] (2,2) to (2,0) to [out=270,in=0] (1.5,-.5) to [out=180,in=270] (1,0) 
		to [out=90,in=270] (0,1.5) to [out=90,in=0] (-.5,2) to [out=180,in=90] (-1,1.5) to (-1,-.5);
	\draw[overcross] (0,0) to [out=90,in=270] (1,1.5); 
	\draw[very thick,gray] (0,-.5) to (0,0) to [out=90,in=270] (1,1.5) to (1,2);
\end{tikzpicture}
=
\begin{tikzpicture}[scale=.4, anchorbase]
	\draw[very thick, gray] (0,0) to [out=90,in=270] (1,1.5);
	\draw[overcross] (1,0) to [out=90,in=270] (0,1.5);
	\draw[very thick,gray] (1,0) to [out=90,in=270] (0,1.5);
\end{tikzpicture}
=
\begin{tikzpicture}[scale=.4, anchorbase]
	\draw[very thick, gray] (1,-.5) to (1,0) to [out=90,in=270] (0,1.5) to (0,2);
	\draw[overcross] (0,0) to [out=90,in=270] (1,1.5); 
	\draw[very thick,gray] (-1,2) to (-1,0) to [out=270,in=180] (-.5,-.5) to [out=0,in=270] (0,0) 
		to [out=90,in=270] (1,1.5) to [out=90,in=180] (1.5,2) to [out=0,in=90] (2,1.5) to (2,-.5);
\end{tikzpicture} \qquad , \qquad
\begin{tikzpicture}[scale=.375, tinynodes,anchorbase]
	\draw[very thick,gray] (-.5,0) to [out=90,in=210] (0,.75);
	\draw[very thick,gray] (.5,0) to [out=90,in=330] (0,.75);
	\draw[very thick] (0,.75) to [out=90,in=270] (0,2.5);
	\draw[overcross] (-1,2.5) to [out=270,in=180] (0,1.5) to [out=0,in=270] (1,2.5);
	\draw[very thick, gray] (-1,2.5) to [out=270,in=180] (0,1.5) to [out=0,in=270] (1,2.5);
\end{tikzpicture}
=
\begin{tikzpicture}[scale=.375, tinynodes,anchorbase]
	\draw[very thick,gray] (-.5,0) to [out=90,in=210] (0,1.75);
	\draw[very thick,gray] (.5,0) to [out=90,in=330] (0,1.75);
	\draw[very thick] (0,1.75) to [out=90,in=270] (0,2.5);
	\draw[overcross] (-1,2.5) to [out=270,in=180] (0,1) to [out=0,in=270] (1,2.5);
	\draw[very thick, gray] (-1,2.5) to [out=270,in=180] (0,1) to [out=0,in=270] (1,2.5);
\end{tikzpicture} \qquad , \qquad
\begin{tikzpicture}[scale=.45,anchorbase,tinynodes]
	\draw[very thick, gray] (.8,-1) to [out=90,in=270] (0,.5);
	\draw[very thick, gray] (1.6,-1) to [out=90,in=270] (0,2);
	\draw[overcross] (0,.5) to [out=90,in=270] (.8,2);
	\draw[very thick, gray] (0,.5) to [out=90,in=270] (.8,2);
	\draw[overcross] (0,-1) to [out=90,in=270] (1.6,2);
	\draw[very thick,gray] (0,-1) to [out=90,in=270] (1.6,2);
\end{tikzpicture}
=
\begin{tikzpicture}[scale=.45,anchorbase,tinynodes]
	\draw[very thick, gray] (1.6,-1) to [out=90,in=270] (0,2);
	\draw[very thick, gray] (1.6,.5) to [out=90,in=270] (.8,2);
	\draw[overcross] (.8,-1) to [out=90,in=270] (1.6,.5);
	\draw[very thick, gray] (.8,-1) to [out=90,in=270] (1.6,.5);
	\draw[overcross] (0,-1) to [out=90,in=270] (1.6,2);
	\draw[very thick,gray] (0,-1) to [out=90,in=270] (1.6,2);
\end{tikzpicture} \, .
	\end{equation}
\end{cor}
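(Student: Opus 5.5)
The plan is to deduce all three relations in \eqref{eq:Stangle} from Theorem \ref{thm:main}, which is now established, rather than from any computation inside $\Web(\son)$. Since $\varphi$ is faithful, two morphisms in $\Web(\son)$ are equal as soon as their images in $\FRep(U_q(\son))$ coincide. So for each relation we compute both sides after applying $\varphi$ and show that they agree in $\Rep(U_q(\son))$. The key input is Proposition \ref{prop:SpinBraiding}: it gives $\varphi(c_{\gS,\gS}) = R_{S,S}$ and $\varphi(c_{\gS,\gS}') = R_{S,S}^{-1}$. It applies once scalars are extended to $\C(q^{1/2})$. Faithfulness survives this extension of scalars, because the Hom spaces in $\Web(\son)$ are finite-dimensional over $\C(q)$ and $\varphi$ is injective on each of them, so base change preserves injectivity.

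\textbf{The rotation relation.} Under $\varphi$, the left and right pictures become the mates of $R_{S,S}$ with respect to the cups and caps of $S$, taken on either side. These are images of $\gS$-colored cups and caps, which $\varphi$ sends to the duality maps of the pivotal structure fixed in Convention \ref{conv:pivotal}. In a ribbon category, the left and right mates of the positive braiding $c_{S,S}$ for a self-dual object are both equal to $c_{S^\vee,S^\vee}^{-1}$, suitably identified. The identification $S \cong S^\vee$ is realized by the gray cup and cap. For these mates to equal $R_{S,S}^{-1}$ exactly, rather than up to a sign, we need the Frobenius--Schur indicator of $S$ to be $+1$, and this is also part of Convention \ref{conv:pivotal}. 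Granting that, both sides map to $R_{S,S}^{-1} = \varphi(c'_{\gS,\gS})$.

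\textbf{The fork-slide relation.} Here we apply $\varphi$ to a trivalent vertex $\gS\otimes\gS\to k$ and use naturality of the braiding, as in the proof of Lemma \ref{lem:otherR2}. Sliding a strand over the morphism $\varphi(\text{vertex})$ commutes with it. One detail needs checking: an overstrand that forms a cap is the composite of a braiding with a cup. I plan to reduce it to naturality applied to the cup itself, using the dinaturality of the braiding with evaluation maps.

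\textbf{Reidemeister III.} This is the Yang--Baxter equation for $R_{S,S}$ in $\Rep(U_q(\son))$, which holds in any braided category. Faithfulness then pulls it back to $\Web(\son)$.

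I expect the main obstacle to be the rotation relation. Unlike the other two, it depends on a compatibility between the braiding and the \emph{chosen} pivotal structure, and a sign or twist factor could appear there. My first attempt would be to verify it directly at the level of $\End(S\otimes S)$. The strategy is to rotate the formula \eqref{eq:SpinBraiding} and compare the result with $c'_{\gS,\gS}$ using the linear independence trick. If this becomes unwieldy, I would instead use the standard fact that, in a ribbon category with its canonical pivotal structure, the mate of $c$ is $c^{-1}$ precisely when duals are identified via the ribbon-compatible pivotal maps. The ribbon element $\nu$ fixed in Convention \ref{conv:pivotal} provides exactly this compatibility.
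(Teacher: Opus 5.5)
Your proposal is correct and follows the paper's proof. Both apply $\varphi$, use that $\Rep(U_q(\son))$ is ribbon together with $\varphi(c_{\gS,\gS}^{\pm 1})=R_{S,S}^{\pm 1}$ from Proposition \ref{prop:SpinBraiding}, and then pull the relations back to $\Web(\son)$ by the faithfulness in Theorem \ref{thm:main}. The one input you leave implicit is that the unlabeled black edge in the fork-slide relation carries the label $1$, so that the gray-over-black crossing is defined in $\Web(\son)$ at all, and that Lemma \ref{lem:otherR2} sends this crossing to $R_{S,V_1}^{\pm 1}$; the paper cites that lemma alongside Proposition \ref{prop:SpinBraiding}.
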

\begin{proof}
Since $\FRep(U_q(\son))$ is a ribbon category (braided, pivotal, and with twist satisfying a certain equation), 
Proposition \ref{prop:SpinBraiding} and Lemma \ref{lem:otherR2} imply that the image under $\varphi$ of 
each of the relations in \eqref{eq:Stangle}
are satisfied in $\FRep(U_q(\son))$. 
By Theorem \ref{thm:main}, these relations hold in $\Web(\son)$. 
\end{proof}

\begin{rem}\label{rem:braiding}
The equivalence $\varphi$ can be used to define a braiding on $\Web(\son)$, 
which promotes Theorem \ref{thm:main} to an equivalence of ribbon categories.
The linear combinations of webs $c_{\gS, \gS}^{\pm 1}$ give the braiding and its inverse in $\End_{\Web(\son)}(\gS\otimes \gS)$. 
Then, since each other generating object in $\Web(\son)$ is a direct summand of $\gS\otimes \gS$, 
naturality of the braiding in $\Rep(U_q(\son))$ shows that we can define the braiding 
on arbitrary objects in $\Web(\son)$ in terms of $c_{\gS,\gS}$ and the webs
$\begin{tikzpicture}[scale =.25, tinynodes,anchorbase,rotate=180]
	\draw[very thick, gray] (0,0) to [out=90,in=210] (.5,.75);
	\draw[very thick, gray] (1,0) to [out=90,in=330] (.5,.75);
	\draw[very thick] (.5,.75) to (.5,1.5) node[below=-2pt]{$k$};
\end{tikzpicture}$
and
$\begin{tikzpicture}[scale =.25, tinynodes,anchorbase]
	\draw[very thick, gray] (0,0) to [out=90,in=210] (.5,.75);
	\draw[very thick, gray] (1,0) to [out=90,in=330] (.5,.75);
	\draw[very thick] (.5,.75) to (.5,1.5) node[above=-4pt]{$k$};
\end{tikzpicture}$.
\end{rem}

Finally, we prove Theorem \ref{thm:mainthm2} on relative braid group symmetries of type I nonclassical representations.

\begin{thm}\label{thm:iquantumWeylR3}
The relations
\begin{equation}\label{eq:iquantumWeylR3}
\iQW_i\iQW_{i\pm1}\iQW_i = \iQW_{i\pm 1}\iQW_i\iQW_{i\pm1} 
\quad \text{and} \quad 
\iQW_i^{-1}\iQW_{i\pm1}^{-1}\iQW_i^{-1} = \iQW_{i\pm 1}^{-1}\iQW_i^{-1}\iQW_{i\pm1}^{-1}
\end{equation}
hold in $\iqUsom^{\le n}$, 
and thus in any finite-dimensional type I nonclassical representation.
\end{thm}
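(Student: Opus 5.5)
The plan is to transport the braid relation from $\Rep(U_q(\son))$ back to $\iqUsom^{\le n}$ through the isomorphism of Corollary \ref{cor:inj}. First, take $m\ge 3$ and restrict to the three adjacent tensor factors $i,i+1,i+2$ (or $i-1,i,i+1$). Under $\varphi\circ\psi^{\le n}$ the element $\iQW_i$ maps to $\id_{S^{\otimes i-1}}\otimes \varphi\psi^{\le n}(\iQW)\otimes \id$. By Remark \ref{rem:TisR}, Lemma \ref{lem:xpsiX}, and the formula \eqref{defofbraidingviaX}, this image is $q^{-n/2}$ times the braiding $R_{S,S}$ acting on the $i$-th and $(i+1)$-st factors of $S^{\otimes m}$. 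The key observation is that the comparison in Remark \ref{rem:TisR} is stated for $m=2$, but $\psi^{\le n}(\xxb_i^{(k)})=\xx_i^{(k)}$ is just $\xx^{(k)}$ tensored with identities. Consequently $\varphi\psi^{\le n}(\iQW_i)=q^{-n/2}\,\id\otimes R_{S,S}\otimes\id$ follows from monoidality of $\varphi$ and Proposition \ref{prop:SpinBraiding}.

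Second, the braiding on $\Rep(U_q(\son))$ satisfies the Yang--Baxter equation:
\[
(R_{S,S}\otimes\id)(\id\otimes R_{S,S})(R_{S,S}\otimes\id)=(\id\otimes R_{S,S})(R_{S,S}\otimes\id)(\id\otimes R_{S,S})
\]
on $S^{\otimes 3}$, and tensoring with identities gives the same relation in $\End_{U_q(\son)}(S^{\otimes m})$. Multiplying by the scalar $q^{-3n/2}$ on both sides shows that the images of $\iQW_i\iQW_{i\pm1}\iQW_i$ and $\iQW_{i\pm1}\iQW_i\iQW_{i\pm1}$ agree. Injectivity from Corollary \ref{cor:inj} then gives the first relation in $\iqUsom^{\le n}$. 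The second relation follows by inverting both sides, using Proposition \ref{prop:iQW-invertible}.

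Finally, for any finite-dimensional type I nonclassical representation $M$, I would reduce to irreducibles by complete reducibility (Proposition \ref{prop-classification-thm}). Each irreducible is some $M_{\ab}$ with $\ab\in\SP_m$, and since $\lead(\ab)$ is finite, $\ab\in\SP_m^{\le n}$ for $n$ large. By Proposition \ref{prop:repsofiQn}, $M_{\ab}$ is then a module over $\iqUsom^{\le n}$. The operators $\sum_k q^{-k}\xxb_i^{(k)}$ acting on $M$ do not depend on $n$, because $\xxb_i^{(k)}$ acts by zero once $k$ exceeds the spectrum bound. Choosing one $n$ large enough for all summands therefore shows that the relation holds on $M$.

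The main obstacle I expect is the bookkeeping in the first step. One must check that the tensor-power identification is compatible with the specific pivotal conventions. One must also confirm that the image of $\iQW_i$ is exactly the braiding on the correct pair of factors, including the sign and ordering conventions for $R_{S,S}$ versus its inverse. The Yang--Baxter equation holds equally for $R^{-1}$, so an ordering mismatch would not break the argument; only a scalar discrepancy could cause trouble, and homogeneity makes that cancel. The case $m=2$ is vacuous.
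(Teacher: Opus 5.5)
Your proposal is correct and follows the paper's own argument: the paper proves the braid relations by pulling Reidemeister III for $R_{S,S}^{\pm 1}$ back to $\iqUsom^{\le n}$ through the isomorphism of Corollary~\ref{cor:inj}, with Remark~\ref{rem:TisR} identifying the image of $\iQW_i$ as $q^{-n/2}$ times the braiding on the $i$-th and $(i+1)$-st tensor factors. Your additional details fill in steps the paper leaves implicit: monoidality to pass from $m=2$ to general $i$, cancellation of the scalar, getting the inverse relation by inverting both sides rather than applying Yang--Baxter to $R_{S,S}^{-1}$, and choosing $n$ large enough that a given type I nonclassical module factors through $\iqUsom^{\le n}$.
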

\begin{proof}
This follows from Corollary \ref{cor:inj} and Remark \ref{rem:TisR}, 
since $R_{S,S}^{\pm 1}$ satisfy the Reidemeister III relations.
\end{proof}

\appendix

\allowdisplaybreaks

%
\section{Proof of Lemma \ref{lem:techcomm}}\label{Appendix:proof-of-commutator-lemma}
%

\begin{lem}
For $k\geq 1$, the following relation holds in $\iqUsom$:
\begin{equation}
q^{-k}b_{i,i+1}\xxb_{i}^{(k)} = (-1)^kq^{k}\xxb_{i}^{(k)}b_{i,i+1} +(-1)^kq^{k-1}\xxb_i^{(k-1)}b_{i,i+1} + q^{-k+1}\xxb_i^{(k-1)}b_{i+1}.
\end{equation}
\end{lem}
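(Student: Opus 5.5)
We argue by induction on $k$, working entirely in $\iqUsom$ with the two-generator subalgebra generated by $b:=b_i$ and $c:=b_{i+1}$. Throughout write $B:=b_{i,i+1}=q^{-1}bc+qcb$ and $\xxb^{(k)}:=\xxb_i^{(k)}$. The only relation we use is the $\iota$Serre relation \eqref{eqn:iSerre-alg-version}, namely $b^2c+cb^2=-[2]_{q^2}bcb+c$.

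The first and key step is the case $k=1$. Since $\xxb^{(0)}=1$ and $\xxb^{(1)}=b-\frac{1}{[2]}$, the claim reads
\[
q^{-1}B\big(b-\tfrac1{[2]}\big)=-q\big(b-\tfrac1{[2]}\big)B - B + c .
\]
Expanding, this becomes an identity among words of degree $\le 3$ in $b,c$ with one occurrence of $c$. The degree-three part is $q^{-2}bcb+cb^2+b^2c+q^2bcb$ on the two sides combined, and the $\iota$Serre relation turns it into $c$. The degree-two terms then have to cancel: they give $-\frac{1}{[2]}(q^{-1}+q)B$ against $-B$, which uses $q+q^{-1}=[2]$. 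So the base case is a direct check.

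Next we convert this into a commutation rule, $Bb=-q^2bB+qc'$, where $c'$ is a correction of lower degree in $b$. From the $k=1$ identity, after clearing scalars, $q^{-1}B\xxb=-q\xxb B+(\text{terms }B,\,c)$. For the inductive step we use the recursion
\[
\xxb^{(k+1)}=\frac{(-1)^k}{``[k+1]^2"}\Big(\xxb^{(k)}\xxb-(-1)^k``[k][k+1]"\xxb^{(k)}\Big).
\]
We multiply the inductive hypothesis on the right by $\xxb$, and move $B\xxb$ to the left using the $k=1$ case. Terms of the form $\xxb^{(k-1)}c\,\xxb$ also appear, so we additionally need how $c$ commutes past $\xxb$. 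For this we express $c\,\xxb$ via $B$: indeed $cb=q^{-1}B-q^{-2}bc$, so $c$ can be traded for $B$ and $bc$.

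We then collect everything in the form $\xxb^{(j)}B$ and $\xxb^{(j)}c$ for $j\in\{k-1,k,k+1\}$. Products $\xxb^{(k)}\xxb$ are rewritten through the recursion, and $\xxb^{(k-1)}\xxb$ through its own recursion, $\xxb^{(k-1)}\xxb=(-1)^{k-1}``[k]^2"\xxb^{(k)}+``[k-1][k]"\xxb^{(k-1)}$. The coefficients must then match. This reduces to quantum-number identities in the devil's arithmetic, which we check by writing $``[k][k+1]"=\frac{[2k+1]-(-1)^k}{[2]}$ and $``[k]^2"=[k]_{q^2}$.

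The main obstacle is this bookkeeping: several mixed terms $\xxb^{(k-1)}c$ and $\xxb^{(k-1)}B$ have to cancel. A cleaner alternative, which we would try first, is to pass to the eigenvalue picture. Because $\xxb^{(k)}$ is a normalized polynomial in $b$ (Lemma \ref{lem:minpoly-vs-dividedpower}), it suffices to prove the polynomial identity $q^{-1}B f(b)=-qf(-q^2\cdot)$-type twisted relation for all polynomials $f$. Applying that to the explicit product formula for $\xxb^{(k)}$ should reduce the whole claim to the single $k=1$ computation plus a scalar identity.
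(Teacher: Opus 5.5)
Your main line is the paper's own proof. It runs by induction on $k$, takes the base case from the $\iota$Serre relation \eqref{eqn:iSerre-alg-version}, and uses the two commutation rules
\[
b_{i,i+1}\xxb_i=-q^2\xxb_i b_{i,i+1}-q\,b_{i,i+1}+q\,b_{i+1},\qquad
b_{i+1}\xxb_i=-q^{-2}\xxb_i b_{i+1}+q^{-1}b_{i,i+1}-q^{-1}b_{i+1}.
\]
So your ``correction $c'$'' is exactly $b_{i+1}$. You then right-multiply the inductive hypothesis by $\alpha_k\xxb_i+\beta_k$, rewrite $\xxb_i^{(k)}\xxb_i$ and $\xxb_i^{(k-1)}\xxb_i$ through \eqref{eq:blackx}, and match coefficients. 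Your base-case check is correct.

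The recursion you quote, however, has a sign error. From \eqref{eq:blackx} one gets
\[
\xxb_i^{(k-1)}\xxb_i=(-1)^{k-1}\Big(``[k]^2"\,\xxb_i^{(k)}+``[k-1][k]"\,\xxb_i^{(k-1)}\Big),
\]
with $(-1)^{k-1}$ on both terms. This is exactly the input that determines the coefficients of $\xxb_i^{(k-1)}b_{i,i+1}$ and $\xxb_i^{(k-1)}b_{i+1}$, and those coefficients must vanish. With your sign they do not vanish when $k$ is even. Moreover, this coefficient matching is the real content of the inductive step, so ``the coefficients must then match'' still has to be carried out. The paper does it using $[2]``[k][k+1]"=[2k+1]+(-1)^{k+1}$ and $\alpha_k=(-1)^k[2]/[2k+2]$.

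The ``cleaner alternative'' you would try first does not work as stated. There is no rule in which $b_{i,i+1}f(b_i)$ equals $f(-q^2b_i)\,b_{i,i+1}$ up to left multiples of $b_{i+1}$. Already
\[
b_{i,i+1}b_i^2=(q^4b_i^2+1)\,b_{i,i+1}-(q^3+q^{-1})\,b_ib_{i+1},
\]
and these coefficients are meaningful because the PBW basis makes the $b_i^j b_{i,i+1}$ and $b_i^jb_{i+1}$ linearly independent. Right multiplication by $b_i$ mixes $b_{i,i+1}$ and $b_{i+1}$. Writing $v=(b_{i,i+1},b_{i+1})^{T}$, one has $v\,f(b_i)=f(M)\,v$ with
\[
M=\begin{pmatrix}-q^2b_i & q\\ q^{-1} & -q^{-2}b_i\end{pmatrix}.
\]
The eigenvalues of $M$ are the roots of $t^2+[2]_{q^2}b_i\,t+b_i^2-1$, which are not polynomials in $b_i$. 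So evaluating $\xxb_i^{(k)}(M)$ does not reduce to the $k=1$ computation plus one scalar identity. Done directly, it is the same induction as above. Otherwise it needs a square-root parametrization of $b_i$ together with contiguity identities for the divided powers, which you have not supplied.

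The proof, then, is the direct induction, with the sign corrected and the five coefficients actually verified.
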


\begin{proof}
We induct on $k$. 
Using \eqref{eq:longb}, 
equation \eqref{eqn:iSerre-alg-version}
implies
\[
b_{i,i+1} b_{i} = -q^2b_{i}b_{i,i+1} + qb_{i+1} \, .
\]
This gives
\begin{align*}
q^{-1}b_{i,i+1}\xxb_i &= q^{-1}b_{i,i+1} b_i - \frac{q^{-1}}{[2]}b_{i,i+1} = -qb_{i}b_{i,i+1}  + b_{i+1} - \frac{q^{-1}}{[2]}b_{i,i+1} \\
&= -qb_{i}b_{i,i+1} + b_{i+1} - \frac{q^{-1}}{[2]}b_{i,i+1} - \frac{-q}{[2]}b_{i,i+1} + \frac{-q}{[2]}b_{i,i+1}
=-q\xxb_{i}b_{i,i+1} - b_{i,i+1} + b_{i+1} \, ,
\end{align*}
which establishes the base case $k=1$. 
We record this equality as:
\[
b_{i,i+1}\xxb_i = -q^2\xxb_{i}b_{i,i+1} -qb_{i,i+1} + qb_{i+1} \, .
\]
Also, a short calculation using \eqref{eq:longb} implies that
\[
b_{i+1}\xxb_{i} = -q^{-2}\xxb_{i}b_{i+1} + q^{-1}b_{i,i+1} - q^{-1}b_{i+1} \, .
\]

Now, assume the Lemma is true for $k\ge 1$. 
Recall that \eqref{eq:blackx} says
\begin{equation}\label{eqn:alpha-beta-dividedpowerrecursion}
\xxb_i^{(k+1)} = \alpha_k \xxb_i^{(k)}\xxb_i + \beta_k \xxb_i^{(k)} 
\quad \text{where} \qquad \alpha_k = \frac{(-1)^k}{``[k+1]^2"} \quad \text{and} \quad \beta_k = -\frac{``[k+1][k]"}{``[k+1]^2"} \, .
\end{equation}
Note that 
\begin{equation}\label{eqn:2-times-beta-over-alpha}
\frac{\beta_k}{\alpha_k} = (-1)^{k+1}``[k+1][k]" \qquad \text{and} \qquad [2]``[k+1][k]" = [2k+1] + (-1)^{k+1} \, .
\end{equation}
We then compute
\begin{align*}
q^{-(k+1)}b_{i,i+1}\xxb_{i}^{(k+1)} &= q^{-1}\left(q^{-k}b_{i,i+1} \xxb_{i}^{(k)}\right)(\alpha_k\xxb_{i} + \beta_k) \\
&=q^{-1} \left((-1)^kq^{k}\xxb_{i}^{(k)}b_{i,i+1} + (-1)^kq^{k-1}\xxb_i^{(k-1)}b_{i,i+1} + q^{-k+1}\xxb_i^{(k-1)}b_{i+1}\right)(\alpha_k\xxb_{i} + \beta_k) \\
&= (-1)^kq^{k-1}\alpha_k\cdot\xxb_{i}^{(k)}b_{i,i+1}\xxb_{i} + (-1)^kq^{k-2}\alpha_k\cdot\xxb_i^{(k-1)}b_{i,i+1}\xxb_{i} + q^{-k}\alpha_k\cdot\xxb_i^{(k-1)}b_{i+1}\xxb_{i} \\
	&\quad+ (-1)^kq^{k-1}\beta_k\cdot\xxb_{i}^{(k)}b_{i,i+1} + (-1)^kq^{k-2}\beta_k\cdot\xxb_i^{(k-1)}b_{i,i+1} + q^{-k}\beta_k\cdot \xxb_i^{(k-1)}b_{i+1} \\
&= (-1)^kq^{k-1}\alpha_k\cdot\xxb_{i}^{(k)}\left(-q^2\xxb_{i}b_{i,i+1} -qb_{i,i+1} + qb_{i+1}\right) \\
	&\quad + (-1)^kq^{k-2}\alpha_k\cdot\xxb_i^{(k-1)}\left(-q^2\xxb_{i}b_{i,i+1} -qb_{i,i+1} + qb_{i+1}\right) \\
	&\qquad + q^{-k}\alpha_k\cdot\xxb_i^{(k-1)}\left(-q^{-2}\xxb_{i}b_{i+1} + q^{-1}b_{i,i+1} - q^{-1}b_{i+1}\right) \\
	&\qquad \quad + (-1)^kq^{k-1}\beta_k\cdot\xxb_{i}^{(k)}b_{i,i+1} + (-1)^kq^{k-2}\beta_k\cdot\xxb_i^{(k-1)}b_{i,i+1} + q^{-k}\beta_k\cdot \xxb_i^{(k-1)}b_{i+1} \\
&= \underline{(-1)^{k+1}q^{k+1}\alpha_k\cdot\xxb_{i}^{(k)}\xxb_{i}b_{i,i+1}} + (-1)^{k+1}q^{k}\alpha_k\cdot\xxb_{i}^{(k)}b_{i,i+1} 
		+ (-1)^kq^{k}\alpha_k\cdot\xxb_{i}^{(k)}b_{i+1} \\
	&\quad + \underline{(-1)^{k+1}q^{k}\alpha_k\cdot\xxb_i^{(k-1)}\xxb_{i}b_{i,i+1}} + (-1)^{k+1}q^{k-1}\alpha_k\cdot\xxb_i^{(k-1)}b_{i,i+1} \\
	&\qquad + (-1)^kq^{k-1}\alpha_k\cdot\xxb_i^{(k-1)}b_{i+1} - \underline{ q^{-k-2}\alpha_k\cdot\xxb_i^{(k-1)}\xxb_{i}b_{i+1} } + q^{-k-1}\alpha_k\cdot\xxb_i^{(k-1)}b_{i,i+1} \\
	&\qquad \quad - q^{-k-1}\alpha_k\cdot\xxb_i^{(k-1)}b_{i+1} + (-1)^kq^{k-1}\beta_k\cdot\xxb_{i}^{(k)}b_{i,i+1} \\
	&\qquad \qquad + (-1)^kq^{k-2}\beta_k\cdot\xxb_i^{(k-1)}b_{i,i+1} + q^{-k}\beta_k\cdot \xxb_i^{(k-1)}b_{i+1} \, .
\end{align*}
Using \eqref{eqn:alpha-beta-dividedpowerrecursion}, the \underline{underlined} terms become
\begin{multline*}
(-1)^{k+1}q^{k+1}\alpha_k\cdot\left(\frac{1}{\alpha_k}\xxb_{i}^{(k+1)} - \frac{\beta_k}{\alpha_k}\xxb_{i}^{(k)}\right)b_{i,i+1}
+(-1)^{k+1}q^{k}\alpha_k\cdot\left(\frac{1}{\alpha_{k-1}}\xxb_{i}^{(k)} - \frac{\beta_{k-1}}{\alpha_{k-1}}\xxb_{i}^{(k-1)}\right)b_{i,i+1} \\
-q^{-k-2}\alpha_k\cdot\left(\frac{1}{\alpha_{k-1}}\xxb_{i}^{(k)} - \frac{\beta_{k-1}}{\alpha_{k-1}}\xxb_{i}^{(k-1)}\right)b_{i+1}
\end{multline*}
or
\begin{multline*}
(-1)^{k+1}q^{k+1}\cdot\xxb_{i}^{(k+1)}b_{i,i+1}  
	+(-1)^{k}q^{k+1}\beta_k\cdot \xxb_{i}^{(k)}b_{i,i+1}
	+(-1)^{k+1}q^{k}\frac{\alpha_k}{\alpha_{k-1}}\cdot\xxb_{i}^{(k)} b_{i,i+1} \\
	+(-1)^{k}q^{k}\frac{\alpha_k\beta_{k-1}}{\alpha_{k-1}}\cdot\xxb_{i}^{(k-1)}b_{i,i+1}
	-q^{-k-2}\frac{\alpha_k}{\alpha_{k-1}}\cdot\xxb_{i}^{(k)}b_{i+1} 
	+q^{-k-2}\frac{\alpha_k\beta_{k-1}}{\alpha_{k-1}}\cdot\xxb_{i}^{(k-1)}b_{i+1} \, .
\end{multline*}
Combining like terms we arrive at
\begin{align*}
q^{-(k+1)}b_{i,i+1}\xxb_{i}^{(k+1)} &= (-1)^{k+1}q^{k+1}\cdot\uwave{\xxb_{i}^{(k+1)}b_{i,i+1}} \\
	&\quad +\left( (-1)^{k+1}q^{k}\alpha_k + (-1)^k (q^{k-1}+q^{k+1})\beta_k 
		+(-1)^{k+1}q^{k}\frac{\alpha_k}{\alpha_{k-1}} \right)\cdot\uwave{ \xxb_{i}^{(k)}b_{i,i+1} } \\
	&\quad + \left((-1)^kq^{k}\alpha_k -q^{-k-2}\frac{\alpha_k}{\alpha_{k-1}}\right)\cdot\uwave{\xxb_{i}^{(k)}b_{i+1}} \\
	&\quad + \left( ((-1)^{k+1}q^{k-1} + q^{-k-1})\alpha_k + (-1)^kq^{k-2}\beta_k
		+(-1)^{k}q^{k}\frac{\alpha_k\beta_{k-1}}{\alpha_{k-1}}\right)\cdot\uwave{\xxb_{i}^{(k-1)}b_{i,i+1}} \\
	&\quad +\left(((-1)^kq^{k-1} - q^{-k-1})\alpha_k + q^{-k}\beta_k
		+q^{-k-2}\frac{\alpha_k\beta_{k-1}}{\alpha_{k-1}}\right)\cdot\uwave{\xxb_{i}^{(k-1)}b_{i+1}} \, .
\end{align*}
To finish the proof, we analyze the coefficients on the \uwave{wavy underlined} terms:
we must show that the second and third are $(-1)^{k+1}q^{k}$ and $q^{-k}$ respectively, 
and that the fourth and fifth are zero. 

To show that the fourth coefficient is zero, we first divide by $\alpha_k$ and then multiply by $[2]$. 
Using \eqref{eqn:2-times-beta-over-alpha}, 
this results in 
\begin{align*}
(-1)^{k+1}q^{k-1}[2] + q^{-k-1}[2] &-q^{k-2}\left([2k+1] + (-1)^{k+1}\right) +q^{k}\left([2(k-1)+1] + (-1)^k\right) \\
	&=(-1)^{k+1}q^{k} + (-1)^{k+1}q^{k-2} + q^{-k} + q^{-k-2} + (-1)^kq^{k-2} + (-1)^kq^{k} \\
	& \qquad + \left(-q^{k-2}[2k+1] +q^{k}[2(k-1)+1] \right) \\
	&= q^{-k} + q^{-k-2} +\left(-q^{3k-2} - \dots - q^{-k-2} + q^{3k-2} + \dots + q^{-k+2}\right) = 0 \, .
\end{align*}
A similar argument shows that the fifth coefficient is zero.

To handle the second and third coefficients, we first observe that 
\[
\alpha_k =  \frac{(-1)^k}{``[k+1]^2"}\frac{[2]}{[2]} = (-1)^k\frac{[2]}{[2k+2]} 
\qquad \text{and} \qquad \beta_k = \frac{-``[k+1][k]"}{``[k+1]^2"}\frac{[2]}{[2]} = \frac{-[2k+1] + (-1)^k}{[2k+2]} \, .
\]
Thus, the second coefficient becomes
\begin{align*}
(-1)^{k+1}q^{k}\alpha_k + (-1)^kq^{k-1}\beta_k &+(-1)^{k}q^{k+1}\beta_k +(-1)^{k+1}q^{k}\frac{\alpha_k}{\alpha_{k-1}} \\
	&= (-1)^{k+1}q^k\left(\alpha_k -(q^{-1}+q)\beta_k +\frac{\alpha_k}{\alpha_{k-1}} \right) \\
	&= \frac{(-1)^{k+1}q^k}{[2k+2]}\left((-1)^k[2]-[2](-[2k+1] + (-1)^k)-[2k] \right)\\
	&= \frac{(-1)^{k+1}q^k}{[2k+2]}\left(-[2](-[2k+1])-[2k] \right)
	= \frac{(-1)^{k+1}q^k[2k+2]}{[2k+2]} = (-1)^{k+1}q^k \, .
\end{align*}
Similarly, the third coefficient becomes
\begin{align*}
(-1)^kq^{k}\alpha_k - q^{-k-2}\frac{\alpha_k}{\alpha_{k-1}} &= q^{-k}\left( (-1)^kq^{2k}\alpha_k - q^{-2}\frac{\alpha_k}{\alpha_{k-1}} \right) \\
	&= q^{-k}\left( q^{2k}\frac{[2]}{[2k+2]} + q^{-2}\frac{[2k]}{[2k+2]} \right)
	= q^{-k}\frac{q^{2k}[2] + q^{-2}[2k]}{[2k+2]}
	=q^{-k} \, . \qedhere
\end{align*}
\end{proof}

%
\section{Comparing cups and caps}\label{Appendix:comparing-cups-and-caps}
%

Fix $k\in \{1, \dots, n-1\}$. We will show the following.
\begin{lem}\label{lem-spin-vs-O-cups-and-caps}
\begin{equation}\label{eq:BW-vs-BER}
\hat{\varphi}_{O}\bigg(
\begin{tikzpicture}[scale=.4, anchorbase,tinynodes]
	\draw[very thick] (0,1.25) to [out=90,in=270] 
	(0,1.5) to [out=90,in=180] (.5,2.25) node[above=-2pt]{$k$}
		to [out=0,in=90] (1,1.5) to (1,1.25);
\end{tikzpicture}  
\bigg)
= (-1)^{\binom{k}{2}}\hat{\varphi}_{BER}\bigg(
\begin{tikzpicture}[scale=.4, anchorbase,tinynodes]
	\draw[very thick] (0,1.25) to [out=90,in=270] (0,1.5) to [out=90,in=180] (.5,2.25) node[above=-2pt]{$k$}
		to [out=0,in=90] (1,1.5) to (1,1.25);
\end{tikzpicture}  
\bigg)
\end{equation}
\end{lem}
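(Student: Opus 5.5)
Both sides of \eqref{eq:BW-vs-BER} are nonzero elements of the one-dimensional space $\Hom_{U_q(\son)}(V_k\otimes V_k,\C(q))$, so they differ by a scalar $\gamma_k$. The plan is to pin down $\gamma_k$ using a closed diagram, namely the $k$-labeled circle, whose value is fixed on both sides. On each side the cup is the pivotal dual of the cap, with inverse scaling, so a circle value determines $\gamma_k$ only up to sign. Comparing circle values is therefore not enough. Instead I will compare a quantity that is linear in the cap.

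\textbf{Step 1.} Rescale the BER cap by $\gamma$ and the cup by $\gamma^{-1}$. This preserves the isotopy relations, and the BER functor stays pivotal. Since both $\hat{\varphi}_{O}$ and $\hat{\varphi}_{BER}$ are pivotal functors with the same pivotal structure (Convention \ref{conv:pivotal}), Frobenius--Schur indicators are $+1$ in both. It suffices to compare one quantity that is linear in the cap, such as a rotation identity for a trivalent vertex.

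\textbf{Step 2.} The concrete device is as follows. In $\hat{\varphi}_{O}$, the all-black trivalent vertex $1\otimes(k-1)\to k$ and its rotations are related by black caps and cups. In \cite{BodWu} one has explicit formulas for how the vertex $(k-1)\otimes 1\to k$ is the rotation of $1\otimes k\to (k-1)$ via $k$-caps, with some sign. On the spin side, the rotation computations \eqref{eq:spintriangle2}--\eqref{eq:spintriangle3} use BER caps. Expressing the same black vertex via rotation with either cap family gives $\gamma_k/\gamma_{k-1}\cdot\gamma_1^{\pm}$ as a ratio of known signs. Concretely, I would compute the ratio $\gamma_k/\gamma_{k-1}$ by evaluating the theta-like closed graph: the digon \eqref{eq:digon1} closed up with a $k$-cap and $k$-cup. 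Its value involves caps of labels $1$, $k-1$ and $k$ linearly, each once via the triangle, and it is computed in \cite{BodWu} with their sign convention and in \cite{BER} with the spin-side convention.

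\textbf{Step 3.} This yields the recursion $\gamma_k=(-1)^{k-1}\gamma_{k-1}\gamma_1$, with base case $\gamma_1=1$. The base case holds because both papers normalize the $1$-labeled circle to $[2]_{2n-1}[2n+1]/[2]$ with the same standard cap on $V_1$, and \cite[Definition 1.4]{BodWu} agrees with \eqref{eq:circle1}. Solving the recursion gives $\gamma_k=(-1)^{\binom{k}{2}}$. The sign $(-1)^{k-1}$ in the recursion is exactly the discrepancy between the digon relation \eqref{eq:digon1} and its unsigned analogue in \cite{BodWu}.

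The main obstacle is Step 2: making the comparison genuinely linear in the cap, rather than quadratic as for circles, so that the sign is detected. If the theta-graph approach proves awkward, I would fall back on explicit weight vectors. Take the highest weight vector $v_+$ of $V_k=\Lambda_q^k$ and its lowest partner $v_-$. Evaluate each cap on $v_+\otimes v_-$, using the explicit formula from \cite[\S4]{BER}, where the spin caps are given through $S\otimes S\to V_k$ and the spin circle, together with the explicit $q$-exterior-algebra pairing of \cite{BodWu}. Then check that the ratio is the sign of reversing $k$ wedge factors, which is $(-1)^{\binom{k}{2}}$.
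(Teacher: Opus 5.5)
\textbf{The main line of argument has a genuine gap, in both the base case and the inductive step.}

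\emph{Base case.} Circle values carry no information about $\gamma_k$, not even its sign. The zigzag relation forces the cup to scale by $\gamma^{-1}$ when the cap scales by $\gamma$, so the circle is unchanged. Hence ``$\gamma_1=1$ because both papers give the same $1$-labeled circle'' proves nothing. What is needed is a direct comparison of the explicit $1$-cups. Indeed \cite[Definition 3.69]{BodWu} and \cite[Proposition 4.14]{BER} both give
$1\mapsto \sum_{i} -(-q^2)^{2n-i}a_i\otimes b_i + \frac{(-q^2)^n}{[2]}u\otimes u + \sum_{i} (-q^2)^{i-1}b_i\otimes a_i$.

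\emph{Inductive step.} The recursion is circular. The only generators on which both $\hat{\varphi}_{O}$ and $\hat{\varphi}_{BER}$ are defined are the black caps and cups. Closed diagrams built from these alone are unions of circles, which are invariant under the rescaling. Your theta graph does detect $\gamma_k\gamma_1^{-1}\gamma_{k-1}^{-1}$, but only because it contains black trivalent vertices, and these lie outside the domain of $\hat{\varphi}_{BER}$. Its value with BodWu vertices and BER caps is not computed in \cite{BER} or anywhere else. Saying this value is governed by the signed digon relation \eqref{eq:digon1} assumes the conclusion. In the paper, \eqref{eq:digon1} with sign $(-1)^{k-1}$ holds for $\hat{\varphi}_{BW}$ precisely \emph{because} of this lemma. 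So no diagrammatic comparison can settle the sign, and you must work with explicit vectors.

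\textbf{Your fallback is the right route (it is the paper's), but as written it omits the two steps that carry the content.}

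(i) The BER black cup is not built from $S\otimes S\to V_k$ and the spin circle. It is normalized directly by $1\mapsto v_k^-\otimes v_k^+ + (\text{terms in } V_k[<\varpi_k]\otimes V_k[>-\varpi_k])$, where $v_k^-$ is fixed by $T_{w_0}v_k^-=v_k^+$. The key lemma is that the isomorphism $V_k\cong\Lambda_q^k$ with $v_k^+\mapsto a_1\cdots a_k$ sends $v_k^-\mapsto b_k\cdots b_1$; this is a quantum Weyl group computation. Without it, ``the sign of reversing $k$ wedge factors'' has no content, since in $\Lambda_q^*$ the monomials $b_1\cdots b_k$ and $b_k\cdots b_1$ differ by $(-q^{-2})^{\binom{k}{2}}$, not by a sign.

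(ii) The BodWu $k$-cup is $1/(``[k]^2")!$ times $k$ nested $1$-cups followed by multiplication in $\Lambda_q^*$. By weight considerations, only the terms $(-q^2)^{i-1}b_i\otimes a_i$ with $\{i_1,\ldots,i_k\}=\{1,\ldots,k\}$ contribute to $b_k\cdots b_1\otimes a_1\cdots a_k$. The reordering factors on the $a$-side and $b$-side combine to $q^{-4\ell(\sigma)}$, with their signs cancelling. The coefficient is therefore $\frac{(-q^2)^{\binom{k}{2}}}{(``[k]^2")!}\sum_{\sigma\in S_k}q^{-4\ell(\sigma)}=(-1)^{\binom{k}{2}}$. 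So the sign comes from the coefficients of the common $1$-cup, not from a wedge reversal.

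It is simplest to compare cups rather than caps, as the paper does, because the BodWu cup is built only from $1$-cups and merges, i.e.\ multiplication in $\Lambda_q^*$.
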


Before we prove Lemma \ref{lem-spin-vs-O-cups-and-caps}, 
we review how each intertwiner in \eqref{eq:BW-vs-BER} is defined. 
To understand the definitions, 
recall that there are two descriptions of the type $B_n$ fundamental representations $V(\varpi_k)$.

The first description $V_k$ is the finite-dimensional irreducible (type I) representation 
generated by a highest weight vector $v_k^+$ of weight $\varpi_k$, 
e.g.~$V_1$ is the unique finite-dimensional irreducible generated by a highest weight vector $v_1^+$ with weight $\varpi_1 = \epsilon_1$.
The image of the cup and cap morphism under $\hat{\varphi}_{BER}$ are described using $V_k$.

The second description $\Lambda_q^k$ is the (quantum) $k$-th exterior power of the defining representation. 
For example, $\Lambda_q^1$ is the defining representation itself which has basis $\{a_1, \dots, a_n, u, b_n, \dots, b_1\}$ 
where $\mathrm{wt}(a_i) = \epsilon_i$, $\mathrm{wt}(u) = 0$, and $\mathrm{wt}(b_i)= - \epsilon_i$. 
The action of the generators of $U_q(\son)$ on $\Lambda_q^1$ can be read from \cite[Proposition 4.7]{BER}. 
The image of the cups and caps under $\hat{\varphi}_{O}$ are described using $\Lambda_q^k$, 
which, in general, is defined to be the degree $k$ component of the following algebra.

\begin{defn}\label{def:q-exterior-algebra}
The type $B_n$ (quantum) exterior algebra $\Lambda^*_q$ is the associative $\C(q)$-algebra 
with generators $a_1, \dots, a_n, u, b_n, \dots, b_1$ subject to the following relations.
\begin{gather*}
a_i^2 = 0 \, , \quad b_i^2 = 0 \, , \quad u^2 = q^{-1}(q^2-q^{-2})\sum_{\ell=1}^n (-q^2)^{\ell-1}a_{n-\ell+1}b_{n-\ell+1} \\
a_ja_i = -q^{-2}a_ia_j \, , \quad b_ib_j = -q^{-2}b_jb_i \quad i< j \\
ua_i = -q^{-2}a_iu \, , \quad b_iu= -q^{-2}ub_i \, , \quad b_ia_i = -a_ib_i + (q^2-q^{-2})\sum_{\ell=1}^{i-1}(-q^2)^{\ell-1}a_{i-\ell}b_{i-\ell} \\
b_ja_i = -q^{-2}a_ib_j \quad i\ne j
\end{gather*}
The algebra $\Lambda_q^*$ is graded, with generators in degree $1$.
\end{defn}

\begin{rem}
The definition of $\Lambda_q^*$ is borrowed from \cite[Definition 3.43]{BodWu}. 
More precisely, Definition \ref{def:q-exterior-algebra} uses the simplified presentation found in \cite[Corollary 3.45]{BodWu}, 
modified for our setting (since \cite{BodWu} uses the opposite coproduct from our Definition \ref{def:quantumgroup}).
\end{rem}

The simplified presentation is deduced from the presentation in \cite[Definition 3.43]{BodWu}, 
which describes $\Lambda_q^*$ as the quotient of the tensor algebra 
by a $q$-analogue of the symmetric square. 
From this perspective, it follows that the algebra $U_q(\son)$ acts on $\Lambda_q^*$,
preserving each $\Lambda_q^r$ (the action is induced from the action on $(\Lambda_q^1)^{\otimes r}$ 
via the quotient map $(\Lambda_q^1)^{\otimes r}\rightarrow \Lambda_q^r$).
On the other hand, the simplified presentation makes it routine to show that $\Lambda_q^*$ has a basis 
\[
\{a_1^{x_1}\cdots a_n^{x_n}u^{z}b^{y_n}\cdots b_1^{y_1} \mid x_i, z, y_j\in \{0,1\}\} \, .
\]
(For details, see \cite[Theorem 3.47]{BodWu}.) Note that degree $r$ monomials are a basis of $\Lambda_q^r$.
In particular, the weight of a basis element is the sum of the weights of each term in the monomial. 
Moreover, the algebra structure maps $\Lambda_q^{r} \otimes \Lambda_q^{\ell}\rightarrow \Lambda_q^{r+\ell}$ 
commute with the action of $U_q(\son)$.

One may readily check that $\Lambda_q^k$ has highest weight $\varpi_k$,
so abstract theory implies that $V_k\cong \Lambda_q^k$. 
In particular, there is a choice of isomorphism $V_k\longrightarrow \Lambda_q^k$ such that $v_k^+\mapsto a_1\cdots a_k$. 
Recall that $v_k^-$ is the lowest weight vector of $V(\varpi_k)$ characterized by the condition $T_{w_0}\cdot v_k^-= v_k^+$, 
where $T_{w_0}$ is the longest element in the quantum Weyl group using conventions fixed in \cite[Section 3.2]{BER}. 

\begin{lem}
The $U_q(\son)$ isomorphism $V_k\longrightarrow \Lambda_q^k$ such that $v_k^+\mapsto a_1\cdots a_k$ sends $v_k^-\mapsto b_k\cdots b_1$. 
\end{lem}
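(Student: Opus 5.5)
Since $v_k^-$ is characterized by $T_{w_0}\cdot v_k^- = v_k^+$ and the chosen isomorphism $V_k \to \Lambda_q^k$ is $U_q(\son)$-linear, it suffices to compute $T_{w_0}\cdot (b_k\cdots b_1)$ inside $\Lambda_q^k$ and check that it equals $a_1\cdots a_k$. The lowest weight space of $\Lambda_q^k$ (weight $w_0\varpi_k = -\varpi_k$) is one-dimensional and spanned by the monomial $b_k\cdots b_1$. Hence the image of $v_k^-$ is some scalar multiple $c\, b_k\cdots b_1$, and the claim is exactly that $c=1$.

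To compute $c$, I will factor $T_{w_0}$ as a product $T_{i_1}\cdots T_{i_N}$ of simple quantum Weyl group generators, using the conventions of \cite[Section 3.2]{BER}. On an extremal weight vector $v$ of weight $\mu$ with $\langle \mu,\alpha_i^\vee\rangle = -r\le 0$, the generator $T_i$ acts as a divided power $e_i^{(r)}$ times a sign and power of $q$ fixed by those conventions. On $\Lambda_q^k$ every weight along the orbit $W\varpi_k$ has $\langle\mu,\alpha_i^\vee\rangle\in\{0,\pm1,\pm2\}$, and the value $2$ occurs only for $i=n$.

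The approach is to reduce to $k=1$ via the algebra structure. The multiplication maps $\Lambda_q^1\otimes\Lambda_q^{k-1}\to \Lambda_q^k$ are intertwiners, but $T_i$ is not grouplike: it interacts with the coproduct through the $R$-matrix, $\Delta(T_{w_0}) = R^{-1}_{21}\,(T_{w_0}\otimes T_{w_0})$ up to the conventions in use. Applied to a tensor of extremal vectors, the $R$-matrix acts by its Cartan part alone, because the nilpotent terms kill the extremal vectors. This gives a recursion of the form $c_k = c_1\, c_{k-1}\cdot(\text{explicit scalar})$.

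There is an alternative that avoids the $R$-matrix. Choose a reduced word for $w_0$ adapted to the chain $\varpi_k \to w_0\varpi_k$ and directly track the monomial through the steps. A step $T_i$ with $i<n$ swaps $a_i\leftrightarrow a_{i+1}$ or $b_i\leftrightarrow b_{i+1}$ inside the monomial, and $T_n$ sends $a_n \mapsto b_n$, passing through $u$. For each step, use the explicit action of $e_i,f_i$ on $\Lambda_q^1$ from \cite[Proposition 4.7]{BER}, and reorder the result using the relations of Definition \ref{def:q-exterior-algebra}.

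The main obstacle is the sign and $q$-power bookkeeping. Reordering produces factors $-q^{-2}$, and the step $e_n^{(2)}$ on a $b_n$ factor passes through $u$, where the relation for $u^2$ and the $b_ia_i$ relation contribute correction terms. These corrections should cancel on the relevant weight space because the result is forced to be a multiple of a single monomial. For $k=1$ I will verify by hand that $T_{w_0} b_1 = a_1$. I will then induct on $k$ and check that the accumulated signs and $q$-powers cancel, which the consistency with the normalization $v_k^+\mapsto a_1\cdots a_k$ strongly suggests.
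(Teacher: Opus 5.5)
Your opening reduction and your ``alternative'' route are the paper's argument. The reduction is correct: the $-\varpi_k$ weight space of $\Lambda_q^k$ is spanned by $b_k\cdots b_1$, so it suffices to show $T_{w_0}(b_k\cdots b_1)=a_1\cdots a_k$. The paper writes $w_0=w^J w_0^J$, where $w_0^J$ is the longest element of the parabolic $\langle s_i : i\neq k\rangle$ stabilizing $-\varpi_k$. Then $T_{w_0^J}$ fixes $b_k\cdots b_1$, and the paper follows the monomial along a reduced word for $w^J$. For $k=2$ this is $b_2b_1\mapsto a_2b_1\mapsto a_1b_2\mapsto a_1a_2$, and the general case is called routine.

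Your $R$-matrix route is not used in the paper, and as written it rests on a false claim. Extremal vectors are not killed by the nilpotent part of $R_{21}^{-1}$: in an irreducible module only the highest (resp.\ lowest) weight vector is killed by all $e_i$ (resp.\ $f_i$), and for example $a_k$ is moved by both $e_{k-1}$ and $f_k$. The route can only work if you split $b_k\cdots b_1$ so that the factor sent to a highest weight vector receives the $U^+$ leg. Then the other factor ($b_k\in\Lambda_q^1$ or $b_k\cdots b_2\in\Lambda_q^{k-1}$) is extremal but not lowest weight, so neither $c_1$ nor $c_{k-1}$ applies without extra transport scalars.

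More importantly, your description of the individual steps is wrong exactly where the bookkeeping happens.

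\begin{itemize}
\item It is false that the pairing $2$ occurs only for $i=n$. The weight $\epsilon_2-\epsilon_1$ of $a_2b_1$ has $\langle\epsilon_2-\epsilon_1,\alpha_1^\vee\rangle=-2$. So in the paper's chain $T_1$ acts by a multiple of $e_1^{(2)}$ through the coproduct and moves two letters at once, $a_2b_1\mapsto a_1b_2$, rather than doing a single swap.
\item Such double steps are unavoidable for $k\ge 2$. Write $r_j=-\langle\mu_j,\alpha_{i_j}^\vee\rangle\in\{1,2\}$ for the $\ell(w^J)=2nk-k^2-\binom{k}{2}$ steps. Then $\sum_j r_j=\mathrm{ht}(2\varpi_k)=2nk-k^2+k$. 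The coefficient of $\alpha_n$ in $2\varpi_k$ is $2k$ and each $T_n$-step contributes $2\alpha_n$, so there are exactly $k$ steps with $i=n$. Hence exactly $\binom{k}{2}$ steps with $i<n$ have $r_j=2$. Their scalars (the normalization of $T_i$ on a vector with pairing $-2$, and the coproduct factors of $e_i^{(2)}$ acting on $a_{i+1}$ and $b_i$) must enter your count.
\item The correction terms you expect from the $u^2$ and $b_ia_i$ relations never appear. Extremal monomials have one letter per nonzero coordinate and no $u$. If you compute letterwise in $(\Lambda_q^1)^{\otimes k}$ before multiplying, $u$ occurs only inside a single tensor factor, since $e_n^{(2)}b_n\propto a_n$. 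So at most the relations $xy=-q^{-2}yx$ between letters of distinct index are needed. Also, along your chain $T_n$ sends $b_n$ to $a_n$, not the reverse.
\end{itemize}

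Finally, the cancellation of all these signs and powers of $q$ is the whole content of the lemma and has to be checked. The normalization $v_k^+\mapsto a_1\cdots a_k$ gives no information about the scalar $c$.
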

\begin{proof}
Similar to proof of \cite[Lemma 4.5]{BER}. For example, if $k=2$, then using 
\[
w_0= (s_2\dots s_{n-1}s_ns_{n-1}\dots s_2)(s_1)(s_2\dots s_{n-1}s_ns_{n-1}\dots s_2)w_0^{1,3, \dots, n}
\]
where $w_0^{1,3,\dots, n}$ is the longest element in parabolic $\langle s_1,s_3,\dots, s_n\rangle\subset W_{B_n}$, it is routine to check that 
\begin{align*}
T_{w_0}\cdot b_2b_1 &= T_{s_2\dots s_{n-1}s_ns_{n-1}\dots s_2}T_{s_1}T_{s_2\dots s_{n-1}s_ns_{n-1}\dots s_2}T_{w_0^{1,3, \dots, n}}\cdot b_2b_1\\
&= T_{s_2\dots s_{n-1}s_ns_{n-1}\dots s_2}T_{s_1}T_{s_2\dots s_{n-1}s_ns_{n-1}\dots s_2}\cdot b_2b_1 \\
&= T_{s_2\dots s_{n-1}s_ns_{n-1}\dots s_2}T_{s_1}\cdot a_2b_1
=T_{s_2\dots s_{n-1}s_ns_{n-1}\dots s_2}\cdot a_1b_2
= a_1a_2 \, . \qedhere
\end{align*}
\end{proof}

In \cite[Proposition 4.14]{BER}, the morphism 
$\hat{\varphi}_{BER}\bigg(
\begin{tikzpicture}[scale=.4, anchorbase,tinynodes]
	\draw[very thick] (0,1.25) to [out=90,in=270] (0,1.5) to [out=90,in=180] (.5,2.25) node[above=-2pt]{$k$}
		to [out=0,in=90] (1,1.5) to (1,1.25);
\end{tikzpicture}  
\bigg)$ is \emph{defined} by the condition that $v_k^+\otimes v_k^-\mapsto 1$. 
Similarly, the image of the $k$-labelled black cup under $\hat{\varphi}_{BER}$ is \emph{defined} 
to be the intertwiner $\C(q)\rightarrow V_k\otimes V_k$ sending $1$ to $v_k^-\otimes v_k^+  + \sum x\otimes y$, 
where $x\otimes y\in V_k[< \varpi_k]\otimes V_k[> - \varpi_k]$. 

On the other hand, 
\cite[Definitions 3.69 and 3.71]{BodWu} imply that $
\hat{\varphi}_{O}\bigg(
\begin{tikzpicture}[scale=.4, anchorbase,tinynodes]
	\draw[very thick] (0,1.25) to [out=90,in=270] (0,1.5) to [out=90,in=180] (.5,2.25) node[above=-2pt]{$k$}
		to [out=0,in=90] (1,1.5) to (1,1.25);
\end{tikzpicture}  
\bigg)$ 
is \emph{defined} by
\[
\hat{\varphi}_{O}\bigg(
\begin{tikzpicture}[scale=.4, anchorbase,tinynodes]
	\draw[very thick] (0,1.25) to [out=90,in=270] (0,1.5) to [out=90,in=180] (.5,2.25) node[above=-2pt]{$1$}
		to [out=0,in=90] (1,1.5) to (1,1.25);
\end{tikzpicture}  
\bigg)
:= \hat{\varphi}_{BER}\bigg(
\begin{tikzpicture}[scale=.4, anchorbase,tinynodes]
	\draw[very thick] (0,1.25) to [out=90,in=270] (0,1.5) to [out=90,in=180] (.5,2.25) node[above=-2pt]{$1$}
		to [out=0,in=90] (1,1.5) to (1,1.25);
\end{tikzpicture}  
\bigg)
\qquad 
\text{and} 
\qquad 
\hat{\varphi}_{O}\bigg(
\begin{tikzpicture}[scale=.4, anchorbase,tinynodes]
	\draw[very thick] (0,1.25) to [out=90,in=270] (0,1.5) to [out=90,in=180] (.5,2.25) node[above=-2pt]{$k$}
		to [out=0,in=90] (1,1.5) to (1,1.25);
\end{tikzpicture}  
\bigg)
:= \frac{[2]}{[2k]}
\hat{\varphi}_{O} \Bigg(
\begin{tikzpicture}[scale =.35, tinynodes,anchorbase,rotate=180]
	\draw[very thick, black] (2.5,.75) to [out=330,in=90] (3,0) to [out=270,in=0] (1.5,-1.5) 
		node[above=-2pt]{$k{-}1$} to [out=180,in=270] 
			(0,0) to [out=90,in=210] (.5,.75);
	\draw[very thick, black] (2.5,.75) to [out=210,in=90] (2,0) to [out=270,in=0] (1.5,-.5) 
		node[above=-2pt]{$1$} to [out=180,in=270]
			(1,0) to [out=90,in=330] (.5,.75);
	\draw[very thick, black] (.5,.75) to (.5,1.5) node[below=-2pt]{$k$};
	\draw[very thick, black] (2.5,.75) to (2.5,1.5) node[below=-2pt]{$k$};
\end{tikzpicture}
\Bigg) \, .
\]
Since $\hat{\varphi}_{O}$ is pivotal, 
the analogous equation \emph{defines} the image of $k$ labelled black cup under $\hat{\varphi}_{O}$. 
For the latter, one also needs the image of trivalent vertices under $\hat{\varphi}_{O}$,
which are defined in \cite{BodWu} using the multiplication on $\Lambda_q^*$. 
In particular, the images of trivalent vertices under $\hat{\varphi}_{O}$ automatically satisfy associativity \eqref{eq:assoc}. 

Having established how the intertwiners in \eqref{eq:BW-vs-BER} are defined, we now prove Lemma \ref{lem-spin-vs-O-cups-and-caps}. 

\begin{proof}[Proof of Lemma \ref{lem-spin-vs-O-cups-and-caps}]
Since the functors $\hat{\varphi}_{O}$ and $\hat{\varphi}_{BER}$ are pivotal, 
it suffices to check that the image of black $k$-labelled cups differ by $(-1)^{\binom{k}{2}}$.
Comparing \cite[Definition 3.69]{BodWu} and \cite[Proposition 4.14]{BER}, 
it follows that the image of the $1$-labelled cup under $\hat{\varphi}_{O}$ and $\hat{\varphi}_{BER}$ \emph{both} agree 
with the intertwiner $\C(q)\rightarrow \Lambda_q^1\otimes \Lambda_q^1$ defined by
\[
1\mapsto \sum_{i=1}^{n} -(-q^2)^{2n-i}a_i\otimes b_i + \frac{(-q^2)^n}{[2]}u\otimes u + \sum_{i=1}^{n} (-q^2)^{i-1}b_i\otimes a_i \, .
\]

Since the $1$-labelled cups in either reference agree, 
we are reduced to computing the projection onto $\Lambda_q^k[\varpi_k]\otimes \Lambda_q^k[-\varpi_k]$ 
of the vector which is the image of $1$ under $\frac{1}{``[k]^2"!}$ times the composition
\[
\C(q)\rightarrow \Lambda_q^1\otimes \Lambda_q^1 \rightarrow 
	\Lambda_q^1\otimes \Lambda_q^1 \otimes \Lambda_q^1 \otimes \Lambda_q^1 \rightarrow \dots 
		\rightarrow (\Lambda_q^1)^{\otimes k}\otimes (\Lambda_q^1)^{\otimes k} \rightarrow \Lambda_q^k\otimes \Lambda_q^k \, .
\]
We do so by taking the coefficient of $b_k\dots b_1\otimes a_1\dots a_k$ (the avatar of $v_k^-\otimes v_k^+$ in $\Lambda_q^k\otimes \Lambda_q^k$) 
which one easily checks to be $(-1)^{0+1+2 + \dots + (k-1)} = (-1)^{\binom{k}{2}}$.

For example, when $k=3$, 
the relevant tensor is
\begin{align*}
\frac{1}{``[3]^2"!}&(-q^2)^{0+1+2}
\Big(b_1b_2b_3\otimes a_3a_2a_1 + b_2b_1b_3\otimes a_3a_1a_2 + b_1b_3b_2\otimes a_2a_3a_1 + b_2b_3b_1\otimes a_1a_3a_2 \\
		& \qquad \qquad \qquad \qquad \qquad \qquad \qquad \qquad 
			\qquad \qquad \qquad + b_3b_1b_2\otimes a_2a_1a_3 + b_3b_2b_1\otimes a_1a_2a_3\Big) \\
	=&\frac{1}{``[3]^2"!}(-q^2)^{0+1+2}\Big((-q^{-2})^6 + (-q^{-2})^4 +(-q^{-2})^4 + (-q^{-2})^2 + (-q^{-2})^2 + 1 \Big)
		b_3b_2b_1\otimes a_1a_2a_3 \\
	=&(-1)^{0+1+2}b_3b_2b_1\otimes a_1a_2a_3 \, . \qedhere
\end{align*}

\end{proof}

%
\section{Comparing bijections}\label{Appendix:doublecentralizerbijection}
%

Although unnecessary for the proof of our main theorems, 
for the sake of completeness we now record Wenzl's refinement of Theorem \ref{thm:bijection}, 
which was alluded to in Remark \ref{rem:Wenzldoesmore}.  
The following arguments, which are adapted from the proof of \cite[Proposition 5.5]{Wenzl-Spin}, 
culminate in Proposition \ref{prop:Xiisinversetoa}, 
which verifies that the bijection $\Xi_{m,n}$ from Definition \ref{def:xi} 
is in fact the inverse to $\ab_{m,n}(-)$.

\begin{lem}\label{lem:reslem1}
Let $\ab, \ab'\in \SP_{m}$. 
If $M_{\ab}$ and $M_{\ab'}$ are isomorphic upon restriction to $\iqUsom[m-1]$, 
then $M_{\ab}\cong M_{\ab'}$ as $\iqUsom$-modules. 
\end{lem}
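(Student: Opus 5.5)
The plan is to use the branching rule \eqref{eq:resM} of Theorem \ref{thm:reptheorytheorem} to reduce the statement to a purely combinatorial one. The isomorphism type of the restriction of $M_{\ab}$ to $\iqUsom[m-1]$ is determined by the multiplicity-free decomposition $\bigoplus_{\abb} M_{\abb}$ over $\abb \in \SP_{m-1}$ interlacing $\ab$. By Proposition \ref{prop:mult-free} and complete reducibility, the set
\[
I(\ab) := \{\abb \in \SP_{m-1} \mid \abb \text{ interlaces } \ab\}
\]
is therefore an isomorphism invariant of the restriction. Type I nonclassical representations of $\iqUsom[m-1]$ are pairwise non-isomorphic for distinct spin-partitions, and restrictions of type I representations are type I, so each summand is some $M_{\abb}$ with $\abb$ uniquely determined. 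Thus the hypothesis gives $I(\ab) = I(\ab')$. It then suffices to show that the map $\ab \mapsto I(\ab)$ is injective on $\SP_m$, because $\ab = \ab'$ forces $M_{\ab} \cong M_{\ab'}$.

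For injectivity I will recover $\ab$ explicitly from $I(\ab)$, splitting by the parity of $m$.

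\emph{Case $m = 2k+1$.} Here $\ab = (a_1 \ge \cdots \ge a_k)$, and $\abb = (b_1, \ldots, b_k)$ ranges over all tuples with $a_i \ge b_i \ge a_{i+1}$, where we set $a_{k+1} = \tfrac12$. The interlacing constraints on the $b_i$ are independent, so $I(\ab)$ is a product of intervals. Hence $a_i = \max\{ b_i \mid \abb \in I(\ab)\}$ for each $i$, and $\ab$ is determined.

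\emph{Case $m = 2k$.} Here $\ab = (a_1, \ldots, a_k)$, and $I(\ab)$ consists of the $(k-1)$-tuples with $a_i \ge b_i \ge a_{i+1}$ for $1 \le i \le k-1$. The maxima of the coordinates recover $a_1, \ldots, a_{k-1}$. The minimum of $b_{k-1}$ recovers $a_k$ when $k \ge 2$.

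The main obstacle is the small cases where the interlacing data looks degenerate. When $m = 2$, every $2$-spin-partition interlaces the unique $1$-spin-partition, so $I(\ab)$ cannot distinguish different $\ab$. In that case I argue directly instead. The algebra $\iqUsom[2] = \C(q)[b_1]$ has restriction to $\iqUsom[1] = \C(q)$, and Lemma \ref{lem:b1restriction} shows that $b_1$ acts on $M_{\ab}$ by a scalar determined by $\lead(\ab)$. This case therefore has to be excluded, or handled via the statement's implicit assumption $m \ge 3$. I would flag it, since the lemma as stated is false for $m = 2$ (both restrictions are the one-dimensional $\C(q)$-module). For $m \ge 3$ the recovery above works.
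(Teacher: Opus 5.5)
Your proposal is correct and follows the paper's argument. The paper deduces the lemma from the branching rule \eqref{eq:resM} of Theorem \ref{thm:reptheorytheorem} together with the combinatorial fact that $\ab$ is determined by the set of $(m-1)$-spin-partitions interlacing it, which you verify explicitly by recovering $\ab$ from coordinate maxima and minima.

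Your caveat is also right. That combinatorial fact, and the lemma itself, fail for $m=2$: $b_1$ acts on the one-dimensional $M_{\ab}$ by a scalar depending on $\lead(\ab)$, yet every restriction to $\iqUsom[1]=\C(q)$ is the same module. The paper only invokes the lemma for $m>2$, in Proposition \ref{prop:Xiisinversetoa}, so nothing downstream is affected.
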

\begin{proof}
This follows from Theorem \ref{thm:reptheorytheorem}, 
using the combinatorial fact that each $\ab\in \SP_m$ is uniquely determined 
by the set $\{\abb\in \SP_{m-1} \mid  \ab \ \text{and} \ \abb \ \text{interlace}\}$.
\end{proof}

\begin{lem}\label{lem:reslem2}
Let $\lambda\in X_+(\son)^{S^{\otimes m}}$. Upon restriction to $\iqUsom[m-1]$, 
we have an isomorphism
\begin{equation}\label{eq:reslem2}
\Hom_{U_q(\son)}(V_{\lambda}, S^{\otimes m}) \cong \Hom_{U_q(\son)}(V_{\lambda}\otimes S, S^{\otimes m-1})
\end{equation}
of $\iqUsom[m-1]$-modules.
\end{lem}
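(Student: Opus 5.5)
The isomorphism will be the standard adjunction given by the spin cap on the last two tensor factors, and the whole proof reduces to checking that this adjunction commutes with the action of $\iqUsom[m-1]$.

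Define
\[
\Phi \colon \Hom_{U_q(\son)}(V_{\lambda}, S^{\otimes m}) \to \Hom_{U_q(\son)}(V_{\lambda}\otimes S, S^{\otimes m-1})
\]
by $f \mapsto (\id_{S^{\otimes m-1}} \otimes \mathrm{ev}_S)\circ (f \otimes \id_S)$. Here $\mathrm{ev}_S = \varphi$ of the gray cap, which contracts the $m$-th factor of $S^{\otimes m}$ with the extra copy of $S$.

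Define a map in the other direction by $g \mapsto (g\otimes \id_S)\circ(\id_{V_\lambda}\otimes \mathrm{coev}_S)$, using the gray cup. The zigzag (isotopy) relations in the pivotal category $\FRep(U_q(\son))$ make these two maps mutually inverse. Since $S$ is self-dual, this is just the standard duality isomorphism $\Hom(A, B\otimes S)\cong \Hom(A\otimes S, B)$. In particular $\Phi$ is a $\C(q)$-linear isomorphism.

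Next, fix the module structures. The algebra $\iqUsom$ acts on the left-hand side of \eqref{eq:reslem2} by postcomposition through Wenzl's map, i.e.\ through $\varphi\circ\psi$ (Proposition \ref{prop:phipsi-isWenzl}). So $b_i$ acts by postcomposing with $\varphi(\rung_{i})$ on $S^{\otimes m}$. On the right-hand side, $\iqUsom[m-1]$ acts by postcomposition on $S^{\otimes m-1}$, with $b_i\mapsto\varphi(\rung_i)$ for $1\le i\le m-2$.

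It remains to check that $\Phi$ intertwines the action of each $b_i$ with $i\le m-2$. The rung $\rung_i$ on $S^{\otimes m}$ involves only uprights $i$ and $i+1$, both of index at most $m-1$. Hence $\rung_i$ is disjoint from the $m$-th strand that the cap contracts. By the interchange law in the monoidal category,
\[
(\id\otimes \mathrm{ev}_S)\circ(\varphi(\rung_i)\circ f \otimes \id_S) = \varphi(\rung_i)\circ (\id\otimes \mathrm{ev}_S)\circ(f\otimes \id_S),
\]
where the right-hand $\rung_i$ now lives in $\End(S^{\otimes m-1})$. This gives $\Phi(b_i\cdot f)=b_i\cdot \Phi(f)$. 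Since the $b_i$ with $i\le m-2$ generate $\iqUsom[m-1]$, $\Phi$ is an isomorphism of $\iqUsom[m-1]$-modules.

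I expect the only point needing care to be conventions: the embedding $\iqUsom[m-1]\subset\iqUsom$ must act on the first $m-1$ factors, and the cap must be on the rightmost factor. This matches the standard inclusion generated by $b_1,\dots,b_{m-2}$, so no braiding or sign issues arise. The hypothesis $\lambda\in X_+(\son)^{S^{\otimes m}}$ is not needed for the isomorphism itself; it only guarantees that the modules are nonzero.
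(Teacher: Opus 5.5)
Your proposal is correct and follows the paper's proof. The isomorphism is the duality map $f\mapsto(\id_{S^{\otimes m-1}}\otimes \mathrm{cap}_{S,S})\circ(f\otimes\id_S)$, and it commutes with $b_1,\dots,b_{m-2}$ because the corresponding rungs involve only the first $m-1$ strands; you additionally spell out the inverse via the cup and the interchange-law check that the paper leaves implicit.
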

\begin{proof}
Set $\mathrm{cap}_{S,S}:=\hat{\varphi}_{BER}
\Big(
\begin{tikzpicture}[scale=.4, anchorbase,tinynodes]
	\draw[very thick, gray] (0,1.25) to [out=90,in=270] (0,1.5) to [out=90,in=180] (.5,2.25)
		to [out=0,in=90] (1,1.5) to (1,1.25);
\end{tikzpicture}  
\Big)$. 
The duality isomorphism in \eqref{eq:reslem2}, explicitly given by 
\[
f\mapsto (\id_{S^{\otimes m-1}}\otimes \mathrm{cap}_{S,S})\circ (f\ot \mathrm{id}_S) \, ,
\]
commutes with the actions of $\iqUsom[m-1]$.
\end{proof}

\begin{lem}\label{lem:bijectionbackwards}
Let $\ab\in \SP_m^{\le n}$. 
As $U_q(\son)$-modules,
there is a multiplicity free decomposition
\begin{equation}\label{eq:bijectionbackwards}
V_{\Xi_{m,n}(\ab)}\otimes S \cong 
\bigoplus_{\substack{ \abb\in \SP_{m-1}^{\le n} \\\abb \ \text{interlaces} \ \ab}} V_{\Xi_{m-1,n}(\abb)} \, .
\end{equation}
\end{lem}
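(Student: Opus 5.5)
We would prove Lemma \ref{lem:bijectionbackwards} by the same weight combinatorics as in Lemma \ref{lem:bijection}, but reading the interlacing in the opposite direction: now $\ab\in\SP_m^{\le n}$ is the longer partition and $\abb\in\SP_{m-1}^{\le n}$ ranges over the partitions interlacing it from below. The starting point is again \eqref{eq:tensor-product}. Since $S$ is minuscule, $V_{\lambda}\otimes S$ is the multiplicity-free sum of the $V_{\lambda+\mu}$, where $\mu=(\ep_1,\ldots,\ep_n)\cdot\frac12$ runs over the weights of $S$ with $\lambda+\mu$ dominant. Multiplicity-freeness is therefore automatic, and it remains to show two things. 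First, the map $\abb\mapsto\Xi_{m-1,n}(\abb)-\Xi_{m,n}(\ab)$ lands in the set of admissible $\mu$. Second, this map is a bijection between $\{\abb \text{ interlacing } \ab\}$ and that set. Injectivity of $\Xi_{m-1,n}$ on $\SP_{m-1}^{\le n}$ is clear from \eqref{eq:defofcs}, since the $c_j$ record exactly how many entries lie at or below each half-integer level. Remark \ref{rem:interlacesaturated} guarantees that every interlacing $\abb$ lies in $\SP_{m-1}^{\le n}$.

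We then split by the parity of $m$, as in the proof of Lemma \ref{lem:bijection}. Write $\ab=(a_1\ge\cdots\ge a_k)$, set $a_0=\frac{2n+1}{2}$ and $a_{k+1}=\frac12$, and let $q_i=a_i-a_{i+1}$. Then $\lambda=\Xi_{m,n}(\ab)$ consists of blocks of $q_i$ equal entries, each equal to $k-i$ (plus $\frac12$ if $m$ is odd).

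An interlacing $\abb$ is determined by choosing $b_i\in[a_{i+1},a_i]$ independently for each $i\ge 1$. For $m=2k+1$, $\abb$ has $k$ entries with $a_i\ge b_i\ge a_{i+1}$ and $b_k\ge\frac12$. For $m=2k$, $\abb$ has $k-1$ entries with $a_i\ge b_i\ge a_{i+1}$, for $1\le i\le k-1$. Write $b_i=a_{i+1}+p_i$ with $0\le p_i\le q_i$.

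The key check is that $\Xi_{m-1,n}(\abb)-\lambda$ is $\pm\frac12$ in each coordinate. Inside the $i$-th block it should be a plus-to-minus pattern with $q_i-p_i$ pluses followed by $p_i$ minuses. The forced blocks must also come out correctly:
\begin{itemize}
\item the top block $i=0$ (where no $b_0$ exists) must be all minus;
\item the bottom block must be all plus when the half-shift in $\Xi$ makes its entries $\frac12$ and subtraction would make them negative, or free when its entries are zero and dominance does not constrain it.
\end{itemize}
The computation is the identity already used in Lemma \ref{lem:bijection}: $b_i-b_{i+1}$ counts the entries of $\Xi_{m-1,n}(\abb)$ at the corresponding level.

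Conversely, we need that every $\mu$ with $\lambda+\mu$ dominant arises this way. Dominance, meaning weakly decreasing and nonnegative, forces each block of $\mu$ to be a plus-to-minus sequence. Counting then gives $\prod_i(q_i+1)$ admissible $\mu$ on both sides, with the appropriate end blocks forced.

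We expect the main obstacle to be the end-block bookkeeping. The parity conventions are swapped relative to Lemma \ref{lem:bijection}: when $m$ is odd, $\lambda$ has half-integer entries and $\Xi_{m-1,n}(\abb)$ has integer entries, so the forced signs at the top and bottom blocks differ from the earlier case. The step to verify most carefully is that the top block ($a_0=\frac{2n+1}{2}$) is forced to be all minus and that the bottom block behaves correctly. This deserves care because $\lead(\abb)\le a_1$ removes the freedom at the top, while at the bottom the number of interlacing choices must match whether the entries of $\lambda$ there are $0$ or $\frac12$. If a direct count proves messy, a fallback is to deduce the result from Lemma \ref{lem:bijection} together with Frobenius reciprocity: $\Hom(V_{\Xi(\abb)}\otimes S,V_{\Xi(\ab)})\cong\Hom(V_{\Xi(\abb)},V_{\Xi(\ab)}\otimes S)$, using self-duality of $S$. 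The summands of $V_{\Xi(\ab)}\otimes S$ appearing with $\Xi_{m-1,n}$ are then exactly those $\abb$ for which $\ab$ appears in $V_{\Xi(\abb)}\otimes S$, which by Lemma \ref{lem:bijection} is precisely the interlacing condition.

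This fallback has a gap that must be closed separately. Lemma \ref{lem:bijection} only compares weights in the image of $\Xi_{m-1,n}$, so it does not rule out summands of $V_{\Xi(\ab)}\otimes S$ that are not of the form $V_{\Xi_{m-1,n}(\abb)}$. To close it we would use Theorem \ref{thm:bijection}: every summand of $V_{\Xi(\ab)}\otimes S$ lies in $X_+(\son)^{S^{\otimes m-1}}$, because $V_{\Xi(\ab)}\subset S^{\otimes m}$ and $S\otimes S\supset\C(q)$. Hence every such summand lies in the image of $\Xi_{m-1,n}$.
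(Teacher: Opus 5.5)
There is a genuine gap, and it cannot be closed as stated: the isomorphism \eqref{eq:bijectionbackwards} is false in general. Your direct count breaks exactly at the step you flagged.

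Nothing forces the top block of $\mu$ to be all minus. Those $q_0=\frac{2n+1}{2}-a_1$ coordinates carry the largest entries of $\lambda=\Xi_{m,n}(\ab)$, so dominance of $\lambda+\mu$ allows any plus-to-minus pattern there, giving $q_0+1$ choices. By contrast, every $\abb$ interlacing $\ab$ has all entries $\le a_1$, which forces the single all-minus pattern. The bound $\lead(\abb)\le a_1$ constrains $\abb$, not $\mu$. Hence $\abb\mapsto\Xi_{m-1,n}(\abb)-\lambda$ is injective into the admissible $\mu$, but surjective only when $\lead(\ab)=\frac{2n+1}{2}$.

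Your bottom-block rule is also reversed. Entries $\frac12$ permit minuses since $\frac12-\frac12=0$, while entries $0$ force pluses. Once corrected, the bottom block matches in both parities, so the top block is the only discrepancy.

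A concrete counterexample: take $n\ge 2$, $m=2$ and $\ab=(\frac{2n-1}{2})$. Then $\Xi_{2,n}(\ab)=\varpi_1$, so the left side is $V_1\otimes S\cong V_{\varpi_1+\varpi_n}\oplus S$ by \eqref{eq:1Sdecomp}. The right side is only $V_{\Xi_{1,n}(\varnothing)}=S$. In general, Lemma \ref{lem:bijection} with $m+1$ in place of $m$ gives $V_{\Xi_{m,n}(\ab)}\otimes S\cong\bigoplus V_{\Xi_{m+1,n}(\ab')}$ over $\ab'\in\SP_{m+1}^{\le n}$ interlacing $\ab$. Only the $\ab'=(\frac{2n+1}{2},\abb)$ with $\abb$ interlacing $\ab$ yield weights of the form $\Xi_{m-1,n}(\abb)$.

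Your fallback is exactly the paper's proof: adjunction with $S$, Lemma \ref{lem:bijection} read backwards, and Schur's lemma. You correctly saw that it only controls $\mu$ in the image of $\Xi_{m-1,n}$, and the ``only if'' in the paper's reduction overreaches in the same way. Your patch, however, is invalid. $V_{\Xi_{m,n}(\ab)}\otimes S$ is a summand of $S^{\otimes m+1}$, and $\C(q)\subset S\otimes S$ only gives $S^{\otimes m-1}\subset S^{\otimes m+1}$, which is the wrong direction. In the example above, $V_{\varpi_1+\varpi_n}$ is not a summand of $S$.

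What the fallback does prove is a corrected statement. For $\abb\in\SP_{m-1}^{\le n}$, $V_{\Xi_{m-1,n}(\abb)}$ occurs in $V_{\Xi_{m,n}(\ab)}\otimes S$ with multiplicity one if $\abb$ interlaces $\ab$, and not at all otherwise. Since $X_+(\son)^{S^{\otimes m-1}}=\Xi_{m-1,n}(\SP_{m-1}^{\le n})$ by the proof of Theorem \ref{thm:bijection}, all remaining summands have highest weight outside $X_+(\son)^{S^{\otimes m-1}}$. Equivalently, \eqref{eq:bijectionbackwards} becomes true after applying $\Hom_{U_q(\son)}(-,S^{\otimes m-1})$. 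That is all Proposition \ref{prop:Xiisinversetoa} uses, so state and prove that version instead.
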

\begin{proof}
By complete reducibility, it suffices to show that
$\dim\Hom_{U_q(\son)}(V_{\mu}, V_{\Xi_{m,n}(\ab)}\otimes S) \ne 0$ 
if and only if
$\mu \in \Xi_{m-1, n}\left( \{\abb\in \SP_{m-1} \mid  \ab \ \text{and} \ \abb \ \text{interlace}\}\right)$,
and that $\dim\Hom_{U_q(\son)}(V_{\mu}, V_{\Xi_{m,n}(\ab)}\otimes S) =1$ for such $\mu$. 
Using the isomorphisms
\begin{align*}
\Hom_{U_q(\son)}(V_{\mu}, V_{\Xi_{m,n}(\ab)}\otimes S) &\cong \Hom_{U_q(\son)}(V_{\mu}\otimes S, V_{\Xi_{m,n}(\ab)}) \\
 f&\mapsto (\id_{V_{\Xi_{m,n}(\ab)}}\otimes \mathrm{cap}_{S,S})\circ (f\otimes \mathrm{id}_S) \, ,
\end{align*}
the claim then follows from Lemma \ref{lem:bijection} (read backwards) and Schur's Lemma.
\end{proof}

\begin{prop}\label{prop:Xiisinversetoa}
If $\ab\in \SP_{m}^{\le n}$, then $\ab_{m,n}(\Xi_{m,n}(\ab)) = \ab$. 
\end{prop}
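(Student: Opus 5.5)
We argue by induction on $m$, using the restriction along the chain \eqref{eqn:chain} to pin down $\ab_{m,n}(\Xi_{m,n}(\ab))$ through its interlacing data. The base cases $m\le 2$ are Example \ref{ex:m2discussionformatching} together with Example \ref{ex:basecase}. For $m=2$, both $\ab_{2,n}$ and $\Xi_{2,n}$ send $0\leftrightarrow\frac{2n+1}{2}$, $\varpi_k\leftrightarrow\frac{2(n-k)+1}{2}$ and $2\varpi_n\leftrightarrow\frac12$. For $m\le1$ the statement is trivial, since there is only the empty spin-partition.

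For the inductive step, fix $m\ge 3$ and $\ab\in\SP_m^{\le n}$, and set $\lambda:=\Xi_{m,n}(\ab)$. By Theorem \ref{thm:bijection}, $\lambda\in X_+(\son)^{S^{\otimes m}}$, so $M_{\ab_{m,n}(\lambda)}\cong\Hom_{U_q(\son)}(V_\lambda,S^{\otimes m})$ is defined. Restrict to $\iqUsom[m-1]$. By Lemma \ref{lem:reslem2} and Lemma \ref{lem:bijectionbackwards}, we obtain isomorphisms of $\iqUsom[m-1]$-modules
\[
\Hom(V_\lambda,S^{\otimes m})\cong \Hom(V_\lambda\otimes S,S^{\otimes m-1})\cong\bigoplus_{\abb\text{ interlaces }\ab}\Hom(V_{\Xi_{m-1,n}(\abb)},S^{\otimes m-1}).
\]
The second isomorphism must be $\iqUsom[m-1]$-equivariant. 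This holds because $\iqUsom[m-1]$ acts only on the $S^{\otimes m-1}$ factor by postcomposition, while the decomposition \eqref{eq:bijectionbackwards} acts on the source by precomposition with $U_q(\son)$-maps, and these two actions commute.

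By the inductive hypothesis, each summand is isomorphic to $M_{\ab_{m-1,n}(\Xi_{m-1,n}(\abb))}=M_{\abb}$. Here Remark \ref{rem:interlacesaturated} ensures $\abb\in\SP_{m-1}^{\le n}$, so the hypothesis applies. Hence the restriction of $M_{\ab_{m,n}(\lambda)}$ is $\bigoplus_{\abb\text{ interlaces }\ab}M_{\abb}$. By \eqref{eq:resM} this is also the restriction of $M_{\ab}$. Lemma \ref{lem:reslem1} then gives $M_{\ab_{m,n}(\lambda)}\cong M_{\ab}$, and the irredundancy part of Theorem \ref{thm:reptheorytheorem} yields $\ab_{m,n}(\lambda)=\ab$.

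The main obstacle is the equivariance of the chain of isomorphisms under $\iqUsom[m-1]$, specifically that Wenzl's action of $b_1,\dots,b_{m-2}$ on $S^{\otimes m}$ restricts compatibly with capping off the last tensor factor. Lemma \ref{lem:reslem2} asserts this; it holds because the $b_i$ for $i\le m-2$ are rungs among the first $m-1$ strands, which commute with the cap on the last two positions after tensoring with $V_\lambda$. A second point to check is that Lemma \ref{lem:reslem1} only determines the module up to isomorphism. The combinatorial claim behind it, that $\ab$ is recovered from its set of interlacing $(m-1)$-spin-partitions, should be confirmed: the extremal choices recover each $a_i$, including the parity cases $m=2k$ and $m=2k+1$.
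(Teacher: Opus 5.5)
Your proposal is correct and follows the paper's proof essentially step for step. The paper also inducts on $m$ from the base cases in Examples~\ref{ex:m2discussionformatching} and~\ref{ex:basecase}, reduces via Lemma~\ref{lem:reslem1} to an isomorphism of $\iqUsom[m-1]$-modules, and concludes with the same chain of isomorphisms from Lemma~\ref{lem:reslem2}, Lemma~\ref{lem:bijectionbackwards}, the inductive hypothesis (with Remark~\ref{rem:interlacesaturated}), and \eqref{eq:resM}.

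Your extra remarks only make explicit what the paper leaves implicit. The equivariance is as you describe. That $\Xi_{m,n}(\ab)\in X_+(\son)^{S^{\otimes m}}$ really comes from the proof of Theorem~\ref{thm:bijection}, namely Lemma~\ref{lem:bijection} applied inductively. The combinatorial fact behind Lemma~\ref{lem:reslem1} does hold for all $m\ge 3$; it fails at $m=2$, which is why $m=2$ must be a base case.
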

\begin{proof}
By Theorem \ref{thm:reptheorytheorem}, it suffices to show that $M_{\ab_{m,n}(\Xi_{m,n}(\ab))}\cong M_{\ab}$. 
We proceed by induction on $m$, the base case already being established in Example \ref{ex:m2discussionformatching} and Example \ref{ex:basecase}. 
Let $m>2$ and suppose that $\ab_{m-1, n}$ and $\Xi_{m-1,n}$ are inverse to one another. 

By Lemma \ref{lem:reslem1}, it suffices to show that $M_{\ab_{m,n}(\Xi_{m,n}(\ab))}\cong M_{\ab}$ as $\iqUsom[m-1]$-modules. 
Upon restriction to $\iqUsom[m-1]$, we have
\begin{align*}
M_{\ab_{m,n}(\Xi_{m,n}(\ab))} &\cong \Hom_{U_q(\son)}(V_{\Xi_{m,n}(\ab)}, S^{\otimes m}) \\
&\!\!\!\stackrel{\eqref{eq:reslem2}}{\cong} \Hom_{U_q(\son)}(V_{\Xi_{m,n}(\ab)}\otimes S, S^{\otimes m-1}) \\
&\!\!\!\stackrel{\eqref{eq:bijectionbackwards}}{\cong} \bigoplus_{\substack{ \abb\in \SP_{m-1}^{\le n} \\\abb \ \text{interlaces} \ \ab}} \Hom_{U_q(\son)}(V_{\Xi_{m-1,n}(\abb)}, S^{\otimes m-1}) \\
&\cong \bigoplus_{\substack{ \abb\in \SP_{m-1}^{\le n} \\\abb \ \text{interlaces} \ \ab}} M_{\ab_{m-1, n}(\Xi_{m-1, n}(\abb))}
\cong \bigoplus_{\substack{ \abb\in \SP_{m-1}^{\le n} \\\abb \ \text{interlaces} \ \ab}} M_{\abb}
\cong M_{\ab} \, ,
\end{align*}
where the second to last isomorphism follows from the induction hypothesis, 
and the last isomorphism follows from \eqref{eq:resM} and Remark \ref{rem:interlacesaturated}.
\end{proof}


\end{document}